\newcommand{\w}{\omega}
\newcommand{\e}{\varepsilon}
\newcommand{\supp}{\mathrm{supp}}
\newcommand{\A}{\mathcal A}
\newcommand{\C}{\mathcal C}
\newcommand{\M}{\mathcal M}
\newcommand{\cov}{\mathrm{cov}}
\newcommand{\IN}{\mathbb N}
\newcommand{\E}{\mathcal E}
\newcommand{\F}{\mathcal F}
\newcommand{\IT}{\mathbb T}
\newcommand{\IC}{\mathbb C}
\newcommand{\IQ}{\mathbb Q}
\newcommand{\add}{\mathrm{add}}
\newcommand{\cof}{\mathrm{cof}}
\newcommand{\non}{\mathrm{non}}
\newcommand{\I}{\mathcal{I}}
\newcommand{\N}{\mathcal{N}}
\newcommand{\J}{\mathcal{J}}
\newcommand{\pr}{\mathrm{pr}}
\newcommand{\Ker}{\mathrm{Ker}}
\newcommand{\IZ}{\mathbb Z}
\newcommand{\pack}{\mathrm{pack}}
\newcommand{\Ipack}{{\mathcal I\mbox{-}\mathrm{pack}}}
\newcommand{\Ra}{\Rightarrow}
\newcommand{\HH}{\mathcal H}
\newcommand{\Hom}{\mathrm{Hom}}
\newcommand{\K}{\mathcal K}
\newcommand{\Inf}{\mathrm{Inf}}
\newcommand{\Sup}{\mathrm{Sup}}
\newtheorem{theorem}{Theorem}[section]
\newtheorem{proposition}[theorem]{Proposition}
\newtheorem{lemma}[theorem]{Lemma}
\newtheorem{claim}[theorem]{Claim}
\newtheorem{corollary}[theorem]{Corollary}
\newtheorem{question}[theorem]{Question}
\newtheorem{example}[theorem]{Example}
\newtheorem{problem}[theorem]{Problem}
\theoremstyle{definition}
\newtheorem{remark}[theorem]{Remark}
\title{The Solecki submeasures and densities on groups}
\author{Taras Banakh}
\address{Ivan Franko National University of Lviv (Ukraine) and Jan Kochanowski University of Kielce (Poland)}
\email{t.o.banakh@gmail.com}
\subjclass{28C10; 22C05; 05D10; 28A12; 05E15; 20F24; 43A05; 43A07}
\keywords{Invariant submeasure, group, Solecki submeasure, Haar measure, compact topological group}
\thanks{The first author has been partially financed by NCN grant  DEC-2011/01/B/ST1/01439.}
\begin{document}
\begin{abstract}
By definition, the {\em right Solecki density} $\sigma^R$ (resp. the {\em Solecki submeasure} $\sigma$) on a group $G$ is the invariant monotone (subadditive) function assigning to each subset $A\subset G$ the real number
$\sigma^R(A)=\inf_{F\in[G]^{<\w}}\sup_{y\in G}\frac{|F\cap Ay|}{|F|}$ (resp. $\sigma(A)=\inf_{F\in[G]^{<\w}}\sup_{x,y\in G}\frac{|F\cap xAy|}{|F|}$). In this paper we study the properties of the Solecki submeasures and Solecki densities on (topological) groups and
establish an interplay between the Solecki submeasure $\sigma$ and the Haar measure $\lambda$ on a compact topological group $G$.  In particular, we prove that that every subset $A\subset G$ has $\max\{\lambda_*(A),\lambda(A^\bullet)\}\le\sigma(A)\le\lambda(\bar A)$ where $A^\bullet$ is the largest open set in $G$ such that $A^\bullet\setminus A$ is meager in $G$. So, $\lambda$ and $\sigma$ coincide on the family of all closed subsets of $G$ and hence the Haar measure $\lambda$ is completely determined by the Solecki submeasure $\sigma$. On the other hand, for any amenable group $G$ the right Solecki density $\sigma^R$ coincides with the upper Banach density $d^*$ well-known in Combinatorics of Groups. The right Solecki density yields a convenient tool for studying the difference sets $AA^{-1}$ and sumsets $AB$ of subsets $A,B$ in groups. Generalizing results of Jin, Beiglb\"ock, Bergelson and Fish, for any subsets $A,B\subset G$ of positive right Solecki density $\sigma^R(A)$ and $\sigma^R(B)$ in an amenable group $G$ we prove that (1) $G=FAA^{-1}$ for some set $F\subset G$ of cardinality $|F|\le 1/\sigma^R(A)$,
(2) the sets $AA^{-1}BB^{-1}$ and $ABB^{-1}A^{-1}$ contain some Bohr open subset $U\ni 1_G$ of $G$,
(3) $B^{-1}AA^{-1}$ contains some non-empty Bohr open set $U$ in $G$, (4) $AA^{-1}\supset U\setminus N$ for some Bohr open set $U\ni 1_G$ in $G$ and some set $N\subset G$ with $\sigma^R(N)=0$, (5) $AB\supset U\cap T$ for some non-empty Bohr open set $U$ in $G$ and some set $T\subset G$ with $\sigma^R(T)=1$.
\end{abstract}
\maketitle
\tableofcontents

\newpage
\section*{Introduction}

In this paper we consider invariant densities and submeasures on groups and define a canonical invariant submeasure $\sigma$ (called the Solecki submeasure) on each group $G$, and four canonical invariant densities $\sigma_L$, $\sigma^L$, $\sigma^R$, $\sigma_R$ (called the Solecki densities) on $G$. Then we shall study the properties of the Solecki submeasure and densities on (topological) groups, and establish the interplay between the Solecki submeasure $\sigma$ and the Haar measure $\lambda$ on a compact topological group and also the interplay between the right Solecki density and the upper Banach density on an amenable group. The obtained results allow us to generalize some fundamental results of
Bogoliuboff, F\o lner \cite{Folner}, Cotlar and Ricabarra \cite{CR}, Ellis and Keynes \cite{EK} concerning the difference sets $AA^{-1}$ and Jin \cite{Jin}, Beiglb\"ock, Bergelson and Fish \cite{BBF} about sumsets $AB$ to the class of all amenable groups.

\section{Submeasures and densities on sets and groups}

A function $\mu:\mathcal P(X)\to[0,1]$ defined on the algebra of all subsets of a set $X$ is called
\begin{itemize}
\item {\em monotone} if $\mu(A)\le \mu(B)$ for any subsets $A\subset B\subset X$;
\item {\em subadditive} if $\mu(A\cup B)\le\mu(A)+\mu(B)$ for any subsets $A,B\subset X$;
\item {\em additive} if $\mu(A\cup B)=\mu(A)+\mu(B)$ for any disjoint subsets $A,B\subset X$;
\item a {\em density} if $\mu$ is monotone, $\mu(\emptyset)=0$ and $\mu(X)=1$;
\item a {\em submeasure} if $\mu$ is a subadditive density;
\item a {\em measure} if $\mu$ is an additive density.
\end{itemize}
So, all measures considered in the paper are in fact finitely additive probalility measures.

Each point $x\in X$ supports the {\em Dirac measure} $\delta_x$ defined by
$$\delta_x(A)=\begin{cases}
1,&\mbox{$x\in A$},\\
0,&\mbox{$x\notin A$}.
\end{cases}
$$

A submeasure $\mu$ on a set $X$ is {\em finitely supported\/} if $\mu(X\setminus F)=0$ for a suitable finite set $F\subset X$. It is well-known that each finitely supported probability measure $\mu$ on $X$ can be written as a convex combination $\mu=\sum_{i=1}^n\alpha_i\delta_{x_i}$ of Dirac measures. A finitely supported measure $\mu$ is called {\em uniformly distributed} if $\mu=\frac1{|F|}\sum_{x\in F}\delta_x$ for some non-empty finite subset $F\subset X$.

For a set $X$ we denote by  $[X]^{<\w}$ the family of all non-empty finite subsets of $X$, by $P(X)$ the set of all measures on $X$, by $P_\w(X)$ the subset of $P(X)$ consisting of all finitely supported  measures on $X$, and by $P_u(X)$ the set of all uniformly distributed finitely supported measures on $X$. The letter $P$ in those notations comes from the fact that all measures we consider are probability measures, i.e., assign measure 1 to $X$.

For each function $f:X\to Y$ and a density $\mu$ on $X$ we can define its image $f(\mu)$ as the density on $Y$ assigning to each subset $A\subset Y$ the real number $\mu(f^{-1}(A))$.

For a set $X$ by $|X|$ we denote its cardinality and for two sets $A,B$ by $A\triangle B$ their symmetric difference $(A\setminus B)\cup(B\setminus A)$.
For a group $G$ by $1_G$ we shall denote its unit.

For two finitely supported measures $\mu=\sum_i\alpha_i\delta_{a_i}$ and $\nu=\sum_j\beta_j\delta_{b_j}$ on a group $G$ their convolution $\mu*\nu$ is defined as $\mu*\nu=\sum_{i,j}\alpha_i\beta_j\delta_{a_ib_j}$. The convolution of two measures can be also defined if one of the measures is finitely supported while the other is not. Namely, for a measure $\mu\in P(G)$ and a finitely supported measure $\nu=\sum_{i}\alpha_i\delta_{a_i}\in P_\w(G)$ on a group $G$ their  convolutions $\mu*\nu$ and $\nu*\mu$ are the measures on $G$ defined by the formulas
$$\mu*\nu(A)=\sum_{i}\alpha_i\mu(Aa_i^{-1})\mbox{ and }\nu*\mu(A)=\sum_{i}\alpha_i\mu(a_i^{-1}A)\mbox{ for $A\subset G$}.$$
The operation of convolution is associative in the sense that $(\mu*\nu)*\eta=\mu*(\nu*\eta)$ for any measures $\mu,\nu,\eta\in P(G)$ among which at least two are finitely supported.

A density $\mu:\mathcal P(G)\to[0,1]$ on a group $G$ is called
\begin{itemize}
\item {\em left} (resp. {\em right}) {\em invariant} if $\mu(xA)=\mu(A)$ \textup{(}resp. $\mu(Ax)=\mu(A)$\textup{)} for all $A\subset G$ and $x\in G$;
\item {\em invariant} if $\mu(xAy)=\mu(A)$ for all $A\subset G$ and $x,y\in G$;
\item {\em inversion invariant} if $\mu$ is invariant and $\mu(A^{-1})=\mu(A)$ for all $A\subset G$.
\end{itemize}

A group $G$ is called {\em amenable} if it admits a left-invariant measure $\mu:\mathcal P(G)\to[0,1]$.
By \cite{Namioka}, a group $G$ is amenable if and only if it admits an inversely invariant measure.
The class of amenable groups contains all abelian groups and is closed under many operations over groups (see \cite{Pat}). On the other hand, the free group with two generators is not amenable. By the F\o lner criterion \cite[4.10]{Pat}, a group $G$ is amenable if and only if for every finite set $F\subset G$ and every $\e>0$ there is a finite set $K\subset G$ such that $|FK\setminus K|<\e|K|$.

It is well-known that the class of amenable group includes all FC-groups. A group $G$ is called an {\em $FC$-group} if each point $x\in G$ has finite conjugacy class $x^G=\{gxg^{-1}:g\in G\}$.
FC-groups were introduced by Baer \cite{Baer}. It is clear that each abelian group is an FC-group. By \cite{Neumann}, a finitely generated group $G$ is an FC-group if and only if $G$ is finite-by-abelian, i.e., $G$ contains a finite normal subgroup $H$ with abelian quotient $G/H$.

\section{The Solecki submeasure on a group}

Each group $G$ carries a canonical inversion invariant submeasure $\sigma:\mathcal P(G)\to[0,1]$ called the {\em Solecki submeasure}. It assigns to each subset $A\subset G$ the real number
$$\sigma(A)=\inf_{\mu\in P_\w(G)}\sup_{x,y\in G}\mu(xAy).$$
The Solecki submeasure was (implicitly) introduced by Solecki in \cite{Sol}.

\begin{proposition}\label{subad} For every group $G$ the Solecki submeasure $\sigma$ on a group $G$ is an inversion invariant submeasure on $G$.
\end{proposition}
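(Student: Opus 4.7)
The plan is to verify the six defining conditions in a natural order, isolating convolution as the one non-routine ingredient.

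First I would dispatch the easy items. The equalities $\sigma(\emptyset)=0$ and $\sigma(G)=1$ are immediate because $\mu(x\emptyset y)=0$ and $\mu(xGy)=\mu(G)=1$ for every $\mu\in P_\w(G)$ and every $x,y\in G$. Monotonicity follows from $xAy\subset xBy$ whenever $A\subset B$. Two-sided invariance is a direct substitution: for $g,h\in G$, the map $(x,y)\mapsto(xg,hy)$ is a bijection of $G\times G$, so $\sup_{x,y}\mu(x(gAh)y)=\sup_{x,y}\mu(xAy)$, and taking the infimum over $\mu$ yields $\sigma(gAh)=\sigma(A)$. For inversion invariance, given $\mu=\sum_i\alpha_i\delta_{a_i}\in P_\w(G)$ I would introduce $\check\mu=\sum_i\alpha_i\delta_{a_i^{-1}}\in P_\w(G)$, observe that $\mu(xA^{-1}y)=\mu((y^{-1}Ax^{-1})^{-1})=\check\mu(y^{-1}Ax^{-1})$, and note that $(x,y)\mapsto(y^{-1},x^{-1})$ bijects $G\times G$ and $\mu\mapsto\check\mu$ bijects $P_\w(G)$; these two facts together give $\sigma(A^{-1})=\sigma(A)$.

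The one genuine step is subadditivity, and the idea is to replace two near-optimal measures by a single one via convolution. Fix $\e>0$ and pick $\mu_1,\mu_2\in P_\w(G)$ with
\[
\sup_{x,y\in G}\mu_1(xAy)<\sigma(A)+\e\quad\text{and}\quad\sup_{x,y\in G}\mu_2(xBy)<\sigma(B)+\e,
\]
and set $\mu:=\mu_1*\mu_2\in P_\w(G)$. Writing $\mu_1=\sum_i\alpha_i\delta_{a_i}$ and $\mu_2=\sum_j\beta_j\delta_{b_j}$ and invoking the two formulas for convolution from the Introduction, for any $x,y\in G$,
\[
\mu(xAy)=\sum_j\beta_j\,\mu_1(xAyb_j^{-1})\le\sup_{x',y'}\mu_1(x'Ay'),\qquad
\mu(xBy)=\sum_i\alpha_i\,\mu_2(a_i^{-1}xBy)\le\sup_{x',y'}\mu_2(x'By'),
\]
since $\sum_j\beta_j=\sum_i\alpha_i=1$. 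Combining these via the elementary inequality $\mu(x(A\cup B)y)\le\mu(xAy)+\mu(xBy)$ gives $\sup_{x,y}\mu(x(A\cup B)y)<\sigma(A)+\sigma(B)+2\e$, hence $\sigma(A\cup B)\le\sigma(A)+\sigma(B)+2\e$; letting $\e\to 0$ finishes the argument.

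The main obstacle is precisely this last step: one must notice that convolution with a probability measure cannot increase the quantity $\sup_{x,y}\mu(xCy)$ for a fixed set $C$, and that by convolving near-optimal witnesses for $A$ and for $B$ one obtains a single witness good for both sides simultaneously. Everything else is a mechanical change of variables.
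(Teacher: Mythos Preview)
Your proof is correct and follows essentially the same route as the paper: the paper dispatches the easy items (monotonicity, $\sigma(\emptyset)=0$, $\sigma(G)=1$, invariance, inversion invariance) by simply saying they follow from the definition, and then proves subadditivity by exactly your convolution trick, choosing near-optimal $\mu_A,\mu_B$, setting $\mu=\mu_A*\mu_B$, and expanding to see that $\mu(xAy)<\sigma(A)+\e$ and $\mu(xBy)<\sigma(B)+\e$. Your treatment of the routine items is more explicit than the paper's, but the substantive argument is the same.
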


\begin{proof} The definition of the Solecki submeasure implies that $\sigma$ is inversion invariant, monotone, and takes the values $\sigma(\emptyset)=0$ and $\sigma(G)=1$. It remains to prove that $\sigma$ is subadditive, i.e., $\sigma(A\cup B)\le\sigma(A)+\sigma(B)$ for any subsets $A,B\subset G$.

This inequality will follow as soon as we check that $\sigma(A\cup B)\le\sigma(A)+\sigma(B)+2\e$ for any $\e>0$.
By the definition of $\sigma(A)$ and $\sigma(B)$, there are finitely supported measures $\mu_A=\sum_i\alpha_i\delta_{a_i}$ and $\mu_B=\sum_{j}\beta_j\delta_{b_j}$ on $G$ such that $\sup_{x,y\in G}\mu_A(xAy)<\sigma(A)+\e$ and $\sup_{x,y\in G}\mu_B(xBy)<\sigma(B)+\e$. Consider the convolution measure $\mu=\mu_A*\mu_B=\sum_{i,j}\alpha_i\beta_j\delta_{a_ib_j}$
and observe that for every $x,y\in G$ the set $xAy$ has measure
$$
\mu(xAy)=\sum_{i,j}\alpha_i\beta_j\delta_{a_ib_j}(xAy)=
\sum_{j}\beta_j\sum_{i}\alpha_i\delta_{a_i}(xAyb_j^{-1})=
\sum_{j}\beta_j\mu_A(xAyb_j^{-1})<\sum_{j}\beta_j(\sigma(A)+\e)=
\sigma(A)+\e.
$$
By analogy we can prove that $\mu(xBy)<\sigma(B)+\e$. Then
$$\sigma(A\cup B)\le \sup_{x,y\in G}\mu\big(x(A\cup B)y\big)\le\sup_{x,y\in G}\big(\mu(xAy)+\mu(xBy)\big)< \sigma(A)+\sigma(B)+2\e.$$
\end{proof}

In Theorem 1.2 of \cite{Sol} Solecki proved that the Solecki submeasure can be equivalently defined using finite sets (instead of finitely supported probability measures).

\begin{theorem}[Solecki]\label{sol} Every subset $A$ of a group $G$ has Solecki submeasure
$$\sigma(A)=\inf_{\mu\in P_u(G)}\sup_{x,y\in G}\mu(xAy)=\inf_{F\in[G]^{<\w}}\sup_{x,y\in G}\frac{|F\cap xAy|}{|F|}.$$
\end{theorem}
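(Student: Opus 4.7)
The second equality in the theorem is just a rewriting of the definition: a uniformly distributed measure is of the form $\mu_F=\tfrac{1}{|F|}\sum_{g\in F}\delta_g$, and then $\mu_F(xAy)=|F\cap xAy|/|F|$, so the two infima coincide. The inequality $\sigma(A)\le\inf_{\mu\in P_u(G)}\sup_{x,y}\mu(xAy)$ is immediate from $P_u(G)\subseteq P_\w(G)$ and the definition of $\sigma(A)$.

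For the reverse inequality, fix $\e>0$ and $\mu=\sum_{i=1}^n\alpha_i\delta_{a_i}\in P_\w(G)$ with $\sup_{x,y}\mu(xAy)<\sigma(A)+\e$. The plan is to exhibit $F\in[G]^{<\w}$ with $\sup_{x,y}|F\cap xAy|/|F|<\sigma(A)+2\e$. The first step is a rational approximation: choose a large $N$ and integers $k_i\ge 0$ with $\sum_ik_i=N$ and $|\alpha_i-k_i/N|<\e/(2n)$, and replace $\mu$ by $\mu'=\sum_i(k_i/N)\delta_{a_i}$, which still satisfies $\sup\mu'(xAy)<\sigma(A)+\tfrac{3}{2}\e$. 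The measure $\mu'$ is the uniform measure on the multiset of size $N$ in which $a_i$ has multiplicity $k_i$, so the task reduces to realizing $\mu'$ as a uniform measure on an honest set.

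To effect this realization, introduce an auxiliary finite set $H\subset G$ of size $M$ a large multiple of $N$, chosen generically so that the $nM$ products $a_ih$ ($i\in[n]$, $h\in H$) are pairwise distinct in $G$; this is possible in any infinite group by an inductive choice of elements of $H$ avoiding finitely many forbidden differences $\{a_i^{-1}a_{i'}:i\neq i'\}$, while finite groups admit the trivial witness $F=G$ using the averaging identity $\sigma(A)=|A|/|G|$. Given a partition $\iota\colon H\to[n]$ with $|\iota^{-1}(i)|=k_iM/N$, set
$$
F_\iota=\{a_{\iota(h)}h:h\in H\}=\bigsqcup_{i=1}^na_i\iota^{-1}(i)\subset G,\qquad|F_\iota|=M.
$$
For a uniformly random such partition, each $h\in H$ lies in class $i$ with marginal probability $k_i/N$, and a direct computation shows
$$
\mathbb{E}\!\left[\frac{|F_\iota\cap xAy|}{M}\right]=\frac{1}{M}\sum_{h\in H}\mu'(xAyh^{-1})\le\sup_{x',y'}\mu'(x'Ay')<\sigma(A)+\tfrac{3}{2}\e,
$$
the inequality using that each $xAyh^{-1}$ is itself a two-sided translate of $A$. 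This bound holds separately for every $(x,y)\in G\times G$.

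The main obstacle is upgrading this pointwise expectation bound to a uniform-in-$(x,y)$ bound for a \emph{single} deterministic partition. The key observation is that $|F_\iota\cap xAy|$ depends on $(x,y)$ only through the $n$-tuple of traces $(H\cap a_i^{-1}xAy)_{i=1}^n\in\mathcal P(H)^n$, which takes only finitely many values as $(x,y)$ varies over $G\times G$. Combining a concentration inequality for balanced random partitions (an Azuma-type martingale estimate, well-behaved thanks to the negative correlations between the values $\iota(h)$) with a union bound over these finitely many types then yields a deterministic $\iota$ for which $\sup_{x,y}|F_\iota\cap xAy|/M<\sigma(A)+2\e$, provided $M$ is taken large enough relative to $n$ and $\e$. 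The combinatorial book-keeping needed to make this union bound efficient is the technical heart of the proof and follows Solecki's original approach in \cite[Theorem~1.2]{Sol}.
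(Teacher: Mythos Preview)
The paper does not give its own proof of this theorem; it simply attributes the result to Solecki and cites \cite[Theorem~1.2]{Sol}. So there is no in-paper argument to compare against---both you and the paper ultimately defer to the original source.

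That said, your sketch has a genuine gap at the concentration/union-bound step. You correctly observe that $|F_\iota\cap xAy|$ depends on $(x,y)$ only through the tuple $(H\cap a_i^{-1}xAy)_{i=1}^n\in\mathcal P(H)^n$, and that this takes only finitely many values. But ``finitely many'' is not enough for a union bound: a priori there can be up to $2^{nM}$ such tuples, while an Azuma-type estimate for a balanced random partition of an $M$-point set gives failure probability at best $\exp(-c\e^2 M)$ per tuple (with $c$ depending on $n$). The product $2^{nM}\cdot\exp(-c\e^2 M)$ does not tend to zero as $M\to\infty$ for fixed $\e$ and $n$, so the argument as written does not close. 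You flag this yourself by calling the ``combinatorial book-keeping'' the technical heart and referring back to \cite{Sol}---but that means your write-up is a proof sketch with the hard step outsourced, not a self-contained proof. Making the probabilistic route work would require either a much sharper bound on the number of tuples that actually arise from shifts of $A$, or a different mechanism altogether; supplying that is precisely the content of Solecki's original argument.
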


Let us observe that the Solecki submeasure $\sigma$ can be also equivalently defined using convolutions of measures.

\begin{proposition}
Every subset $A$ of a group $G$ has Solecki submeasure
 $$\sigma(A)=\inf_{\mu\in P_\w(G)}\sup_{x,y\in G}\delta_x*\mu*\delta_y(A)=\inf_{\mu_1\in P_\w(G)}\sup_{\mu_2\in P_\w(G)}\sup_{\mu_3\in P_\w(G)}\mu_2*\mu_1*\mu_3(A).$$
 \end{proposition}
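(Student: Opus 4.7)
The plan is to handle the two equalities separately, both by direct manipulation of the convolution formulas recorded earlier in the paper.

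For the first equality I would unfold the convolution. Using $\nu*\mu(A)=\sum_i\alpha_i\mu(a_i^{-1}A)$ with $\nu=\delta_x$ and $\mu*\nu(A)=\sum_i\alpha_i\mu(Aa_i^{-1})$ with $\nu=\delta_y$ (together with associativity of convolution, which applies because at least two of the three measures are finitely supported), one immediately obtains $\delta_x*\mu*\delta_y(A)=\mu(x^{-1}Ay^{-1})$ for every $\mu\in P_\w(G)$, every $x,y\in G$ and every $A\subset G$. Since inversion is a bijection of $G$,
\[
\sup_{x,y\in G}\delta_x*\mu*\delta_y(A)=\sup_{x,y\in G}\mu(x^{-1}Ay^{-1})=\sup_{x,y\in G}\mu(xAy),
\]
and taking the infimum over $\mu\in P_\w(G)$ recovers the definition of $\sigma(A)$.

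For the second equality I would prove the two inequalities separately for each fixed $\mu_1\in P_\w(G)$ and then take the infimum. The bound $\sup_{\mu_2,\mu_3\in P_\w(G)}\mu_2*\mu_1*\mu_3(A)\ge\sup_{x,y\in G}\delta_x*\mu_1*\delta_y(A)$ is immediate, by specialising $\mu_2=\delta_x$ and $\mu_3=\delta_y$. For the reverse inequality, writing $\mu_2=\sum_i\alpha_i\delta_{a_i}$ and $\mu_3=\sum_j\gamma_j\delta_{c_j}$ and applying associativity gives
\[
\mu_2*\mu_1*\mu_3(A)=\sum_{i,j}\alpha_i\gamma_j\,\mu_1(a_i^{-1}Ac_j^{-1})\le\sup_{x,y\in G}\mu_1(xAy),
\]
because the left-hand side is a convex combination of values each bounded by the supremum. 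Passing to the supremum over $\mu_2,\mu_3\in P_\w(G)$ on the left and then to the infimum over $\mu_1\in P_\w(G)$ on both sides yields the second equality.

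No genuine obstacle is anticipated: the proposition is essentially bookkeeping, expressing the fact that convolving on the left (resp.\ right) by a finitely supported probability measure averages left (resp.\ right) translates, and a convex combination of translated values cannot exceed their supremum. The only minor point to keep straight is the inversion $x\leftrightarrow x^{-1}$ that appears when converting two-sided translations $xAy$ into convolutions with Dirac masses.
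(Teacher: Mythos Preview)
Your proof is correct. The paper states this proposition without proof, treating it as an immediate consequence of the convolution formulas and the definition of $\sigma$; your argument supplies exactly that routine verification, so there is nothing to compare.
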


The Solecki submeasure is preserved by homomorphisms. The following proposition can be easily derived from the definition of the Solecki submeasure.

\begin{proposition}\label{p2.3} For any surjective homomorphism $h:G\to H$ between groups and any set $A\subset H$ we get $\sigma(h^{-1}(A))=\sigma(A)$.
\end{proposition}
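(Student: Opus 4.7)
The plan is to establish the two inequalities $\sigma(h^{-1}(A))\le\sigma(A)$ and $\sigma(A)\le\sigma(h^{-1}(A))$ separately, exploiting the two equivalent formulations of the Solecki submeasure: the original one via $P_\w$ and Solecki's finite-set form from Theorem~\ref{sol}. The surjectivity of $h$ enters through the basic identity $h^{-1}(xAy)=x'h^{-1}(A)y'$ valid for any $x',y'\in G$ with $h(x')=x$ and $h(y')=y$; this is immediate from $h$ being a homomorphism, the inclusion $\supseteq$ being trivial and $\subseteq$ following because $h(x'^{-1}gy'^{-1})\in A$ whenever $h(g)\in xAy$.

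For the inequality $\sigma(A)\le\sigma(h^{-1}(A))$ I would push measures forward. Given $\mu\in P_\w(G)$ with $\sup_{x',y'\in G}\mu(x'h^{-1}(A)y')<\sigma(h^{-1}(A))+\e$, consider the image measure $h(\mu)\in P_\w(H)$. The identity above yields
$$h(\mu)(xAy)=\mu(h^{-1}(xAy))=\mu(x'h^{-1}(A)y')$$
for any preimages $x',y'$, so $\sup_{x,y\in H}h(\mu)(xAy)=\sup_{x',y'\in G}\mu(x'h^{-1}(A)y')$ (using surjectivity of $h$ so that every pair $(x,y)\in H\times H$ has preimages). Taking infima gives the desired inequality.

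For the reverse inequality $\sigma(h^{-1}(A))\le\sigma(A)$ I would apply Theorem~\ref{sol} on $H$: given a finite set $F\subset H$ with $\sup_{x,y\in H}|F\cap xAy|/|F|<\sigma(A)+\e$, lift it to a finite set $F'\subset G$ by selecting one preimage in $h^{-1}(\{f\})$ for each $f\in F$, so that $|F'|=|F|$ and $h$ is a bijection between $F'$ and $F$. For $x,y\in G$ the injectivity of $h|_{F'}$ gives
$$|F'\cap xh^{-1}(A)y|=|h(F'\cap xh^{-1}(A)y)|\le|F\cap h(x)Ah(y)|,$$
so dividing by $|F'|=|F|$ and taking the supremum in $x,y\in G$ yields a bound by $\sigma(A)+\e$. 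Applying Theorem~\ref{sol} on $G$ to $h^{-1}(A)$ and letting $\e\to 0$ completes the proof.

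There is no substantive obstacle: the argument is a direct unwinding of the definitions, and the only genuine ingredient is the compatibility $h^{-1}(xAy)=x'h^{-1}(A)y'$ afforded by $h$ being a surjective homomorphism. The mildest care is in checking that the suprema over $G\times G$ and $H\times H$ really do coincide after push-forward, which is where surjectivity is actually used.
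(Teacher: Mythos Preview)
Your proof is correct and follows precisely the route the paper has in mind: the paper offers no explicit argument, merely noting that the proposition ``can be easily derived from the definition of the Solecki submeasure,'' and your two-inequality argument via push-forward of measures and lifting of finite sets is exactly such a derivation. The only minor remark is that you could have used either the $P_\w$-formulation or the finite-set formulation from Theorem~\ref{sol} for \emph{both} directions, but mixing them as you did works just as well.
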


\section{Left and right Solecki densities on a group}

In this section we introduce and study four left and right modifications of the Solecki submeasure, called the {\em Solecki densities}.

For a subset $A$ of a group $G$ the Solecki densities are defined by the formulas:
$$
\begin{aligned}
&\sigma^L(A)=\inf_{F\in[G]^{<\w}}\sup_{x\in G}\frac{|F\cap xA|}{|F|},&
&\sigma^R(A)=\inf_{F\in[G]^{<\w}}\sup_{y\in G}\frac{|F\cap Ay|}{|F|},&\\
&\sigma_L(A)=\inf_{\mu\in P_\w(G)}\sup_{x\in G}\mu(xA),&
&\sigma_R(A)=\inf_{\mu\in P_\w(G)}\sup_{y\in G}\mu(Ay).&
\end{aligned}
$$
It is clear that $\sigma_L\le\sigma^L\le\sigma$ and $\sigma_R\le\sigma^R\le\sigma$. Like the Solecki submeasure $\sigma$, the densities $\sigma_L,\sigma^L,\sigma_R,\sigma^R$ are invariant. In general, they  are not inversely invariant, but
$$\sigma^R(A^{-1})=\sigma^L(A)\mbox{ \  and \ }\sigma_R(A^{-1})=\sigma_L(A)$$for every subset $A\subset G$. If a subset $A\subset G$ is {\em inner invariant} (i.e., $xAx^{-1}=A$ for all $x\in G$), then all its Solecki densities coincide:
$$\sigma_L(A)=\sigma^L(A)=\sigma(A)=\sigma^R(A)=\sigma_R(A).$$

Because of the equalities $\sigma_L(A)=\sigma_R(A^{-1})$ and $\sigma^L(A)=\sigma^R(A^{-1})$, the study of the left densities $\sigma_L$ and $\sigma^L$ can be reduced to their right counterparts $\sigma_R$ and $\sigma^R$. So in the sequel we shall restrict ourselves to the right Solecki densities $\sigma_R$ and $\sigma^R$.

The following theorem was proved by Solecki in Theorems~1.1, 1.3, 5.1 \cite{Sol}.

\begin{theorem}[Solecki]\label{t-FC} Let $G$ be a group.
\begin{enumerate}
\item If $G$ is amenable, then $\sigma_L=\sigma^L$ and $\sigma_R=\sigma^R$.
\item If $G$ is an FC-group, then $\sigma_L=\sigma^L=\sigma=\sigma^R=\sigma_R$.
\item If $G$ is not an FC-group, then $G$ contains a subset $A\subset G$ such that $\sigma^L(A)<\sigma^R(A)=\sigma(A)=1$;
\item If $G$ contains a non-abelian free subgroup, then for every $\e>0$ the group $G$ contains a subset $A\subset G$ such that $\sigma_R(A)<\e$ and $\sigma^R(A)>1-\e$;
\item If $G$ is countable and contains a non-abelian free subgroup, then for every $\e>0$ the group $G$ contains a subset $A\subset G$ such that $\sigma_R(A)=0$ and $\sigma^R(A)>1-\e$.
\end{enumerate}
\end{theorem}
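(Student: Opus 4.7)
The plan is to prove the five parts in order: (1) and (2) are positive structural results via F\o lner-type approximation and conjugation-averaging, while (3)--(5) are explicit constructions witnessing the gap without the FC or amenability hypothesis.

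For (1), I would show $\sigma^R(A)\le \sigma_R(A)+3\varepsilon$ for each $\varepsilon>0$; the reverse inequality holds by definition. Given a witness $\mu=\sum_{i=1}^n\alpha_i\delta_{a_i}$ with $\sup_y\mu(Ay)<\sigma_R(A)+\varepsilon$, first rationalize the weights to $\alpha_i=p_i/N$, so that $\mu$ becomes (up to $\varepsilon$) a uniform measure on a multiset of $N$ points. To convert this multiset into a genuine finite set $F$ as required by $\sigma^R$, use the F\o lner criterion to pick a finite $K\subset G$ with $|a_iK\triangle K|<(\varepsilon/n)|K|$ for $i=1,\dots,n$, and form $F$ by placing $p_i$ disjoint left translates of $a_iK$ at far-apart positions of $G$. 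A direct count then shows that $|F\cap Ay|/|F|$ differs from $\mu(Ay)$ by at most $2\varepsilon$ uniformly in $y$, giving the claim.

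For (2), every FC-group is amenable (by Neumann's theorem, finitely generated FC-groups are virtually abelian, and amenability is preserved under directed unions), so part (1) already gives $\sigma_L=\sigma^L$ and $\sigma_R=\sigma^R$. It remains to prove $\sigma(A)\le\sigma^R(A)$, since $\sigma^R\le\sigma$ is immediate. The plan is to replace an optimal right-witness $\mu=\sum_i\alpha_i\delta_{a_i}$ by a conjugation-invariant variant. Since each conjugacy class $a_i^G$ is finite, $C:=\bigcap_i C_G(a_i)$ has finite index with a finite transversal $R\subset G$. Define
\[
\tilde\mu:=\frac{1}{|R|}\sum_{g\in R}\delta_g*\mu*\delta_{g^{-1}},
\]
which is conjugation-invariant on $G$ (independence of the transversal follows from $c\mu c^{-1}=\mu$ for $c\in C$). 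Conjugation-invariance yields $\tilde\mu(xAy)=\tilde\mu(x^{-1}\cdot xAy\cdot x)=\tilde\mu(Ayx)$, so $\sup_{x,y}\tilde\mu(xAy)=\sup_y\tilde\mu(Ay)$. The core obstacle is the inequality $\sup_y\tilde\mu(Ay)\le\sup_y\mu(Ay)$: expanding $\tilde\mu(Ay)=\frac{1}{|R|}\sum_g\mu(g^{-1}Ayg)$, one sees that $g^{-1}Ayg$ is a conjugated right translate rather than a right translate of $A$ itself. I would attack this by first preprocessing $\mu$ so that its support is already conjugation-closed (replace each $\delta_{a_i}$ with $\frac{1}{|a_i^G|}\sum_{b\in a_i^G}\delta_b$), a step which in explicit small FC-examples does not inflate $\sup_y\mu(Ay)$; proving this monotonicity in general is the crux of the theorem.

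For (3), pick $x_0\in G$ with infinite conjugacy class and let $A\subset G$ be an appropriate thickening of $x_0^G$. The infinitude of $x_0^G$ forces every finite $F$ to meet some right translate $Ay$ in a fraction approaching $1$, so $\sigma^R(A)=\sigma(A)=1$, while left translates $xA$ scatter and may be avoided by some $x$, yielding $\sigma^L(A)<1$. For (4) and (5), embed a free subgroup $F_2=\langle a,b\rangle\le G$ and exploit its Cayley tree structure: define $A$ via first-letter restrictions on reduced words so that right translates cover large fractions of finite balls (giving $\sigma^R(A)>1-\varepsilon$), while a measure concentrated on long alternating words disperses right translates of $A$ (giving $\sigma_R(A)<\varepsilon$). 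Part (5) adds, in the countable setting, a diagonal enumeration $\{F_n\}$ of the finite subsets of $G$; a nested construction of $A$ annihilates every $F_n$ simultaneously, driving $\sigma_R(A)$ to $0$. The combinatorial work is to exhibit one $A$ realizing both extremes, which exploits the paradoxicality of $F_2$; but the main overall obstacle remains the averaging inequality in~(2).
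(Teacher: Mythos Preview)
The paper does not give a proof of this theorem; it attributes the result to Solecki \cite{Sol} (Theorems~1.1, 1.3, 5.1 there) and uses it as a black box. So there is nothing in the paper to compare against, and I assess your argument on its own terms.

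\textbf{Part (1).} Your construction has a gap. If you form $F=\bigsqcup_{i,j} g_{i,j}a_iK$ with the $g_{i,j}$ chosen to make the pieces disjoint, then $|g_{i,j}a_iK\cap Ay|=|a_iK\cap g_{i,j}^{-1}Ay|$ involves the \emph{left} shift $g_{i,j}^{-1}A$, and this quantity bears no controlled relation to $\mu(Ay)=\sum_i\alpha_i[a_i\in Ay]$; your ``direct count'' does not go through. The clean fix is much simpler: take $F=K$ itself. Since $|a_iK\triangle K|<\varepsilon|K|$ for each $i$, the convolution $\mu*\lambda_K$ (with $\lambda_K$ uniform on $K$) satisfies $\|\mu*\lambda_K-\lambda_K\|_1<\varepsilon$, hence $\lambda_K(Ay)\le(\mu*\lambda_K)(Ay)+\varepsilon$. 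But $(\mu*\lambda_K)(Ay)=\frac{1}{|K|}\sum_{k\in K}\mu(Ayk^{-1})\le\sup_z\mu(Az)$, so $\sigma^R(A)\le\sup_y\lambda_K(Ay)\le\sup_z\mu(Az)+\varepsilon$. No disjoint copies or ``far-apart positions'' are needed.

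\textbf{Part (2).} You correctly identify the obstacle and then do not overcome it. Your averaged measure satisfies
\[
\tilde\mu(Az)=\frac{1}{|R|}\sum_{g\in R}\mu\big((g^{-1}Ag)\cdot g^{-1}zg\big),
\]
so bounding $\sup_z\tilde\mu(Az)$ by $\sup_z\mu(Az)$ would require $\sup_{z'}\mu((g^{-1}Ag)z')\le\sup_{z'}\mu(Az')$ for each $g$, which is false in general: the left side is governed by the conjugate $g^{-1}Ag$, not by $A$. Preprocessing the support of $\mu$ to be conjugation-closed does not help, because the problematic conjugation is acting on $A$, not on $\operatorname{supp}(\mu)$. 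Solecki's argument is not this kind of direct averaging, and your proposal contains no substitute idea.

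\textbf{Parts (3)--(5).} These are only programmatic sketches. For (3) you give no construction beyond ``an appropriate thickening of $x_0^G$''; for (4)--(5) the phrases ``first-letter restrictions'' and ``a nested construction'' do not specify a set, and realizing $\sigma_R(A)<\varepsilon$ and $\sigma^R(A)>1-\varepsilon$ simultaneously requires an explicit construction you have not supplied. As written, no part of the theorem is actually proved.
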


Unlike the Solecki submeasure $\sigma$ its modifications $\sigma_L$, $\sigma^L$, $\sigma_R$, $\sigma^R$ are not subadditive in general.

\begin{example}\label{not-ideal} The free group $F_2$ with two generators can be written as the union $F_2=A\cup B$ of two sets with $\sigma^R(A)=\sigma^R(B)=0$.
\end{example}

\begin{proof} Let $a,b$ be the generators of the free group $G=F_2$. Elements of the group $G$ can be written as irreducible words in the alphabet $\{a,b,a^{-1},b^{-1}\}$. The empty word $e$ is the unit of the group $G$. Let $A$ be the set of all irreducible words that start with $a$ or $a^{-1}$.
We claim that $\sigma^R(A)=0$. To show this, for every $n\in\IN$ consider the finite subset $F=\{b,b^2,\dots,b^n\}$ and observe that $|Fy\cap A|\le 1$ for every $y\in G$, which implies that $\sigma^R(A)\le \sup_{y\in G}|Fy\cap A|/|F|\le 1/n$ and hence $\sigma^R(A)=0$. By analogy we can show that the set $B=G\setminus A$ of irreducible words which are empty or start with $b$ or $b^{-1}$ has right Solecki density $\sigma^R(B)=0$.
\end{proof}

Nonetheless, the function $\sigma_R$ has a property which is weaker than the subadditivity.

\begin{proposition}\label{semiadditive} Let $G$ be a group. Then
\begin{enumerate}
\item $\sigma_R(A\cup B)\le\sigma_R(A)+\sigma(B)$ for any subsets $A,B\subset G$;
\item $\sigma^R(AF)\le|F|\cdot\sigma^R(A)$ and $\sigma_R(AF)\le|F|\cdot\sigma_R(A)$ for any subset $A\subset G$ and finite subset $F\subset G$.
\end{enumerate}
\end{proposition}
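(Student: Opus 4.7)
The plan is to mimic the convolution technique used in the proof of Proposition \ref{subad}. For part (1), given $\e>0$, I would pick $\mu_A=\sum_i\alpha_i\delta_{a_i}\in P_\w(G)$ with $\sup_{y\in G}\mu_A(Ay)<\sigma_R(A)+\e$ (witnessing $\sigma_R(A)$) and $\mu_B=\sum_j\beta_j\delta_{b_j}\in P_\w(G)$ with $\sup_{x,y\in G}\mu_B(xBy)<\sigma(B)+\e$ (witnessing $\sigma(B)$). Take the convolution $\mu=\mu_A*\mu_B=\sum_{i,j}\alpha_i\beta_j\delta_{a_ib_j}$. Exactly the Dirac-unpacking done in Proposition \ref{subad} shows that for every $y\in G$
\[\mu(Ay)=\sum_j\beta_j\,\mu_A(Ayb_j^{-1})<\sigma_R(A)+\e\quad\text{and}\quad\mu(By)=\sum_i\alpha_i\,\mu_B(a_i^{-1}By)<\sigma(B)+\e,\]
where each $\mu_A(Ayb_j^{-1})$ is handled by the right-invariant bound for $\sigma_R(A)$ and each $\mu_B(a_i^{-1}By)$ by the two-sided bound for $\sigma(B)$. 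Then $\mu((A\cup B)y)\le\mu(Ay)+\mu(By)<\sigma_R(A)+\sigma(B)+2\e$, and taking $\sup_{y\in G}$ and letting $\e\to 0$ yields the claim.

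For part (2), I would use the trivial decomposition $AF=\bigcup_{f\in F}Af$ together with a union bound. Given $\e>0$, choose $E\in[G]^{<\w}$ with $\sup_{y\in G}|E\cap Ay|/|E|<\sigma^R(A)+\e$ (directly from the definition of $\sigma^R$). Since for each $y\in G$
\[\frac{|E\cap AFy|}{|E|}\le\sum_{f\in F}\frac{|E\cap Afy|}{|E|}\le|F|\bigl(\sigma^R(A)+\e\bigr),\]
one gets $\sigma^R(AF)\le|F|\sigma^R(A)+|F|\e$, and letting $\e\to 0$ finishes it. The inequality $\sigma_R(AF)\le|F|\sigma_R(A)$ is proved identically, replacing $E$ by a finitely supported measure $\mu\in P_\w(G)$ with $\sup_{y\in G}\mu(Ay)<\sigma_R(A)+\e$ and using $\mu(AFy)\le\sum_{f\in F}\mu(Afy)$.

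The only delicate point is the asymmetry in (1): convolving by $\mu_B$ on the right side of $\mu_A$ moves $A$ only by right translations $\cdot\,b_j^{-1}$, which is exactly what $\sigma_R$ controls, whereas the same convolution moves $B$ by left translations $a_i^{-1}\cdot$ on top of the right translation by $y$, and those mixed two-sided translates can only be controlled by the fully invariant submeasure $\sigma$. In particular, one cannot hope to replace $\sigma(B)$ by $\sigma_R(B)$ on the right-hand side of (1), as Example \ref{not-ideal} already shows that $\sigma_R$ itself fails to be subadditive.
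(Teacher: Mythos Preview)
Your proof is correct and follows essentially the same approach as the paper's: for (1) the paper merely says ``by analogy with Proposition~\ref{subad}'', which is exactly the convolution argument you spell out, and for (2) the paper uses the same union bound $\mu(AFz)\le\sum_{f\in F}\mu(Afz)$ with a witnessing measure. Your closing remark on the asymmetry in (1) is a nice addition not present in the paper.
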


\begin{proof} The first statement can be proved by analogy with Proposition~\ref{subad}.
\smallskip

To prove the second statement, fix any subset $A\subset G$ and a finite subset $F\subset G$.
Given arbitrary $\e>0$, by the definition of $\sigma^R(A)$, find a uniformly distributed finitely supported measure $\mu\in P_u(G)$ such that $\sup_{z\in G}\mu(Az)<\sigma_R(A)+\e$.
Then $$\sigma^R(AF)\le\sup_{z\in G}\mu(AFz)\le \sum_{y\in F}\mu(Ayz)<\sum_{y\in F}(\sigma_R(A)+\e)=|F|\cdot(\sigma^R(A)+\e).$$
Since $\e>0$ was arbitrary, this implies the required inequality  $\sigma^R(AF)\le|F|\cdot\sigma^R(A)$.

By analogy we can prove the inequality $\sigma_R(AF)\le|F|\cdot\sigma_R(A)$.
\end{proof}

The density $\sigma_R$ has a nice characterization in terms of Kelley's intersection number. Following Kelley \cite{Kelly} we define the {\em intersection number} $I(\mathcal B)$ of a family $\mathcal B$ of subsets of a set $X$ as $$I(\mathcal B)=\inf_{B_1,...,B_n\in\mathcal B}\sup_{x\in X}\frac1n\sum_{i=1}^n\chi_{B_i}(x).$$
Here by $\chi_B:X\to\{0,1\}$ denotes the characteristic function of a set $B\subset X$.

We recall that by $P(X)$ we denote the family of all measures on a set $X$ and by $P_\w(X)$ the  set of all finitely supported  measures on $X$. The following minimax theorem was inspired by a result of Zakrzewski \cite{Zak}.

\begin{theorem}\label{t3.3nn} For every subset $A$ of a group $G$ we get
$$\inf_{\mu_1\in P_\w(G)}\sup_{\mu_2\in P_\w(G)}\mu_1*\mu_2(A)=\sigma_R(A)=I(\{xA\}_{x\in G})=\sup_{\mu\in P(G)}\inf_{x\in G}\mu(xA)=\sup_{\mu_2\in P(G)}\inf_{\mu_1\in P_\w(G)}\mu_1*\mu_2(A).$$
\end{theorem}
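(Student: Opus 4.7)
For the first equality $\inf_{\mu_1}\sup_{\mu_2}\mu_1*\mu_2(A)=\sigma_R(A)$, I would note that for $\mu_2=\sum_j\beta_j\delta_{b_j}\in P_\w(G)$ the definition of convolution gives $\mu_1*\mu_2(A)=\sum_j\beta_j\mu_1(Ab_j^{-1})$, a convex combination of the values $\mu_1(Ab_j^{-1})$. Hence $\sup_{\mu_2\in P_\w(G)}\mu_1*\mu_2(A)$ is attained at a Dirac measure and equals $\sup_{y\in G}\mu_1(Ay)$; taking the infimum over $\mu_1\in P_\w(G)$ yields $\sigma_R(A)$ by definition. Symmetrically, for the final equality I would use $\mu_1*\mu_2(A)=\sum_i\alpha_i\mu_2(a_i^{-1}A)$ (again a convex combination in the weights of $\mu_1$) to obtain $\inf_{\mu_1\in P_\w(G)}\mu_1*\mu_2(A)=\inf_{a\in G}\mu_2(a^{-1}A)=\inf_{x\in G}\mu_2(xA)$; the supremum over $\mu_2\in P(G)$ then reduces to the Kelley identity in the middle of the theorem.

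For $\sigma_R(A)=I(\{xA\}_{x\in G})$, fix a finite tuple $(x_1,\dots,x_n)\in G^n$ and set $\nu=\tfrac{1}{n}\sum_{i=1}^n\delta_{x_i^{-1}}$. A direct count yields
$$
\frac{1}{n}\sum_{i=1}^n\chi_{x_iA}(z)=\frac{1}{n}|\{i:x_i^{-1}z\in A\}|=\nu(Az^{-1}),
$$
so $\sup_z\tfrac1n\sum_i\chi_{x_iA}(z)=\sup_y\nu(Ay)$ after the substitution $y=z^{-1}$. As the tuple ranges over $\bigcup_n G^n$, the measure $\nu$ runs through all rational-weight elements of $P_\w(G)$, a dense subfamily. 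Since $\mu\mapsto\sup_y\mu(Ay)$ is continuous in the weight coefficients on any fixed finite support (being a supremum of linear forms in the weights), the infimum over this dense subfamily coincides with $\inf_{\mu\in P_\w(G)}\sup_y\mu(Ay)=\sigma_R(A)$, and hence with $I(\{xA\}_{x\in G})$.

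The remaining equality $\sigma_R(A)=\sup_{\mu\in P(G)}\inf_{x\in G}\mu(xA)$ is precisely Kelley's intersection-number theorem from \cite{Kelly} applied to $\mathcal B=\{xA:x\in G\}$: the easy $\le$ direction follows by averaging a test measure against a minimizing tuple, while the reverse inequality is obtained by Hahn-Banach separation of the constant function $I(\mathcal B)$ from the convex hull of $\{\chi_{xA}:x\in G\}\subset\ell^\infty(G)$. This is the only substantive ingredient; the other reductions are formal consequences of the definitions of convolution and Kelley's intersection number, and thus the main obstacle is the invocation (or re-derivation) of Kelley's classical theorem.
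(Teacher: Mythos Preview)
Your proposal is correct and follows essentially the same route as the paper. Both reduce the outer equalities to the observation that a convolution against a finitely supported measure is a convex combination (so the extremum is attained at a Dirac), both identify $I(\{xA\}_{x\in G})$ with $\sigma_R(A)$ via the map $(x_1,\dots,x_n)\mapsto \tfrac1n\sum_i\delta_{x_i^{-1}}$ together with a rational-weight approximation (you phrase this as density plus continuity of $\mu\mapsto\sup_y\mu(Ay)$ on a fixed-support simplex; the paper writes out the common-denominator step explicitly), and both invoke Kelley's theorem from \cite{Kelly} for the equality $I(\mathcal B)=\sup_{\mu\in P(G)}\inf_{x}\mu(xA)$.
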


\begin{proof} It follows from the definition of $\sigma_R$ that $$\sigma_R(A)=\inf_{\mu\in P_\w(G)}\sup_{y\in G}\mu*\delta_y(A)=\inf_{\mu_1\in P_\w(G)}\sup_{\mu_2\in P_\w(G)}\mu_1*\mu_2(A).$$ To see that $\sigma_R(A)\le I(\{xA\}_{x\in G})$, it suffices to check that $\sigma_R(A)\le I(\{xA\}_{x\in G})+\e$ for every $\e>0$. By the definition of the intersection number, there is a sequence of points $x_1,\dots,x_{n}\in G$ such that $\frac1n\sup_{y\in G}\sum_{i=1}^n\chi_{x_iA}(y)<I(\{xA\}_{x\in G})+\e$.
Consider the finitely supported measure $\mu=\frac1n\sum_{i=1}^n\delta_{x^{-1}_i}$ and observe that for every $y\in G$
$$\mu(Ay)=\frac1n\sum_{i=1}^n\delta_{x^{-1}_i}(Ay)=\frac1n\sum_{i=1}^n\chi_{Ay}(x_i^{-1})=
\frac1n\sum_{i=1}^n\chi_{x_iA}(y^{-1})<I(\{xA\}_{x\in G})+\e$$ and hence $\sigma_R(A)\le\sup_{y\in G}\mu(Ay)\le I(\{xA\}_{x\in G})+\e$.
\smallskip

Next, we prove that $\sigma_R(A)=I(\{xA\}_{x\in G})$. In the opposite case,
$\sigma_R(A)<I(\{xA\}_{x\in G})-\e$ for some $\e>0$. By the definition of $\sigma_R(A)$, there exists a finitely supported probability measure $\mu$ on $G$ such that $\sup_{y\in G}\mu(Ay)<I(\{xA\}_{x\in G})-\e$.
The measure $\mu$ can be written as a convex combination of Dirac measures $\sum_{i=1}^k\alpha_i\delta_{y_i}$. Replacing each $\alpha_i$ by a near rational number, we can additionally assume that each $\alpha_i$ is a positive rational number. Moreover, we can assume that the numbers $\alpha_1,\dots,\alpha_k$ have a common denominator $n$. In this case the measure $\mu=\sum_{i=1}^k\alpha_i\delta_{y_i}$ can be written as $\mu=\sum_{i=1}^n\frac1n\delta_{x_i}$ for some points $x_1,\dots,x_n\in\{y_1,\dots,y_k\}$. Then
$$
\begin{aligned}
I(\{xA\}_{x\in G})&\le \frac1n\sup_{y\in G}\sum_{i=1}^n\chi_{x_i^{-1}A}(y)=\frac1n\sup_{y\in G}\sum_{i=1}^n\chi_{Ay^{-1}}(x_i)=\\
&=\frac1n\sup_{y\in G}\sum_{i=1}^n\delta_{x_i}(Ay^{-1})=\sup_{y\in G}\mu(Ay^{-1})<I(\{xA\}_{x\in G})-\e
\end{aligned}
$$is a desired contradiction proving the equality $\sigma_R(A)=I(\{xA\}_{x\in G})$.
\smallskip

The equality $I(\{xA\}_{x\in G})=\sup_{\mu\in P(G)}\inf_{x\in G}\mu(Ay)$ follows from Proposition 1 and Theorem 2 of \cite{Kelly}. So, $$\inf_{\mu\in P_\w(G)}\sup_{y\in G}\mu(Ay)=\sigma_R(A)=I(\{xA\}_{x\in G})=\sup_{\mu\in P(G)}\inf_{x\in G}\mu(xA).$$
\end{proof}

For a group $G$ by $P_l(G)$ we denote the subset of $P(G)$ consisting of all left-invariant  probability measures on $G$. Observe that a group $G$ is amenable if and only if $P_l(G)\ne\emptyset$.

\begin{theorem}\label{t3.4nn} If a group $G$ is amenable, then $$\sigma_R(A)=\sigma^R(A)=\sup_{\mu\in P_l(G)}\mu(A)$$for every subset $A\subset G$.
\end{theorem}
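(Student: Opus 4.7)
The equality $\sigma_R(A)=\sigma^R(A)$ for amenable $G$ is already given by Theorem~\ref{t-FC}(1), so the only remaining task is to establish $\sigma_R(A)=\sup_{\mu\in P_l(G)}\mu(A)$. The inequality ``$\ge$'' falls straight out of Theorem~\ref{t3.3nn}, which identifies $\sigma_R(A)$ with $\sup_{\nu\in P(G)}\inf_{x\in G}\nu(xA)$: any left-invariant $\mu\in P_l(G)$ makes $\mu(xA)=\mu(A)$ constant in $x$, whence $\mu(A)=\inf_{x\in G}\mu(xA)\le\sigma_R(A)$.

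For the reverse inequality my plan is to fix $\e>0$, use Theorem~\ref{t3.3nn} again to pick $\nu\in P(G)$ satisfying $\nu(xA)>\sigma_R(A)-\e$ for every $x\in G$, and then average $\nu$ against an invariant mean in order to convert this uniform lower bound into an actual left-invariant measure. Amenability of $G$ supplies a left-invariant mean $m\colon\ell^\infty(G)\to\IR$, and I would define
$$\tilde\nu(B)=m\bigl(x\mapsto\nu(x^{-1}B)\bigr),\qquad B\subset G.$$
Writing $\psi_B(x)=\nu(x^{-1}B)$, the identities $\psi_{B\cup C}=\psi_B+\psi_C$ for disjoint $B,C$ and $\psi_G\equiv1$ make it immediate from linearity of $m$ that $\tilde\nu\in P(G)$.

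The crux is the elementary identity $\psi_{hB}(x)=\nu(x^{-1}hB)=\nu((h^{-1}x)^{-1}B)=\psi_B(h^{-1}x)$, which identifies $\psi_{hB}$ as the left $h$-translate of $\psi_B$; left-invariance of $m$ then forces $\tilde\nu(hB)=\tilde\nu(B)$, so $\tilde\nu\in P_l(G)$. The uniform bound $\psi_A>\sigma_R(A)-\e$ together with monotonicity of $m$ yields $\tilde\nu(A)\ge\sigma_R(A)-\e$, and letting $\e\to0$ completes the proof. The only real pitfall is clerical: placing the inverse inside $\psi_B(x)=\nu(x^{-1}B)$ is exactly what converts \emph{left}-invariance of $m$ into \emph{left}-invariance of $\tilde\nu$; the opposite convention would only yield right-invariance.
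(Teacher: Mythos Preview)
Your proof is correct and follows essentially the same route as the paper's: both use Theorem~\ref{t3.3nn} for the easy inequality, then pick $\nu\in P(G)$ with $\inf_{x}\nu(xA)>\sigma_R(A)-\e$ and average it against an invariant mean to produce a left-invariant measure witnessing the reverse inequality. The only cosmetic difference is bookkeeping: the paper sets $\nu_B(x)=\nu(xB)$ and uses a mean invariant under the right action $fz(x)=f(xz)$, whereas you set $\psi_B(x)=\nu(x^{-1}B)$ and use a left-invariant mean; the two conventions are interchangeable.
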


\begin{proof} By Theorem~\ref{t-FC}, $\sigma^R(A)=\sigma_R(A)$.  Theorem~\ref{t3.3nn} implies that
$$\sup_{\mu\in P_l(G)}\mu(A)=\sup_{\mu\in P_l(G)}\inf_{x\in G}\mu(xA)\le\sup_{\mu\in P(G)}\inf_{x\in G}\mu(xA)\le \sigma_R(A).$$
To show that $\sigma_R(A)\le\sup_{\mu\in P_l(G)}\mu(A)$, take any $\e>0$ and using Theorem~\ref{t3.3nn}, find a measure $\nu\in P(G)$ such that $\sigma_R(A)-\e<\inf_{x\in G}\nu(xA)$. Now we shall modify the measure $\nu$ to a right-invariant measure $\tilde\nu$.

Let $l_\infty(G)$ be the Banach lattice of all bounded real-valued functions on the group $G$. Each real number $c\in\mathbb R$ will be identified with the constant function $G\to\{c\}\subset \mathbb R$. The set $l_\infty(G)$ is endowed with the right action $l_\infty\times G\to l_\infty$ of the group $G$. This action assigns to each pair $(f,z)\in l_\infty\times G$ the function $fz:G\to\mathbb R$ defined by $fz(x)=f(xz)$ for $x\in G$. By \cite{Pat}, the amenability of the group $G$ implies the existence of a $G$-invariant linear functional $a^*:l_\infty(G)\to\mathbb R$ with $\|a^*\|=1=a^*(1)$. This functional is monotone in the sense that $a^*(f)\le a^*(g)$ for any bounded functions $f\le g$ on $G$.

For each subset $B\subset G$ consider the function $\nu_B\in l_\infty$ defined by $\nu_B(x)=\nu(xB)$ for $x\in G$ and put $\tilde \nu(B)=a^*(\nu_B)$. It is standard to check that $\tilde\nu:\mathcal P(G)\to[0,1]$, $\tilde\nu:B\mapsto \tilde \nu(B)$, is a well-defined measure on $G$. To see that the measure $\tilde \nu$ is left-invariant, observe that for every $B\subset G$ and $y,x\in G$ we get $$\nu_{xB}(y)=\nu(yxB)=\nu_Bx(y),$$ which means that $\nu_{xB}=\nu_Bx$. The $G$-invariance of the functional $a^*$ guarantees that $a^*(\nu_Bx)=a^*(\nu_B)$ and hence $\tilde \nu(xB)=a^*(\nu_{xB})=a^*(\nu_Bx)=a^*(\nu_B)=\tilde\nu(B)$, which means that the measure $\tilde\nu$ is left-invariant. It follows from $\inf_{x\in G}\nu(xA)>\sigma_R(A)-\e$ that $\nu_A\ge \sigma_R(A)-\e$ and $\tilde \nu(A)=a^*(\nu_A)\ge \sigma_R(A)-\e$ by the monotonicity of the functional $a^*$. So, $\sigma_R(A)-\e\le \tilde\nu(A)\le\sup_{\mu\in P_l(G)}\mu(A)$. Since $\e>0$ was arbitrary, this implies $\sigma_R(A)\le\sup_{\mu\in P_l(G)}\mu(A)$. So, $\sigma^R(A)=\sigma_R(A)=\sup_{\mu\in P_l(G)}\mu(A)$.
\end{proof}

Theorems~\ref{t3.3nn} and \ref{t3.4nn} imply the following result due to Solecki \cite[\S7]{Sol}.

\begin{corollary}[Solecki]\label{solamenable} If $G$ is an amenable group, then the function is $\sigma^R=\sigma_R$ is subadditive.
\end{corollary}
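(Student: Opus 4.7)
The plan is to deduce subadditivity directly from the representation of $\sigma_R$ as a supremum of left-invariant measures provided by Theorem~\ref{t3.4nn}, using the fact that each individual $\mu \in P_l(G)$ is (finitely) additive.

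First I would invoke Theorem~\ref{t-FC}(1) (or equivalently the statement of Theorem~\ref{t3.4nn}) to reduce to working with $\sigma_R$, since on an amenable group $\sigma^R = \sigma_R$. Then I would apply Theorem~\ref{t3.4nn} to write
$$\sigma_R(C) = \sup_{\mu \in P_l(G)} \mu(C)$$
for every subset $C \subset G$.

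Next, for arbitrary $A, B \subset G$ and any $\mu \in P_l(G)$, the fact that $\mu$ is a probability measure (hence finitely additive and in particular subadditive) gives
$$\mu(A \cup B) \le \mu(A) + \mu(B) \le \sup_{\nu \in P_l(G)} \nu(A) + \sup_{\nu \in P_l(G)} \nu(B) = \sigma_R(A) + \sigma_R(B).$$
Taking the supremum of the left-hand side over $\mu \in P_l(G)$ and applying Theorem~\ref{t3.4nn} once more yields $\sigma_R(A \cup B) \le \sigma_R(A) + \sigma_R(B)$, which is the required subadditivity.

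There is essentially no obstacle here: all the work has been done in Theorem~\ref{t3.4nn}, which converts $\sigma_R$ from an inf--sup expression (for which subadditivity is not automatic, as Example~\ref{not-ideal} shows in the non-amenable case) into a sup over genuine additive measures, where subadditivity is immediate. The only point worth noting is that the supremum over $P_l(G)$ is well-defined precisely because $G$ is amenable, so $P_l(G) \ne \emptyset$; this is exactly the hypothesis we are using.
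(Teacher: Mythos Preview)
Your proof is correct and is essentially identical to the paper's own argument: the paper also invokes Theorem~\ref{t-FC}(1) for $\sigma^R=\sigma_R$ and then uses Theorem~\ref{t3.4nn} to write $\sigma_R(A\cup B)=\sup_{\mu\in P_l(G)}\mu(A\cup B)\le \sup_{\mu\in P_l(G)}(\mu(A)+\mu(B))\le \sigma_R(A)+\sigma_R(B)$.
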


\begin{proof} The equality $\sigma^R=\sigma_R$ follows from Theorem~\ref{t-FC}(1). To see that $\sigma_R$ is subadditive, take any subsets $A,B\subset G$ and apply Theorem~\ref{t3.4nn} to get:
$$\sigma_R(A\cup B)=\sup_{\mu\in P_l(G)}\mu(A\cup B)\le \sup_{\mu\in P_l(G)}(\mu(A)+\mu(B))\le\sup_{\mu\in P_l(G)}\mu(A)+\sup_{\mu\in P_l(G)}\mu(B)=\sigma_R(A)+\sigma_R(B).$$
\end{proof}

We define a group $G$ to be {\em Solecki amenable} if the function $\sigma_R$ is subadditive. By Corollary~\ref{solamenable}, each amenable group is Solecki amenable. It is not known if each Solecki amenable group is amenable (see \cite[\S7]{Sol}). Nonetheless the following characterization of amenability holds.

\begin{theorem} For a group $G$ the following conditions are equivalent:
\begin{enumerate}
\item $G$ is amenable;
\item the group $\IZ\times G$ is Solecki amenable;
\item for each $n\in\IN$ there is a finite group $F$ of cardinality $|F|\ge n$ such that the group $F\times G$ is Solecki amenable;
\item for each $n\in\IN$ there is a finite group $F$ of cardinality $|F|\ge n$ such that for any partition $F\times G=A\cup B$ of the group $F\times G$ we get $\sigma_R(A)+\sigma_R(B)\ge 1$.
\end{enumerate}
\end{theorem}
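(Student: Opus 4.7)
The plan is to prove the cycle $(1)\Ra(2)\Ra(3)\Ra(4)\Ra(1)$, in which only the last implication is substantive. For $(1)\Ra(2)$, direct products of amenable groups are amenable, so $\IZ\times G$ is amenable and hence Solecki amenable by Corollary~\ref{solamenable}. For $(2)\Ra(3)$, given $n$, I take $F=\IZ/n\IZ$ and the quotient homomorphism $h\colon\IZ\times G\to F\times G$; the proof of Proposition~\ref{p2.3} adapts verbatim to $\sigma_R$, giving $\sigma_R(h^{-1}(S))=\sigma_R(S)$ for every $S\subset F\times G$, which transports Solecki amenability of $\IZ\times G$ to $F\times G$. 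The implication $(3)\Ra(4)$ is immediate: applying Solecki subadditivity to $A\cup B=F\times G$ gives $1=\sigma_R(F\times G)\le\sigma_R(A)+\sigma_R(B)$.

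For the hard direction $(4)\Ra(1)$ the first step is the identity $\sigma_R^{F\times G}(F\times C)=\sigma_R^G(C)$, valid for any finite group $F$ and any $C\subset G$: for $\mu\in P_\w(F\times G)$ one has $\mu(F\times Cy)=(\pi_G\mu)(Cy)$, where $\pi_G$ is the projection to $G$, and $\pi_G\colon P_\w(F\times G)\to P_\w(G)$ is surjective. Applied to rectangular partitions of $F\times G$, condition~(4) translates into the partition condition $\sigma_R^G(A)+\sigma_R^G(G\setminus A)\ge 1$ for all $A\subset G$. Using the Kelley-type formula $\sigma_R^G(A)=\sup_{\mu\in P(G)}\inf_{x\in G}\mu(xA)$ from Theorem~\ref{t3.3nn} and the identity $\sigma_R^G(G\setminus A)=1-\inf_{\mu\in P(G)}\sup_x\mu(xA)$, this partition inequality is equivalent to the reverse minimax inequality $\sup_\mu\inf_x\mu(xA)\ge\inf_\mu\sup_x\mu(xA)$; combined with the usual minimax inequality $\le$, it forces equality for every $A\subset G$.

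To upgrade this minimax identity to amenability of $G$ I would invoke the full strength of~(4) through non-rectangular partitions. For each $n$ and the finite group $F_n$ provided by~(4), consider subsets $\hat A=\bigsqcup_{f\in F_n}\{f\}\times A_f\subset F_n\times G$ with complement $\hat B=\bigsqcup_f\{f\}\times(G\setminus A_f)$. By~(4) one has $\sigma_R(\hat A)+\sigma_R(\hat B)\ge 1$, which constrains the family $(A_f)_{f\in F_n}$. Choosing the $A_f$'s to encode an approximate F\o lner witness for a prescribed finite $S\subset G$ and $\varepsilon>0$, then passing to the limit $n\to\infty$ via a compactness/diagonal argument, should yield a finite set $K\subset G$ satisfying the F\o lner criterion $|SK\setminus K|<\varepsilon|K|$, and thereby a left-invariant mean on $G$.

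The main obstacle is this last step: on product test measures $\lambda_{F_n}\otimes\nu$ the bound $\sigma_R(\hat A)+\sigma_R(\hat B)\le 1$ is already tight, so the F\o lner encoding must be engineered so that the partition inequality can only be saturated by non-product measures, through which one extracts the F\o lner property. This will likely require a Hahn-Banach or Markov-Kakutani style argument that converts the minimax information extracted from~(4) into an honest left-invariant mean on $G$.
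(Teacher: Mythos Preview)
Your treatment of $(1)\Rightarrow(2)\Rightarrow(3)\Rightarrow(4)$ matches the paper's and is fine.

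The gap is in $(4)\Rightarrow(1)$. Your rectangular-partition step only recovers the inequality $\sigma_R^G(A)+\sigma_R^G(G\setminus A)\ge 1$ for all $A\subset G$, i.e.\ the two-piece case of Solecki amenability of $G$ itself. Whether that alone implies amenability is precisely the open problem the paper flags, so this cannot close the argument; nothing in your minimax reformulation changes that. Your remaining plan---``encode a F\o lner witness in the fibers $(A_f)_{f\in F_n}$ and pass to a limit''---is not a proof: you give no construction of the $A_f$, no reason the partition inequality forces a F\o lner condition, and no mechanism to extract a finite $K$. You yourself identify this as ``the main obstacle'' and defer to an unspecified Hahn--Banach/Markov--Kakutani step. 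That is the missing idea, not a technicality.

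The paper proves the contrapositive $\neg(1)\Rightarrow\neg(4)$ by a concrete construction. If $G$ is not amenable, Emerson's characterization produces $\mu,\eta\in P\subset l_1(G)$ with $\overline{\mu*P}\cap\overline{\eta*P}=\emptyset$; Hahn--Banach then gives $f\in l_\infty(G)$ with $\sup_{\nu}\mu*\nu(f)=c<C=\inf_{\nu}\eta*\nu(f)$. After normalizing $\|f\|_\infty\le\tfrac12$, choosing $m=|F|\ge 5/(C-c)$, and discretizing $\tfrac12+f$ to $g:G\to\{0,\tfrac1m,\dots,1\}$, one defines $A\subset F\times G$ so that the fiber over each $y\in G$ has exactly $m\cdot g(y)$ points. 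Testing $\sigma_R(A)$ with the product measure $\lambda_F*\tilde\mu$ and $\sigma_R(B)$ with $\lambda_F*\tilde\eta$ (where $\tilde\mu,\tilde\eta$ are finitely supported approximants) yields $\sigma_R(A)\le c+\tfrac2m$ and $\sigma_R(B)\le 1-C+\tfrac2m$, hence $\sigma_R(A)+\sigma_R(B)<1$. The separating functional $f$ is exactly what dictates the non-rectangular fiber sizes; the role of the large finite factor $F$ is to allow $g$ to approximate $f$ closely enough. Your sketch is groping toward a Hahn--Banach input, but you need Emerson's theorem to produce the two measures to separate, and you need the explicit ``fiber-size $=m\cdot g(y)$'' construction rather than an unspecified F\o lner encoding.
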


\begin{proof} The implication $(1)\Ra(2)$ follows from Corollary~\ref{solamenable} and the well-known fact that the product of two amenable groups is amenable. To see that $(2)\Ra(3)$ it suffices to observe that a quotient group of a Solecki amenable group is Solecki amenable. The implication $(3)\Ra(4)$ is trivial. So, it remains to prove that $(4)\Ra(1)$.

Assume that the group $G$ is not amenable. Consider the Banach space $l_1(G)$ of all real-valued functions $f$ on $G$ with $\sum_{x\in G}|f(x)|<\infty$. The Banach space $l_1(G)$ is endowed with the norm $\|f\|_1=\sum_{x\in G}|f(x)|$.
The dual Banach space $l_1(G)^*$ to $l_1(G)$ can be identified with the Banach space $l_\infty(G)$ of all bounded functions on $G$ endowed with the norm $\|f\|_\infty=\sup_{x\in G}|f(x)|$.

Consider the closed convex set $P=\{f\in l_1(G):f\ge 0,\;\|f\|_1=1\}$ in $l_1(G)$. Each function $f\in P$ can be identified with the probability measure $\sum_{x\in G}f(x)\delta_x$. Since $G$ is not amenable, Emerson's characterization of amenability \cite[1.7]{Emerson} yields two  measures $\mu,\eta\in P$ such that the convex sets $\mu*P=\{\mu*\nu:\nu\in P\}$ and $\eta*P=\{\eta*\nu:\nu\in P\}$ have disjoint closures in the Banach space $l_1(G)$. By the Hahn-Banach Theorem, the convex sets $\mu*P$ and $\eta*P$ can be separated by a linear functional $f\in l_1(G)^*=l_\infty(G)$ in the sense that
$$\sup_{\nu\in P}\mu*\nu(f)=c<C=\inf_{\nu\in P}\eta*\nu(f)$$for some real numbers $c<C$.
Multiplying $f$ by a suitable positive constant, we can assume that $\|f\|_\infty\le \frac12$. Let $n\in\IN$ be any number such that $n\ge \frac5{C-c}$ and let $F$ be a finite group of cardinality $m=|F|\ge n$. Choose two finitely supported measures $\tilde \mu,\tilde\eta\in P_\w(G)$ such that $\|\mu-\tilde\mu\|_1<\frac1{m}$ and $\|\eta-\tilde\eta\|<\frac1{m}$. Also choose a function $g:G\to[0,1]\cap\frac1{m}\IZ$ such that $\|g-(\frac12+f)\|<\frac1{m}$. Observe that
$$\sup_{\nu\in P}\tilde \mu*\nu(g)\le c+\frac2{m}<C-\frac2{m}\le\inf_{\nu\in P}\tilde\eta*\nu(g).$$
Take any subset $A\subset F\times G$ such that for each $y\in G$ the set $\{x\in F:(x,y)\in A\}$ has cardinality $m\cdot g(y)$. Put $B=(F\times G)\setminus A$. We claim that $\sigma_R(A)+\sigma_R(B)<1$.
Let $\lambda=\frac1m\sum_{x\in F}\delta_x$ be the Haar measure on the finite group $F$. Identifying $F$ and $G$ with the subgroups $F\times\{1_G\}$ and $\{1_F\}\times G$ of $F\times G$, we can consider the finitely supported measures $\lambda*\tilde\mu$ and $\lambda*\tilde\eta$ on the group $F\times G$. Write $\tilde\mu=\sum_{i}\alpha_i\delta_{y_i}$ and observe that
$$
\begin{aligned}
\sigma_R(A)&\le\sup_{(x,y)\in F\times G}\lambda*\tilde\mu(Axy)=\sup_{(x,y)\in F\times G}\sum_{i}\alpha_i\sum_{z\in F}\frac{1}m\delta_{zy_i}(Axy)=\\
&=\sup_{y\in G}\sup_{z\in F}\sum_i\alpha_i\frac{|\{z\in F:zy_i\in Axy\}|}m=\sup_{y\in G}\sup_{x\in F}\sum_i\alpha_i\frac{|\{z\in F:zx^{-1}y_iy^{-1}\in A\}|}m=\\
&=\sup_{y\in G}\sup_{z\in F}\sum_i\alpha_i g(y_iy^{-1})=\sup_{y\in G}\sum_i\alpha_i\delta_{y_i}*\delta_{y^{-1}}(g)=\sup_{y\in G}\tilde\mu*\delta_{y^{-1}}(g)\le \sup_{\nu\in P}\tilde\mu*\nu(g)\le c+\frac2m\, .
\end{aligned}
$$
By analogy we can prove that for the set $B=(F\times G)\setminus A$ we get
$$\sigma_R(B)\le \sup_{(x,y)\in F\times G}\lambda*\tilde\eta(Bxy)=\sup_{(x,y)\in F\times G}(1-\lambda*\tilde\eta(Axy))=1-\inf_{(x,y)\in F\times G}\lambda*\tilde\eta\le 1-(C-\frac2m).
$$
Then $$\sigma_R(A)+\sigma_R(B)\le c+\frac2m+1-C+\frac2m<1-(C-c)+\frac4m<1-\frac5m+\frac4m<1=\sigma_R(F\times G).$$witnessing that the condition $(4)$ does not hold.
\end{proof}

\section{The subadditivization of the Solecki densities}\label{s4}

Since the Solecki densities on a group $G$ are not subadditive in general, it is reasonable to consider their subadditivizations. By the {\em subadditivization} of a density $\mu:\mathcal P(X)\to[0,1]$ we understand the submeasure $\hat\mu:\mathcal P(X)\to[0,1]$ defined by the formula
$$\hat\mu(A)=\sup_{B\in\mathcal P(X)}\mu(A\cup B)-\mu(B).$$

The following proposition can be easily derived from the defintion of $\hat\mu$.

\begin{proposition}\label{p4.1n} If $\mu:\mathcal P(G)\to[0,1]$ is a density on a group $G$, then
\begin{enumerate}
\item its subadditivisation $\hat\mu$ is a submeasure on $G$;
\item $\mu\le\hat\mu$;
\item $\mu=\hat\mu$ if and only if $\mu$ is subadditive;
\item the submeasure $\hat\mu$ is (right-, left-) invariant provided so is the density $\mu$.
\end{enumerate}
\end{proposition}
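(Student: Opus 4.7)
The four parts are all straightforward unpackings of the definition $\hat\mu(A)=\sup_B \mu(A\cup B)-\mu(B)$; I expect no serious obstacle, so the plan is mostly a matter of bookkeeping.

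For item (1), I would first verify $\hat\mu(\emptyset)=0$ (every term in the sup equals $\mu(B)-\mu(B)=0$) and $\hat\mu(G)=1$ (take $B=\emptyset$ and use $\mu(G)=1$), while monotonicity $A_1\subset A_2\Rightarrow\hat\mu(A_1)\le\hat\mu(A_2)$ is immediate from the monotonicity of $\mu$ applied inside the supremum. The only non-cosmetic step is subadditivity. Here the trick is to telescope: for any $B\subset G$,
\[
\mu(A_1\cup A_2\cup B)-\mu(B)=\bigl(\mu(A_1\cup(A_2\cup B))-\mu(A_2\cup B)\bigr)+\bigl(\mu(A_2\cup B)-\mu(B)\bigr),
\]
and the first bracket is bounded by $\hat\mu(A_1)$ (use $A_2\cup B$ as the test set in the supremum defining $\hat\mu(A_1)$) while the second is bounded by $\hat\mu(A_2)$; taking the supremum over $B$ gives $\hat\mu(A_1\cup A_2)\le\hat\mu(A_1)+\hat\mu(A_2)$. (This also shows that $\hat\mu$ takes values in $[0,1]$, since each difference is $\le 1$.)

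Item (2) is obtained by specializing to $B=\emptyset$, which yields $\mu(A)-\mu(\emptyset)=\mu(A)\le\hat\mu(A)$. For item (3), if $\mu=\hat\mu$ then $\mu$ inherits subadditivity from part (1); conversely, if $\mu$ is subadditive then $\mu(A\cup B)-\mu(B)\le\mu(A)$ for every $B$, so $\hat\mu(A)\le\mu(A)$, and combining with (2) gives equality.

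Finally, for item (4) I would use that for each fixed $x\in G$ the map $B\mapsto xB$ is a bijection of $\mathcal P(G)$ onto itself. Assuming $\mu$ is left-invariant,
\[
\hat\mu(xA)=\sup_{B\subset G}\bigl(\mu(xA\cup B)-\mu(B)\bigr)=\sup_{B'\subset G}\bigl(\mu(xA\cup xB')-\mu(xB')\bigr)=\sup_{B'\subset G}\bigl(\mu(A\cup B')-\mu(B')\bigr)=\hat\mu(A),
\]
after the change of variable $B=xB'$ and using left-invariance of $\mu$ on both $\mu(x(A\cup B'))$ and $\mu(xB')$. The arguments for right-invariance and two-sided invariance are identical with $xB$ replaced by $Bx$ or $xBy$. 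Since no step requires anything beyond the listed axioms of $\mu$, the proof is essentially mechanical, and the only mildly substantive point is the telescoping identity used for subadditivity.
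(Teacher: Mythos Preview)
Your proof is correct and complete; the paper itself does not give a proof but merely remarks that the proposition ``can be easily derived from the definition of $\hat\mu$,'' which is exactly what you have done. The telescoping argument for subadditivity and the change-of-variable for invariance are the natural verifications the paper has in mind.
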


Given a group $G$ by $\hat\sigma^R$ and $\hat\sigma_R$ we shall denote the subadditivizations of the right-Solecki densities  $\sigma^R$ and $\sigma_R$ on $G$, respectively. It follows that  $\hat\sigma^R$, $\hat\sigma_R$ are invariant submeasures on $G$ and $\hat\sigma_R$ is bounded from above by the Solecki submeasure $\sigma$ according to
Proposition~\ref{semiadditive}(1). In fact the upper bound $\hat\sigma_R\le\sigma$ can be improved to $\hat\sigma\le\varsigma_R$, where $\varsigma_R:\mathcal P(G)\to[0,1]$ is a density defined on each group $G$ by the formula:
$$
\varsigma_R(A)=
\sup_{\mu_1\in P(G)}\sup_{\mu_2\in P_\w(G)}\inf_{\mu_3\in P_\w(G)}\mu_2*\mu_3*\mu_1(A).
$$
Using the fact that finitely supported measures on $G$ can be approximated by measures of the form $\frac1n \sum_{i=1}^n\delta_{x_i}$ for some points $x_1,\dots,x_n\in G$, we can show that the density $\varsigma_R$ can be equivalently defined as
$$\varsigma_R(A)=\sup_{\mu\in P(G)}\sup_{y_1,\dots,y_n\in G}\inf_{x\in G}\frac1n\sum_{i=1}^n\mu(xy_iA).$$

\begin{theorem}\label{subadit-sol} For each group $G$ we get
$$\sigma_R\le\hat\sigma_R\le\varsigma_R\le\hat\varsigma_R\le\sigma.$$
\end{theorem}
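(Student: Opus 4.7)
The chain $\sigma_R\le\hat\sigma_R\le\varsigma_R\le\hat\varsigma_R\le\sigma$ consists of four inequalities. The outermost two, $\sigma_R\le\hat\sigma_R$ and $\varsigma_R\le\hat\varsigma_R$, are immediate instances of Proposition~\ref{p4.1n}(2) applied to the densities $\sigma_R$ and $\varsigma_R$, so the real content lies in the middle steps $\hat\sigma_R\le\varsigma_R$ and $\hat\varsigma_R\le\sigma$. My plan is to prove each by bounding $\mu(A\cup B)-\mu(B)$ for an arbitrary witness set $B\subset G$.

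For $\hat\sigma_R\le\varsigma_R$ I aim to show $\sigma_R(A\cup B)-\sigma_R(B)\le\varsigma_R(A)$. Fix $\e>0$. By Theorem~\ref{t3.3nn}, $\sigma_R(A\cup B)=\sup_{\mu\in P(G)}\inf_{x\in G}\mu(x(A\cup B))$, so pick $\mu\in P(G)$ witnessing this up to $\e$. The same theorem identifies $\sigma_R(B)$ with the intersection number $I(\{yB\}_{y\in G})$, so choose $y_1,\dots,y_n\in G$ with $\sup_{z\in G}\tfrac1n\sum_{i=1}^n\chi_{y_iB}(z)<\sigma_R(B)+\e$. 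The identity $\chi_{xy_iB}(z)=\chi_{y_iB}(x^{-1}z)$, integrated against $\mu$, gives $\tfrac1n\sum_i\mu(xy_iB)<\sigma_R(B)+\e$ for every $x\in G$, whereas $\tfrac1n\sum_i\mu(xy_i(A\cup B))\ge\inf_z\mu(z(A\cup B))>\sigma_R(A\cup B)-\e$. Subadditivity of $\mu$ yields $\tfrac1n\sum_i\mu(xy_iA)>\sigma_R(A\cup B)-\sigma_R(B)-2\e$; the infimum of the left-hand side in $x$ is dominated by $\varsigma_R(A)$ by definition, and the result follows as $\e\to0$.

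For $\hat\varsigma_R\le\sigma$ the target is $\varsigma_R(A\cup B)\le\sigma(A)+\varsigma_R(B)$. Fix $\e>0$. By inversion invariance of $\sigma$ and Theorem~\ref{sol}, pick a finite $F\subset G$ with $\sup_{u,v\in G}|F\cap uA^{-1}v|/|F|<\sigma(A)+\e$ and put $\lambda_{F^{-1}}:=\tfrac1{|F|}\sum_{f\in F}\delta_{f^{-1}}\in P_\w(G)$. Using the definition of $\varsigma_R(A\cup B)$, fix $\mu_1\in P(G)$ and $\mu_2\in P_\w(G)$ so that $\inf_{\mu_3\in P_\w(G)}\mu_2*\mu_3*\mu_1(A\cup B)>\varsigma_R(A\cup B)-\e$. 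The decisive choice of $\mu_3$ goes as follows: set $\mu_2':=\mu_2*\lambda_{F^{-1}}\in P_\w(G)$; apply the definition of $\varsigma_R(B)$ to the pair $(\mu_1,\mu_2')$ to obtain $\mu_3^B\in P_\w(G)$ with $\mu_2'*\mu_3^B*\mu_1(B)<\varsigma_R(B)+\e$; and take $\mu_3:=\lambda_{F^{-1}}*\mu_3^B$. Then $\mu_2*\mu_3*\mu_1(B)=\mu_2'*\mu_3^B*\mu_1(B)<\varsigma_R(B)+\e$ by design, while with $\nu_1:=\mu_3^B*\mu_1\in P(G)$ we may rewrite $\mu_2*\mu_3*\mu_1(A)=\mu_2*\lambda_{F^{-1}}*\nu_1(A)$; the identity $\chi_{fa^{-1}A}(z)=\chi_{zA^{-1}a}(f)$ then reduces this $A$-mass to an average of quantities $|F\cap zA^{-1}a|/|F|$, each $<\sigma(A)+\e$ by the choice of $F$. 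Subadditivity of the measure $\mu_2*\mu_3*\mu_1$ combined with the lower bound on its $(A\cup B)$-mass yields $\varsigma_R(A\cup B)<\sigma(A)+\varsigma_R(B)+3\e$, and letting $\e\to 0$ finishes.

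The main obstacle is this last step: one must exhibit a single $\mu_3$ that is simultaneously a near-infimizer of $\mu_2*\mu_3*\mu_1(B)$ (required for the $\varsigma_R(B)$-bound) and places enough $\lambda_{F^{-1}}$-averaging to the left of $\mu_1$ for the Solecki witness $F$ to bite on $A$. The factorisation $\mu_3=\lambda_{F^{-1}}*\mu_3^B$, together with the re-associations that absorb $\lambda_{F^{-1}}$ into $\mu_2$ when evaluating on $B$ and into $\mu_1$ (through $\mu_3^B$) when evaluating on $A$, is the non-obvious move that lets both constraints be discharged by one and the same $\mu_3$.
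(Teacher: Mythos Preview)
Your proof is correct and follows essentially the same route as the paper's. For $\hat\sigma_R\le\varsigma_R$ your argument is the paper's argument almost verbatim: pick $\mu_1\in P(G)$ near-maximizing $\inf_x\mu_1(x(A\cup B))$ via Theorem~\ref{t3.3nn}, pick the intersection-number witness for $B$, and subtract. For $\hat\varsigma_R\le\sigma$ you prove $\varsigma_R(A\cup B)\le\sigma(A)+\varsigma_R(B)$ whereas the paper proves the symmetric $\varsigma_R(A\cup B)\le\varsigma_R(A)+\sigma(B)$; the key trick---factoring $\mu_3$ as (Solecki witness)$*$(near-minimizer) so that the Solecki witness can be absorbed into $\mu_2$ when evaluating one summand and does the averaging work on the other---is identical, with your only cosmetic variation being the use of a uniform measure $\lambda_{F^{-1}}$ from Theorem~\ref{sol} in place of a general $\mu\in P_\w(G)$.
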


\begin{proof} The inequalities $\sigma_R\le\hat\sigma_R$ and $\varsigma_R\le\hat\varsigma_R$ are trivial.

The inequality $\hat\sigma_R\le\varsigma_R$ will be proved as soon as for every  $\e>0$ we find a measure $\mu_1\in P(G)$ and a finitely supported measure $\mu_2\in P_\w(G)$ such that  $\inf_{\mu_3\in P_\w(G)}\mu_2*\mu_3*\mu_1(A)\ge \hat\sigma_R(A)-3\e$. By the definition of the submeasure $\hat\sigma_R$, there is a subset $B\subset G$ such that $\sigma_R(A\cup B)-\sigma_R(B)>\hat\sigma_R(A)-\e$. Replacing the set $B$ by $B\setminus A$ we can additionally assume that $A\cap B=\emptyset$.

Theorem~\ref{t3.3nn}  implies that $\sigma_R(A\cup B)=\sup_{\mu\in P(G)}\inf_{x\in G}\mu(x(A\cup B))$ and hence there is a measure $\mu_1\in P(G)$ such that
$\inf_{x\in G}\mu_1(x(A\cup B))>\sigma_R(A\cup B)-\e$.

By Theorem~\ref{t3.3nn}, $\sigma_R(B)=I(\{xB\}_{x\in G})$. Consequently, we can find $n\in\IN$ and a sequence $(x_i)_{i\in n}\in G^n$ such that $\big\|\frac1n\sum_{i\in n}\chi_{x_iB}\big\|<\sigma_R(B)+\e$. Consider the measure $\mu_2=\sum_{i\in n}\alpha_i\delta_{x_i^{-1}}$ where $\alpha_i=\frac1n$ for all $i\in n$ and observe that for every $y\in G$ we get
$$
\Big\|\sum_{i\in n}\alpha_i\chi_{yx_iB}\Big\|=\sup_{z\in G}\sum_{i\in n}\alpha_i\chi_{yx_iB}(z)=
\sup_{z\in G}\sum_{i\in n}\alpha_i\chi_{x_iB}(y^{-1}z)\le \Big\|\sum_{i\in n}\alpha_i\chi_{x_iB}\Big\|<\sigma_R(B)+\e.
$$Integrating the function $\sum_{i\in n}\alpha_i\chi_{yx_i B}$ by the measure $\mu_1$, we obtain the inequality
$$
\sum_{i\in n}\alpha_i\mu_1(yx_iB)\le\Big\|\sum_{i\in n}\alpha_i\chi_{yx_iB}\Big\|<\sigma_R(B)+\e
$$
holding for every $y\in G$.

Now observe that for every finitely supported measure $\mu_2=\sum_j\beta_j\delta_{y_j}\in P_\w(G)$ we get
$$
\begin{aligned}
\mu_2*\mu_3*\mu_1(A)&=\sum_{i,j}\alpha_i\beta_j\cdot\delta_{x_i^{-1}}*\delta_{y_j}*\mu_1(A)=
\sum_{i,j}\alpha_i\beta_j\mu_1(y_j^{-1}x_iA)=\\
&=\sum_{i,j}\alpha_i\beta_j\mu_1(y_j^{-1}x_i(A\cup B))-\sum_{i,j}\alpha_i\beta_j\mu_1(y_j^{-1}x_iB)>\\
&>\sum_{i,j}\alpha_i\beta_j(\sigma_R(A\cup B)-\e)-\sum_j\beta_j\sum_i\alpha_i\mu_1(y_j^{-1}x_iB)>\\
&>\sigma_R(A\cup B)-\e-\sum_j\beta_j(\sigma_R(B)+\e)=\sigma_R(A\cup B)-\sigma_R(B)-2\e>\hat\sigma_R(A)-3\e,
\end{aligned}
$$
which implies the desired inequality
$$\hat\sigma_R(A)-3\e\le\inf_{\mu_3\in P_\w(G)}\mu_2*\mu_3*\mu_1(A)\le\varsigma_R(A).$$

The final inequality $\hat\varsigma_R\le\sigma$ will follow as soon as we prove that $\varsigma_R(A\cup B)\le\varsigma_R(A)+\sigma(B)+3\e$ for every sets $A,B\subset G$ and $\e>0$. By the definition of $\varsigma_R(A\cup B)$, there are measures $\mu_1\in P(G)$ and $\mu_2\in P_\w(G)$ such that
$\inf_{\mu_3\in P_\w(G)}\mu_2*\mu_3*\mu_1(A\cup B)>\varsigma_R(A\cup B)-\e$.
By the definition of the Solecki submeasure $\sigma(B)$ there is a finitely supported measure $\mu\in P_\w(G)$ such that $\sup_{x,y\in G}\mu(xBy)<\sigma(B)+\e$.

Next, choose a measure $\nu\in P_\w$ such that $(\mu_2*\mu)*\nu*\mu_1(A)<\inf_{\eta\in P_\w(G)}(\mu_2*\mu)*\eta*\mu_1(A)+\e$. Put $\mu_3=\mu*\nu$ and observe that $\sup_{x,y\in G}\mu(xBy)<\sigma(B)+\e$ implies $\sup_{x,y\in B}\mu*\nu(xBy)\le \sup_{x,y\in G}\mu(xBy)<\sigma(B)+\e$.

Write the finitely supported measures $\mu_2$ and $\mu_3$ as convex combinations $\mu_2=\sum_{i\in n}\alpha_i\delta_{a_i}$ and $\mu_3=\sum_{j\in m}\beta_j\delta_{b_j}$ of Dirac measures.
For every $i\le n$, consider the function $f_i=\sum_{j\in m}\beta_j\chi_{b_j^{-1}a_i^{-1}B}$ and observe that it has norm
$$
\begin{aligned}
\|f_i\|&=\sup_{x\in G}\sum_{j\in m}\beta_j\chi_{b_j^{-1}a_i^{-1}B}(x)=
\sup_{x\in G}\sum_{j\in m}\beta_j\delta_x(b_j^{-1}a_i^{-1}B)=\sup_{x\in G}\sum_{j\in m}\beta_j\delta_{b_jx}(a_i^{-1}B)=\\
&=\sup_{x\in G}\sum_{j\in m}\beta_j\delta_{b_j}(a_i^{-1}Bx^{-1})=
\sup_{x\in G}\mu_3(a_i^{-1}Bx^{-1})<\sigma(B)+\e.
\end{aligned}
$$
Integrating the function $f_i$ by the measure $\mu_1$, we get the inequality
$$\sum_{j\in m}\beta_j\mu_1(b_i^{-1}a_i^{-1}B)=\int_G f_i d\mu_1\le\|f_i\|<\sigma(B)+\e,$$
which implies
$$
\begin{aligned}
\varsigma_R(A\cup B)&<\e+\inf_{\eta\in P_\w(G)}\mu_2*\eta*\mu_1(A\cup B)\le
\e+ \mu_2*\mu_3*\mu_1(A\cup B)\le\\
&\le \e+\mu_2*\mu_3*\mu_1(A)+\mu_2*\mu_3*\mu_1(B)=\e+\mu_2*\mu*\nu*\mu_1(A)+
\sum_{i,j}\alpha_i\beta_j\cdot\delta_{a_i}*\delta_{b_j}*\mu_1(B)<\\
&<\e+\e+\inf_{\eta\in P_\w(G)}(\mu_2*\mu)*\eta*\mu_1(A)+\sum_{i}\alpha_i\sum_{j}\beta_j\mu_1(b_j^{-1}a_i^{-1}B)\le \\
&\le2\e+\varsigma_R(A)+\sum_i\alpha_i(\sigma(B)+\e)=\varsigma_R(A)+\sigma(B)+3\e.
\end{aligned}
$$
\end{proof}

Theorem~\ref{subadit-sol} has the following combinatorial corollary.

\begin{theorem}\label{t4.2} For any subset $A\subset G$ with $\varsigma_R(A)>0$ there is a finite subset $F\subset G$ such that $G=FAA^{-1}F$.
\end{theorem}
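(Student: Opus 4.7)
The plan is to exploit the hypothesis $\varsigma_R(A) > 0$ via the equivalent formulation
$$\varsigma_R(A) = \sup_{\mu \in P(G)} \sup_{y_1, \dots, y_n \in G} \inf_{x \in G} \frac{1}{n} \sum_{i=1}^n \mu(x y_i A)$$
recalled just before the theorem. Fixing any $0 < c < \varsigma_R(A)$, I obtain a measure $\mu \in P(G)$, an integer $n \ge 1$, and points $y_1, \dots, y_n \in G$ such that for every $x \in G$ the average $\tfrac{1}{n} \sum_{i=1}^n \mu(x y_i A) \ge c$; in particular some index $i(x) \in \{1, \dots, n\}$ satisfies $\mu(x y_{i(x)} A) \ge c$. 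Put $F_0 := \{y_1, \dots, y_n\}$.

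Next I would run a greedy packing construction: pick $x_1 \in G$ arbitrarily, and having chosen $x_1, \dots, x_j$, stop if $G = \bigcup_{\ell \le j} x_\ell F_0 A A^{-1} F_0^{-1}$, otherwise pick $x_{j+1}$ outside this union. The key observation is that for $j' < j$ with $i(x_{j'}) = i(x_j) = i$, the sets $x_{j'} y_i A$ and $x_j y_i A$ must be disjoint: a common point would yield $a_1, a_2 \in A$ with $x_{j'} y_i a_1 = x_j y_i a_2$, whence $x_j = x_{j'} y_i a_1 a_2^{-1} y_i^{-1} \in x_{j'} F_0 A A^{-1} F_0^{-1}$, contradicting the choice of $x_j$. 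Since $\mu$ is a probability measure, at most $\lfloor 1/c \rfloor$ pairwise disjoint sets can each carry $\mu$-mass $\ge c$, so at most $\lfloor 1/c \rfloor$ of the selected indices $j$ share any fixed value of $i(x_j)$; hence the procedure terminates after some $k \le n/c$ steps, giving $G = \bigcup_{j=1}^k x_j F_0 A A^{-1} F_0^{-1}$. Setting $F := (\{x_1, \dots, x_k\} \cdot F_0) \cup F_0^{-1}$, every $x \in G$ now decomposes as $(x_j y_i) a_1 a_2^{-1} (y_{i'}^{-1})$ with $x_j y_i, y_{i'}^{-1} \in F$ and $a_1, a_2 \in A$, so $G = F A A^{-1} F$.

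The main technical subtlety is the passage from the $\sup\sup\inf$ definition of $\varsigma_R$ to the uniform-in-$x$ lower bound on $\mu(x y_{i(x)} A)$; this is exactly what the averaged formulation established just before the theorem provides, as it lets the inner infimum over $\mu_3 \in P_\w(G)$ be replaced by an infimum over Dirac measures and hence by a pointwise infimum over $G$. The remainder is then a Kelley-style packing bound applied to the $n$ shifted families $\{x y_i A : x \in G\}$, converting a measure-theoretic lower bound into a finite combinatorial covering.
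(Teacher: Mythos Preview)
Your proof is correct and follows essentially the same strategy as the paper's: extract from the definition of $\varsigma_R$ a measure $\mu$ and a finite set of shifts, build a packing of points whose shifted copies of $A$ are disjoint, bound its size using the measure, and read off the covering $G=FAA^{-1}F$. The paper uses Zorn's Lemma in place of your greedy selection and, by averaging over all indices $i$ (noting that your choice of $x_{j+1}$ actually forces $x_{j'}y_iA\cap x_jy_iA=\emptyset$ for \emph{every} $i$, not just $i(x_j)=i(x_{j'})$), obtains the sharper bound $k\le 2/\varsigma_R(A)$ rather than your $n/c$; but the underlying argument is the same.
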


\begin{proof} By the definition of the density $\varsigma_R$, there are measures $\mu_1\in P(G)$ and $\mu_2\in P_\w(G)$ such that $\inf_{\mu_3\in P_\w(G)}\mu_2*\mu_3*\mu_1(A)>\frac12\varsigma_R(A)>0$.
Write the finitely supported measure $\mu_2$ as a convex combination $\mu_2=\sum_{i\in n}\alpha_i\delta_{a_i}$ of Dirac measures and put $S=\{a_i\}_{i\in n}$.

By the Zorn's Lemma there is a maximal subset $M\subset G$ such that for every $a\in S$ the indexed family $(xa^{-1}A)_{x\in M}$ is disjoint. By the maximality of $M$, for every $x\in G$ there is $a^{-1}\in S$ such that $xa^{-1}A\cap Ma^{-1}A\ne\emptyset$, which implies that $x\in Ma^{-1}AA^{-1}a\subset MS^{-1}AA^{-1}S$ and hence $G=MS^{-1}AA^{-1}S$. It remains to prove that the set $M$ is finite. For this observe that for every finite subset $F\subset M$ and every $a\in S$ the indexed family $(xa^{-1}A)_{x\in F}$ is disjoint and hence $\sum_{x\in F}\mu_1(xa^{-1}A)\le 1$.  Then $$\sum_{x\in F}\mu_2*\delta_{x^{-1}}*\mu_1(A)=\sum_{x\in F}\sum_{i\in n}\alpha_i\delta_{a_i}*\delta_{x^{-1}}*\mu_1(A)=
\sum_{i\in n}\alpha_i\sum_{x\in F}\mu_1(xa_i^{-1}A)\le \sum_{i\in n}\alpha_i\cdot 1=1$$ and hence $$|F|\cdot\inf_{x\in F}\mu_2*\delta_{x^{-1}}*\mu_1(A)\le\sum_{x\in F}\mu_2*\delta_{x^{-1}}*\mu_1(A)\le 1.$$
This implies the inequalities $$\frac1{|F|}\ge \inf_{x\in F}\mu_2*\delta_{x^{-1}}*\mu_1(A)\ge \inf_{\mu_3\in P_\w(G)}\mu_2*\mu_3*\mu_1(A)>\frac12\varsigma_R(A)$$ and $|F|\le 2/{\varsigma_R(A)}$. So, $M$ is a finite set of cardinality $|M|\le 2/\varsigma_R(A)$ and $G=MS^{-1}AA^{-1}S$.
\end{proof}

\begin{problem} Is $\varsigma_R=\sigma_R$ for any amenable group $G$?
\end{problem}

\section{The right Solecki density versus the upper Banach density on amenable groups}

In this section we shall prove that for an amenable group $G$ the right Solecki density $\sigma_R=\sigma^R=\hat\sigma_R=\hat\sigma^R$ coincides with the upper Banach density $d^*$, widely exploited in Ramsey Theory of groups and semigroups, see \cite{HS} and references therein. For the group $\IZ$ of integers the upper Banach density was introduced by Polya \cite{Polya} in 1929. Later, with help of F\o lner sequences this notion was generalized to countable amenable groups; see \cite{BBF} and \cite{HS}.

A sequence $(F_n)_{n\in\w}$ of finite subsets of a  group $G$ is called a {\em F\o lner sequence} if for every $g\in G$ the sequence $(|F_n\triangle gF_n|/|F_n|)_{n\in\w}$ tends to zero. Here by $A\triangle B$ we denote the symmetric difference $(A\setminus B)\cup(B\setminus A)$ of two sets $A,B\subset G$. By the F\o lner criterion \cite[4.10]{Pat}, a group $G$ admits a F\o lner sequence $(F_n)_{n\in\w}$ if and only if $G$ is countable and amenable.

Let $G$ be a countable amenable group. The {\em upper density of a subset $A\subset G$ with respect to a F\o lner sequence $(F_n)_{n\in\w}$} is defined as $$\bar d_{(F_n)}(A)=\limsup_{n\to\infty}\frac{|A\cap F_n|}{|F_n|}$$and the number
$$d^*(A)=\sup\{\bar d_{(F_n)}(A):(F_n)_{n\in\w}\mbox{ is a F\o lner sequence}\}$$is called the {\em upper Banach density} of $A$.

In \cite{HS} and \cite{NL}  the upper Banach density was defined for subsets of any amenable group. According to \cite{NL}, the {\em upper Banach density} $d^*(A)$ of a subset $A$ of an amenable group $G$ is defined as
$$d^*(A)=\sup\Big\{\alpha\in[0,1]:\forall F\in[G]^{<\w}\;\forall\e>0\;\exists K\in[G]^{<\w}\mbox{ such that }\max_{x\in F}\frac{|xK\triangle K|}{|K|}<\e\mbox{ and }\frac{|K\cap A|}{|K|}\ge\alpha\Big\}.$$

It turns out that the right Solecki density $\sigma^R$ on an amenable group $G$ coincides with the upper Banach density $d^*$.

\begin{theorem}\label{Bandens} For any amenable group $G$ we get $d^*=\sigma_R=\sigma^R=\hat\sigma_R=\hat\sigma^R$.
\end{theorem}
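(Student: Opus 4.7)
The plan is to split the chain of equalities into two parts: the purely amenable-group identifications $\sigma_R = \sigma^R = \hat\sigma_R = \hat\sigma^R$ follow from facts already proved, while the substantive identity $d^* = \sigma^R$ requires a weak$^*$ compactness argument in one direction and a direct right-translation trick in the other. For the first part, Theorem~\ref{t-FC}(1) gives $\sigma_R = \sigma^R$ on the amenable group $G$, Corollary~\ref{solamenable} asserts that this common function is subadditive, and Proposition~\ref{p4.1n}(3) then immediately yields $\sigma_R = \hat\sigma_R$ and $\sigma^R = \hat\sigma^R$.

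To establish $\sigma^R(A) \ge d^*(A)$, I would invoke Theorem~\ref{t3.4nn}, which gives $\sigma^R(A) = \sup_{\mu \in P_l(G)} \mu(A)$. Fix $\alpha < d^*(A)$. The definition of $d^*$ produces, for each pair $(F,\e) \in [G]^{<\w} \times (0,\infty)$, a finite $K_{F,\e} \subset G$ with $\max_{x \in F}|xK_{F,\e} \triangle K_{F,\e}|/|K_{F,\e}| < \e$ and $|K_{F,\e} \cap A|/|K_{F,\e}| \ge \alpha$. The uniformly distributed measures $\mu_{F,\e} = \frac{1}{|K_{F,\e}|}\sum_{k \in K_{F,\e}} \delta_k \in P_u(G)$ form a net in the weak$^*$-compact space $P(G)$. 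Any weak$^*$ cluster point $\mu$ satisfies $\mu(A) \ge \alpha$, and it is left-invariant because for each fixed $x \in G$ and $B \subset G$, eventually $x \in F$, and then $|\mu_{F,\e}(xB) - \mu_{F,\e}(B)| \le |x^{-1}K_{F,\e} \triangle K_{F,\e}|/|K_{F,\e}| < \e$.

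Conversely, for $\sigma^R(A) \le d^*(A)$ I would argue directly. Assume $\alpha > d^*(A)$. Then by definition there exist $F_0 \in [G]^{<\w}$ and $\e_0 > 0$ such that every left-F\o lner set $K$ satisfying $\max_{x\in F_0}|xK\triangle K|/|K| < \e_0$ has $|K \cap A|/|K| < \alpha$, and the F\o lner criterion guarantees that such a $K$ exists. The key observation is that right translation preserves left-F\o lner-ness: for every $x \in F_0$ and $y \in G$,
$$|x(Ky^{-1}) \triangle (Ky^{-1})|/|Ky^{-1}| = |xK \triangle K|/|K| < \e_0.$$
Therefore $|K \cap Ay|/|K| = |Ky^{-1} \cap A|/|Ky^{-1}| < \alpha$ for every $y \in G$, and consequently $\sigma^R(A) \le \sup_{y \in G} |K \cap Ay|/|K| \le \alpha$.

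The main obstacle I anticipate is the weak$^*$ cluster-point argument used for $\sigma^R \ge d^*$: one must verify left-invariance of the limit mean carefully, passing to a subnet along which $\mu_{F,\e}(xB) \to \mu(xB)$ and $\mu_{F,\e}(B) \to \mu(B)$ for each fixed $x,B$, and combining this with the F\o lner bound. The right-translate trick in the opposite direction is elementary once one spots it, and is really the reason that the \emph{right} Solecki density (rather than the left one) is the invariant coinciding with $d^*$.
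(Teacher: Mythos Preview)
Your proof is correct. The chain $\sigma_R=\sigma^R=\hat\sigma_R=\hat\sigma^R$ and the direction $\sigma^R(A)\le d^*(A)$ are handled exactly as in the paper: the latter is the same right-translation observation (the paper takes a F\o lner set $E$, picks $y$ with $|Ey\cap A|/|E|\ge\sigma^R(A)$, and lets $K=Ey$; you phrase the identical idea contrapositively).

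The direction $d^*(A)\le\sigma^R(A)$ is where you genuinely diverge. The paper gives a self-contained combinatorial argument: assuming $\sigma^R(A)<d^*(A)-\e$, it fixes a witnessing finite $F$, chooses a very good F\o lner set $K$ with $|K\cap A|/|K|$ close to $d^*(A)$, and analyzes the multiplication map $\pi:F\times K\to FK$ to count how many fibres over points of $A$ can be full, deriving a contradiction. You instead invoke Theorem~\ref{t3.4nn} and produce a left-invariant mean as a weak$^*$ cluster point of the uniform measures on F\o lner sets $K_{F,\e}$; since characteristic functions lie in $l_\infty(G)$, both $\mu(A)\ge\alpha$ and left-invariance pass to the limit, and the concern you flag about the cluster-point argument is not an actual obstacle. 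Your route is shorter and more conceptual but leans on the functional-analytic characterization $\sigma^R(A)=\sup_{\mu\in P_l(G)}\mu(A)$ already established; the paper's route is longer but entirely elementary and does not revisit that machinery.
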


\begin{proof} By Theorem~\ref{t-FC}(1), $\sigma_R=\sigma^R$. By Corollary~\ref{solamenable}, the right Solecki density $\sigma_R=\sigma^R$ is subadditive and hence coincides with its subadditivization. So, $\sigma_R=\sigma^R=\hat\sigma^R=\hat\sigma_R$.

To see that $d^*(A)\le\sigma^R(A)$, assume conversely that $\sigma_R(A)<d^*(A)$ and find a finite
subset $F\subset G$ such that $$\sigma^R(A)\le\sup_{y\in G}\frac{|Fy\cap A|}{|F|}<d^*(A)-\e$$ for some $\e>0$. Replacing $F$ by $Fz^{-1}$ for some $z\in F$ we can additionally assume that $F$ contains the unit $1_G$ of the group $G$. Choose a positive $\delta$ so small that
$$\frac{d^*(A)-\delta(|F|+1)}{1+\delta}>d^*(A)-\e.$$
By the definition of $d^*(A)$, for the finite set $F$ and the positive number $\delta$ there is a finite subset $K\subset G$ such that $\max_{x\in F}\frac{|xK\triangle K|}{|K|}<\frac{\delta}{|F|}$ and $|K\cap A|/|K|\ge d^*(A)-\delta$.
Then $|FK\setminus K|\le\sum_{x\in F}|xK\setminus K|<\delta$, $|FK|\le |FK\setminus K|+|K|<|K|(1+\delta)$ and hence $$|FK\cap A|\ge|K\cap A|\ge|K|(d^*(A)-\delta)\ge |FK|\frac{d^*(A)-\delta}{1+\delta}.$$

Consider the map $\pi:F\times K\to FK$, $\pi:(x,y)\mapsto xy$, and observe that $|\pi^{-1}(z)|\le |F|$ for all $z\in FK$. Let $S=\{z\in FK:|\pi^{-1}(z)|<|F|\}$ and $E=\{z\in FK:|\pi^{-1}(z)|=|F|\}$.
It follows that
$$|F|\cdot\frac{|FK|}{1+\delta}\le |F|\cdot|K|=|F\times K|=|\pi^{-1}(S)\cup \pi^{-1}(FK\setminus S)|\le (|F|-1)\cdot|S|+|F|\cdot(|FK|-|S|)=|F|\cdot|FK|-|S|$$which implies $|S|\le |F|\cdot |FK|\cdot\big(1-\frac1{1+\delta}\big)=|F|\cdot |FK|\frac\delta{1+\delta}$ and $$|E|=|FK|-|S|\ge |FK|\Big(1-\frac{\delta|F|}{1+\delta}\Big).$$
Observe that
$$|E\cap A|=|FK\cap A|-|S\cap A|\ge |FK|\frac{d^*(A)-\delta}{1+\delta}-|FK|\frac{\delta|F|}{1+\delta}\ge|K|\frac{d^*(A)-\delta(|F|+1)}{1+\delta}.
$$

The choice of $\delta$ guarantees that $$|\pi^{-1}(E\cap A)|=|E\cap A|\cdot|F|\ge |F|\cdot |K|\frac{d^*(A)-\delta(|F|+1)}{1+\delta}>|F|\cdot|K|(d^*(A)-\e).$$
On the other hand,
$$\pi^{-1}(E\cap A)\subset\{(x,y)\in F\times K:xy\in E\cap A\}\subset \{(x,y)\in F\times K:xy\in A\}$$ and hence
$$|\pi^{-1}(E\cap A)|\le|\{(x,y)\in F\times K:xy\in A\}|=\sum_{y\in K}|\{x\in F:xy\in A\}|=\sum_{y\in K}|Fy\cap A|<|K|\cdot |F| (d^*(A)-\e),$$which is a desired contradiction proving that $d^*(A)\le\sigma^R(A)$.
\smallskip

We claim that $d^*(A)=\sigma^R(A)$. In the opposite case $d^*(A)<\sigma^R(A)$ and by the definition of $d^*(A)$, there is a finite set $F\subset G$ and a positive number $\e$ such that for any finite set $K\subset G$ with $\max_{x\in F}\frac{|xK\triangle K|}{|K|}<\e$ we get $\frac{|K\cap A|}{|K|}<\sigma^R(A)$.
By the F\o lner criterion of the amenability, there is a finite set $E\subset G$ such that
$\max_{x\in F}\frac{|xE\triangle E|}{|E|}<\e$. By the definition of the right Solecki density $\sigma^R(A)$, there is a point $y\in G$ such that $\frac{|Ey\cap A|}{|E|}\ge\sigma^R(A)$. Then we get a contradiction letting $K=Ey$.
\end{proof}

\section{Solecki densities and combinatorial sizes of subsets in groups}

In this section we shall evaluate the Solecki densities or submeasures of subsets which are small or large in a suitable combinatorial sense. Combinatorial sizes of subsets in groups were studied in many papers (see, the survey \cite{Prot} and references therein).

Following \cite{Prot} we define a subset $A$ of a group $G$ to be
\begin{itemize}
\item {\em thick} if for every finite subset $F\subset G$ there is a point $y\in G$ such that $Fy\subset A$;
\item {\em large} if $FA=G$ for some finite subset $F\subset G$;
\item {\em small} if for any large set $B\subset G$ the complement $B\setminus A$ is large.
\end{itemize}
It follows that small sets form an invariant ideal on each group. By Theorem~12.4 \cite{PB}, a subset $A$ of a group $G$ is small if and only if for any finite subset $F\subset G$ the set $FA$ is not thick. This characterization of small sets and Proposition~\ref{semiadditive}(2) imply:

\begin{proposition}\label{p:rizne} Let $G$ be a group.
\begin{enumerate}
\item Each  large set $A\subset G$ has $\sigma_L(A)=\sigma_R(A^{-1})>0$;
\item A subset $A\subset G$ is thick iff $\sigma_R(A)=1$ iff $\sigma^R(A)=1$;
\item A subset $A\subset G$ is small iff $\sigma_R(FA)<1$ for each finite subset $F\subset G$.
\end{enumerate}
\end{proposition}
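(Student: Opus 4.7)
The plan is to dispatch the three items in order, each following quickly from the machinery already in place.

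For (1), the identity $\sigma_L(A)=\sigma_R(A^{-1})$ is recorded at the beginning of Section~3, so only the strict positivity needs work. I would derive the left-hand analogue $\sigma_L(FA)\le|F|\cdot\sigma_L(A)$ of Proposition~\ref{semiadditive}(2) by applying that proposition to $A^{-1}$ and $F^{-1}$ and then invoking the identities $\sigma^L(B)=\sigma^R(B^{-1})$ and $\sigma_L(B)=\sigma_R(B^{-1})$. Largeness of $A$ produces a finite $F$ with $FA=G$, and substituting $\sigma_L(G)=1$ into the inequality immediately yields $\sigma_L(A)\ge 1/|F|>0$.

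For (2), since $\sigma_R\le\sigma^R\le 1$, it suffices to establish the equivalence ``$A$ thick $\Leftrightarrow \sigma^R(A)=1$'' together with the implication ``$A$ thick $\Rightarrow \sigma_R(A)=1$''. The key observation for the first step is that for any finite $F\subset G$ the ratio $|F\cap Ay|/|F|$ takes only values in the finite set $\{0,1/|F|,\dots,1\}$, so the supremum over $y$ in the Solecki formula of Theorem~\ref{sol} is attained; it equals $1$ exactly when some $y$ satisfies $F\subset Ay$, i.e., $Fy^{-1}\subset A$, which is the defining condition of thickness applied to $F$. For the implication ``thick $\Rightarrow \sigma_R(A)=1$'', I would take an arbitrary $\mu=\sum_{i=1}^n\alpha_i\delta_{a_i}\in P_\w(G)$, apply thickness to the finite support $\{a_1,\dots,a_n\}$ to find $z\in G$ with $a_iz\in A$ for every $i$, and conclude $\mu(Az^{-1})=1$, whence $\sup_y\mu(Ay)=1$ and so $\sigma_R(A)=1$.

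For (3), I would invoke the characterization from \cite[Theorem~12.4]{PB} cited just before the statement: $A$ is small iff $FA$ is not thick for every finite $F\subset G$. Combining this with the equivalence ``$B$ is not thick $\Leftrightarrow \sigma_R(B)<1$'' from (2), applied to $B=FA$, yields the claim immediately.

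I do not anticipate any real obstacle; the only mildly delicate point is the attainment of the supremum in the formula for $\sigma^R$, which is however just the elementary observation that a function taking only finitely many values attains its supremum.
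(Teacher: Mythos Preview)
Your proposal is correct and matches the paper's intended approach: the paper merely states that the proposition follows from the cited characterization of small sets and Proposition~\ref{semiadditive}(2), and you have accurately filled in those details. One minor slip: the finite-set formula for $\sigma^R$ is its \emph{definition} in Section~3, not a consequence of Theorem~\ref{sol} (which concerns $\sigma$), so you should cite the definition rather than that theorem.
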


The following proposition combined with Proposition~\ref{p:rizne}(2) implies that each infinite group $G$ contains $|G|$ many subsets of right Solecki density 1.

\begin{proposition}\label{p2.6} Each infinite group $G$ contains $|G|$ many pairwise disjoint thick sets.
\end{proposition}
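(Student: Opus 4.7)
Let $\kappa=|G|$ and write $[G]^{<\w}=\{F_\beta:\beta<\kappa\}$ as an enumeration of all nonempty finite subsets of $G$ (we have $|[G]^{<\w}|=\kappa$ because $G$ is infinite). My plan is to build, by transfinite recursion on pairs $(\alpha,\beta)\in\kappa\times\kappa$, a family of translates $\{F_\beta y_{\alpha,\beta}:\alpha<\kappa,\,\beta<\kappa\}$ that are \emph{pairwise disjoint}, and then set $A_\alpha=\bigcup_{\beta<\kappa}F_\beta y_{\alpha,\beta}$. Each $A_\alpha$ will then contain a right translate of every finite subset of $G$, and hence will be thick; pairwise disjointness across different values of $\alpha$ follows directly from the construction.

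Fix any well-ordering of $\kappa\times\kappa$ of order type $\kappa$ (this uses $\kappa\cdot\kappa=\kappa$ for infinite $\kappa$). At stage $(\alpha,\beta)$ assume that the points $y_{\alpha',\beta'}\in G$ have already been chosen for all previous pairs $(\alpha',\beta')$ with the translates $F_{\beta'}y_{\alpha',\beta'}$ pairwise disjoint. The number of previous stages is some ordinal $\gamma<\kappa$, so the union $U=\bigcup_{(\alpha',\beta')<(\alpha,\beta)}F_{\beta'}y_{\alpha',\beta'}$ has cardinality $|U|\le|\gamma|\cdot\aleph_0<\kappa$. A point $y\in G$ is \emph{forbidden} at this stage exactly when $F_\beta y\cap U\ne\emptyset$, i.e.\ $y\in F_\beta^{-1}U$; since $F_\beta$ is finite, the set $F_\beta^{-1}U$ has cardinality strictly less than $\kappa=|G|$. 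Therefore some allowed $y_{\alpha,\beta}\in G$ exists, and the inductive construction proceeds.

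Setting $A_\alpha=\bigcup_{\beta<\kappa}F_\beta y_{\alpha,\beta}$ for each $\alpha<\kappa$, the set $A_\alpha$ contains $F_\beta y_{\alpha,\beta}$ for every $\beta<\kappa$, hence a right translate of every finite subset of $G$; this is precisely the defining property of a thick set. If $\alpha\ne\alpha'$ and $x\in A_\alpha\cap A_{\alpha'}$, then $x\in F_\beta y_{\alpha,\beta}\cap F_{\beta'}y_{\alpha',\beta'}$ for some $\beta,\beta'<\kappa$, contradicting the disjointness secured at the later of the two stages. Thus $(A_\alpha)_{\alpha<\kappa}$ is the required family of $|G|$ pairwise disjoint thick subsets of $G$.

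The only delicate point is the cardinal bookkeeping in the recursion---one must verify that the forbidden set $F_\beta^{-1}U$ has cardinality $<\kappa$ at every stage, which works uniformly because $F_\beta$ is finite and fewer than $\kappa$ previous stages have each contributed a finite piece. Once this is in place, thickness and disjointness are immediate from the construction.
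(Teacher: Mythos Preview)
Your proof is correct and follows essentially the same approach as the paper: a transfinite recursion of length $|G|$ choosing translating elements so that all the finite translates are pairwise disjoint, then assembling them into $|G|$ many thick sets. The paper's version differs only in bookkeeping---it enumerates $[G]^{<\w}\times G$ rather than $\kappa\times\kappa$ and groups the translates according to the $G$-coordinate---and it additionally builds two-sided translates $x_\alpha F_\alpha\cup F_\alpha x_\alpha$ so that each resulting set \emph{and its inverse} are thick, a strengthening not needed for the bare statement.
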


\begin{proof} We identify the cardinal $|G|$ with the smallest ordinal of cardinality $|G|$. Let $[G]^{<\w}$ be the family of all finite subsets of $G$. The set $[G]^{<\w}\times G$ has cardinality $|G|$ and hence can be enumerated as $[G]^{<\w}\times G=\{(F_\alpha,y_\alpha):\alpha\in|G|\}$. For each ordinal $\alpha\in|G|$ by transfinite induction choose a point $$x_\alpha\in G\setminus  \bigcup_{\beta<\alpha}F_\alpha^{-1}(x_\beta F_\beta \cup F_\beta x_\beta)F_\alpha^{-1}.$$ Such choice of the points $x_\alpha$ guarantees that the family $\{x_\alpha F_\alpha\cup F_\alpha x_\alpha\}_{\alpha\in|G|}$ is disjoint. Then the indexed family $\{X_y\}_{y\in G}$ consisting of the sets  $X_y=\bigcup\{x_\alpha F_\alpha\cup F_\alpha x_\alpha:y_\alpha=y\}$ is also disjoint. We claim that for each $y\in G$ the set $X_y$ and $X_y^{-1}$ are thick. Given any finite subset $F\subset G$, find an ordinal $\alpha<|G|$ such that $(F_\alpha,y_\alpha)=(F,y)$. Then $x_\alpha F\cup Fx_\alpha=x_\alpha F_\alpha\cup F_\alpha x_\alpha\subset X_y$, which implies that $X_y$ and $X_y^{-1}$ both are thick and hence have right Solecki density 1 according to Proposition~\ref{p:rizne}(2).
\end{proof}

Now we shall calculate the Solecki densities of subgroups of groups.

\begin{proposition}\label{zero-index} For a subgroup $H$ of a group $G$ the following conditions are equivalent:
\begin{enumerate}
\item $H$ has infinite index $G:H$ in $G$;
\item $H$ is not large in $G$;
\item $H$ is small in $G$;
\item $\sigma^R(H)=0$;
\item $\sigma_R(H)=0$;
\item $\hat\sigma_R(H)=0$;
\item $\varsigma_R(H)=0$.
\end{enumerate}
\end{proposition}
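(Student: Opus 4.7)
My plan is to chase through the seven conditions, using Theorem~\ref{subadit-sol} for the cheap comparisons $\sigma_R\le\hat\sigma_R\le\varsigma_R$ and invoking B.H.~Neumann's classical lemma on finite coset coverings for the two genuinely delicate implications $(1)\Ra(7)$ and $(1)\Ra(3)$.

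First the routine steps. $(1)\Leftrightarrow(2)$ unwinds the definition of ``large'': $H$ is large iff $G=FH$ iff $G$ is a finite union of left cosets of $H$, iff $[G:H]<\infty$. $(3)\Ra(2)$ follows because taking $B:=H$ in the definition of ``small'' would otherwise force $B\setminus H=\emptyset$ to be large. For $(1)\Ra(4)$ I pick, for each $n$, a set $F=\{x_1,\dots,x_n\}$ in distinct right cosets of $H$: then $x_i,x_j\in Hy$ would force $Hx_i=Hx_j$, so $|F\cap Hy|\le 1$ for every $y$ and hence $\sigma^R(H)\le 1/n$. The implication $(4)\Ra(5)$ is $\sigma_R\le\sigma^R$, and $(5)\Ra(1)$ is contrapositive: if $[G:H]=n<\infty$ and $G=\bigsqcup_{i=1}^n Hg_i$, every $\mu\in P_\w(G)$ satisfies $\sum_i\mu(Hg_i)=1$, so $\sup_y\mu(Hy)=\max_i\mu(Hg_i)\ge 1/n$ and thus $\sigma_R(H)\ge 1/n$. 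Theorem~\ref{subadit-sol} then gives $(7)\Ra(6)\Ra(5)$ for free.

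For $(1)\Ra(7)$ I argue contrapositively: if $\varsigma_R(H)>0$, Theorem~\ref{t4.2} produces a finite $F\subset G$ with $G=FHH^{-1}F=FHF$ (using $H^{-1}=H$), and the identity $fHf'=(fHf^{-1})(ff')$ shows that $G$ is covered by finitely many right cosets of conjugates of $H$. All these conjugates share the common index $[G:H]$, so B.H.~Neumann's lemma forces $[G:H]<\infty$. For $(1)\Ra(3)$ I use Proposition~\ref{p:rizne}(3) to reduce to $\sigma_R(FH)<1$ for each finite $F=\{f_1,\dots,f_n\}$. The set $T=\bigcup_{i,j\le n}f_iHf_j^{-1}$ is a union of finitely many cosets of conjugates of $H$, so again by Neumann's lemma $T\ne G$; any $g\in G\setminus T$ satisfies $FH\cap gFH=\emptyset$ (since $y=f_ih=gf_jh'$ would put $g=f_i(hh'^{-1})f_j^{-1}$ into $T$), and plugging the pair $\{FH,gFH\}$ into the intersection-number identity $\sigma_R(FH)=I(\{xFH\}_{x\in G})$ of Theorem~\ref{t3.3nn} yields $\sigma_R(FH)\le 1/2$.

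The principal obstacle is the invocation of Neumann's coset lemma in both of these implications: the hypothesis $[G:H]=\infty$ does not by itself obviously prevent $G$ from being a finite union of cosets of \emph{distinct} conjugates of $H$, and a direct combinatorial argument in its place appears to require a lemma of equivalent strength.
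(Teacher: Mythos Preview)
Your proof is correct and complete; every implication is justified. The route differs from the paper's in two places, and the comparison is instructive.

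For the link between smallness and $\varsigma_R(H)=0$, the paper proves $(3)\Ra(7)$ rather than your $(1)\Ra(7)$: assuming $\varsigma_R(H)>0$, Theorem~\ref{t4.2} gives $G=FHF$, and then one simply observes that small sets form an \emph{invariant ideal}, so if $H$ were small, each $fHf'$ would be small and hence $G=\bigcup_{f,f'\in F}fHf'$ would be small, a contradiction. This bypasses Neumann's lemma entirely. Similarly, for $(2)\Leftrightarrow(3)$ the paper cites Lemma~4.2 of \cite{LP} and for $(2)\Ra(4)$ it invokes Proposition~\ref{p7.1n} (the bound $\cov(HH^{-1})\le 1/\sigma^R(H)$), while you give direct coset arguments for $(1)\Ra(4)$ and $(5)\Ra(1)$ and prove $(1)\Ra(3)$ from scratch via Neumann and Proposition~\ref{p:rizne}(3).

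Your approach trades the external citation \cite{LP} for the (equally external but more classical) Neumann covering lemma, and your direct proofs of $(1)\Ra(4)$ and $(5)\Ra(1)$ are pleasantly elementary. The paper's approach is more modular: it leans on the ideal structure of small sets, which is itself an easy consequence of the definition and avoids any appeal to coset-covering theorems. Either way works; your remark that Neumann's lemma is the ``principal obstacle'' is fair for your route but is not intrinsic to the proposition, since the ideal argument sidesteps it.
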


\begin{proof} It suffices to check that

$(1)\Leftrightarrow(2)\Leftrightarrow(3)\Ra(7)\Ra(6)\Ra(5)\Ra(2)\Ra(4)\Ra(5)$.
The equivalence $(1)\Leftrightarrow(2)$ follows from the definition of a large set and $(2)\Leftrightarrow(3)$ has been proved in Lemma~4.2 of \cite{LP}.

To prove that $(3)\Ra(7)$, assume that $\varsigma_R(H)>0$ and applying Theorem~\ref{t4.2}, conclude that $G=FHF$ for some finite subset $F\subset G$. Since  small subsets form a non-trivial invariant ideal of subsets of $G$, the equality $G=FHF$ implies that the group $H$ is not  small.

The implications $(7)\Ra(6)\Ra(5)$ follow from the inequalities $\sigma_R\le\hat\sigma_R\le\varsigma_R$ proved in Theorem~\ref{subadit-sol}. The implication $(5)\Ra(2)$ follows from Proposition~\ref{p:rizne}(1).

To prove that $(2)\Ra(4)$, assume that $\sigma^R(H)>0$. Applying Proposition~\ref{p7.1n} proved in Section~\ref{s:difference}, we conclude that the set $H=HH^{-1}$ is large. The final implication $(4)\Ra(5)$ is trivial.
\end{proof}

\begin{proposition}\label{p6.3n} Each subgroup $H$ of a group $G$ has
$\sigma_R(H)=\sigma_R(H)=\hat\sigma_R(H)=\varsigma_R(H)=\frac1{G:H}.$ If the group $H$ has finite index in $G$, then $\sigma(H)=\frac1{G:H}$.
\end{proposition}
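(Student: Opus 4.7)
The plan is to dichotomize on the index of $H$. If $H$ has infinite index in $G$, then Proposition~\ref{zero-index} immediately yields $\sigma_R(H)=\sigma^R(H)=\hat\sigma_R(H)=\varsigma_R(H)=0=\frac{1}{G:H}$ and the finite-index clause is vacuous. So assume $H$ has finite index $n=G:H$, and fix right coset representatives $g_1,\dots,g_n$ and left coset representatives $h_1,\dots,h_n$. For the upper bound on $\sigma^R(H)$ I take $F=\{g_1,\dots,g_n\}$: each right coset $Hy$ meets $F$ in exactly one point, so $|F\cap Hy|/|F|=1/n$ and hence $\sigma^R(H)\le 1/n$. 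For the matching lower bound on $\sigma_R(H)$, the Kelley-intersection characterization $\sigma_R(A)=\sup_{\mu\in P(G)}\inf_{x\in G}\mu(xA)$ from Theorem~\ref{t3.3nn}, applied to the uniform measure $\mu=\frac{1}{n}\sum_{i=1}^n\delta_{h_i}$, gives $\mu(xH)=1/n$ for every $x\in G$ (each left coset contains exactly one $h_i$), so $\sigma_R(H)\ge 1/n$. Combined with $\sigma_R\le\sigma^R$, this pins down $\sigma_R(H)=\sigma^R(H)=1/n$.

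For $\sigma(H)$, subadditivity and invariance of the submeasure $\sigma$ applied to the partition $G=\bigsqcup_{i=1}^n Hg_i$ yield $1=\sigma(G)\le\sum_{i=1}^n\sigma(Hg_i)=n\sigma(H)$, so $\sigma(H)\ge 1/n$. The reverse inequality calls for a more symmetric transversal, and the plan is to pass to the normal core $N=\bigcap_{g\in G}gHg^{-1}$, which is normal in $G$, contained in $H$, and has finite index $m=G:N\le n!$ (because the action of $G$ on $G/H$ factors through an embedding of $G/N$ into the symmetric group on $n$ letters). Take a transversal $T\subset G$ for $N$, so $|T|=m$ and $T$ bijects with $G/N$. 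Since $N\triangleleft G$ and $N\subset H$, for every $x,y\in G$ the set $xHy$ is a union of right $N$-cosets (the identity $xHy\cdot N=xH\cdot Ny=xHNy=xHy$ uses $Ny=yN$ and $HN=H$), and it is the union of $H:N=m/n$ of them. Consequently $|T\cap xHy|=m/n$ for all $x,y\in G$, and Theorem~\ref{sol} gives $\sigma(H)\le\frac{m/n}{m}=\frac{1}{n}$.

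Finally, the chain $\sigma_R\le\hat\sigma_R\le\varsigma_R\le\sigma$ from Theorem~\ref{subadit-sol}, combined with the already-established equalities $\sigma_R(H)=\sigma(H)=1/n$, squeezes $\hat\sigma_R(H)=\varsigma_R(H)=1/n$ as well. The only step requiring genuine thought is the upper bound $\sigma(H)\le 1/n$: the naive uniform measure on a transversal of $H$ itself is insufficient when $H$ is non-normal, because a double coset $xHy$ may meet such a transversal in more than one point, and descending to the normal core is what restores the required two-sided symmetry.
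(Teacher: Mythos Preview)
Your proof is correct and follows essentially the same route as the paper: dichotomize on the index via Proposition~\ref{zero-index}, pass to the normal core $N$ in the finite-index case, and finish with the chain $\sigma_R\le\hat\sigma_R\le\varsigma_R\le\sigma$ from Theorem~\ref{subadit-sol}.

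The only stylistic difference is that the paper packages the finite-index computation by invoking Proposition~\ref{p2.3} (preservation of $\sigma$ and $\sigma_R$ under the quotient map $q:G\to G/N$) and then reading off $\sigma(q(H))=\sigma_R(q(H))=|q(H)|/|G/N|$ in the finite group $G/N$, whereas you unpack this by hand: your transversal $T$ of $N$ is precisely a lift of $G/N$, and your count $|T\cap xHy|=m/n$ is exactly the statement $|q(x)q(H)q(y)|=|q(H)|$ in the quotient. Your separate direct bounds $\sigma^R(H)\le 1/n$ (via a right-coset transversal) and $\sigma_R(H)\ge 1/n$ (via Theorem~\ref{t3.3nn} with the uniform measure on a left-coset transversal) are a nice touch---they establish the one-sided densities without any appeal to the core---but for $\sigma(H)\le 1/n$ both you and the paper need the core, for exactly the reason you identify.
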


\begin{proof} If one of the numbers $\sigma_R(H)$, $\sigma_R(H)$, $\hat \sigma_R(H)$ or $\varsigma_R(H)$  is equal to zero, then the subgroup $H$ has infinite index in $G$ and
$$\sigma_R(H)=\sigma_R(H)=\hat\sigma_R(H)=\varsigma_R(H)=\frac1{G:H}=0$$
according to Proposition~\ref{zero-index}.

So, we assume that the numbers $\sigma_R(H)$, $\sigma_R(H)$, $\hat \sigma_R(H)$, $\varsigma_R(H)$ are positive. In this case, the subgroup $H$ has finite index in $G$ according to Proposition~\ref{zero-index} and then the normal subgroup $N=\bigcap_{x\in G}xHx^{-1}$ has finite index in $G$ too.
Consider the quotient group $G/N$ and the quotient homomorphism $q:G\to G/N$. It follows that $H=q^{-1}(q(H))$. It can be easily deduced from the definitions of the Solecki submeasure $\sigma$ and the Solecki densities $\sigma_R$ and $\sigma^R$  on the finite group $G/N$ that any subset $A\subset G/N$ has submeasure $\sigma(A)=\sigma_R(A)=\sigma^R(A)=\frac{|A|}{|G/N|}$. Now Proposition~\ref{p2.3} and its counterpart for the right Solecki density $\sigma_R$ imply that $$\sigma_R(H)=\sigma_R(q(H))=\frac{|q(H)|}{|G/N|}=\sigma(q(H))=\sigma(H).$$ Taking into account the inequalities $\sigma_R(H)\le\sigma^R(H)\le\sigma(H)$ and $\sigma_R(H)\le \hat\sigma_R(H)\le\varsigma_R(H)\le\sigma(H)$, we get the desired equalities $$\sigma^R(H)=\sigma_R(H)=\hat\sigma_R(H)=\varsigma_R(H)=\sigma(H)=\frac{|q(H)|}{|G/N|}=\frac1{G:H}.$$
\end{proof}

It is possible to generalize Proposition~\ref{p6.3n} from subgroups to subgroup cosets.
A subset $A$ of a group $G$ will be called a {\em subgroup coset} if $A=xHy$ for some subgroup $H\subset G$ and some points $x,y\in G$. In this case $AA^{-1}=xHx^{-1}$ is a subgroup of $G$, conjugated to $H$. By the {\em index} $G:A$ of the subgroup coset $A$ we understand the index $G:AA^{-1}=|G/AA^{-1}|$ of the subgroup $AA^{-1}$.

The invariance of the densities $\sigma_R$, $\sigma^R$, $\hat\sigma_R$ and $\varsigma_R$ and Proposition~\ref{p6.3n} imply the following corollary.

\begin{corollary}\label{c6.5n} Each subgroup coset $A$ in a group $G$ has
$\sigma_R(A)=\sigma_R(A)=\hat\sigma_R(A)=\varsigma_R(A)=\frac1{|G:A|}.$ If the index of $A$ in $G$ is finite, then $\sigma(A)=\frac1{G:A}$.
\end{corollary}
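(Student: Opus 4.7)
The plan is to reduce everything to Proposition~\ref{p6.3n} by peeling off the translations that convert a subgroup coset into the ambient subgroup itself.

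First I would unpack the definition: write $A=xHy$ for some subgroup $H\subset G$ and some $x,y\in G$. A direct computation gives $AA^{-1}=xHy\cdot y^{-1}H^{-1}x^{-1}=xHx^{-1}$, so the stabilizing subgroup of the coset $A$ is the conjugate $xHx^{-1}$. Since conjugate subgroups have the same index in $G$, we obtain
\[
G:A \;=\; G:AA^{-1} \;=\; G:xHx^{-1} \;=\; G:H,
\]
so the right-hand side of the claimed formula equals $1/(G:H)$.

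Next I would invoke invariance. All four functions $\sigma_R$, $\sigma^R$, $\hat\sigma_R$, $\varsigma_R$ are (two-sided) invariant: for $\sigma_R$ and $\sigma^R$ this was noted immediately after their definition, for $\hat\sigma_R$ it follows from Proposition~\ref{p4.1n}(4) applied to the invariant density $\sigma_R$, and for $\varsigma_R$ the invariance is immediate from its convolution definition because left- and right-translations on $G$ act by post-convolution with Dirac measures and may be absorbed into $\mu_1$, $\mu_2$, $\mu_3$. Consequently
\[
\sigma_R(A)=\sigma_R(xHy)=\sigma_R(H),\qquad \sigma^R(A)=\sigma^R(H),\qquad \hat\sigma_R(A)=\hat\sigma_R(H),\qquad \varsigma_R(A)=\varsigma_R(H).
\]
Combining this with Proposition~\ref{p6.3n} applied to $H$ yields the first displayed chain of equalities, each value being $1/(G:H)=1/(G:A)$.

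For the final clause, if $G:A$ is finite then so is $G:H$, and the inversion-invariance of the Solecki submeasure $\sigma$ (Proposition~\ref{subad}) gives $\sigma(A)=\sigma(xHy)=\sigma(H)$. The finite-index case of Proposition~\ref{p6.3n} now delivers $\sigma(H)=1/(G:H)=1/(G:A)$. There is no genuine obstacle here; the only point requiring a moment's care is checking that $\varsigma_R$ really is two-sided invariant, which I would justify by noting that for any $g\in G$ the substitutions $\mu_1\mapsto\mu_1*\delta_g$ and $\mu_2\mapsto\delta_g*\mu_2$ (respectively) are bijections of $P(G)$ and $P_\w(G)$, so the suprema/infima in the definition of $\varsigma_R$ are unchanged when $A$ is replaced by $Ag$ or $gA$.
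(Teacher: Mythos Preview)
Your argument is correct and follows exactly the route the paper takes: invoke the two-sided invariance of $\sigma_R$, $\sigma^R$, $\hat\sigma_R$, $\varsigma_R$ (and of $\sigma$) to replace the coset $A=xHy$ by the subgroup $H$, then apply Proposition~\ref{p6.3n}. Your extra verification that $G:A=G:H$ and your explicit check that $\varsigma_R$ is invariant (via the bijections $\mu_1\mapsto\mu_1*\delta_g$ of $P(G)$ and $\mu_2\mapsto\delta_g*\mu_2$ of $P_\w(G)$) are welcome details the paper leaves implicit. One terminological nit: in the last paragraph you cite the \emph{inversion}-invariance of $\sigma$, but what you actually use is its two-sided invariance $\sigma(xHy)=\sigma(H)$; since inversion invariance (as defined here) includes invariance, the citation is still valid, but the word ``inversion'' is superfluous.
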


It is interesting that even for a normal subgroup $H$ of an amenable group $G$ the equality $\sigma(A)=\frac1{G:A}$ need not hold. As a counterexample consider the group $S_X$ of all bijective transformations of an infinite set $X$ and the normal subgroup $FS_X$ of $S_X$ consisting of all bijective transformations $f:X\to X$ with finite support $\supp(f)=\{x\in X:f(x)\ne x\}$.

\begin{example}\label{permutation} For any infinite sets $E\subset X$ the subgroup $FS_E=\{f\in FS_X:\supp(f)\subset E\}$ has Solecki submeasure $\sigma(FS_E)=1$ in the group $FS_X$. If the complement $X\setminus E$ is infinite, then the subgroup $FS_E$ has infinite index in $FS_X$ and hence $\sigma(FS_E)=1\ne 0=\frac1{FS_X:FS_E}$.
\end{example}

\begin{proof} Given a finite subset $A\subset FS_X$ consider its (finite) support
$\supp(A)=\bigcup_{a\in A}\supp(a)$ and find a finitely supported permutation $f\in FS_X$ such that $f(\supp(A))\subset E$. It follows that $\supp(fAf^{-1})\subset E$ and hence $fAf^{-1}\subset FS_E$, witnessing that the set $FS_E$ has Solecki submeasure $\sigma(FS_E)=1$ (according to Proposition~\ref{thick}).

If the complement $X\setminus E$ is infinite, then the subgroup $FS_E$ has infinite index in the group $FS_X$.
\end{proof}

\section{Solecki null, Solecki positive and Solecki one sets in groups}

A subset $A$ of a group $G$ is called
\begin{itemize}
\item {\em Solecki null} if $\sigma(A)=0$;
\item {\em Solecki positive} if $\sigma(A)>0$;
\item {\em Solecki one} if $\sigma(A)=1$.
\end{itemize}

The subadditivity of the Solecki submeasure $\sigma$ implies that Solecki null sets form an invariant ideal of subsets of a group $G$.

Solecki one sets admit a simple combinatorial characterization, which follows immediately from the definition of the Solecki submeasure.

\begin{proposition}\label{thick} A subset $A$ of a group $G$ is Solecki one if and only if for each finite subset $F\subset G$ there are points $x,y\in G$ such that $xFy\subset A$.
\end{proposition}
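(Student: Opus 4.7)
The plan is to just unwind the definition of $\sigma(A)$ and exploit the discreteness of the quantity $|F\cap xAy|/|F|$ for finite $F$. Recall
\[
\sigma(A)=\inf_{F\in[G]^{<\w}}\sup_{x,y\in G}\frac{|F\cap xAy|}{|F|},
\]
and for fixed finite $F$ the expression $|F\cap xAy|/|F|$ takes values in the finite set $\{0,\tfrac1{|F|},\dots,1\}$. Consequently $\sup_{x,y\in G}|F\cap xAy|/|F|=1$ if and only if there actually exist $x,y\in G$ with $F\subset xAy$.

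For the forward implication, assume $\sigma(A)=1$. Fix an arbitrary $F\in[G]^{<\w}$. Since the infimum defining $\sigma(A)$ equals $1$, the monotonicity of the sup in $F$ forces $\sup_{x,y\in G}|F\cap xAy|/|F|=1$, so by the discreteness observation above there are $x,y\in G$ with $F\subset xAy$. Equivalently, setting $x'=x^{-1}$ and $y'=y^{-1}$ gives $x'Fy'\subset A$, which is the required condition.

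For the reverse implication, assume that for each finite $F\subset G$ there exist $x',y'\in G$ with $x'Fy'\subset A$. Then $F\subset x'^{-1}Ay'^{-1}$, so $|F\cap x'^{-1}Ay'^{-1}|/|F|=1$, and therefore $\sup_{x,y\in G}|F\cap xAy|/|F|=1$. Taking the infimum over all $F\in[G]^{<\w}$ yields $\sigma(A)=1$, as required.

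There is no real obstacle here: both directions are direct manipulations of the definition of $\sigma$, with the only mild subtlety being the remark that for finite $F$ the supremum actually equals $1$ (rather than merely approaching $1$) because $|F\cap xAy|$ is an integer bounded by $|F|$.
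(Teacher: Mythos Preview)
Your proof is correct and is exactly the unwinding of the definition that the paper has in mind; the paper itself does not spell out the argument, merely stating that the proposition ``follows immediately from the definition of the Solecki submeasure.'' One small remark: the phrase ``the monotonicity of the sup in $F$'' is unnecessary --- the point is simply that each term $\sup_{x,y}\frac{|F\cap xAy|}{|F|}$ is bounded above by $1$, so if their infimum equals $1$ then every term equals $1$.
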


The notions of Solecki null, one, and positive sets have right modifications.

A subset $A$ of a group $G$ is called
\begin{itemize}
\item {\em right-Solecki null} if $\sigma^R(A)=0$;
\item {\em right-Solecki positive} if $\sigma^R(A)>0$;
\item {\em right-Solecki one} if $\sigma^R(A)=1$.
\end{itemize}

Since $\sigma^R\le\sigma$, each right-Solecki one set in Solecki one and each Solecki null set is right-Solecki null. However the converse implications are not true as Example~\ref{permutation} shows.

Theorems~\ref{t3.3nn} and \ref{t3.4nn} imply the following Zakrzewski's  characterization \cite{Zak} of right-Solecki null sets in amenable groups.

\begin{theorem}[Zakrzewski]\label{t-zak} A subset $A$ of an amenable group $G$ is right-Solecki null if and only if $\mu(A)=0$ for each left-invariant measure on $G$.
\end{theorem}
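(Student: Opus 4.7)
The plan is to deduce this immediately from Theorem~\ref{t3.4nn}, which already provides the identification
\[
\sigma^R(A) \;=\; \sup_{\mu\in P_l(G)}\mu(A)
\]
for every subset $A$ of an amenable group $G$. Thus the whole content of the theorem is a restatement of that supremum formula in ``null'' form, and essentially no additional combinatorial or measure-theoretic input is required.

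Concretely, I would argue as follows. For the forward direction, suppose $A$ is right-Solecki null, i.e.\ $\sigma^R(A)=0$. By Theorem~\ref{t3.4nn}, $\sup_{\mu\in P_l(G)}\mu(A)=0$. Since each $\mu\in P_l(G)$ is non-negative and the supremum of non-negative numbers vanishes only if each of them vanishes, we conclude $\mu(A)=0$ for every left-invariant measure $\mu$ on $G$. For the reverse direction, suppose $\mu(A)=0$ for each $\mu\in P_l(G)$. Then $\sup_{\mu\in P_l(G)}\mu(A)=0$, and again Theorem~\ref{t3.4nn} gives $\sigma^R(A)=0$, so $A$ is right-Solecki null.

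There is essentially no obstacle: Theorem~\ref{t3.4nn} supplies the amenability-dependent identification $\sigma^R = \sup_{\mu\in P_l(G)}\mu(\cdot)$ (its proof used the Hahn--Banach/invariant-mean technology and Theorem~\ref{t3.3nn} to pass from an arbitrary measure $\nu$ to a left-invariant measure $\tilde\nu$), and once that formula is in hand the characterization of Solecki null sets as those annihilated by every left-invariant measure is purely formal. The only step one needs to remember is that amenability is what guarantees $P_l(G)\ne\emptyset$ and that the supremum in Theorem~\ref{t3.4nn} indeed equals $\sigma^R(A)$ (not merely bounds it).
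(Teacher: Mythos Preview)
Your proof is correct and matches the paper's own approach: the paper simply states that Theorem~\ref{t-zak} is implied by Theorems~\ref{t3.3nn} and~\ref{t3.4nn}, and your argument spells out exactly how the formula $\sigma^R(A)=\sup_{\mu\in P_l(G)}\mu(A)$ from Theorem~\ref{t3.4nn} yields the equivalence.
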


Now we detect groups in which the classes of Solecki null and right Solecki null sets coincide.
For a group $G$ denote by $G_{FC}=\{x\in G:|x^G|<\infty\}$ the normal subgroup of $G$ consisting of elements $x\in G$ with finite conjugacy class $x^G=\{gxg^{-1}:g\in G\}$. Observe that a group $G$ is an FC-group if and only if $G=G_{FC}$. The following characterization was proved by Solecki in \cite[Theorem 1.3]{Sol}.

\begin{theorem}[Solecki]\label{t6.3n} For a group $G$ the following statements are equivalent:
\begin{enumerate}
\item The subgroup $G_{FC}$ has finite index in $G$;
\item A subset $A\subset G$ is Solecki null if and only if $A$ is right-Solecki null;
\item no Solecki one set $A\subset G$ is right-Solecki null.
\end{enumerate}
\end{theorem}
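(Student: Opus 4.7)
The natural plan is the cycle $(1)\Rightarrow(2)\Rightarrow(3)\Rightarrow(1)$. The implication $(2)\Rightarrow(3)$ is immediate: a Solecki one set $A$ has $\sigma(A)=1\ne 0$, so $A$ is not Solecki null, and $(2)$ then forces $\sigma^R(A)\ne 0$.

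For $(1)\Rightarrow(2)$ set $H:=G_{FC}$, a normal subgroup of finite index $n:=(G:H)$; note that $H$ is itself an FC-group, so by Theorem~\ref{t-FC}(2) the Solecki submeasure and the right Solecki density coincide when computed inside $H$. Fix a transversal $\{g_1,\dots,g_n\}$ for $H$ in $G$. The key lemma is a coset-splitting calculation: for $B\subset H$ and $F_H\in[H]^{<\w}$, taking $F:=\bigsqcup_{i=1}^n F_Hg_i\subset G$ and using normality of $H$ (so that $xBy\subset xyH$ lies in a single coset of $H$), one obtains after translating the relevant piece back into $H$ the identity
\[
\sup_{y\in G}\frac{|F\cap By|}{|F|}=\frac1n\sup_{h\in H}\frac{|F_H\cap Bh|}{|F_H|},
\]
together with a matching lower bound via an arbitrary $F\subset G$ split along cosets. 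This shows that the right Solecki density of $B\subset H$ computed in $G$ equals $1/n$ times the right Solecki density of $B$ computed in $H$, so they vanish together. An analogous two-sided calculation, using additionally that conjugation by $y\in G$ is an automorphism of $H$ (so the Solecki submeasure in $H$ is invariant under $B\mapsto y^{-1}By$) and the FC-property of $H$ to control the family of $G$-conjugates of $B$ uniformly, shows that the Solecki submeasure of $B$ vanishes in $G$ iff it vanishes in $H$. Combining with $\sigma^R=\sigma$ on the FC-group $H$ yields, for every $B\subset H$, that $\sigma^R(B)=0$ and $\sigma(B)=0$ (computed in $G$) are equivalent. For arbitrary $A\subset G$ with $\sigma^R(A)=0$, decompose $A=\bigsqcup_{i=1}^n A_i$ with $A_i:=A\cap g_iH$; left-invariance and monotonicity of $\sigma^R$ give $\sigma^R(g_i^{-1}A_i)=0$, hence $\sigma(g_i^{-1}A_i)=0$ by the case just treated, and invariance and subadditivity of $\sigma$ yield $\sigma(A)\le\sum_{i=1}^n\sigma(A_i)=0$.

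For $(3)\Rightarrow(1)$ I argue contrapositively: when $(G:G_{FC})=\infty$, I construct $A\subset G$ with $\sigma(A)=1$ and $\sigma^R(A)=0$. A model instance is Example~\ref{permutation}: for $G=FS_X$ with $X$ infinite one has $G_{FC}=\{1\}$ of infinite index, and for any infinite $E\subsetneq X$ with $X\setminus E$ also infinite the subgroup $FS_E$ satisfies $\sigma(FS_E)=1$ (every finite subset of $FS_X$ can be conjugated into $FS_E$, see Proposition~\ref{thick}) while $\sigma^R(FS_E)=0$ by infinite index through Proposition~\ref{zero-index}. In the general case one mimics this via a bookkeeping recursion: enumerating $[G]^{<\w}\times\IN$ as $\{(E_k,n_k)\}_k$, at stage $k$ one selects $x_k,y_k\in G$ with $y_k$ in a coset of $G_{FC}$ disjoint from all cosets used at earlier stages, guaranteeing both $x_kE_ky_k\subset A$ (so $\sigma(A)=1$ by Proposition~\ref{thick}) and that a pre-chosen test set $F_k$ satisfies $\sup_{y\in G}|F_k\cap Ay|/|F_k|<1/n_k$ (so $\sigma^R(A)=0$). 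The main obstacle is precisely this balancing act: pushing $\sigma(A)\to 1$ tends to inflate $\sigma^R(A)$, and only the infinite index of $G_{FC}$ provides the inexhaustible stock of fresh cosets needed to disperse blocks so that no single right-translate of $A$ can accumulate positive density.
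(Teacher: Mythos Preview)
The paper does not prove this theorem; it is quoted from Solecki \cite[Theorem~1.3]{Sol}. So there is no in-paper proof to compare against, and I evaluate your argument on its own.

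Your cycle $(2)\Rightarrow(3)$ is fine, and your plan for $(1)\Rightarrow(2)$ is essentially correct, though one step is under-specified. The right-Solecki identity $\sigma^R_G(B)=\tfrac1n\,\sigma^R_H(B)$ for $B\subset H$ follows exactly as you indicate. For the two-sided version, writing $xBy=(xBx^{-1})(xy)$ and $x=hg_i$ shows that the family of sets appearing in the sup is $\{u\,B_i\,v:u,v\in H,\ 1\le i\le n\}$ with $B_i:=g_iBg_i^{-1}\subset H$. The point you gloss as ``the FC-property of $H$ to control the family of $G$-conjugates of $B$ uniformly'' is really two separate ingredients: first, automorphism-invariance of $\sigma_H$ gives $\sigma_H(B_i)=\sigma_H(B)$; second, since there are only \emph{finitely many} $B_i$ (because $[G:H]<\infty$, not because $H$ is FC), subadditivity of $\sigma_H$ yields $\sigma_H\big(\bigcup_i B_i\big)=0$, and a single witness $F_H$ for this union controls all $i$ simultaneously. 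With that made explicit, your reduction to $B\subset H$ and the coset decomposition of a general $A$ go through.

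The genuine gap is in $\neg(1)\Rightarrow\neg(3)$. Your recursion sketch does not explain the crucial point: why placing $y_k$ in a fresh coset of $G_{FC}$ ensures that \emph{later} blocks $x_jE_jy_j$ ($j>k$) do not spoil the inequality $\sup_{y}|F_ky\cap A|/|F_k|<1/n_k$ already secured at stage $k$. A right translate $F_ky$ can land in any finite collection of cosets, so ``fresh cosets for $y_k$'' gives no a~priori disjointness between $F_ky$ and future blocks. The motivating example $FS_E\subset FS_X$ is misleading here: it is a subgroup of infinite index (so $\sigma^R=0$ comes for free from Proposition~\ref{zero-index}), and $\sigma=1$ comes from the very special fact that every finite subset of $FS_X$ is conjugate into $FS_E$; neither mechanism is available in a general group with $[G:G_{FC}]=\infty$. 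Solecki's actual construction is more delicate than a straight bookkeeping recursion, and your final paragraph correctly identifies the tension without resolving it. As written, this direction is a plausible heuristic, not a proof.
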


The Solecki submeasure can be helpful in generalizing some results of Ramsey Theory like the Gallai's  Theorem \cite[p.40]{GRS}. This theorem says that for any finite coloring of the group $G=\IZ^n$ and any finite set $F\subset G$ there are $b\in G$ and $n\in\IN$ such that the homothetic copy $b+nF$ of $F$ is monochrome.

The notion of a homothetic copy can be defined in each semigroup as follows. We say that a subset $B$ of a semigroup $S$ is a {\em homothetic image} of a set $A\subset S$ if $B=f(A)$ for some function $f:S\to S$ of the form $f(x)=a_0xa_1x\cdots xa_n$ for some $n\in\IN$ and some elements $a_0,\dots,a_n\in G$. If $n=1$, then $f(x)=a_0xa_1$ and we shall say that $B=a_0Aa_1$ is a {\em translation image} of $A$.

\begin{theorem}\label{t4.4n} If a subset $A$ of a group $G$ is:
\begin{enumerate}
\item Solecki one, then $A$ contains a translation image of each finite subset $F\subset G$.
\item Solecki positive, then $A$ contains a homothetic image of each finite subset $F\subset G$.
\end{enumerate}
\end{theorem}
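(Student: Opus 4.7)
Part (1) is immediate from Proposition~\ref{thick}: if $\sigma(A)=1$, then for any finite $F\subset G$ there exist $x,y\in G$ with $xFy\subset A$, and the function $f(z)=xzy$, which has the form $a_0 z a_1$ with $a_0=x$ and $a_1=y$, maps $F$ into $A$, exhibiting $f(F)=xFy$ as a translation image of $F$ contained in $A$.

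Part (2) is a density-type analogue of Gallai's theorem in an arbitrary (possibly non-abelian) group, and the plan is to deduce it from the \emph{density Hales--Jewett theorem} of Furstenberg and Katznelson. Fix a finite set $F=\{x_1,\dots,x_k\}\subset G$, put $\delta=\sigma(A)/2>0$, and let $N=N(k,\delta)\in\IN$ be the integer supplied by density Hales--Jewett, so that every subset of the cube $F^N$ of relative cardinality at least $\delta$ contains a combinatorial line.

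Now consider the uniformly distributed measure $\mu_F=\frac{1}{|F|}\sum_{x\in F}\delta_x\in P_u(G)$ and its $N$-fold convolution $\mu=\mu_F*\cdots*\mu_F\in P_\w(G)$, for which $\mu(B)=|F|^{-N}\cdot|\{(y_1,\dots,y_N)\in F^N:y_1\cdots y_N\in B\}|$ for every $B\subset G$. By the definition of the Solecki submeasure, there exist $u,v\in G$ with $\mu(uAv)\ge\sigma(A)>\delta$, so the pullback $S=\{(y_1,\dots,y_N)\in F^N:y_1\cdots y_N\in uAv\}$ has relative cardinality at least $\delta$ in $F^N$. Density Hales--Jewett then produces a combinatorial line in $S$: a non-empty set of mobile positions $T\subset\{1,\dots,N\}$ together with letters $c_j\in F$ for $j\notin T$ such that, for every $x\in F$, the word $w(x)\in F^N$ with $w(x)_j=x$ for $j\in T$ and $w(x)_j=c_j$ for $j\notin T$ belongs to $S$.

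Finally, multiplying the letters of $w(x)$ in the order dictated by $\{1,\dots,N\}$ collapses the combinatorial line into a homothetic image: writing the mobile positions as $j_1<\cdots<j_n$ with $n=|T|\ge 1$, one has $w(x)_1\cdots w(x)_N=b_0\,x\,b_1\,x\cdots x\,b_n$, where each $b_i$ is the product of the fixed letters in the corresponding gap between consecutive mobile positions. Since this element belongs to $uAv$ for every $x\in F$, setting $a_0=u^{-1}b_0$, $a_i=b_i$ for $1\le i\le n-1$, and $a_n=b_n v^{-1}$ yields the inclusion $\{a_0 x a_1\cdots x a_n:x\in F\}\subset A$, which is the desired homothetic image. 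The substantive ingredient is the density Hales--Jewett theorem; the remaining steps amount to the routine observation that the word-evaluation map $F^N\to G$, $(y_1,\dots,y_N)\mapsto y_1\cdots y_N$, sends combinatorial lines in $F^N$ exactly to homothetic images in $G$, so no further serious obstacle arises beyond invoking that deep result.
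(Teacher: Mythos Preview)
Your proof is correct and follows essentially the same approach as the paper: both argue Part~(1) via Proposition~\ref{thick}, and for Part~(2) both push forward the uniform measure on $F^N$ through the multiplication map, use the definition of $\sigma$ to find a shift $uAv$ of large pulled-back density, and then apply the density Hales--Jewett theorem of Furstenberg--Katznelson to extract a combinatorial line whose image under multiplication is the desired homothetic copy. The only cosmetic differences are that the paper takes $\e=\sigma(A)$ rather than $\sigma(A)/2$ and phrases the combinatorial line as an injective map $\xi:F\to F^N$ with identity-or-constant components, but these are equivalent formulations.
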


\begin{proof} 1. The first statement is a trivial corollary of Proposition~\ref{thick}.
\smallskip

2. Assume that $\e=\sigma(A)>0$ and let $F$ be any finite subset of the group $G$. By the Density Version of the Hales-Jewett Theorem due to Furstenberg and Katznelson \cite{FK}, for the numbers $\e$ and $k=|F|$ there is a number $N$ such that every subset $S\subset F^N$ of cardinality $|S|\ge\e|F^N|$ contains the image $\xi(F)$ of $F$ under an injective function $\xi=(\xi_i)_{i=1}^N:F\to F^N$ whose components $\xi_i:F\to F$ are identity functions or constants.

On the ``cube'' $F^N$ consider the uniformly distributed measure $\mu=\frac1{|F^N|}\sum_{x\in F^N}\delta_x$. The multiplication function $\pi:F^N\to G$, $\pi:(x_1,\dots,x_N)\mapsto x_1\cdots x_N$, maps the measure $\mu$ to a finitely supported probability measure $\nu=\pi(\mu)$ on the group $G$.
By Theorem~\ref{sol}, $\e=\sigma(A)\le\sup_{u,v\in G}\nu(uAv)=\max_{u,v\in G}\nu(uAv)$. So, there are points $u,v\in G$ such that $\nu(uAv)\ge \e$. Then for the map $\pi_{u,v}:F^N\to G$, $\pi_{u,v}(\vec x)=u^{-1}\cdot\pi(\vec x)\cdot v^{-1}$, the preimage $S=\pi^{-1}_{u,v}(A)$ has measure $\mu(S)=\nu(uAv)\ge \e$ and hence $|S|=\mu(S)\cdot|F^N|\ge\e|F^N|$. By the choice of $N$, the set $S$ contains an image $\xi(F)$ of $F$ under some injective function $\xi=(\xi)_{i=1}^N:F\to F^N$ whose components $\xi_i:F\to F$ are identity functions or constants. It follows that $f=\pi_{u,v}\circ \xi:F\to G$ is a function of the form $f(x)=a_0xa_1\cdots xa_n$ for some $n\le N$ and some elements $a_0,\dots,a_n\in G$. Moreover, $f(F)=\pi_{u,v}\circ\xi(F)\subset \pi_{u,v}(S)\subset A$.
\end{proof}

Theorem~\ref{t4.4n} implies the following density version of the Van der Waerden Theorem proved by Szemer\'edi \cite{Szemeredi}.

\begin{corollary}[Szemer\'edi]\label{c:Wan} Each Solecki positive subset of integers contains arbitrarily long arithmetic progressions.
\end{corollary}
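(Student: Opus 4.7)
The plan is to derive this as a direct consequence of Theorem~\ref{t4.4n}(2), with essentially no extra work beyond translating the notion of a homothetic image into additive notation on $\IZ$. First I would fix a Solecki positive subset $A\subset\IZ$ and an arbitrary $k\in\IN$, and set $F=\{0,1,\dots,k-1\}$, which is a finite subset of $\IZ$ of cardinality $k$.

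Next I would apply Theorem~\ref{t4.4n}(2) to the group $G=\IZ$, the set $A$, and the finite set $F$ to obtain a homothetic image $f(F)\subset A$, where $f:\IZ\to\IZ$ has the form $f(x)=a_0xa_1x\cdots xa_n$ in multiplicative notation for some $n\in\IN$ and $a_0,\dots,a_n\in\IZ$. Since $\IZ$ is abelian and written additively, this product collapses to
\[
f(x)=\bigl(a_0+a_1+\cdots+a_n\bigr)+nx=c+nx,
\]
where $c=\sum_{i=0}^{n}a_i\in\IZ$ and $n\in\IN$ is a positive integer (note that $n\ge 1$ because the word defining $f$ contains at least one occurrence of $x$). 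Therefore
\[
f(F)=\{c,\;c+n,\;c+2n,\;\dots,\;c+(k-1)n\}\subset A,
\]
which is an arithmetic progression of length $k$ and common difference $n\ge 1$. Since $k$ was arbitrary, $A$ contains arithmetic progressions of every length.

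There is no real obstacle here: all the work is absorbed into Theorem~\ref{t4.4n}(2), whose proof invokes the Furstenberg--Katznelson density Hales--Jewett theorem. The only thing one has to observe for the corollary itself is that in the abelian additive group $\IZ$, a homothetic image of a finite set is literally an affinely scaled translate of it, so applying this to the standard progression $\{0,1,\dots,k-1\}$ produces an arithmetic progression of length $k$ inside $A$.
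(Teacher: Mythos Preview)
Your proof is correct and follows exactly the approach the paper intends: the paper simply states that the corollary is implied by Theorem~\ref{t4.4n}, and you have spelled out the straightforward translation of a homothetic image in $(\IZ,+)$ into an affine map $x\mapsto c+nx$, applied to $F=\{0,1,\dots,k-1\}$.
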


One of brightest recent results of Ramsey Theory is the Green-Tao Theorem \cite{GT} which says that the set of prime numbers $P$ contains arbitrarily long arithmetic progressions. It should be mentioned that this theorem cannot be derived from Corollary~\ref{c:Wan} as the set of primes is Solecki null, as shown in the following example.

\begin{example} The set  of prime numbers $P$ is Solecki null in the additive group of integers $\IZ$.
\end{example}

\begin{proof} Let $P=\{p_k\}_{k=1}^\infty$ be the increasing enumeration of prime numbers. For every $k\in\IN$ let $n_k=p_1\cdots p_k$ be the product of first $k$ prime numbers. Let us recall \cite[\S5.5]{HW} that the Euler function $\phi:\IN\to\IN$ assigns to each $n\in \IN$ the number of positive integers $k\le n$ which are relatively prime with $n$.
It is well-known that $\phi(p)=p-1$ for each prime number $p$ and by the multiplicativity of the Euler function, $\phi(n_k)=\phi(p_1\cdots p_k)=\prod_{i=1}^k(p_i-1)$ for every $k\in\IN$.
By Merten's Theorem \cite[\S22.8]{HW}, $$\lim_{k\to\infty}\frac{\phi(n_k)}{n_k}=\lim_{k\to\infty}\prod_{i=1}^k\Big(1-\frac1{p_i}\Big)=0.$$

Observe that for every $k\in\IN$ the set $A_k=\bigcup_{i=1}^kp_i\IZ$ coincides with the set of numbers which are not relatively prime with $n_k=p_1\cdots p_k$. Consequently, for the finite set $F_k=\{n\in\IZ:0<n\le n_k\}$ we get $|F_k\setminus A_k|=\phi(n_k)$. Observe that for every $x\in n_k\IZ$ the equality $x+A_k=A_k=-x+A_k$ implies $|(x+F_k)\setminus A_k|=|F_k\setminus (-x+A_k)|=\phi(n_k)$.
Since the set $P_k=P\setminus \{p_1,\dots,p_k\}$ is contained in $\IZ\setminus A_k$, we have an upper bound $|(x+F_k)\cap P_k|\le |(x+F_k)\setminus A_k|=\phi(n_k)$ for every $x\in n_k\IZ$. Given any integer number $y$, find an integer number $a\in\IZ$ such that $an_k<y\le (a+1)n_k$ and observe that $y+F_k\subset (an_k+F_k)\cup((a+1)n_k+F_k)$. Consequently,
$|(y+F_k)\cap P_k|\le |(an_k+F_k)\cap P_k|+|((a+1)n_k+F_k)\cap P_k)|\le 2\phi(n_k)$ and finally
$|(y+F_k)\cap P|\le|\{p_1,\dots,p_k\}|+|(y+F_k)\cap P_k|\le k+2\phi(n_k)$.

Applying Merten's Theorem \cite[\S22.8]{HW}, we  get the upper bound  $$\sigma(P)\le \inf_{k\in\IN}\sup_{y\in\IZ}\frac{|(y+F_k)\cap P|}{|F_k|}\le\lim_{k\in\IN}
\Big(\frac{k}{n_k}+2\frac{\phi(n_k)}{n_k}\Big)\le 0+2\lim_{k\to\infty}\prod_{i=1}^k\Big(1-\frac1{p_i}\Big)=0$$which implies the desired equality $\sigma(P)=0$.
\end{proof}

\section{The Solecki submeasure of subsets of small cardinality in groups}

In this section we shall evaluate the Solecki submeasure of sets of small cardinality in infinite groups. We start with two trivial propositions.

\begin{proposition}\label{finite} Each finite subset $A$ of an infinite group $G$ is Solecki null.
\end{proposition}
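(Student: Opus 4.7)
The statement follows directly from Solecki's reformulation of $\sigma$ in terms of finite sets (Theorem~\ref{sol}), which says
\[
\sigma(A)=\inf_{F\in[G]^{<\w}}\sup_{x,y\in G}\frac{|F\cap xAy|}{|F|}.
\]
The plan is simply to exploit the fact that left and right translations in a group are bijections, so a translated copy $xAy$ of a finite set $A$ has the same cardinality as $A$, and therefore cannot cover a large proportion of a sufficiently large finite set $F$.

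More precisely, fix $A\subset G$ finite and set $n=|A|$. For any finite set $F\subset G$ and any $x,y\in G$ the map $a\mapsto xay$ is a bijection from $A$ onto $xAy$, so $|xAy|=n$ and hence
\[
\frac{|F\cap xAy|}{|F|}\le \frac{|xAy|}{|F|}=\frac{n}{|F|}.
\]
Given $\e>0$, infiniteness of $G$ allows us to choose $F\in[G]^{<\w}$ with $|F|\ge n/\e$; then $\sup_{x,y\in G}|F\cap xAy|/|F|\le\e$, and Theorem~\ref{sol} yields $\sigma(A)\le\e$. Since $\e>0$ was arbitrary, $\sigma(A)=0$.

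There is no real obstacle here: the only ingredients are the measure-free characterization of $\sigma$ via finite sets (already available to us) and the elementary observation that bijective translations preserve cardinality.
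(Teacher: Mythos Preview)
Your proof is correct and essentially identical to the paper's: both use the finite-set formulation of $\sigma$, bound $|F\cap xAy|\le |xAy|=|A|$, and choose $|F|$ large enough to make the ratio less than any given $\e$. The paper's version is slightly more terse but the argument is the same.
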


\begin{proof} Given any $\e>0$ take a finite subset $F\subset G$ of cardinality $|F|>|A|/\e$ and observe that $\sup_{x,y\in G}\frac{|F\cap xAy|}{|F|}\le\frac{|A|}{|F|}<\e$. So, $\sigma(A)=0$.
\end{proof}

\begin{proposition}\label{small-solnul} Any subset $A\subset G$ of cardinality $|A|<|G|$ in an infinite group $G$ is right-Solecki null in $G$.
\end{proposition}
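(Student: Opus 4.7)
The plan is to exhibit, for each $n\in\IN$, a finite set $F\subset G$ of cardinality $n$ with $|F\cap Ay|\le 1$ for every $y\in G$; this yields $\sigma^R(A)\le 1/n$, and letting $n\to\infty$ gives $\sigma^R(A)=0$. The finite case $|A|<\w$ is already covered: Proposition~\ref{finite} gives $\sigma(A)=0$, and $\sigma^R\le\sigma$. So I may assume $\w\le|A|<|G|$.

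I will build $F=\{f_1,\dots,f_n\}$ by induction on $k\le n$, maintaining the invariant that the left translates $f_1^{-1}A,\dots,f_k^{-1}A$ are pairwise disjoint. A direct unwinding of the definitions shows that $f_{k+1}^{-1}A\cap f_i^{-1}A\ne\emptyset$ is equivalent to $f_{k+1}\in AA^{-1}f_i$. Hence the only constraint at step $k+1$ is $f_{k+1}\notin\bigcup_{i=1}^{k} AA^{-1}f_i$. Since $A$ is infinite, $|AA^{-1}|\le|A|^2=|A|$, so the forbidden set has cardinality at most $k\cdot|A|=|A|<|G|$, and a valid $f_{k+1}$ can always be chosen.

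Once disjointness of $\{f_i^{-1}A\}_{i\le n}$ is secured, for each $y\in G$ there is at most one index $i$ with $y\in f_i^{-1}A$, equivalently at most one $i$ with $f_iy\in A$; this gives $|Fy\cap A|\le 1$ for every $y\in G$. Using the bijection $z\mapsto zy$ between $F\cap Ay^{-1}$ and $Fy\cap A$, we conclude $\sup_{y\in G}|F\cap Ay|/|F|\le 1/n$, i.e., $\sigma^R(A)\le 1/n$. The only subtle point is the cardinal estimate $|AA^{-1}|=|A|$, which uses $A$ infinite; otherwise the argument is a straightforward greedy packing of left-translates of $A$, in the same spirit as the $|Fy\cap A|\le 1$ computation in Example~\ref{not-ideal}.
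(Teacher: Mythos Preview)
Your proof is correct and takes a genuinely different route from the paper's. The paper splits into cases by the cardinality of $G$: if $G$ is countable then $A$ is finite and Proposition~\ref{finite} applies; if $G$ is uncountable, the paper passes to the subgroup $H=\langle A\rangle$, notes $|H|\le\max\{|A|,\aleph_0\}<|G|$, and invokes Proposition~\ref{zero-index} (subgroups of infinite index are right-Solecki null) to conclude $\sigma^R(A)\le\sigma^R(H)=0$.

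Your argument instead builds the witnessing finite sets directly: a greedy choice of $f_1,\dots,f_n$ keeping the translates $f_i^{-1}A$ pairwise disjoint, using only the cardinality bound $|AA^{-1}|\le|A|^2=|A|<|G|$. This is more elementary and entirely self-contained; it avoids the paper's forward reference to Proposition~\ref{p7.1n} (on which the relevant implication in Proposition~\ref{zero-index} depends). The paper's route, on the other hand, is shorter once that machinery is in place and highlights the structural reason (infinite-index subgroups are null). One small remark: in your inductive step you should note that the disjointness constraint automatically forces the $f_i$ to be distinct when $A\ne\emptyset$, so $|F|=n$ as required; this is implicit but worth a word.
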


\begin{proof} If the group $G$ is countable, then the conclusion follows from Proposition~\ref{finite}
and Theorem~\ref{subadit-sol}.
If $G$ is uncountable, then subgroup $H$ generated by $A$ has cardinality $|H|\le\max\{|A|,\aleph_0\}<|G|$ and hence has infinite index in $G$. By Proposition~\ref{zero-index}, the subgroup $H$ (and its subset $A$) is right-Solecki null.
\end{proof}

\begin{remark}\label{pathology} Example~\ref{permutation} implies that for each infinite cardinal $\kappa$ there is a locally finite (and hence amenable) group $G$ of cardinality $|G|=\kappa$ containing a countable subgroup $H\subset G$ which is Solecki one and right-Solecki null. This shows that Proposition~\ref{finite} cannot be generalized to uncountable group.
\end{remark}

However, Theorem~\ref{t-FC} and Proposition~\ref{small-solnul} imply:

\begin{corollary}\label{FC-zero} Any subset $A$ of cardinality $|A|<|G|$ in an infinite FC-group $G$ is  Solecki null.
\end{corollary}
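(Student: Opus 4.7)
The plan is to combine the two preceding results cited explicitly in the paragraph introducing the corollary: Theorem~\ref{t-FC}(2) and Proposition~\ref{small-solnul}. There is essentially no new work to do — the corollary is a direct composition of the two.

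First I would apply Proposition~\ref{small-solnul} to the subset $A$ of the infinite group $G$. Since $|A|<|G|$ and $G$ is infinite, this proposition yields immediately that $\sigma^R(A)=0$, i.e.\ $A$ is right-Solecki null. Note that the FC-hypothesis is not used at this step; Proposition~\ref{small-solnul} applies to every infinite group.

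Next I would invoke the assumption that $G$ is an FC-group. Theorem~\ref{t-FC}(2) asserts that for every FC-group one has the chain of equalities
\[
\sigma_L=\sigma^L=\sigma=\sigma^R=\sigma_R
\]
as functions on $\mathcal P(G)$. In particular, $\sigma(A)=\sigma^R(A)$ for the specific set $A$ under consideration. Combining this with the equality $\sigma^R(A)=0$ obtained in the previous step gives $\sigma(A)=0$, which by definition means that $A$ is Solecki null.

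There is no real obstacle here: both ingredients have been proved earlier in the paper, and the role of the FC-hypothesis is solely to upgrade ``right-Solecki null'' to ``Solecki null'' via the coincidence $\sigma=\sigma^R$. The only thing worth flagging is that without the FC-hypothesis one cannot hope for such a conclusion, as Remark~\ref{pathology} already exhibits a countable subgroup of an uncountable locally finite (hence amenable) group that is Solecki one while being right-Solecki null; so the FC-assumption is genuinely what allows the cardinality bound on $A$ to control $\sigma(A)$ rather than merely $\sigma^R(A)$.
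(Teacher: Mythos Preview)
Your proposal is correct and matches the paper's own argument exactly: the corollary is stated as an immediate consequence of Theorem~\ref{t-FC} and Proposition~\ref{small-solnul}, combining $\sigma^R(A)=0$ from the latter with $\sigma=\sigma^R$ on FC-groups from the former.
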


A similar result holds also for compact Hausdorff topological groups. {\em All compact topological groups considered in this section are Hausdorff}. By $\cov(\M)$ (resp. $\cov(\E)$) we denote the smallest cardinality of a cover of an infinite compact metrizable group by meager subsets (resp. closed Haar null sets). It is known  that $\w_1\le\cov(\M)\le\cov(\E)\le\mathfrak c$ and the position of the cardinals $\cov(\M)$ and $\cov(\E)$ in the interval $[\w_1,\mathfrak c]$ depends on additional set-theoretic axioms (see \cite{BJ}, \cite{BS}). By \cite[7.13]{Blass}, the equality $\cov(\M)=\mathfrak c$ is equivalent to Martin's Axiom for countable posets.

\begin{theorem}\label{compact-zero} If a group $G$ admits a homomorphism $h:G\to H$ onto an infinite compact topological group $H$, then each subset $A\subset G$ of cardinality $|A|<\cov(\E)$ is Solecki null.
\end{theorem}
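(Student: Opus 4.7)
The plan is to reduce the statement, via Proposition~\ref{p2.3}, to the case where $G=H$ is an infinite compact metrizable topological group and $B=h(A)\subset H$ has $|B|<\cov(\E)$; and then to show $\sigma(B)=0$, which yields $\sigma(A)=0$. To show $\sigma(B)=0$, for every $n\in\IN$ I would construct a finite set $F=\{f_1,\ldots,f_n\}\subset H$ with the property that $|F\cap xBy|\le 1$ for all $x,y\in H$; such an $F$ gives $\sigma(B)\le 1/n$, and letting $n\to\infty$ finishes the proof.

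For the reduction, Proposition~\ref{p2.3} together with monotonicity gives $\sigma(A)\le\sigma(h^{-1}(h(A)))=\sigma(h(A))$, and $|h(A)|\le|A|<\cov(\E)$. Every infinite compact Hausdorff topological group admits a continuous surjective homomorphism onto an infinite compact metrizable topological group (via the Peter--Weyl theorem), so a second application of Proposition~\ref{p2.3} further reduces to the compact metrizable setting, which is where the cardinal $\cov(\E)$ is defined.

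For the construction, observe that two distinct points $f_i,f_j\in H$ lie in a common translate $xBy$ if and only if $f_if_j^{-1}$ is conjugate in $H$ to an element of the difference set $D:=BB^{-1}$; note $|D|\le|B|^2<\cov(\E)$. Setting $N=\bigcup_{d\in D}\{gdg^{-1}:g\in H\}$, each conjugacy class composing $N$ is closed in $H$ as the continuous image of the compact group $H$. I would pick $f_1,\ldots,f_n$ inductively: at step $i$ the forbidden set is $\bigcup_{j<i}Nf_j$, a union of at most $(i-1)|D|<\cov(\E)$ closed subsets of $H$ (translates of conjugacy classes). Provided each such translate is Haar null, by the very definition of $\cov(\E)$ the union cannot cover $H$, so $f_i$ can be selected in the complement.

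The main obstacle is ensuring this Haar nullness. It holds cleanly when $H$ is a compact connected Lie group (conjugacy classes are positive-codimension submanifolds) or a compact profinite group (the ratio $|C_{d_n}|/|H_n|$ tends to $0$ along any cofinal family of finite quotients whenever the class is infinite), but not in general: e.g.\ in $\IT\rtimes\IZ_2$ the conjugacy class of a reflection is the coset $\IT\times\{1\}$ of Haar measure $1/2$. The remedy is to run the inductive construction of $F$ inside a carefully chosen closed subgroup $H'\subset H$ (for instance the connected identity component $H_0$ when it is infinite, or all of $H$ in the profinite case) inside which the $H$-conjugacy classes of elements of $D$ that meet $H'$ are Haar null in $H'$. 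The structure theorem for compact groups --- in particular the exact sequence $1\to H_0\to H\to H/H_0\to 1$ with $H_0$ connected compact and $H/H_0$ compact profinite --- supplies such an $H'$ in every case, completing the plan.
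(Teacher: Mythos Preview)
Your reduction via Proposition~\ref{p2.3} and the key observation --- that two points $f_i,f_j$ lie in a common two-sided translate $xBy$ iff $f_if_j^{-1}$ is $H$-conjugate to an element of $BB^{-1}$ --- are correct and coincide with the paper's approach (this is exactly the content of Lemma~\ref{Lie}). The dichotomy ``$H_0$ infinite'' versus ``$H$ profinite'' is also the right one.

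The gap is in the profinite case. Your assertion that for a compact profinite group ``the ratio $|C_{d_n}|/|H_n|$ tends to $0$ along any cofinal family of finite quotients whenever the class is infinite'' is \emph{false}. Take the profinite dihedral group $H=\IZ_2\ltimes\hat\IZ=\varprojlim D_n$ (with $\IZ_2$ acting by inversion): the conjugacy class of the reflection $s$ is $\{r^{b}s:b\in 2\hat\IZ\}$, which has Haar measure $1/4$; in each finite quotient $D_n$ the class of $s$ has density $1/2$ or $1/4$, never tending to $0$. So your inductive avoidance argument breaks down, and in such groups one cannot in general find an $n$-element $F$ with $|F\cap xBy|\le 1$ for all $x,y$. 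This is precisely the phenomenon you flagged for $O(2)$, and it persists in the profinite world.

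The paper's fix is to use $3$-element test sets $T$ in the profinite case (Lemma~\ref{l3.9}) rather than $2$-element ones. A direct count in a large finite quotient $H'$ shows that the set of $n$-tuples containing some two-sided translate of a fixed $3$-set has cardinality at most $n(n-1)(n-2)\,|H'|^{n-1}$, hence density $O(1/|H'|)$; the analogous count for $2$-sets gives only $n(n-1)\,|H'|^{n}$, which is useless. This yields an $F$ with $|F\cap xAy|\le 2$ rather than $\le 1$, still giving $\sigma(A)\le 2/n\to 0$.

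A smaller point: in the non-profinite case the paper first passes to a Lie quotient (via \cite[9.1]{HM}) \emph{before} restricting to the identity component, because in a general compact group $H_0$ need not be a Lie group, and the Haar-nullness of conjugacy classes in $H_0$ is most cleanly available in the Lie setting.
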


\begin{proof} We divide the proof of this theorem into a series of lemmas. In the proofs of these lemmas we shall use a well-known fact \cite{vN} that each compact topological group $G$ carries a Haar measure $\lambda$  (i.e., the unique invariant probability regular $\sigma$-additive measure $\lambda$ defined on the $\sigma$-algebra of Borel subsets of $G$). A subset $A\subset G$ will be called {\em Haar null} if $\lambda(A)=0$.

\begin{lemma} For any finite subset $T$ of a compact topological group $G$ and any $n\in\IN$ the set
$$G^n_T=\big\{(x_1,\dots,x_n)\in G^n:\exists x,y\in G\;\;xTy\subset\{x_1,\dots,x_n\}\big\}$$is closed in $G^n$.
\end{lemma}

\begin{proof} The set $G^n_T$ is closed being the continuous image of the closed subset
$$\big\{(x_1,\dots,x_n,x,y)\in G^n\times G^2:xTy\subset \{x_1,\dots,x_n\}\big\}$$
of the compact Hausdorff space $G^n\times G^2$.
\end{proof}

\begin{lemma}\label{Lie} For any $2$-element subset $T$ of an infinite connected compact Lie group $G$ and every $n\ge 2$ the closed set $G^n_T$ is Haar null in the compact topological group $G^n$.
\end{lemma}

\begin{proof} Replacing the set $T$ by a suitable shift, we can assume that $T$ contains the unit $1_G$ of the group $G$. In this case $T=\{1_G,t\}$ for some element $t\in G\setminus\{1_G\}$. Observe that a subset $\{x_1,\dots,x_n\}$ contains a shift $xTy$ for some $x,y\in G$ if and only if there are two distinct indices $1\le i,j\le n$ such that $x_i=xy$ and $x_j=xty$. In this case $x_jx^{-1}_i=xtyy^{-1}x^{-1}=xtx^{-1}\in t^G$. The conjugacy class $t^G$, being a closed submanifold of $G$ is Haar null. Then the set $G^n_T$ also is Haar null, being the finite union $G^n_T=\bigcup_{i\ne j}\big\{(x_1,\dots,x_n)\in G^n:x_jx^{-1}_i\in t^G\}$ of Haar null sets.
\end{proof}

\begin{remark} The connectedness of the Lie group $G$ in Lemma~\ref{Lie} is essential as shown by the example of the orthogonal group $G=O(2)$. It is easy to check that for any 2-element set $T=\{1_G,t\}\subset O(2)$ containing the unit $1_G$ and a reflection $t\in O(2)\setminus SO(2)$ (i.e., an orientation reversing isometry of $\mathbb R^2$) the set $G^2_T$ has Haar measure $\lambda(G^2_T)=\frac12$.
\end{remark}

A topological group $G$ is called {\em profinite} if it embeds into a Tychonoff product of finite groups.

\begin{lemma}\label{l3.9} For any $3$-element set $T$ in an infinite profinite compact topological group $G$ and any $n\ge 3$ the closed set $G^n_T$ is Haar null in $G^n$.
\end{lemma}

\begin{proof} It suffices to show that the set $G^n_T$ has Haar measure $\lambda(G^n_T)<\e$ for any $\e>0$. Since the group $G$ is infinite and profinite, there is a continuous surjective homomorphism $h:G\to H$ onto a finite group $H$ of cardinality $|H|>n(n-1)(n-2)/\e$ such that the restriction $h|T$ is injective. Then the subset $T'=h(T)$ of the group $H$ has cardinality $|T'|=3$. The homomorphism $h$ induces a homomorphism $h^n:G^n\to H^n$, $h^n:(x_1,\dots,x_n)\mapsto (h(x_1),\dots,h(x_n))$.

Observe that $h^n(G_T^n)\subset H^n_{T'}$, which implies that the Haar measure of $G^n_T$ does not exceed the Haar measure of $H^n_{T'}$. Taking into account that
$$
\begin{aligned}
H^n_{T'}&=\big\{(x_1,\dots,x_n)\in H^n:\exists x,y\in H\;xT'y\subset\{x_1,\dots,x_n\}\big\}=\\
&=\bigcup_{x,y\in H}\bigcup_{1\le i<i<k\le n}\{(x_1,\dots,x_n)\in H^n:xT'y=\{x_i,x_j,x_k\}\big\}
\end{aligned}
$$
and
$$\big|\{(x_1,\dots,x_n)\in H^n:xT'y=\{x_i,x_j,x_k\}\big\}\big|=6\cdot|H|^{n-3}$$for all $x,y\in H$ and $1\le i<j<k\le n$, we conclude that $$H^n_{T'}\le|H|^2\cdot\binom{n}{3}\cdot 6\cdot|H|^{n-3}=n(n-1)(n-2)\cdot|H|^{n-1}<\e\cdot|H|^n.$$ Consequently the sets $H^n_{T'}$ and $G^n_T$ have Haar measure $<\e$ in the groups $H^n$ and $G^n$, respectively.
\end{proof}

\begin{lemma}\label{l4.10n} If a group $G$ admits a homomorphism $h:G\to H$ onto an infinite compact topological group $H$, then for each subset $A\subset G$ of cardinality $|A|<\cov(\E)$  and every $n\ge3$ there is an $n$-element set $F\subset G$ such that $|F\cap xAy|\le 2$ for all $x,y\in G$. Consequently, $\sigma(A)=0$.
\end{lemma}

\begin{proof} Fix $n\ge 3$ and a subset $A\subset G$ of cardinality $|A|<\cov(\E)$. Depending on the properties of the compact group $H$ we shall separately consider two cases.
\smallskip

1. The infinite compact group $H$ is profinite. In this case $H$ admits a homomorphism onto a infinite metrizable profinite compact topological group. So, we lose no generality assuming that the group $H$ is metrizable. Given any subset $A\subset G$ of cardinality $|A|<\cov(\E)$, consider its image $B=h(A)\subset H$. Then the family $[B]^3$ of all 3-element subsets of $B$ has cardinality $|[B]^3|<\cov(\E)$.  By Lemma~\ref{l3.9}, for every $T\in[B]^3$ the set $H^n_T$ is closed and Haar null in the compact group $H^n$.
Since the diagonal of the square $H\times H$ is a subgroup of infinite index in $H\times H$, it has  Haar measure zero in $H\times H$. This fact can be used to show that the set $$\Delta H^{n}=\big\{(x_1,\dots,x_n)\in H^n:|\{x_1,\dots,x_n\}|<n\big\}$$ is closed and Haar null in the compact topological group $H^n$.
Since $|[B]^3|<\cov(\E)$, the union $\Delta H^{n}\cup \bigcup_{T\in[B]^3}H^n_T$ does not cover the compact metrizable group $H^n$. So, we can find a vector $(x_1,\dots,x_n)\in H^n$ which does not belong to this union. Since $(x_1,\dots,x_n)\notin \Delta H^n$, the set $F'=\{x_1,\dots,x_n\}$ has cardinality $|F'|=n$. We claim that $|F'\cap xBy|\le 2$ for any points $x,y\in H$. Assuming the converse, we can find a 3-element subset $T\subset B$ such that $xTy\subset F'$ for some $x,y\in H$. But this contradicts the choice of the vector $(x_1,\dots,x_n)\notin H^n_T$.

Choose any finite set $F\subset G$ such that the restriction $h|F:F\to F'$ is a bijective map. Then for any points $x,y\in G$ we get $|F\cap xAy|\le |F\cap xh^{-1}(B)y|=|F'\cap h(x)Bh(y)|\le 2$. It follows that $\sigma(A)\le \frac2{|F|}=\frac{2}n$ for all $n\ge 3$ and hence $\sigma(A)=0$.
\smallskip

2. The compact group $H$ is not profinite. In this case by \cite[9.1]{HM}, $H$ admits a continuous homomorphism onto an infinite Lie group and we lose no generality assuming that $H$ is an infinite Lie group. It follows that the connected component $L$ of the unit $1_H$ is an open normal subgroup of finite index in $H$ and hence $L$ is an infinite connected Lie group. Let $S\subset H$ be a finite subset such that $SL=H=LS$. Since the set $B=L\cap (S\cdot h(A)\cdot S)$ has cardinality $|B|\le|S|\cdot|A|\cdot|S|<\cov(\E)$, the family $[B]^2$ of all 2-element subsets of $B$ also has cardinality $|[B]^2|<\cov(\E)$. By Lemma~\ref{Lie}, for every  $T\in[B]^2$ the set $L^n_T$ is closed and Haar null in the connected Lie group $L^n$.
Since the set $\Delta L^{n}=\{(x_1,\dots,x_n)\in L^n:|\{x_1,\dots,x_n\}|<n\}$ is closed and Haar null in  $L^n$ and $|[B]^2|<\cov(\E)$, the union $\Delta L^{n}\cup \bigcup_{T\in[B]^2}L^n_T$ does not cover the compact metrizable group $L^n$. So, we can find a vector $(x_1,\dots,x_n)\in L^n$ which does not belong to this union. Since $(x_1,\dots,x_n)\notin \Delta L^n$, the set $F'=\{x_1,\dots,x_n\}$ has cardinality $|F'|=n$. We claim that $|F'\cap xh(A)y|\le 1$ for any points $x,y\in H$. Assuming the converse, we could find a 2-element set $T\subset h(A)$ such that $xTy\subset F'\subset L$ for some points $x,y\in H$. It follows from $H=SL=LS$ that $x=ua$ and $y=bv$ for some elements $a,b\in S$ and $u,v\in L$. On the other hand, $uaTbv=xTy\subset L$ implies $aTb\subset u^{-1}Lv^{-1}=L$ and  $aTb\subset L\cap Sh(A)S=B$.
Since $(x_1,\dots,x_n)\notin L^n_{aTb}$ we get $xTy=uaTbv\not\subset \{x_1,\dots,x_n\}=F'$, which is a desired contradiction showing that $|F'\cap xh(A)y|\le 1$ for all $x,y\in H$.

Choose any finite set $F\subset G$ such that the restriction $h|F:F\to F'$ is a bijective map. Then for any points $x,y\in G$ we get $|F\cap xAy|\le |F\cap xh^{-1}(h(A))y|=|F'\cap h(x)h(A)h(y)|\le 1$. It follows that $\sigma(A)\le \frac1{|F|}=\frac1n$ for all $n\ge 3$ and hence $\sigma(A)=0$.
\end{proof}
Lemma~\ref{l4.10n} completes the proof of Theorem~\ref{compact-zero}.
\end{proof}

Comparing Corollary~\ref{FC-zero} and Theorem~\ref{compact-zero} it is natural to ask:

\begin{question} Is $\sigma(A)=0$ for any subset $A$ of cardinality $|A|<|G|$ in an infinite (metrizable) compact topological group $G$?
\end{question}

Example~\ref{permutation} and Theorem~\ref{compact-zero} yield a measure-theoretic proof of the following known fact (for an alternative proof see \cite{BGP} and \cite{BG}).

\begin{corollary} The group $FS_X$ of finitely supported bijective transformations of an infinite set $X$ admits no homomorphism onto an infinite compact topological group.
\end{corollary}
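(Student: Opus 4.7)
The plan is to derive a contradiction from the existence of such a surjective homomorphism by exhibiting a subset of $FS_X$ which is simultaneously forced to be Solecki one and Solecki null.

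Suppose, toward a contradiction, that a surjective homomorphism $h:FS_X\to H$ exists, with $H$ an infinite compact topological group. First I would pick any countably infinite subset $E\subset X$ and consider the subgroup $FS_E\subset FS_X$. Since $E$ is infinite, Example~\ref{permutation} applies and gives $\sigma(FS_E)=1$. On the other hand, $FS_E$ is the union over $n\in\IN$ of the finite symmetric groups on $n$-element subsets of $E$, and is therefore countable, so $|FS_E|=\aleph_0$.

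Now I would invoke Theorem~\ref{compact-zero}: the hypothesis on $FS_X$ is exactly that it admits a homomorphism onto an infinite compact topological group. Since $\w_1\le\cov(\E)$ by the cardinal inequalities recalled in the paragraph preceding Theorem~\ref{compact-zero}, we have $|FS_E|=\aleph_0<\w_1\le\cov(\E)$. Theorem~\ref{compact-zero} then forces $\sigma(FS_E)=0$, contradicting $\sigma(FS_E)=1$ from the previous step. Hence no such homomorphism $h$ can exist.

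There is essentially no obstacle here beyond correctly lining up the two inputs; the conceptual content is already carried by Example~\ref{permutation} (which shows that the ``small'' subgroup $FS_E$ is Solecki-massive inside $FS_X$) and by Theorem~\ref{compact-zero} (which says that a compact quotient would make every small-cardinality subset Solecki null). The only point to double-check is that $\cov(\E)>\aleph_0$, which is immediate from $\cov(\E)\ge\w_1$. One could even replace $\cov(\E)$ by the bound $\w_1$ if desired, since only a countable $FS_E$ is needed for the contradiction.
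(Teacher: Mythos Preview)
Your proof is correct and follows exactly the route the paper intends: combine Example~\ref{permutation} (giving $\sigma(FS_E)=1$ for a countably infinite $E\subset X$) with Theorem~\ref{compact-zero} (forcing $\sigma(FS_E)=0$ under the assumed homomorphism, since $|FS_E|=\aleph_0<\cov(\E)$) to obtain a contradiction. The paper only sketches this by citing the two ingredients, and your write-up fills in precisely those details.
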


\section{The Solecki submeasure on non-meager topological groups}

In this section we study the properties of the Solecki submeasure on non-meager topological groups.
The topological homogeneity of a topological group $G$ implies that $G$ is non-meager if and only if $G$ is {\em Baire} in the sense that the intersection $\bigcap_{n\in\w}U_n$ of a countable family of open dense subsets of $G$ is dense in $G$.

\begin{proposition}\label{p5.1} Each dense $G_\delta$-subset $A$ of a non-meager topological group $G$ is right-Solecki one.
\end{proposition}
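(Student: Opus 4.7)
The plan is to reduce the claim to a Baire category argument. Unfolding Solecki's formula for $\sigma^R$ from Theorem~\ref{sol} (applied to the one-sided version), we have
\[
\sigma^R(A)=\inf_{F\in[G]^{<\w}}\sup_{y\in G}\frac{|F\cap Ay|}{|F|},
\]
so it suffices to show that for every finite subset $F\subset G$ there exists $y\in G$ with $F\subset Ay$, since then the supremum equals $1$ and, taking infimum over $F$, we get $\sigma^R(A)=1$.

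First I would reformulate the condition $F\subset Ay$ by setting $z=y^{-1}$: we need $Fz\subset A$, i.e., $xz\in A$ for every $x\in F$, i.e.,
\[
z\in\bigcap_{x\in F}x^{-1}A.
\]
So the task reduces to showing that the finite intersection $\bigcap_{x\in F}x^{-1}A$ is nonempty.

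Next I would use the structure of $A$ and the topological group structure of $G$. Since left translation $g\mapsto x^{-1}g$ is a homeomorphism of $G$, each set $x^{-1}A$ is again a dense $G_\delta$ subset of $G$. Write each $x^{-1}A$ as a countable intersection of open dense sets; then $\bigcap_{x\in F}x^{-1}A$ is a countable intersection of open dense subsets of $G$. Now I would invoke the standard fact that a non-meager topological group is Baire (this follows from topological homogeneity: if a space is topologically homogeneous and is not the countable union of nowhere dense sets, then the intersection of countably many open dense sets is dense). Therefore $\bigcap_{x\in F}x^{-1}A$ is dense, in particular nonempty, which yields the desired $z\in G$.

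There is essentially no main obstacle; the only conceptual point is the passage from \emph{non-meager} to \emph{Baire} for topological groups, which is where homogeneity is used. Once that is granted, the argument is a one-line Baire category application combined with left-invariance of the topology on $G$.
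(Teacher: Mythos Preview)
Your proof is correct and follows essentially the same approach as the paper's: both show that for every finite $F\subset G$ the intersection $\bigcap_{x\in F}x^{-1}A$ is nonempty by invoking the Baire property of non-meager topological groups, yielding a point $z$ with $Fz\subset A$. The only cosmetic difference is that the paper concludes via the characterization ``$A$ is thick iff $\sigma^R(A)=1$'' (Proposition~\ref{p:rizne}), whereas you unpack the formula for $\sigma^R$ directly; the substance is identical.
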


\begin{proof} Given a finite set $F\subset G$ observe that for each $x\in F$ the shift $x^{-1}A$ is a dense $G_\delta$-set in $G$. Since the topological group $G$ is Baire, the intersection $\bigcap_{x\in F}x^{-1}A$ is not empty and hence contains some point $y\in G$. For this point $y$ we get $Fy\subset A$, which means that $A$ is right-Solecki one according to Proposition~\ref{p:rizne}.
\end{proof}

Let us recall that a subset $A$ of a topological space $X$ has the {\em Baire Property} if for some open set $U\subset X$ the symmetric difference $A\triangle U=(A\setminus U)\cup (U\setminus A)$ is meager in $X$. It is known \cite[8.22]{Ke} that the family of sets with the Baire Property is a $\sigma$-algebra containing all Borel subsets of $X$. A topological group $G$ is called {\em totally bounded} if each non-empty open subset is large in $G$.

\begin{proposition}\label{p5.2} Each right-Solecki null set with Baire Property in a totally bounded topological group $G$ is meager. In particular, each Borel Solecki null set in $G$ is meager.
\end{proposition}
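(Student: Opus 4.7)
The plan is to prove the contrapositive: if $A$ has the Baire Property and is non-meager in $G$, then $\sigma^R(A) > 0$. The ``in particular'' statement then follows automatically, since every Borel set has the Baire Property and $\sigma^R \le \sigma$, so a Borel Solecki null set is right-Solecki null with the Baire Property, hence meager by the main assertion.

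First I would write the Baire Property of $A$ as $A = U \triangle M$ with $U \subset G$ open and $M \subset G$ meager. If $A$ is non-meager, then $U$ must be non-meager too (otherwise $A \subset U \cup M$ would be meager), so in particular $U$ is a non-empty open subset of $G$ and $G$ itself is non-meager, hence Baire by the standard homogeneity argument for topological groups. Applying total boundedness to $U^{-1}$ (also non-empty open) gives $G = F U^{-1}$ for some finite $F \subset G$; inverting both sides yields $G = U F^{-1}$, and after renaming we may write $G = UF$ with $F$ finite.

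The main step is to show $\sigma^R(A) \ge 1/|F|$. Fix a finite $K \subset G$. Using $G = UF$, decompose each $k \in K$ as $k = u_k x_k$ with $x_k \in F$, and by pigeonhole pick $x_0 \in F$ so that $K_0 := \{k \in K : x_k = x_0\}$ has $|K_0| \ge |K|/|F|$ and $K_0 x_0^{-1} \subset U$. The key observation is that the open set
$$W := \bigcap_{k \in K_0} U^{-1}k = \{y \in G : K_0 y^{-1} \subset U\}$$
contains $x_0$, hence is non-meager in the Baire space $G$; while
$$M_* := \bigcup_{k \in K_0} M^{-1}k = \{y \in G : K_0 y^{-1} \cap M \neq \emptyset\}$$
is a finite union of right-translates of the meager set $M^{-1}$ (inversion being a homeomorphism of $G$), and so $M_*$ is meager. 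Any $y \in W \setminus M_*$ then satisfies $K_0 y^{-1} \subset U \setminus M \subset A$, giving $|K \cap Ay| = |K y^{-1} \cap A| \ge |K_0| \ge |K|/|F|$. Since $K$ was arbitrary, $\sigma^R(A) \ge 1/|F| > 0$, contradicting $\sigma^R(A) = 0$.

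The main obstacle is getting the left/right conventions straight in a non-abelian setting: the definition of $\sigma^R$ controls only translates of the form $K y^{-1}$, so one must arrange the pigeonhole decomposition to produce $K_0 x_0^{-1} \subset U$ (which then varies into $K_0 y^{-1} \subset U$ as $y$ ranges over the open set $W$). Using $G = UF$ rather than the perhaps more obvious $G = FU$ is precisely what makes the two sides match. Once this is arranged, the Baire-category step of finding a $y \in W$ avoiding the meager bad set $M_*$ is routine.
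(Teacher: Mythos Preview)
Your proof is correct and follows essentially the same strategy as the paper: pass to the contrapositive, use the Baire decomposition $A=U\triangle M$, pull a finite $F$ with $G=UF$ from total boundedness, and then invoke Baire category to exhibit right-translates hitting $A$ in a large fraction of any finite test set. The only cosmetic difference is that the paper packages the final step by citing Proposition~\ref{p5.1} (dense $G_\delta$-sets are right-Solecki one) together with the inequality $\sigma^R(AF)\le|F|\cdot\sigma^R(A)$, whereas you unwind these results in place via the pigeonhole-on-$F$ argument; both routes deliver the same bound $\sigma^R(A)\ge 1/|F|$.
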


\begin{proof} Given a Solecki null set $A$ with the Baire Property in $G$, we need to show that $A$ is meager in $G$. Assume conversely that $A$ is not meager. In this case the topological group $G$ is not meager and hence is Baire. Since $A$ has the Baire Property in $G$, there is an open set $U\subset G$ such that the symmetric difference $A\triangle U$ is meager in $G$ and hence can be enlarged to a meager $F_\sigma$-set $M\subset G$. Since $A$ is not meager, the open set $U$ is not empty and hence is a   Baire space. Then the complement $U\setminus M$ is a dense $G_\delta$-set in $U$. The total boundedness of the group $G$ implies that $UF=G$ for some finite subset $F\subset G$. By Proposition~\ref{p5.1}, the dense $G_\delta$-set $(U\setminus M)F$ in $G$ is right-Solecki one. Now Proposition~\ref{semiadditive}(2) implies that the set $U\setminus M$ is right-Solecki positive,  which is a desired contradiction.
\end{proof}

Proposition~\ref{p5.2} cannot be reversed as shown by the following theorem proved by Solecki in \cite{Sol2001}. This theorem can be considered as a topological counterpart of Proposition~\ref{p2.6}.

\begin{theorem}[Solecki] Let $G$ be a non-locally compact Polish group whose topology is generated by an invariant metric. Then there exists a closed subset $F\subset G$ and a continuous map $f:F\to \{0,1\}^\w$ such that for each $y\in\{0,1\}^\w$ the preimage $f^{-1}(y)$ is thick and hence  right-Solecki one in $G$.
\end{theorem}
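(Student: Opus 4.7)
The plan is to construct a Cantor-scheme inside $G$ whose branches correspond to elements of $\{0,1\}^\w$ and whose resulting fibers are each arranged to contain translates of all prescribed finite subsets of $G$. Fix a complete two-sided invariant metric $d$ on $G$ and a countable dense subset $D\subset G$. The hypothesis that $G$ is not locally compact provides the key geometric freedom: every open ball in $G$ is non-totally-bounded, so within any ball one can exhibit, for any positive $\eta$, an infinite $\eta$-separated set.

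First, I would enumerate all pairs $(E,s)\in[D]^{<\w}\times\{0,1\}^{<\w}$ as $\{(E_n,s_n)\}_{n\in\w}$, so that each pair appears infinitely often and $|s_n|\le n$. I would then build, by recursion on $n$, a point $z_n\in G$, an extension $t_n\in\{0,1\}^{n-|s_n|}$, and a finite ``patch'' $P_n:=E_n\cdot z_n\subset G$, subject to the following conditions: the patches $P_1,\dots,P_n$ are pairwise disjoint and uniformly $\varepsilon_n$-separated; $P_n$ is contained in a pre-designated open set $U_{s_n\frown t_n}\subset G$, where the collection $\{U_s\}_{s\in\{0,1\}^{<\w}}$ is a Cantor-scheme with $U_{s\frown 0}\cup U_{s\frown 1}\subset U_s$ and diameters shrinking to $0$ along each branch; and the sequence $(z_n)$ is Cauchy along every branch of the scheme. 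Non-local compactness is invoked at each recursive step to provide room for $z_n$ in the residual open region inside $U_{s_n\frown t_n}$.

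Having completed the recursion, let $F$ be the closure of $\bigcup_n P_n$. The separation of patches together with the shrinking of cylinders ensure that $F$ adds to $\bigcup_n P_n$ exactly the branch-limit points $a_y\in\bigcap_n\overline{U_{y|_n}}$, one per $y\in\{0,1\}^\w$. Define $f\colon F\to\{0,1\}^\w$ by sending each point of $P_n$ to the unique branch whose address extends $s_n\frown t_n$ (determined by which deeper cylinders contain the point) and sending each $a_y$ to $y$; this is well-defined and continuous by the Cantor-scheme structure. For every $y$ and every $E\in[D]^{<\w}$, the infinitely many stages with $s_n\prec y$ and $E_n=E$ produce translates $E\cdot z_n\subset f^{-1}(y)$, so each fiber contains a translate of every finite subset of $D$.

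The main obstacle is promoting this to full thickness of $f^{-1}(y)$ with respect to arbitrary finite subsets of $G$. Given $F_0\in[G]^{<\w}$, one selects a nearby $E\in[D]^{<\w}$ together with a bijection $\phi\colon F_0\to E$ of small pointwise displacement; invariance of $d$ shows that for any $z$ the translate $F_0\cdot z$ is pointwise $d$-close to $E\cdot z$ by the same displacements, but this closeness alone does not deliver an exact translate inside a fiber. To convert the approximation into exact containment, I would thread an extra degree of freedom through the recursive construction: at each stage with $E_n=E$, the point $z_n$ is chosen from within the non-totally-bounded residual region so that it simultaneously realizes exact translates of $E$ and of the finitely many arbitrary $F_0\in[G]^{<\w}$ that have been listed up to stage $n$ via an auxiliary enumeration. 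Dovetailing these exact-translate requirements with closedness of $F$, continuity of $f$, and Cauchy convergence along branches is the most delicate technical piece; once it is executed, the ``right-Solecki one'' conclusion for each fiber follows from Proposition~\ref{p:rizne}(2).
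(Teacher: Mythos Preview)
The paper does not prove this theorem; it is stated with attribution to Solecki and a citation to \cite{Sol2001}, so there is no in-paper argument to compare against.

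Evaluated on its own, your proposal has a structural incompatibility. You require each patch $P_n=E_n\cdot z_n$ to lie inside a cylinder $U_{s_n\frown t_n}$ of a Cantor scheme whose diameters shrink to $0$ along each branch. But by two-sided invariance of $d$, the diameter of $E_n\cdot z_n$ equals the diameter of $E_n$, and the sets $E_n$ must exhaust $[D]^{<\w}$ and hence have unbounded diameter. So from some depth on, the patch simply cannot fit into its assigned cylinder. This is not a side issue: the shrinking cylinders are precisely what you invoke to control the closure of $F$ (so that it adds only the branch-limit points $a_y$) and the continuity of $f$; dropping that control undoes the rest of the scheme. Any workable construction must obtain closedness of $F$ and continuity of $f$ by \emph{separating} the patches rather than by \emph{confining} them.

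You also explicitly defer ``the most delicate technical piece,'' namely upgrading from translates of finite subsets of the countable dense set $D$ to translates of arbitrary finite subsets of $G$. Invariance of $d$ and density of $D$ yield only approximate translates inside a fiber, not exact ones, and a set containing arbitrarily good approximate translates of every finite set need not be thick. This step requires a genuine construction, not an appeal to ``threading an extra degree of freedom.'' In sum, the proposal identifies the right raw ingredients (non-local-compactness to produce separated points, a binary tree to index the map to $\{0,1\}^\w$), but the specific Cantor-scheme packaging is incompatible with the thickness requirement, and the argument as written is incomplete.
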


\section{The Solecki submeasure versus the Haar submeasure on groups}\label{Bohr}

In this section we shall prove that the Solecki submeasure does not exceed the Haar submeasure. The Haar submeasure can be defined on each group with help of its Bohr compactification. The {\em Bohr compactification} of a group $G$ is a pair $(bG,\eta)$ consisting of a compact Hausdorff topological group $bG$ and a homomorphism $\eta:G\to bG$ such that for each homomorphism $f:G\to K$ to a compact topological group $K$ there is a unique continuous homomorphism $\bar f:bG\to K$ such that $f=\bar f\circ \eta$. The uniqueness of $\bar f$ implies that the subgroup $\eta(G)$ is dense in the compact topological group $bG$.

It is well-known that each group $G$ has a Bohr compactification, which is unique up to an isomorphism, see \cite[\S3.1]{BJM}. There are groups with trivial Bohr compactification. For example, so is the permutation group $S_X$ of an infinite set $X$ (this can be derived from \cite{Gau}, \cite{DS} or \cite{BGP}).

A subset $U\subset G$ of a group $G$ is called {\em Bohr open} if $U=\eta^{-1}(V)$ for some open subset $V\subset bG$. Bohr open subsets of a group $G$ form a topology called the {\em Bohr topology} on $G$. This is the largest totally bounded group topology on $G$. This topology needs not be Hausdorff. For example, the Bohr topology on the permutation group $S_X$ of an infinite set $X$ is anti-discrete.

The Bohr compactification $bG$, being a compact Hausdorff topological group, carries the Haar measure $\lambda$.
We recall that the {\em Haar measure} on a compact topological group $K$ is the unique invariant regular probability $\sigma$-additive measure $\lambda:\mathcal B(K)\to[0,1]$ defined on the $\sigma$-algebra $\mathcal B(K)$ of all Borel subsets of $K$. The {\em regularity} of $\lambda$ means that $$\lambda_*(B)=\lambda(B)=\lambda^*(B)$$ for each Borel subset $B$ of $K$. Here
$$
\lambda_*(B)=\sup\{\lambda(F):\mbox{$F\subset B$ is closed in $K$}\}\mbox{ \ and \ }
\lambda^*(B)=\inf\{\lambda(U):\mbox{$U\supset B$ is open in $K$}\}
$$
are the {\em lower} and {\em upper Haar measures} of a set $B\subset K$.

For each group $G$ the Haar measure $\lambda$ on its Bohr compactification $bG$ induces the {\em Haar submeasure} $$\bar \lambda:\mathcal P(G)\to[0,1],\;\;\bar\lambda:A\mapsto \lambda(\overline{\eta(A)}),$$ on $G$, assigning to each subset $A\subset G$ the Haar measure $\lambda(\overline{\eta(A)})$ of the closure of its image $\eta(A)$ in $bG$.

The Solecki and Haar submeasures relate as follows.

\begin{theorem}\label{bohr} Each subset $A$ of a group $G$ has Solecki submeasure $\sigma(A)\le\bar\lambda(A)$.
\end{theorem}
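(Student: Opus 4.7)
My plan is to fix $\e>0$ and construct a finitely supported probability measure $\mu\in P_\w(G)$ such that $\mu(xAy)<\bar\lambda(A)+\e$ for all $x,y\in G$; by the very definition of the Solecki submeasure this will give $\sigma(A)\le\bar\lambda(A)$. Writing $K=\overline{\eta(A)}$, so that $\lambda(K)=\bar\lambda(A)$, the outer regularity of the Haar measure on the compact topological group $bG$ provides an open set $U\subset bG$ with $K\subset U$ and $\lambda(U)<\lambda(K)+\e$.

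The core of the argument is to approximate $\lambda$ by a finitely supported measure whose support lies in $\eta(G)$ in a way compatible with all bi-translates of $K$. I first use the compactness of $K$ and the continuity of multiplication on $bG$ to extract an open neighborhood $W\ni 1_{bG}$ with $WK\subset U$. The crucial step is then to pass from $W$ to an open neighborhood $V\ni 1_{bG}$ satisfying $s^{-1}Vs\subset W$ for every $s\in bG$; this is the standard fact that in a compact group every neighborhood of the identity contains a conjugation-invariant neighborhood, proved by applying a finite subcover argument to the continuous conjugation map $(s,v)\mapsto s^{-1}vs$. With such $V$ in hand, the invariance of $\lambda$ yields
$$\lambda(VsKt)\le\lambda(sWKt)\le\lambda(sUt)=\lambda(U)\quad\text{for all }s,t\in bG,$$
because $Vs=s(s^{-1}Vs)\subset sW$.

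Next, I pick a symmetric open neighborhood $V_0\ni 1_{bG}$ with $V_0V_0\subset V$, cover the compact group $bG$ by finitely many left translates $V_0h_1,\dots,V_0h_N$, and exploit the density of $\eta(G)$ in $bG$ to select $g_i\in G$ with $\eta(g_i)\in V_0h_i$. A short computation gives $V_0h_i\subset V\eta(g_i)$, so the open sets $V\eta(g_1),\dots,V\eta(g_N)$ still cover $bG$; refining to the Borel partition $B_i=V\eta(g_i)\setminus\bigcup_{j<i}V\eta(g_j)$, I define the finitely supported probability measure
$$\mu=\sum_{i=1}^N\lambda(B_i)\,\delta_{g_i}\in P_\w(G).$$

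For any $x,y\in G$, the inclusion $g_i\in xAy$ forces $\eta(g_i)\in\eta(x)K\eta(y)$ and hence $B_i\subset V\eta(g_i)\subset V\eta(x)K\eta(y)$; disjointness of the pieces $B_i$ then delivers
$$\mu(xAy)=\sum_{i:\,g_i\in xAy}\lambda(B_i)\le\lambda(V\eta(x)K\eta(y))\le\lambda(\eta(x)U\eta(y))=\lambda(U)<\lambda(K)+\e,$$
and taking the supremum over $x,y\in G$ before letting $\e\to 0$ will yield $\sigma(A)\le\lambda(K)=\bar\lambda(A)$. The main obstacle is precisely the conjugation-compactness step producing $V$ from $W$: without the uniform bound $s^{-1}Vs\subset W$ over all $s\in bG$, the key estimate $\lambda(VsKt)\le\lambda(U)$ would fail to be uniform in $s$ and the whole control on $\mu(xAy)$ would collapse.
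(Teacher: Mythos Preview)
Your proof is correct and follows a genuinely different route from the paper's. Both arguments approximate the Haar measure $\lambda$ on $bG$ by a finitely supported measure on $\eta(G)$ that uniformly controls all bi-translates of $K=\overline{\eta(A)}$, but the mechanisms differ.

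The paper uses a two-sided cushion: it finds a neighborhood $V\ni 1_{bG}$ with $VKV\subset\bar O(K)$, then covers $bG=FV=VF$ by finitely many translates with $F\subset\eta(G)$, and obtains the approximating measure abstractly by invoking the density of $P_\w(\eta(G))$ in $P_\sigma(bG)$ for the weak topology; the finitely many constraints $\mu(x'\overline{VKV}y')<\lambda(K)+\e$ for $x',y'\in F$ define an open neighborhood of $\lambda$, and any finitely supported $\mu$ in that neighborhood works because an arbitrary $\eta(x),\eta(y)$ can be pushed into $F$ at the cost of the $V$'s on either side of $K$.

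Your argument replaces the two-sided cushion by the conjugation-invariance trick (finding $V$ with $s^{-1}Vs\subset W$ for all $s\in bG$), which turns the left $V$ directly into a left $W$ after commuting past $s$. This lets you build the approximating measure \emph{explicitly} as $\mu=\sum_i\lambda(B_i)\,\delta_{g_i}$ via a Borel partition subordinate to translates $V\eta(g_i)$, bypassing the weak-topology density argument entirely. Your approach is more constructive and self-contained; the paper's is shorter once one is willing to quote the density of finitely supported measures. Either way, the essential content---uniform control of $\lambda$ on all bi-translates of $K$ via compactness of $bG$---is the same.
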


\begin{proof} Let $(bG,\eta)$ be a Bohr compactification of $G$ and $B$ be the closure of the set $\eta(A)$ in $bG$.

To prove the theorem, it suffices to check that $\sigma(A)\le\lambda(B)+\e$ for every $\e>0$. By the regularity of the Haar measure $\lambda$ and the normality of the compact Hausdorff space $bG$, the closed set $B$ has a closed neighborhood $\bar O(B)$ in $bG$ such that $\lambda(\bar O(B))<\lambda(B)+\e$. Let $1_{bG}$ denote the unit of the group $bG$. Since $1_{bG}\cdot B\cdot 1_{bG}=B\subset \bar O(B)$, the compactness of $B$ and the continuity of the group operation yield an open neighborhood $V\subset bG$ of $1_{bG}$ such that $VBV\subset \bar O(B)$. Then $\overline{VBV}\subset \bar O(B)$ and hence $\lambda(x\overline{VBV}y)=\lambda(\overline{VBV})\le\lambda(\bar O(B))<\lambda(B)+\e$ for any points $x,y\in bG$. The density of $\eta(G)$ in $bG$ implies that  $bG=\bigcup_{x\in\eta(G)}xV=\bigcup_{x\in\eta(G)}Vx$. By the compactness of $bG$ there is a finite set $F\subset \eta(G)$ such that $G=FV=VF$.

Let $P_\sigma(G)$ be the space of all probability regular Borel $\sigma$-additive measures on $G$ endowed with the topology generated by the subbase consisting of the sets $\{\mu\in P_\sigma(G):\mu(U)>a\}$ where $U$ is an open subset in $G$ and $a\in\mathbb R$. It follows that for each closed set $C\subset G$ the set $$\{\mu\in P_\sigma(G):\mu(C)<a\}=\{\mu\in P_\sigma(G):\mu(G\setminus C)>1-a\}$$is open in $P_\sigma(G)$.
Consequently, the set $$O_\lambda=\bigcap_{x,y\in F}\{\mu\in P_\sigma(G):\mu(x\overline{VBV}y)<\lambda(B)+\e\}$$is an open neighborhood of the Haar measure $\lambda$ in the space $P_\sigma(G)$.

Since $\eta(G)$ is a dense subset in $bG$, the subspace $P_\w(\eta(G))$ of finitely supported probability measures on $\eta(G)$ is dense in the space $P_\sigma(bG)$ (see e.g. \cite{Var} or \cite[1.9]{Fed}). Consequently, the open set $O_\lambda$ contains some probability measure $\mu\in P_\w(\eta(G))$ and we can find a finitely supported probability measure $\nu$ on $G$ such that $\eta(\nu)=\mu$. The latter equality means that $\mu(C)=\nu(\eta^{-1}(C))$ for all $C\subset bG$ and hence $\nu(D)\le\nu\big(\eta^{-1}(\eta(D))\big)=\mu(\eta(D))$ for each set $D\subset G$. We claim that $\sup_{x,y\in G}\nu(xAy)\le \sigma(A)+\e$. Indeed, since $bG=FV=VF$, for any points $x,y\in G$ we can find points $x',y'\in F$ such that $\eta(x)\in x'V$ and $\eta(y)=Vy'$. Then
$$\nu(xAy)\le \mu(\eta(x)\eta(A)\eta(y))\le\mu(\eta(x)B\eta(y))\le \mu(x'VBVy')\le \mu(x'\overline{VBV}y')<\lambda(B)+\e=\bar\lambda(A)+\e$$as $\mu\in O_\lambda$.
So, $\sigma(A)\le\sup_{x,y\in G}\nu(xAy)\le\bar\lambda(A)+\e$. Since the number $\e>0$ was arbitrary, we conclude that $\sigma(A)\le\bar\lambda(A)$.
\end{proof}

\section{The Solecki submeasure versus Haar measure on compact topological groups}

In this section we shall study the relation between the Solecki submeasure $\sigma$ and the Haar measure $\lambda$ on a compact Hausdorff topological group $G$. Some results remain true also for the right  Solecki submeasure  $\hat\sigma_R$, which does not exceed the Solecki submeasure $\sigma$ according to Theorem~\ref{subadit-sol}.

For a subset $A$ of $G$ by $\bar A$ and $A^\circ$ we shall denote the closure and the interior of $A$ in $G$, respectively. The difference $\partial A=\bar A\setminus A^\circ$ is the boundary of $A$ in $G$. Besides the interior $A^\circ$ we can assign to $A$ another canonical open set $A^\bullet$ called the {\em comeager interior} of $A$. By definition, $A^\bullet$ is the largest open set in $G$ such that $A^\bullet \setminus A$ is meager in $G$. It is easy to see that $A^\circ\subset A^\bullet\subset \bar A$. Observe that a set $A\subset X$ has the Baire Property if and only if the symmetric difference $A\triangle A^\bullet$ is meager.

It turns out that the Haar measure $\lambda$ on a compact topological group $G$ nicely agrees with the Solecki submeasure $\sigma$ (at least on the family of all closed subsets). We recall that $\lambda_*(A)=\sup\{\lambda(F):F=\bar F\subset A\}$ for $A\subset G$.

\begin{theorem}\label{t6.1} Each subset $A$ of a compact topological group $G$ has
$$\max\{\lambda_*(A),\lambda(A^\bullet))\le\sigma(A)\le\lambda(\bar A).$$
\end{theorem}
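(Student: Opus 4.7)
The plan is to prove the three inequalities $\sigma(A)\le\lambda(\bar A)$, $\lambda_*(A)\le\sigma(A)$, and $\lambda(A^\bullet)\le\sigma(A)$ separately. The first is a direct adaptation of the proof of Theorem~\ref{bohr} with $G$ in place of $bG$: given $\e>0$, by regularity of $\lambda$ and normality of $G$ one finds an open $W\supset\bar A$ with $\lambda(W)<\lambda(\bar A)+\e$ and a neighborhood $V$ of $1_G$ such that $\overline{V\bar AV}\subset W$; choose a finite $F\subset G$ with $G=VF=FV$. The weak$^*$-open neighborhood
\[
\bigl\{\nu\in P_\sigma(G):\nu(x'\overline{V\bar AV}y')<\lambda(\bar A)+\e\text{ for all }x',y'\in F\bigr\}
\]
of the Haar measure $\lambda$ (open because $\nu\mapsto\nu(C)$ is upper semicontinuous for closed $C$) contains a finitely supported $\mu$, and for every $x,y\in G$ one may write $x=x'v$, $y=v'y'$ with $x',y'\in F$ and $v,v'\in V$, giving $\mu(xAy)\le\mu(x'\overline{V\bar AV}y')<\lambda(\bar A)+\e$. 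For the second inequality, by monotonicity of $\sigma$ it suffices to prove $\lambda(F)\le\sigma(F)$ for each closed $F\subset A$, and this is immediate from the identity
\[
\iint_{G\times G}\mu(xFy)\,d\lambda(x)\,d\lambda(y)=\lambda(F),
\]
valid for any $\mu=\sum_i\alpha_i\delta_{z_i}\in P_\w(G)$ by Fubini and the left and right invariance of $\lambda$.

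The third inequality $\lambda(A^\bullet)\le\sigma(A)$ is the heart of the proof. Set $U=A^\bullet$ and $M=U\setminus A$, so $M\subset\bigcup_k N_k$ with each $N_k$ closed and nowhere dense in $G$. Fix a finitely supported $\mu=\sum_{i=1}^n\alpha_i\delta_{z_i}$ and $\e>0$. The same Fubini computation yields $\iint\mu(xUy)\,d\lambda\,d\lambda=\lambda(U)$, and since $U$ is open the function $(x,y)\mapsto\mu(xUy)=\sum_i\alpha_i\chi_U(x^{-1}z_iy^{-1})$ is lower semicontinuous on $G^2$, so the set $\Omega=\{(x,y):\mu(xUy)>\lambda(U)-\e\}$ is open and nonempty. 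On the other hand, each continuous map $\phi_i:G^2\to G$, $\phi_i(x,y)=x^{-1}z_iy^{-1}$, sends rectangles $W_1\times W_2$ onto the nonempty open sets $W_1^{-1}z_iW_2^{-1}$; hence $\phi_i^{-1}$ carries nowhere dense subsets of $G$ to nowhere dense subsets of $G^2$, and the set $D=G^2\setminus\bigcup_{i,k}\phi_i^{-1}(N_k)$ is comeager in the Baire space $G^2$. By the Baire property, $\Omega\cap D\ne\emptyset$, and at any $(x,y)$ in this intersection one has $\mu(xMy)=0$ and therefore $\mu(xAy)\ge\mu(xUy)-\mu(xMy)>\lambda(U)-\e$. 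Arbitrariness of $\mu$ and $\e$ gives $\sigma(A)\ge\lambda(A^\bullet)$.

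The main obstacle is precisely this last inequality, because a meager subset of a compact group can have positive Haar measure, so $M$ cannot be discarded by a purely measure-theoretic estimate. The remedy is to juggle the two independent notions of smallness: the Haar-measure computation controls the average of $\mu(xUy)$, while Baire category on $G^2$ guarantees that the meager translates $\phi_i^{-1}(M)$ cannot swallow the nonempty open set on which $\mu(xUy)$ is nearly maximal. The lower semicontinuity of $(x,y)\mapsto\mu(xUy)$, which comes from the openness of $U$, is what bridges the two arguments by upgrading the Haar-integral lower bound on $\sup\mu(xUy)$ to an open-set statement compatible with the Baire-category step.
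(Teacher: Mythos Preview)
Your proof is correct, and for the two lower bounds it is genuinely different from---and cleaner than---the paper's argument.

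For $\sigma(A)\le\lambda(\bar A)$ you do exactly what the paper does (its Lemma~\ref{l6.3}, a compact-group rerun of Theorem~\ref{bohr}).

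For $\lambda_*(A)\le\sigma(A)$ the paper first establishes $\sigma(F)=\lambda(F)$ for every closed $F$ (Lemma~\ref{l6.5}) via a somewhat delicate detour: it uses Urysohn's Lemma to sandwich $F$ inside an open set $W$ whose boundary has Haar measure zero, invokes the auxiliary Lemma~\ref{l6.4} on such sets, and then runs a covering-by-shifts compactness argument. Your single line $\iint\mu(xFy)\,d\lambda\,d\lambda=\lambda(F)$ replaces all of that: it shows $\sup_{x,y}\mu(xFy)\ge\lambda(F)$ for every $\mu\in P_\w(G)$ with nothing more than Fubini and the bi-invariance (and inversion-invariance) of Haar measure. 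This is a real simplification.

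For $\lambda(A^\bullet)\le\sigma(A)$ the paper (Lemma~\ref{l10.6}) passes to finite sets via Theorem~\ref{sol}, uses Lemma~\ref{l6.5} again to locate a specific pair $(u,v)$ with many points of $uFv$ landing in $A^\bullet$, and then perturbs $u$ alone by a one-variable Baire-category argument in $G$. You instead stay with an arbitrary $\mu\in P_\w(G)$, use the same averaging identity to get a nonempty open set $\Omega\subset G^2$ where $\mu(xUy)$ is large (lower semicontinuity from openness of $U$), and run Baire category on $G^2$ to avoid the finitely many meager preimages $\phi_i^{-1}(M)$. The openness of each $\phi_i$ (which you essentially observe) is what makes those preimages meager. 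This two-variable argument is tidier and, notably, never needs the exact equality $\sigma(K)=\lambda(K)$ for compacta---your averaging trick supplies the needed lower bound directly.

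What the paper's route buys is the sharper statement $\sigma(F)=\lambda(F)$ for closed $F$ (and $\hat\sigma_R(A)=\sigma(A)=\lambda(A)$ when $\lambda(\partial A)=0$), which it uses later; your argument proves the theorem as stated without producing that equality as a byproduct.
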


\begin{proof} We divide the proof of this theorem into five lemmas. In these lemmas we assume that $G$ is a compact topological group and $\lambda$ is the Haar measure on $G$.

\begin{lemma}\label{l6.3} $\lambda(A^\circ)\le\hat\sigma_R(A)\le\sigma(A)\le\lambda(\bar A)$ for each subset $A\subset G$.
\end{lemma}

\begin{proof} The group $G$, being compact, can be identified with its Bohr compactification $bG$. By Theorems~\ref{subadit-sol} and~\ref{bohr}, $\hat\sigma_R(A)\le\sigma(A)\le\sigma(\bar A)\le\lambda(\bar A)$.
 The subadditivity of the right-Solecki submeasure $\hat\sigma_R$ guarantees that $1=\hat\sigma_R(G)\le \hat\sigma_R(A^\circ)+\hat\sigma_R(G\setminus A^\circ)$. Since the set $G\setminus A^\circ$ is closed in $G$, Theorem~\ref{bohr} guarantees that $\hat\sigma_R(G\setminus A^\circ)\le\sigma(G\setminus A^\circ)\le\lambda(G\setminus A^\circ)$ and hence
$$\hat\sigma_R(A)\ge\hat\sigma_R(A^\circ)\ge 1-\hat\sigma_R(G\setminus A^\circ)\ge 1-\lambda(G\setminus A^\circ)=\lambda(A^\circ).$$
\end{proof}

\begin{lemma}\label{l6.4} $\hat\sigma_R(A)=\sigma(A)=\lambda(A)$ for any subset $A\subset G$ whose boundary $\partial A=\bar A\setminus A^\circ$ has Haar measure $\lambda(\partial A)=0$.
\end{lemma}

\begin{proof} The additivity of the Haar measure $\lambda$ guarantees that
$$\lambda(\bar A)=\lambda(A^\circ)+\lambda(\partial A)=\lambda(A^\circ)+0\le\lambda(A)\le\lambda(\bar A)$$and hence $\lambda(A^\circ)=\lambda(A)=\lambda(\bar A)$. Now the equality $\lambda(A)=\sigma(A)$ follows from Lemma~\ref{l6.3}.
\end{proof}

For the Solecki submeasure $\sigma$ we can prove more:

\begin{lemma}\label{l6.5} $\sigma(A)=\lambda(A)$ for each closed subset $A\subset G$.
\end{lemma}

\begin{proof} By Lemma~\ref{l6.3}, $\sigma(A)\le\lambda(A)$. So, it remains to show that $\sigma(A)\ge\lambda(A)$. Assuming conversely that $\sigma(A)<\lambda(A)$ we conclude that the number $\e=\frac12(\lambda(A)-\sigma(A))$ is positive. Then $\sigma(A)<\lambda(A)-\e$ and by the definition of the Solecki submeasure, there is a finitely supported probability measure $\mu$ on $G$ such that $\sup_{x,y\in G}\mu(xAy)<\lambda(A)-\e$. For each pair $(x,y)\in G\times G$, by the regularity of the measure $\mu$,  there is an open neighborhood $O_{x,y}(A)\subset G$ of $A$ such that $\mu(xO_{x,y}(A)y)<\lambda(A)-\e$. Using the compactness of $A$, we can find an open neighborhood $U_{x,y}\subset G$ of $1_G$ such that $U_{x,y}AU_{x,y}\subset O_{x,y}(A)$. The continuity of the group operation at $1_G$ yields an open neighborhood $V_{x,y}\subset G$ of $1_G$ such that $V_{x,y}\cdot V_{x,y}\subset U_{x,y}$. By the compactness of the space $G\times G$ the open cover $\{xV_{x,y}\times V_{x,y}y:(x,y)\in G\times G\}$ of $G\times G$ has a finite subcover $\{xV_{x,y}\times V_{x,y}y:(x,y)\in F\}$ where $F$ is a finite subset of $G\times G$. Consider the open neighborhood $V=\bigcap_{(x,y)\in F}V_{x,y}$ of $1_G$ and the open neighborhood $V\kern-1pt AV$ of the closed set $A$. By the Urysohn Lemma \cite[1.5.10]{En}, there is a continuous function $f:G\to[0,1]$ such that $f(A)\subset \{0\}$ and $f(G\setminus V\kern-1pt AV)\subset \{1\}$. By the $\sigma$-additivity of the Haar measure $\lambda$, there is a number $t\in(0,1)$ whose preimage $f^{-1}(t)$ has Haar measure $\lambda(f^{-1}(t))=0$. In this case the open neighborhood $W=f^{-1}\big([0,t)\big)\subset V\kern-1pt AV$ of $A$ has boundary $\partial W\subset f^{-1}(t)$ of Haar measure zero. By Lemma~\ref{l6.4}, $\sigma(W)=\lambda(W)$.

We claim that $\mu(aWb)<\lambda(A)-\e$ for any points $a,b\in G$. Since $\{xV_{x,y}\times V_{x,y}y:(x,y)\in F\}$ is a cover of $G\times G$, there is a pair $(x,y)\in F$ such that $a\in xV_{x,y}$ and $b\in V_{x,y}y$. Then $$aWb\subset aVAVb\subset xV_{x,y}V\kern-1pt AVV_{x,y}y\subset xV_{x,y}V_{x,y}AV_{x,y}V_{x,y}y\subset xU_{x,y}AU_{x,y}y\subset xO_{x,y}(A)y$$and hence
$$\mu(aWb)\le\mu(xO_{x,y}(A)y)<\lambda(A)-\e.$$
By Lemma~\ref{l6.4},
$$\sigma(W)\le\sup_{a,b\in G}\mu(aWb)\le\lambda(A)-\e<\lambda(W)=\sigma(W),$$
which is a desired contradiction. So, $\sigma(A)=\lambda(A)$.
\end{proof}

\begin{lemma}\label{l6.6} $\lambda_*(A)\le\sigma(A)$ for each subset $A\subset G$.
\end{lemma}

\begin{proof}
By Lemma~\ref{l6.5} and the monotonicity of the Solecki submeasure, we get
$$\lambda_*(A)=\sup\{\lambda(F):F=\bar F\subset A\}=\sup\{\sigma(F):F=\bar F\subset A\}\le\sigma(A).$$
\end{proof}

\begin{lemma}\label{l10.6} $\lambda(A^\bullet)\le\sigma(A)$ for each subset $A\subset G$.
\end{lemma}

\begin{proof} Assume conversely that $\sigma(A)<\lambda(A^\bullet)$ and put $\e=\frac12(\lambda(A^\bullet)-\sigma(A))$. Since $\sigma(A)<\lambda(A^\bullet)-\e$, by Theorem~\ref{sol}, there is a finite subset $F\subset G$ such that $\sup_{x,y\in G}|xFy\cap A|/|F|<(\lambda(A^\bullet)-\e)$.
By the regularity of the Haar measure $\lambda$, some compact set $K\subset A^\bullet$ has Haar measure $\lambda(K)>\lambda(A^\bullet)-\e$. By Lemma~\ref{l6.5}, $\lambda(K)=\sigma(K)\le\max_{x,y\in G}|xFy\cap K|/|F|$. So, there are points $u,v\in G$ such that $|uFv\cap A^\bullet|\ge |uFv\cap K|\ge \lambda(K)\cdot|F|$. Let $T=\{t\in F:utv\in A^\bullet\}$ and observe that $|T|=|uFv\cap A^\bullet|\ge\lambda(K)\cdot|F|$. For every $t\in T$ consider the homeomorphism $s_t:G\to G$, $s_t:x\mapsto xtv$, and observe that $s_t^{-1}(A^\bullet)$ is an open neighborhood of the point $u$. Since the set $A^\bullet\setminus A$ is meager in $G$ its preimage $s_t^{-1}(A^\bullet\setminus A)$ is a meager set in $G$. Since the space $G$ is compact and hence Baire, in the open neighborhood $V_u=\bigcap_{t\in T}s_t^{-1}(A^\bullet)$ of the point $u$ we can find a point $x\in V_u$ which does not belong to the meager set $\bigcup_{t\in T}s_t^{-1}(A^\bullet\setminus A)$. For this point $x$ we get $s_t(x)\in A$ for all $t\in T$, which implies that $xTv\subset A$ and then $|xFv\cap A|\ge|xTv\cap A|=|xTv|=|T|\ge\lambda(K)\cdot|F|>(\lambda(A^\bullet)-\e)\cdot|F|$, which contradicts the choice of $F$.
\end{proof}

Lemmas~\ref{l6.3}, \ref{l6.6} and \ref{l10.6} finish the proof of Theorem~\ref{t6.1}.
\end{proof}

\begin{remark} For a compact topological group $G$ the family
$$\A_0=\{A\subset G:\sigma(\partial A)=0\}=\{A\subset G:\lambda(\partial A)=0\}$$is an algebra of subsets of $G$. This algebra determines the Haar measure in the sense that a regular Borel $\sigma$-additive measure $\mu$ on $G$ coincides with the Haar measure $\lambda$ if $\mu|\A_0=\lambda|\A_0$. By Lemma~\ref{l6.4}, $\lambda|\A_0=\sigma|\A_0=\hat\sigma_R|\A_0$. This means that the Solecki submeasure $\sigma$ uniquely determines the Haar measure $\lambda$ on each compact topological group $G$. The same is true for the subadditivization $\hat\sigma_R$ of the right-Solecki density $\sigma_R$.
\end{remark}

\begin{problem} Let $A$ be a closed subset of a compact topological group $G$. Is $\hat\sigma_R(A)=\lambda(A)$?
\end{problem}

Looking at the lower bound $\max\{\lambda_*(A),\lambda(A^\bullet)\}\le\sigma(A)$ proved in Theorem~\ref{t6.1}, one can suggest that it can be improved to $\lambda_*(A\cup A^\bullet)\le\sigma(A)$. However this is not true.

\begin{example} The compact abelian group $\IT=\{z\in \IC:|z|=1\}$
contains a Borel subset $A$ such that
$$\frac14=\lambda(A)=\lambda(A^\bullet)=\sigma(A)<\lambda(A\cup A^\bullet)=\lambda(\bar A)=\frac12.$$
\end{example}

\begin{proof} Consider the open subset $U=\{e^{i\varphi}:0<\varphi<\pi/2\}\subset\IT$ of Haar measure $\lambda(U)=1/4$  and the countable dense subset  $Q=\{e^{i\varphi}:\varphi\in\pi\cdot\mathbb Q\}$ where $\IQ$ is the set of rational numbers. By the regularity of the Haar measure $\lambda$ on $\IT$, the set $U\setminus Q$ contains a $\sigma$-compact (meager) subset $K$ of Haar measure $\lambda(K)=\lambda(U\setminus Q)=\frac14$. Now consider the set $A=(U\setminus K)\cup (-K)$ where $-K=\{-z:z\in K\}$. The finite set $F=\{1,-1,i,-i\}$ witnesses that $\sigma(A)\le\sup_{x,y\in\IT}|xFy\cap A|/|F|=\frac14$.
It follows that, $A^\bullet=U$ and thus
$$
\frac14=\lambda(A)=\lambda(A^\bullet)\le\sigma(A)\le\frac14.$$
On the other hand,
$$\lambda(A\cup A^\bullet)=\lambda(U\cup(-K))=\frac14+\frac14=\frac12=\lambda(\bar U\cup(-\bar U))=\lambda(\bar A).$$
\end{proof}

Theorem~\ref{t6.1} implies:

\begin{corollary} In an infinite compact Hausdorff topological group $G$ each closed Haar null set is Solecki null and each Borel Solecki null set is meager and Haar null.
\end{corollary}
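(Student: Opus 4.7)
The plan is to deduce both assertions directly from Theorem~\ref{t6.1}, which gives the two-sided estimate $\max\{\lambda_*(A),\lambda(A^\bullet)\}\le\sigma(A)\le\lambda(\bar A)$ for every subset $A$ of a compact topological group.

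For the first assertion, I would start with a closed Haar null set $A\subset G$. Since $A=\bar A$ and $\lambda(A)=0$, the upper bound in Theorem~\ref{t6.1} gives $\sigma(A)\le\lambda(\bar A)=0$, so $A$ is Solecki null. This step is essentially a one-line application of the upper estimate.

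For the second assertion, let $A$ be a Borel Solecki null subset of $G$. The lower bound in Theorem~\ref{t6.1} immediately yields $\lambda_*(A)\le\sigma(A)=0$ and $\lambda(A^\bullet)\le\sigma(A)=0$. Since $A$ is Borel, the regularity of the Haar measure gives $\lambda(A)=\lambda_*(A)=0$, so $A$ is Haar null. To establish meagerness, I would use that $A$, being Borel, has the Baire property, so the symmetric difference $A\triangle A^\bullet$ is meager in $G$. It then suffices to show that the open set $A^\bullet$ is empty. For this, I would invoke the standard fact that in a compact Hausdorff topological group the Haar measure is strictly positive on non-empty open sets: finitely many left translates of any non-empty open $U$ cover $G$ by compactness, so $\lambda(U)>0$. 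Combined with $\lambda(A^\bullet)=0$, this forces $A^\bullet=\emptyset$, and hence $A=A\triangle A^\bullet$ is meager.

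There is no real obstacle here; both conclusions reduce to reading off the inequalities of Theorem~\ref{t6.1}. The only non-trivial small point is the meagerness argument, where one must remember to use the Baire property of Borel sets together with strict positivity of Haar measure on non-empty open sets to pass from $\lambda(A^\bullet)=0$ to $A^\bullet=\emptyset$.
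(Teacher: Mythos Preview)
Your proposal is correct and follows exactly the route the paper intends: the paper does not give an explicit proof but simply states that the corollary is implied by Theorem~\ref{t6.1}, and you have spelled out precisely the details one would fill in (the upper bound for the first assertion, and the lower bounds together with regularity of $\lambda$, the Baire property of Borel sets, and strict positivity of Haar measure on non-empty open sets for the second).
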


Finally we show that both inequalities $\max\{\lambda_*(A),\lambda(A^\bullet)\}\le\sigma(A)\le\lambda(\bar A)$ in Theorem~\ref{t6.1} can be strict.

\begin{proposition}\label{p6.10} Each infinite compact Hausdorff topological group $G$ contains
\begin{enumerate}
\item a dense $F_\sigma$-set with
$0=\lambda(A)=\lambda(A^\bullet)=\sigma(A)<\lambda(\bar A)=1$;
\item a dense $G_\delta$-set $B\subset G$ with
$0=\lambda(B)<\lambda(B^\bullet)=\sigma(B)=\lambda(\bar B)=1$;
\item a dense subset $C\subset G$ with
$0=\lambda_*(C)=\lambda(C^\bullet)<\sigma(C)=\lambda(\bar C)=1$.
\item If $G$ is topologically isomorphic to the product $G=\prod_{n\in\w}G_n$ of infinite compact topological groups, then $G$ contains a dense meager $F_\sigma$-set $D\subset G$ which is Haar null and Solecki one.
\end{enumerate}
\end{proposition}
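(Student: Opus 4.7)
The plan is to combine Theorem~\ref{t6.1} (the sandwich $\max\{\lambda_*(A),\lambda(A^\bullet)\}\le\sigma(A)\le\lambda(\bar A)$) with the thickness characterisation of Solecki-one sets (Proposition~\ref{thick}), with the fact that dense $G_\delta$-sets in a Baire topological group are right-Solecki one (Proposition~\ref{p5.1}), and with the small-cardinality vanishing $\sigma(A)=0$ for $|A|<\cov(\E)$ (Theorem~\ref{compact-zero}). The Haar measure of an infinite compact Hausdorff group is atomless, which I use freely throughout. For concreteness I work in the metrisable case; the general case is handled by the same arguments.

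For (1), take $A$ to be a countable dense subset of $G$. Then $A$ is $F_\sigma$ (a countable union of singletons), dense (so $\lambda(\bar A)=1$), Haar null (by atomlessness), and meager (singletons are nowhere dense in a non-discrete compact Hausdorff group). Meagerness forces the open set $A^\bullet$ to be empty in the Baire space $G$, giving $\lambda(A^\bullet)=0$, and Theorem~\ref{compact-zero} gives $\sigma(A)=0$ since $\aleph_0<\omega_1\le\cov(\E)$. For (2), enumerate $D=\{d_n\}$ dense countable and construct open dense sets $V_n\supset D$ with $\lambda(V_n)<1/n$ by placing small open balls around each $d_k$; put $B=\bigcap_n V_n$. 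Then $B$ is a dense $G_\delta$ of Haar measure $0$, the complement $G\setminus B=\bigcup_n(G\setminus V_n)$ is a meager $F_\sigma$ (so $B^\bullet=G$ and $\lambda(B^\bullet)=1$), and Proposition~\ref{p5.1} gives $\sigma(B)\ge\sigma^R(B)=1$.

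For (3), the target is a set $C$ that is simultaneously dense, meager, Haar null, and thick. Meagerness of $C$ forces $C^\bullet=\emptyset$ (any open $V\subseteq C^\bullet$ would have both $V\cap C$ and $V\setminus C$ meager, so $V$ itself meager in a Baire space, hence $V=\emptyset$); Haar nullness gives $\lambda_*(C)=0$; thickness plus Proposition~\ref{thick} gives $\sigma(C)=1$; density gives $\lambda(\bar C)=1$. To produce such a $C$ in $G$, I would run a transfinite recursion of length $|G|$: enumerate the finite subsets $F_\alpha\subset G$, a collection $\{K_\alpha\}$ of closed sets of positive Haar measure (including a representative of every positive-measure closed set), and a base $\{V_\alpha\}$ of nonempty open sets. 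At stage $\alpha$, add a translate $x_\alpha F_\alpha y_\alpha$ to $C$ avoiding all previously forbidden points (maintaining thickness), then forbid one point of $K_\alpha$ from ever being added (so $K_\alpha\not\subset C$, preserving $\lambda_*(C)=0$), and finally put a Cantor set inside $V_\alpha$ into the forbidden set (keeping $V_\alpha\setminus C$ non-meager, hence $C^\bullet=\emptyset$). The main obstacle is the combinatorial balancing: I must verify that at each stage strictly fewer than $|G|$ points have been committed, so among the $|G|$-many candidate translates $xF_\alpha y$ at least one is available.

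For (4), assume $G=\prod_{n\in\omega}G_n$ with each $G_n$ infinite compact. Put $D_n=\{x\in G:x_n=1_{G_n}\}$, a closed subgroup of $G$; it is Haar null (since $\{1_{G_n}\}\subset G_n$ has Haar measure $0$) and nowhere dense (its complement is open and dense). Set $D=\bigcup_{n\in\omega}D_n$: then $D$ is an $F_\sigma$ set which is meager, Haar null, and dense (any basic open of $G$ leaves some coordinate unrestricted and we may force that coordinate to be $1_{G_n}$). To see that $D$ is thick, and hence $\sigma(D)=1$ by Proposition~\ref{thick}, given finite $\{f^1,\dots,f^m\}\subset G$ pick distinct indices $n_1,\dots,n_m\in\omega$ and choose $x,y\in G$ whose $n_i$-th coordinates satisfy $x_{n_i}(f^i)_{n_i}y_{n_i}=1_{G_{n_i}}$; then $xf^iy\in D_{n_i}\subset D$ for every $i$, so $x\{f^1,\dots,f^m\}y\subset D$.
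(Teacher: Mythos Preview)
Your arguments for parts (1), (2), and (4) are broadly correct in the metrizable case, but the disclaimer ``the general case is handled by the same arguments'' is false. An infinite compact Hausdorff group need not be separable (take $\{0,1\}^{\kappa}$ for $\kappa>\mathfrak c$), so in general there is no countable dense subset to use in (1), and then (2) inherits the problem. The paper handles this by first passing to an infinite \emph{metrizable} quotient $h:G\to\tilde G$ (such a quotient always exists), choosing a countable dense $\tilde A\subset\tilde G$, and pulling back $A=h^{-1}(\tilde A)$; Proposition~\ref{p2.3} transports $\sigma(\tilde A)=0$ to $\sigma(A)=0$, and the preimage is automatically a dense meager $F_\sigma$ since $h$ is open. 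Part (2) then follows exactly as you argue, enlarging this $A$ to a $G_\delta$ of the same measure by outer regularity. Your treatment of (4) is essentially the paper's; the paper notes more simply that for $F=\{x_1,\dots,x_n\}$ one can choose a single $g$ with $\pr_i(g)=\pr_i(x_i)$ for each $i\le n$, so that $g^{-1}F\subset D$ with a one-sided shift.

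For part (3) your transfinite recursion is both unnecessary and, as you yourself flag, incomplete: the bookkeeping involving Cantor sets of size $\mathfrak c$ and an enumeration of all positive-measure closed sets is not obviously consistent with a recursion of length $|G|$ when $|G|$ is large. The paper's argument is a two-line pigeonhole. By Proposition~\ref{p2.6} the group $G$ contains an uncountable disjoint family $\mathcal C$ of thick (hence Solecki-one) sets. By $\sigma$-additivity of $\lambda$, only countably many $C\in\mathcal C$ can have $\lambda_*(C)>0$; and since the comeager interiors of disjoint sets are disjoint open sets in a Baire space, only countably many $C\in\mathcal C$ can have $\lambda(C^\bullet)>0$. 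Any $C$ outside these two countable subfamilies works; density of $C$ is automatic from $1=\sigma(C)\le\lambda(\bar C)$.
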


\begin{proof} By \cite[9.1]{HM}, the group $G$ admits a continuous homomorphism $h:G\to \tilde G$ onto an infinite metrizable compact topological group $\tilde G$. By \cite[1.10]{HM} the homomorphism $h$ is an open map.
By $\lambda,\tilde\lambda$ we denote the Haar measures and by $\sigma,\tilde\sigma$ the Solecki submeasures on the groups $G,\tilde G$, respectively. The uniqueness of the Haar measure on the topological group $\tilde G$ implies that $\lambda(h^{-1}(B))=\tilde \lambda(B)$ for any Borel subset $B\subset \tilde G$.
\smallskip

1. The topological group $\tilde G$, being compact and metrizable, contains a countable dense subset $\tilde A$, which is Haar null (by the $\sigma$-additivity of the Haar measure $\tilde\lambda$).
By  Theorem~\ref{compact-zero}, $\tilde A$ is Solecki null in $\tilde G$. Since the homomorphism $h$ is continuous and open, the preimage $A=f^{-1}(\tilde A)$ is a dense meager $F_\sigma$-set in $G$. Taking into account that $A$ is meager in $G$, we get $A^\bullet=\emptyset$.
By Proposition~\ref{p2.3} the set $A=h^{-1}(\tilde A)$ has the Solecki submeasure $\sigma(A)=\tilde \sigma(\tilde A)=0$. The uniqueness of the Haar measure on the group $\tilde G$ implies that $\lambda(A)=\tilde \lambda(\tilde A)=0$. Now we see that
$0=\lambda(A)=\lambda(A^\bullet)=\sigma(A)<\lambda(\bar A)=1$.
\smallskip

2. By the regularity of the Haar measure $\lambda$, the dense $F_\sigma$-set $A$ can be enlarged to a dense $G_\delta$-set $B$ such that $\lambda(B)=\lambda(A)=0$. It follows that $B^\bullet=G$ and hence $\lambda(B^\bullet)=\lambda(\bar B)=1$. By Proposition~\ref{p5.1}, $\sigma(B)=1$.
\smallskip

3. By the Baire Theorem, the infinite compact Hausdorff group $G$ is uncountable and by Proposition~\ref{p2.6}, $G$ contains an uncountable disjoint family $\C$ of Solecki one sets.  By the $\sigma$-additivity of the Haar measure $\lambda$ on $G$, the subfamily $\C_1=\{C\in\C:\lambda_*(C)>0\}$ is at most countable. Since for any disjoint sets $A,B\subset G$ their comeager interiors $A^\bullet$ and $B^\bullet$ are disjoint, the family $\C_2=\{C\in\C:\lambda(C^\bullet)>0\}$ is at most countable. So, we can choose a set $C\in\C\setminus(\C_1\cup\C_2)$ and observe that
$$0=\lambda_*(C)=\lambda(C^\bullet)<\sigma(C)=\lambda(\bar C)=1.$$

4. Assume that $G=\prod_{n\in\w}G_n$ for suitable infinite compact topological groups $G_n$. For every $n\in\w$ consider the coordinate projection $\pr_n:G\to G_n$ and its kernel $\Ker(\pr_n)$, which is a compact subgroup of Haar measure zero in $G$. Then $D=\bigcup_{n\in\w}\Ker(\pr_n)$ is a dense Haar null $F_\sigma$-subset in $G$. Since $D$ is meager, its comeager interior $D^\bullet$ is empty. Consequently, $0=\lambda(D)=\lambda(D^\bullet)$ and $\lambda(\bar D)=\lambda(G)=1$. We claim that the set $D$ is Solecki one.

Given a finite set $F=\{x_1,\dots,x_n\}\subset G$, choose an element $g\in G$ such that $\pr_i(g)=\pr_i(x_i)$ for all $i\le n$. Then for every $i\le n$ we get $g^{-1}x_i\in\Ker(\pr_i)\subset D$, which implies $g^{-1}F\subset D$. So, the set $D$ is Solecki one according to Proposition~\ref{thick}.
\end{proof}

\begin{question} Does any infinite compact Hausdorff topological group $G$ contain an $F_\sigma$-set $D$ which is Haar null and Solecki one?
\end{question}

\section{The difference sets of right-Solecki positive sets in groups}\label{s:difference}

The right-Solecki density $\sigma^R$ and the right-Solecki submeasure $\hat\sigma_R$ are convenient instruments for generalization of many notions and results which were previously known in the context of Polish or amenable groups. A motivating example is the classical Steinhaus-Weil Theorem saying that for every measurable subset $A$ of positive Haar measure in a compact topological group $G$, the set $AA^{-1}$ is a neighborhood of the unit $1_G$ in $G$. We shall try to find a counterpart of this theorem replacing the Haar measure of $A$ by the (right) Solecki density $\sigma^R(A)$ of $A$.

We start with calculating the covering number of the difference set $AA^{-1}$.

For a non-empty subset $A$ of a group $G$ its {\em covering number} is defined as the cardinal
$$\cov(A)=\min\{|F|:F\subset G\mbox{  \ and \ }G=FA\}.$$
The  covering number $\cov(AA^{-1})$ of the difference set $AA^{-1}$ is bounded from above by
the {\em packing index}
$$
\pack(A)=\sup\big\{|E|:E\subset G,\,\;\forall x,y\in E\;\;(x\ne y\;\Ra\;xA\cap yA=\emptyset)\big\}
$$
of the set $A$. Packing indices of subsets in groups were studied in \cite{BL}, \cite{BL11}, \cite{BLR}, \cite{Lyas}, \cite{Prot}.

\begin{proposition}\label{p7.2n} For any non-empty subset $A$ of a group $G$ we get $\cov(AA^{-1})\le\pack(A).$
\end{proposition}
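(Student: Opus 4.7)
The plan is to use a standard Zorn's-lemma / maximality argument. By the Kuratowski–Zorn lemma, choose a set $E \subset G$ that is maximal with respect to the property that the indexed family $(xA)_{x\in E}$ consists of pairwise disjoint subsets of $G$. Since the family $(xA)_{x\in E}$ is disjoint, the definition of the packing index immediately gives the bound $|E| \le \pack(A)$.

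Next I would show that $G = E \cdot AA^{-1}$, which will yield $\cov(AA^{-1}) \le |E| \le \pack(A)$. Pick any $y \in G$. If $y \in E$ then $y = ya a^{-1} \in E \cdot AA^{-1}$ for any $a \in A$ (recall $A \ne \emptyset$). If $y \notin E$, then by the maximality of $E$ the family $(xA)_{x\in E \cup \{y\}}$ fails to be disjoint, so there exists $x \in E$ with $xA \cap yA \ne \emptyset$. Choose $a, a' \in A$ with $xa = ya'$; then $y = xa(a')^{-1} \in E \cdot AA^{-1}$, as required.

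Combining the two observations, $G = E \cdot AA^{-1}$ with $|E| \le \pack(A)$, hence $\cov(AA^{-1}) \le \pack(A)$. There is no real obstacle here; the only subtlety is the application of Zorn's lemma to produce the maximal disjoint family $E$, and the easy verification that the maximality of $E$ translates precisely into the covering relation $G \subset E\cdot AA^{-1}$.
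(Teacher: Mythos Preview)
Your proof is correct and follows essentially the same approach as the paper: both choose via Zorn's Lemma a maximal set $E$ with $(xA)_{x\in E}$ pairwise disjoint, deduce $|E|\le\pack(A)$, and use the maximality of $E$ to conclude $G=E\cdot AA^{-1}$. The only cosmetic difference is that the paper treats all $g\in G$ uniformly (taking $e=g$ when $g\in E$, so $gA\cap eA=gA\ne\emptyset$), whereas you split into the cases $y\in E$ and $y\notin E$.
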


\begin{proof} By Zorn's Lemma, there is a maximal set $E\subset G$ such that for any distinct points $x,y\in E$ the sets $xA$ and $yA$ are disjoint. By the maximality of $E$, for each $g\in G$ there is an element
$e\in E$ such that $gA\cap eA\ne\emptyset$ and thus $g\in eAA^{-1}$. Then $G=EAA^{-1}$ and hence $\cov(AA^{-1})\le|E|\le\pack(A)$.
\end{proof}

\begin{proposition}\label{p7.1n} For any right-Solecki positive subset $A$ of a group $G$ we get
$$\cov(AA^{-1})\le\pack(A)\le\frac1{\sigma^R(A)}.$$
\end{proposition}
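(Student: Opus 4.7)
The first inequality $\cov(AA^{-1})\le\pack(A)$ was already established in Proposition~\ref{p7.2n}, so the entire task reduces to showing $\pack(A)\le 1/\sigma^R(A)$. I would prove this by showing that for every finite packing $E\subset G$ (i.e., a finite set with $\{xA:x\in E\}$ pairwise disjoint) of cardinality $n$, we have $\sigma^R(A)\le 1/n$; taking the supremum over $n\le\pack(A)$ then gives the required bound, noting that the assumption $\sigma^R(A)>0$ ensures $1/\sigma^R(A)$ is well defined.

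The core of the proof is a direct translation of the packing condition into an upper bound on $\sup_y|F\cap Ay|/|F|$ for a carefully chosen finite $F$. Given a finite packing $E=\{x_1,\dots,x_n\}$, the disjointness of $x_1A,\dots,x_nA$ is equivalent to saying that for every $z\in G$ there is at most one index $i$ with $z\in x_iA$, which is the same as at most one $i$ with $x_i^{-1}\in Az^{-1}$. Setting $F:=E^{-1}=\{x_1^{-1},\dots,x_n^{-1}\}$, this reads
\[
|F\cap Az^{-1}|\le 1\quad\text{for every }z\in G.
\]
Since $|F|=n$ (the $x_i^{-1}$ are distinct because the $x_i$ are) and $z\mapsto z^{-1}$ is a bijection of $G$, this gives $\sup_{y\in G}|F\cap Ay|/|F|\le 1/n$, and by the definition of $\sigma^R$ we conclude $\sigma^R(A)\le 1/n$, i.e., $n\le 1/\sigma^R(A)$.

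Taking the supremum over all finite packings of $A$ yields $\pack(A)\le 1/\sigma^R(A)$, and combined with Proposition~\ref{p7.2n} this completes the proof. There is no serious obstacle here: the argument is essentially a one-line reformulation of the packing condition, using the identity $xA\cap yA=\emptyset \Leftrightarrow x^{-1}\in A(\text{something})$ to convert a statement about left translates (appearing in the definition of $\pack$) into a statement about right translates (appearing in the definition of $\sigma^R$). The only small point to note is that $E^{-1}$ really does have cardinality $n$, which follows because if two elements of $E$ coincided as inverses then the corresponding translates of $A$ would coincide rather than be disjoint.
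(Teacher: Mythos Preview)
Your proof is correct and follows essentially the same approach as the paper: both take the finite test set $F=E^{-1}$ and use the observation that the disjointness of the family $\{xA:x\in E\}$ forces $|E^{-1}z\cap A|\le 1$ (equivalently $|F\cap Ay|\le 1$) for every $z\in G$. The only cosmetic difference is that the paper phrases the argument by contradiction (assuming $|E|>1/\sigma^R(A)$ and producing a point $z$ with $|E^{-1}z\cap A|\ge 2$), whereas you argue directly; also, your remark about $|E^{-1}|=n$ is unnecessary, since inversion is a bijection of $G$.
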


\begin{proof} By Proposition~\ref{p7.2n}, $\cov(AA^{-1})\le\pack(A)$. It remains to prove that
$\pack(A)>\frac1{\sigma^R(A)}$. Assume conversely that $\pack(A)>\frac1{\sigma^R(A)}$ and find a finite set $E\subset G$ of cardinality $|E|>\frac1{\sigma^R(A)}$ such that for any distinct points $x,y\in E$ the sets $xA$ and $yA$ are disjoint. Since $\frac1{|E|}<\sigma^R(A)\le\sup_{z\in G}|E^{-1}z\cap A|/|E^{-1}|$, there is a point $z\in G$ such that $|E^{-1}z\cap A|\ge 2$. Then we can choose two distinct points $x,y\in E$ such that $x^{-1}z,y^{-1}z\in A$ and hence $z\in xA\cap yA$, which contradicts the choice of the set $E$.
\end{proof}

Theorem~\ref{t-FC} and Propositions~\ref{p7.2n} and \ref{p7.1n} imply:

\begin{corollary}\label{c7.3n} For any Solecki positive set $A$ in an FC-group $G$ the difference set $AA^{-1}$ has covering number $\cov(AA^{-1})\le\pack(A)\le1/\sigma(A).$
\end{corollary}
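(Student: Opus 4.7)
The plan is to derive this as a direct combination of the two preceding propositions together with the FC-group identification of the Solecki densities supplied by Theorem~\ref{t-FC}. Concretely, I would first invoke Theorem~\ref{t-FC}(2), which asserts that in an FC-group $G$ all five Solecki (sub)measures coincide; in particular $\sigma(A)=\sigma^R(A)$ for every $A\subset G$. Thus the hypothesis that $A$ is Solecki positive, $\sigma(A)>0$, is equivalent to $A$ being right-Solecki positive, $\sigma^R(A)>0$, and moreover the two numbers are equal.

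Next, I would apply Proposition~\ref{p7.1n} to the right-Solecki positive set $A$, obtaining the chain of inequalities
$$\cov(AA^{-1})\le\pack(A)\le\frac1{\sigma^R(A)}.$$
Substituting $\sigma^R(A)=\sigma(A)$ from the previous step yields the desired bound $\cov(AA^{-1})\le\pack(A)\le1/\sigma(A)$. The first inequality $\cov(AA^{-1})\le\pack(A)$ is of course also the content of Proposition~\ref{p7.2n}, but it is already embedded in Proposition~\ref{p7.1n}, so no separate appeal is strictly needed.

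There is essentially no obstacle here: the corollary is a purely formal consequence, and the only substantive content is the FC-group coincidence $\sigma=\sigma^R$ which is cited from Theorem~\ref{t-FC}(2). If one wanted to emphasize the role of the packing index explicitly, one could spell out Proposition~\ref{p7.2n} as a separate step, but the argument then remains a two-line concatenation of cited results.
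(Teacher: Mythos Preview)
Your proposal is correct and matches the paper's own approach exactly: the paper simply states that the corollary follows from Theorem~\ref{t-FC} together with Propositions~\ref{p7.2n} and \ref{p7.1n}, which is precisely the concatenation you describe.
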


\begin{remark} Corollary~\ref{c7.3n} cannot be generalized to amenable groups. A suitable counterexample can be constructed as follows. Take an infinite set $X$ and an infinite subset $Y\subset X$ with infinite complement $X\setminus Y$. Consider the group $FS_{X}$ of finitely supported bijections of $X$ and the subgroups $FS_{Y}=\{f\in FS_X:\supp(f)\subset Y\}$. Observe that the group $FS_X$ is locally finite and hence amenable, the subgroup $FS_{Y}$ has infinite packing index and infinite covering number but is Solecki one according to Example~\ref{permutation}.
\end{remark}

\begin{problem} Let $G$ be a non-trivial (amenable) group.
\begin{enumerate}
\item Is there a subset $A\subset G$ with $0<\sigma(A)<1$?
\item Is there a large subset $A\subset G$ with $\sigma(A)<1$?
\item Is there a finite partition $G=A_1\cup\dots\cup A_n$ of $G$ such that $\sigma(A_i)<1$ for all $i\le n$? What is the answer for $n=2$?
\end{enumerate}
\end{problem}

Corollary~\ref{c7.3n} implies that all these questions have affirmative answers for FC-groups $G$.

 Another question concerns a possible characterization of amenability.

\begin{problem}[Protasov] Is a group $G$ amenable if for each partition $G=A_1\cup \dots\cup A_n$ there is a cell $A_i$ of the partition satisfying one of the conditions: \textup{(a)}~$\sigma^R(A_i)\ge\frac1n$, \textup{(b)}~$\pack(A_i)\le n$, \textup{(c)}~$\cov(A_iA_i^{-1})\le n$, \textup{(d)}~$\sigma^R(A_i)>0$, \textup{(e)}~$\pack(A_i)<\w$?
\end{problem}

\section{The $\I$-difference sets of right-Solecki positive sets in groups}

In this section we generalize the upper bound $\cov(AA^{-1})\le1/\sigma^R(A)$ proved in Proposition~\ref{p7.1n} and give an upper bound on the covering number of the  $\I$-difference set
$$\Delta_\I(A)=\{x\in G:A\cap xA\notin\I\},$$where $\I$ a family of subsets of a group $G$ and $A$ is a subset of $G$. Usually we shall assume that $\I$ is a left-invariant ideal of subsets of $G$.

A non-empty family $\I$ of subsets of a set $X$ is an {\em ideal} if it is closed under unions and taking subsets. An ideal $\I$ of subsets of a group $G$ will be called {\em left-invariant} if for each set $A\in\I$ all its left shifts $xA$, $x\in G$, belong to $\I$.

Observe that the difference set $AA^{-1}$ of a set $A\subset G$ coincides with the $\I$-difference set $\Delta_\I(A)$ for the smallest ideal $\I=\{\emptyset\}$.

For a subset $A$ of a group $G$ and a left-invariant family $\I$ of subsets of $G$ the covering number $\cov(\Delta_\I(A))$ of the  $\I$-difference set is bounded from above by the {\em  $\I$-packing index}
$$
\begin{aligned}
\Ipack(A)&=\sup\big\{|E|:E\subset G,\,\;\forall x,y\in E\;(x\ne y\:\Rightarrow xA\cap yA\in \I) \big\}
\end{aligned}
$$
of the set $A$.
It is clear that $\pack(A)=\I_0\mbox{-}pack_L(A)$ for the smallest ideal $\I_0=\{\emptyset\}$.

\begin{proposition}\label{Ipack-cov} For any left-invariant family $\I$ of subsets of a group $G$ and any subset $A\notin\I$ of $G$ we get $\Ipack(A)\ge\cov(\Delta_\I(A))$.
\end{proposition}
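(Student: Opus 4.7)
The plan is to imitate the proof of Proposition~\ref{p7.2n}, replacing the emptiness condition by membership in $\I$, and using left-invariance of $\I$ in the crucial step. The argument is essentially: pick a maximal $\I$-packing set $E$, observe that its cardinality is bounded by $\Ipack(A)$, and use maximality to show $E\cdot\Delta_\I(A)$ covers $G$.

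More concretely, I would first apply Zorn's Lemma to obtain a subset $E\subset G$ which is maximal with respect to the property that for any distinct $x,y\in E$ the intersection $xA\cap yA$ belongs to $\I$. By the very definition of $\Ipack(A)$ as a supremum, $|E|\le\Ipack(A)$. Next I would check that $1_G\in\Delta_\I(A)$: this is where the hypothesis $A\notin\I$ enters, since $A\cap 1_G\cdot A=A\notin\I$.

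The key step is to verify $G=E\cdot\Delta_\I(A)$. Given $g\in G$, if $g\in E$ then $g=g\cdot 1_G\in E\cdot\Delta_\I(A)$ by the previous observation. Otherwise, the maximality of $E$ forces the existence of some $e\in E$ for which $gA\cap eA\notin\I$ (otherwise $E\cup\{g\}$ would still satisfy the defining property, contradicting maximality). Left-multiplying by $e^{-1}$ and invoking the left-invariance of $\I$, we get $e^{-1}gA\cap A=e^{-1}(gA\cap eA)\notin\I$, i.e.\ $e^{-1}g\in\Delta_\I(A)$, whence $g\in e\cdot\Delta_\I(A)\subset E\cdot\Delta_\I(A)$.

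The conclusion $\cov(\Delta_\I(A))\le|E|\le\Ipack(A)$ then follows at once. There is no real obstacle: the only subtlety is the use of left-invariance of $\I$ to pass from $gA\cap eA\notin\I$ to $e^{-1}g\in\Delta_\I(A)$, and the minor bookkeeping that $A\notin\I$ guarantees $1_G\in\Delta_\I(A)$ so that the points of $E$ themselves are covered by $E\cdot\Delta_\I(A)$.
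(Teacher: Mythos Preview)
Your proof is correct and follows essentially the same route as the paper's: choose a maximal $\I$-packing set $E$ via Zorn's Lemma, then use maximality together with left-invariance of $\I$ to show $G=E\cdot\Delta_\I(A)$. The only difference is cosmetic: you treat the case $g\in E$ separately via $1_G\in\Delta_\I(A)$ (which is where you use $A\notin\I$), whereas the paper handles all $g\in G$ uniformly, the case $g\in E$ being covered implicitly by taking $e=g$ and noting $gA\notin\I$ by left-invariance.
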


\begin{proof} By Zorn's Lemma, there is a maximal set $E\subset G$ such that $xA\cap yA\in\I$ for any distinct points $x,y\in E$. By the maximality of $E$, for each $g\in G$ there is an element
$e\in E$ such that $eA\cap gA\notin\I$. Since $\I$ is left-invariant, this implies $A\cap e^{-1}gA\notin\I$ and hence $e^{-1}g\in \Delta_\I(A)$ according to the definition of $\Delta_\I(A)$.
Then $g\in e\cdot\Delta_\I(A)\subset E\cdot\Delta_\I(A)$, which implies  $G=E\cdot \Delta_\I(A)$ and $\cov(\Delta_\I(A))\le|E|\le\Ipack(A)$.
\end{proof}

Now we prove the ``idealized'' versions of Proposition~\ref{p7.1n}.

\begin{proposition}\label{Icov-sigma} Let $G$ be a group and $\mathcal I=\{B\subset G:\hat\sigma_R(B)=0\}$. Any subset $A\subset G$ with $\sigma_R(A)>0$ has $\cov(\Delta_\I(A))\le\Ipack(A)\le1/{\sigma_R(A)}$.
\end{proposition}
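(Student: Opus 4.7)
The first inequality $\cov(\Delta_\I(A))\le\Ipack(A)$ is immediate from Proposition~\ref{Ipack-cov} once I verify two preliminaries. First, $\I$ is a left-invariant ideal: by Proposition~\ref{p4.1n} the function $\hat\sigma_R$ is an invariant submeasure (monotone, subadditive, and invariant because $\sigma_R$ is), so $\I=\hat\sigma_R^{-1}(\{0\})$ is closed under subsets, finite unions, and translations. Second, $A\notin\I$ because $\hat\sigma_R(A)\ge\sigma_R(A)>0$.

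For the main inequality I argue by contradiction: suppose there is a finite $\I$-packing $E\subset G$ with $|E|>1/\sigma_R(A)$, i.e.\ $eA\cap fA\in\I$ for all distinct $e,f\in E$. The strategy is to excise a small ``overlap'' set $N$ from $A$ so that the translates become genuinely pairwise disjoint while $\sigma_R$ is preserved. By left-invariance of $\I$, $A\cap e^{-1}fA=e^{-1}(eA\cap fA)\in\I$ for distinct $e,f\in E$, so
\[
N\;:=\;\bigcup_{\substack{e,f\in E\\ e\ne f}}\bigl(A\cap e^{-1}fA\bigr)\;\in\;\I.
\]
Put $A':=A\setminus N$. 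Applying the defining inequality $\sigma_R(B\cup C)\le\sigma_R(C)+\hat\sigma_R(B)$ with $B=A\cap N$ and $C=A'$ gives $\sigma_R(A)\le\sigma_R(A')+\hat\sigma_R(N)=\sigma_R(A')$, and monotonicity supplies the reverse inequality, so $\sigma_R(A')=\sigma_R(A)$.

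Next I check that the translates $(eA')_{e\in E}$ are pairwise disjoint. If $y\in eA'\cap fA'$ with $e\ne f$, then $e^{-1}y\in A'$ and $e^{-1}y=(e^{-1}f)(f^{-1}y)\in e^{-1}fA'\subset e^{-1}fA$; hence $e^{-1}y\in A\cap e^{-1}fA\subset N$, contradicting $e^{-1}y\in A\setminus N$.

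Finally, I invoke Theorem~\ref{t3.3nn} in its measure form $\sigma_R(A')=\sup_{\mu\in P(G)}\inf_{x\in G}\mu(xA')$: for any $\e>0$ there is $\mu\in P(G)$ with $\mu(eA')\ge\sigma_R(A)-\e$ for every $e\in E$. Summing and using pairwise disjointness,
\[
1\;\ge\;\mu\Bigl(\bigcup_{e\in E}eA'\Bigr)\;=\;\sum_{e\in E}\mu(eA')\;\ge\;|E|\bigl(\sigma_R(A)-\e\bigr),
\]
and letting $\e\to 0$ yields $|E|\le 1/\sigma_R(A)$, contradicting the choice of $E$. The main obstacle is the middle step: one cannot expect a measure $\mu$ realising the supremum in Theorem~\ref{t3.3nn} to vanish on the $\I$-null intersections $eA\cap fA$, so the removal trick---together with the fact that excising an $\hat\sigma_R$-null set leaves $\sigma_R$ unchanged---is crucial for converting $\I$-disjointness into genuine disjointness at no cost in $\sigma_R$-mass.
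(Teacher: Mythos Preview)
Your proof is correct and follows essentially the same strategy as the paper: assume an $\I$-packing $E$ of size exceeding $1/\sigma_R(A)$, excise an $\hat\sigma_R$-null ``overlap'' set from $A$ to obtain $A'$ with $\sigma_R(A')=\sigma_R(A)$ and genuinely disjoint translates $(eA')_{e\in E}$, then derive the contradiction $|E|\le 1/\sigma_R(A)$. The only cosmetic differences are that you remove the slightly smaller set $N=\bigcup_{e\ne f}e^{-1}(eA\cap fA)$ rather than the paper's $F^{-1}\!\bigl(\bigcup_{x\ne y}xA\cap yA\bigr)$, and you close the argument by invoking Theorem~\ref{t3.3nn} directly (picking a measure $\mu$ and summing) instead of appealing to Proposition~\ref{p7.1n}; both routes are equivalent in spirit.
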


\begin{proof}  Proposition~\ref{Ipack-cov} implies that $\cov(\Delta_{\I}(A))\le \Ipack(A)$.

Assuming that $\Ipack(A)>1/\sigma_R(A)$, we can find a finite set $F\subset G$ of cardinality $|F|>1/\sigma_R(A)$ such that $xA\cap yA\in\I$ for all distinct points $x,y\in F$.
Then the set $E=\bigcup\{xA\cap yA:x,y\in E,\;x\ne y\}$ belongs to the ideal $\I$ and so does the set $F^{-1}E$. Now consider the set $A'=A\setminus F^{-1}E$ and observe that $$\sigma_R(A)-\sigma_R(A')\le\sigma_R(A'\cup F^{-1}E)-\sigma_R(A')\le\hat\sigma_R(F^{-1}E)=0.$$
So, $\sigma_R(A')=\sigma_R(A)$ and  $\pack(A')\le1/\sigma^R(A')\le1/\sigma_R(A')=1/\sigma_R(A)$ according to Proposition~\ref{p7.1n}. On the other hand, for any distinct points $x,y\in F$ the sets $xA'$ and $yA'$ are disjoint. Assuming conversely that $xA'\cap yA'$ contains some points $z$, we would conclude that $z\in xA'\cap yA'\subset xA\cap yA\subset E$. Then $z=xx^{-1}z\in xF^{-1}E$ which is not possible as $z\in xA'=x(A\setminus F^{-1}E)$. This contradiction shows that the indexed family $(xA')_{x\in F}$ is disjoint and hence $\pack(A')\ge|F|>1/\sigma_R(A)\ge\pack(A')$, which is a desired contradiction.
\end{proof}

By analogy we can prove:

\begin{proposition}\label{Icov-sigma2} Let $G$ be a group and $\mathcal I=\{B\subset G:\hat\sigma^R(B)=0\}$. Any subset $A\subset G$ with $\sigma^R(A)>0$ has $\cov(\Delta_\I(A))\le\Ipack(A)\le1/{\sigma^R(A)}$.
\end{proposition}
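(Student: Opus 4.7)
The plan is to transcribe the proof of Proposition~\ref{Icov-sigma} verbatim, replacing $\sigma_R$ and $\hat\sigma_R$ by $\sigma^R$ and $\hat\sigma^R$, and using Proposition~\ref{p7.1n} at the final step (where the previous proof also invoked Proposition~\ref{p7.1n} via the bound $\pack(A')\le 1/\sigma^R(A')$, which already relates $\pack$ to $\sigma^R$ rather than $\sigma_R$).

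First I would check that $\I=\{B\subset G:\hat\sigma^R(B)=0\}$ is a left-invariant ideal of subsets of $G$. Since $\sigma^R$ is an invariant density (as noted in Section 3), Proposition~\ref{p4.1n} guarantees that its subadditivization $\hat\sigma^R$ is an invariant submeasure on $G$. Monotonicity, subadditivity and left-invariance of $\hat\sigma^R$ immediately translate into the ideal and left-invariance properties for $\I$. Consequently, Proposition~\ref{Ipack-cov} applies and delivers the first inequality $\cov(\Delta_\I(A))\le\Ipack(A)$.

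For the substantive bound $\Ipack(A)\le 1/\sigma^R(A)$, I would argue by contradiction. Assume there is a finite set $F\subset G$ with $|F|>1/\sigma^R(A)$ such that $xA\cap yA\in\I$ for all distinct $x,y\in F$. The finite union $E=\bigcup\{xA\cap yA:x,y\in F,\;x\ne y\}$ then lies in $\I$, and by left-invariance of $\I$ so does $F^{-1}E$, i.e.\ $\hat\sigma^R(F^{-1}E)=0$. Setting $A'=A\setminus F^{-1}E$, the very definition of the subadditivization gives
$$\sigma^R(A)-\sigma^R(A')\le \sigma^R(A'\cup F^{-1}E)-\sigma^R(A')\le\hat\sigma^R(F^{-1}E)=0,$$
so $\sigma^R(A')=\sigma^R(A)$, and Proposition~\ref{p7.1n} yields $\pack(A')\le 1/\sigma^R(A')=1/\sigma^R(A)<|F|$. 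To obtain the contradiction, I would verify that the family $\{xA'\}_{x\in F}$ is pairwise disjoint: any $z\in xA'\cap yA'$ with $x\ne y$ would belong to $xA\cap yA\subset E$, hence $x^{-1}z\in x^{-1}E\subset F^{-1}E$, which is incompatible with $x^{-1}z\in A'=A\setminus F^{-1}E$. Thus $\pack(A')\ge|F|$, contradicting the previous bound.

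There is no genuine obstacle in this argument: it is a line-by-line dualisation of the proof of Proposition~\ref{Icov-sigma} under the symmetry $\sigma_R\leftrightarrow\sigma^R$, $\hat\sigma_R\leftrightarrow\hat\sigma^R$, the only facts used being invariance of $\sigma^R$ (ensuring $\I$ is left-invariant via Proposition~\ref{p4.1n}) and the estimate $\pack(A)\le 1/\sigma^R(A)$ from Proposition~\ref{p7.1n}, which is already formulated in terms of the right Solecki density $\sigma^R$.
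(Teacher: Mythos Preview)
Your proposal is correct and is exactly the analogy the paper intends: the paper's own proof of Proposition~\ref{Icov-sigma2} is just the line ``By analogy we can prove,'' referring to the proof of Proposition~\ref{Icov-sigma}, and you have carried out precisely that transcription (with the added care of checking that $\I$ is a left-invariant ideal via Proposition~\ref{p4.1n}). If anything, the $\sigma^R$ version is slightly cleaner, since Proposition~\ref{p7.1n} already bounds $\pack(A')$ by $1/\sigma^R(A')$ directly, without the extra step $1/\sigma^R(A')\le 1/\sigma_R(A')$ used in the $\sigma_R$ case.
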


Next, we prove a quantitative  version of Theorem~\ref{t4.2}.

\begin{theorem}\label{semikourov} Let $G$ be a group and $\I=\{B\subset G:\hat\varsigma_R(B)=0\}$. For any subset $A\subset G$ with $\hat\sigma_R(A)>0$ there is a finite subset $E\subset G$ such that the set $\Delta_\I(A)^{\circlearrowright E}=\bigcup_{x\in E}x^{-1}\cdot \Delta_\I(A)\cdot x$ has covering number $$\cov(\Delta_\I(A)^{\circlearrowright E})\le \frac1{\varsigma_R(A)}\le\frac1{\hat \sigma_R(A)}.$$
\end{theorem}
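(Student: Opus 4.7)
The plan is to mimic the Zorn's-Lemma argument from the proof of Theorem~\ref{t4.2}, relaxing exact disjointness of the translates to $\I$-disjointness. Fix $\e>0$; because $\varsigma_R(A)\ge\hat\sigma_R(A)>0$ by Theorem~\ref{subadit-sol}, the definition of $\varsigma_R$ furnishes $\mu_1\in P(G)$ and $\mu_2=\sum_{i\in n}\alpha_i\delta_{a_i}\in P_\w(G)$ with $\inf_{\mu_3\in P_\w(G)}\mu_2*\mu_3*\mu_1(A)>\varsigma_R(A)-\e$. Set $E:=S=\{a_i\}_{i\in n}$ and, by Zorn's Lemma, choose $M\subset G$ maximal with the property that $xa^{-1}A\cap ya^{-1}A\in\I$ for all distinct $x,y\in M$ and every $a\in S$.

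Next, I would check that $G=M\cdot\Delta_\I(A)^{\circlearrowright E}$. A direct computation using the left-invariance of $\I$ (which holds because $\hat\varsigma_R$ is invariant by Proposition~\ref{p4.1n}) gives $\Delta_\I(a^{-1}A)=a^{-1}\Delta_\I(A)a$ and $\Delta_\I(A)^{-1}=\Delta_\I(A)$. For $g\in M$ use $1_G\in\Delta_\I(A)$, which holds because $A\notin\I$ (as $\hat\varsigma_R(A)\ge\varsigma_R(A)>0$). For $g\notin M$, maximality yields $m\in M$ and $a\in S$ with $ma^{-1}A\cap ga^{-1}A\notin\I$, i.e.\ $g^{-1}m\in\Delta_\I(a^{-1}A)=a^{-1}\Delta_\I(A)a$, so $g\in m\cdot a^{-1}\Delta_\I(A)a\subset M\cdot\Delta_\I(A)^{\circlearrowright E}$.

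The main obstacle is the cardinality bound $|M|\le 1/\varsigma_R(A)$. For a finite $F\subset M$, I would form the $a_i$-overlap $B_i:=\bigcup_{x\ne y\in F}(xa_i^{-1}A\cap ya_i^{-1}A)\in\I$ and the \emph{aligned} overlap $B:=\bigcup_i a_iB_i\in\I$, designed so that $B_i\subset a_i^{-1}B$. Since $\varsigma_R(B)\le\hat\varsigma_R(B)=0$, for any $\e'>0$ the definition of $\varsigma_R$ supplies $\mu_3\in P_\w(G)$ with $\mu_2*\mu_3*\mu_1(B)<\e'$. Absorbing $\mu_3$ into $\mu_1$ via $\tilde\mu_1:=\mu_3*\mu_1$ preserves the starting bound, because $\nu*\mu_3\in P_\w(G)$ whenever $\nu\in P_\w(G)$; hence $\inf_{\nu\in P_\w(G)}\mu_2*\nu*\tilde\mu_1(A)>\varsigma_R(A)-\e$. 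Integrating the pointwise inequality $|\{x\in F:z\in xa_i^{-1}A\}|\le 1+|F|\chi_{B_i}(z)$ against $\tilde\mu_1$ gives $\sum_{x\in F}\tilde\mu_1(xa_i^{-1}A)\le 1+|F|\tilde\mu_1(B_i)$, and $\alpha_i$-weighting together with $\sum_i\alpha_i\tilde\mu_1(B_i)\le\sum_i\alpha_i\tilde\mu_1(a_i^{-1}B)=\mu_2*\tilde\mu_1(B)<\e'$ produces
$$|F|(\varsigma_R(A)-\e)<\sum_{x\in F}\mu_2*\delta_{x^{-1}}*\tilde\mu_1(A)\le 1+|F|\e',$$
so $|F|<1/(\varsigma_R(A)-\e-\e')$. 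Since $F\subset M$ was arbitrary and the covering number is a positive integer, choosing $\e,\e'$ sufficiently small yields $\cov(\Delta_\I(A)^{\circlearrowright E})\le|M|\le 1/\varsigma_R(A)\le 1/\hat\sigma_R(A)$, as required. The crucial technical trick--and the step hardest to anticipate--is the alignment $B=\bigcup_i a_iB_i$, which makes $\sum_i\alpha_i\tilde\mu_1(B_i)$ bounded by $\mu_2*\tilde\mu_1(B)$; this is exactly the quantity that the replacement $\mu_1\mapsto\mu_3*\mu_1$ can be made small for, without spoiling the near-optimality of the density on $A$.
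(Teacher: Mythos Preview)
Your proof is correct and shares the paper's overall architecture: choose $\mu_1,\mu_2$ near-witnessing $\varsigma_R(A)$, set $E=\supp(\mu_2)$, take $M\subset G$ maximal for the $\I$-disjointness condition, verify $G=M\cdot\Delta_\I(A)^{\circlearrowright E}$, and then bound $|M|$. The divergence is at the cardinality bound. The paper excises the overlap set $B=EF^{-1}S$ from $A$ to make the translates $xa^{-1}(A\setminus B)$ genuinely disjoint, and then writes $\varsigma_R(A\setminus B)-\e<\inf_x\mu_2*\delta_x*\mu_1(A\setminus B)$; but this step is not justified as written, since $\mu_1,\mu_2$ were chosen as near-optimal witnesses for $A$, not for the smaller set $A\setminus B$. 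Your route sidesteps this: you keep $A$ intact, bound the overlap pointwise by $|\{x\in F:z\in xa_i^{-1}A\}|\le1+|F|\chi_{B_i}(z)$, use $\varsigma_R(B)=0$ to pick $\mu_3$ with $\mu_2*\mu_3*\mu_1(B)<\e'$, and absorb $\mu_3$ into $\tilde\mu_1=\mu_3*\mu_1$ so that the near-optimality $\mu_2*\delta_{x^{-1}}*\tilde\mu_1(A)>\varsigma_R(A)-\e$ is preserved while the alignment $B=\bigcup_ia_iB_i$ converts $\sum_i\alpha_i\tilde\mu_1(B_i)$ into $\mu_2*\tilde\mu_1(B)<\e'$. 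Your final appeal to integrality of the covering number, to pass from the $\e$-dependent bound $1/(\varsigma_R(A)-\e-\e')$ to the clean $1/\varsigma_R(A)$, is also legitimate. In effect your argument repairs a gap in the paper's written proof.
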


\begin{proof} Assume conversely that $\cov(\Delta_\I(A)^{\circlearrowright E})>1/\varsigma_R(A)$ for any non-empty finite subset $E\subset G$. Then we can choose a positive $\e$ such that
$$\frac1{\varsigma_R(A)-\e}<\min\big\{\cov(\Delta_\I(A)^{\circlearrowright E}):E\in [G]^{<\w}\big\}.$$
 By the definition of the density $\varsigma_R$,
there are measures $\mu_1\in P(G)$ and $\mu_2\in P_\w(G)$ such that $$\inf_{x\in G}\mu_2*\delta_x*\mu_1(A)>\varsigma_R(A)-\e.$$ Write the finitely supported measure $\mu_2$ as a convex combination $\mu_2=\sum_{i\in n}\alpha_i\delta_{a_i}$ of Dirac measures and put $E=\{a_i\}_{i\in n}$.

Using Zorn's Lemma, choose a maximal subset $M\subset G$ such that for every $a\in E$  and any distinct points $x,y\in M$ we get $xa^{-1}A\cap ya^{-1}A\in \I$. Then for every $g\in G$, by the maximality of $M$, there is a point $x\in M$ such that $ga^{-1}A\cap xa^{-1}A\notin\I$ for some $a\in E$, which implies that $g\in xa^{-1}\Delta_\I(A)a\subset M\cdot\Delta_\I(A)^{\circlearrowright E}$ and hence $1/(\varsigma_R(A)-\e)<\cov(\Delta_\I(A)^{\circlearrowright E})\le|M|$.

Choose any finite subset $F\subset M$ of cardinality $|F|\ge\cov(\Delta_\I(A)^{\circlearrowright E})$ and consider the set
$$S=\bigcup\{xa^{-1}A\cap ya^{-1}A:\mbox{$a\in E$ and $x,y$ are distinct points of $F$}\},$$
which belongs to the ideal $\I=\{B\subset G:\hat\varsigma_R(B)=0\}$ by the choice of $M$.
Then the set $B=EF^{-1}S$  belongs to the ideal $\I$ too.

Repeating the argument from the proof of Proposition~\ref{Icov-sigma}, we can show that
 for every $a\in E$ the indexed family $(xa^{-1}(A\setminus B))_{x\in F}$ is disjoint, which implies that $\sum_{x\in F}\mu_1(xa^{-1}(A\setminus B))\le 1$ and hence
$$
\sum_{x\in F}\mu_2*\delta_{x^{-1}}*\mu_1(A\setminus B)=\sum_{x\in F}\sum_{i\in n}\alpha_i\delta_{a_i}*\delta_{x^{-1}}*\mu_1(A\setminus B)=\sum_{i\in n}\alpha_i\sum_{x\in F}\mu_1(xa^{-1}_i(A\setminus B))\le \sum_{i\in n}\alpha_i\cdot 1=1.$$
It follows from $B\in\I$ that $\hat\varsigma_R(B)=0$ and hence $\varsigma_R(A)\le\varsigma_R(A\setminus B)+\hat\varsigma_R(B)=\varsigma_R(A\setminus B)$.

Consequently,
$$
|F|\cdot(\varsigma_R(A)-\e)=|F|\cdot (\varsigma_R(A\setminus B)-\e)<|F|\cdot \inf_{x\in G}\mu_2*\delta_x*\mu_1(A\setminus B)\le\sum_{x\in F}\mu_2*\delta_{x^{-1}}*\mu_1(A\setminus B)\le 1$$
and we obtain a desired contradiction: $|F|\le 1/(\varsigma_R(A)-\e)<\cov(\Delta_\I(A)^{\circlearrowright E})\le|F|$.
\end{proof}

\begin{remark} Since $\hat\varsigma_R\le \sigma$ (according to Theorem~\ref{subadit-sol}), the ideal $\I=\{B\subset G:\hat\varsigma_R(B)=0\}$ appearing in Theorem~\ref{semikourov} is contains the ideal of Solecki null sets.
\end{remark}

Now we apply Theorem~\ref{Icov-sigma} to give a partial answer to the following problem of I.V.Protasov from the Kourovka Problem Notebook \cite[Problem 13.44]{Kourov} and its ``idealized'' version from \cite{ProtD}.

\begin{problem}[Protasov]\label{prob-Kourov} Is it true that for every finite cover $G=A_1\cup\dots\cup A_n$ of an (infinite) group $G$ there is an index $i\le n$ such that $\cov(A_iA_i^{-1})\le n$ \textup{(}and  $\cov(\Delta_\I(A_i))\le n$ for the ideal $\I$ of finite subsets of $G$\textup{)}?
\end{problem}

The answer to this problem is positive for covers of groups by subgroup cosets as follows from Lemma 1 of \cite{Neumann} or can be alternatively derived from Corollary~\ref{c6.5n}.

We prove that the answer to Problem~\ref{prob-Kourov} is affirmative if the group $G$ is Solecki amenable or the partition consists of inner invariant sets. Let us recall that a subset $A$ of a group $G$ is called {\em inner invariant} if $xAx^{-1}=A$ for all $x\in G$.
The following theorem is a joint result of T.Banakh, I.Protasov and S.Slobodianiuk (cf. \cite{BPS}).

\begin{theorem}[Banakh, Protasov, Slobodiadiuk]\label{BPS} Let $G=A_1\cup \dots\cup A_n$ be a finite partition of a group and let $\I=\{A\subset G:\hat\sigma_R(A)=0\}$. If the group $G$ is Solecki amenable or the cells $A_i$ of the partition are inner invariant, then for some index $i\le n$ the $\I$-difference set $\Delta_\I(A_i)$ has covering number $\cov(A_iA_i^{-1})\le\cov(\Delta_\I(A_i))\le n$.
\end{theorem}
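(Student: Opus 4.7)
The plan is to reduce the statement to the inequality $\cov(\Delta_\I(A))\le 1/\sigma_R(A)$ proved in Proposition~\ref{Icov-sigma} by showing that, in either hypothesis, one of the cells of the partition must have right Solecki density at least $1/n$.

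First I would record the easy inclusion $\Delta_\I(A)\subset AA^{-1}$: if $x\in\Delta_\I(A)$, then $A\cap xA\notin\I$, in particular $A\cap xA\ne\emptyset$, so $x\in AA^{-1}$. This immediately yields $\cov(AA^{-1})\le\cov(\Delta_\I(A))$, which is the left inequality in the theorem. So it suffices to bound $\cov(\Delta_\I(A_i))$ by $n$ for some $i$.

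Next I would show that in both cases of the hypothesis the right Solecki densities of the cells sum to at least one, so by the pigeonhole principle some $A_i$ satisfies $\sigma_R(A_i)\ge\frac1n$. If $G$ is Solecki amenable, then $\sigma_R$ is subadditive by definition, hence
$$1=\sigma_R(G)=\sigma_R\Big(\bigcup_{i=1}^nA_i\Big)\le\sum_{i=1}^n\sigma_R(A_i).$$
If instead each $A_i$ is inner invariant, then by the observation in Section~3 all its Solecki densities coincide with the Solecki submeasure, i.e.\ $\sigma_R(A_i)=\sigma(A_i)$. Since $\sigma$ is a submeasure (Proposition~\ref{subad}), we again get $\sum_{i=1}^n\sigma_R(A_i)=\sum_{i=1}^n\sigma(A_i)\ge\sigma(G)=1$.

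Finally, pick an index $i$ with $\sigma_R(A_i)\ge 1/n$ and apply Proposition~\ref{Icov-sigma} to conclude
$$\cov(A_iA_i^{-1})\le\cov(\Delta_\I(A_i))\le\Ipack(A_i)\le\frac1{\sigma_R(A_i)}\le n.$$
There is really no hard step here: the only thing to be careful about is that the ideal $\I=\{B\subset G:\hat\sigma_R(B)=0\}$ chosen in the statement matches precisely the ideal used in Proposition~\ref{Icov-sigma}, so that proposition applies verbatim. The mild subtlety is the direction of the inclusion $\Delta_\I(A)\subset AA^{-1}$ (rather than the reverse), which ensures that the covering-number inequality goes the way the theorem asserts.
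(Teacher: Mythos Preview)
Your proof is correct and follows essentially the same approach as the paper: find a cell with $\sigma_R(A_i)\ge 1/n$ (via subadditivity of $\sigma_R$ in the Solecki amenable case, or via $\sigma_R=\sigma$ and subadditivity of $\sigma$ in the inner-invariant case), then apply Proposition~\ref{Icov-sigma}. You spell out the inclusion $\Delta_\I(A)\subset AA^{-1}$ and the resulting covering-number inequality a bit more explicitly than the paper does, but the argument is the same.
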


\begin{proof} We claim that $\sigma_R(A_i)\ge 1/n$ for some $i\le n$. If the group $G$ is Solecki amenable, then this follows from the subadditivity of the right Solecki density $\sigma_R$. If each cell $A_i$ of the partition is inner invariant, then  $\sigma_R(A_i)=\sigma(A_i)$ for all $i\le n$ and the existence of an index $i\le n$ with $\sigma_R(A_i)=\sigma(A_i)\ge 1/n$ follows from the subadditivity of the Solecki submeasure $\sigma$. By Proposition~\ref{Icov-sigma}, $\cov(A_iA_i^{-1})\le \cov(\Delta_\I(A_i))\le1/\sigma^R(A_i)\le1/\sigma_R(A_i)\le n.$
\end{proof}

Theorem~\ref{BPS} can be also deduced from the following corollary of Theorem~\ref{semikourov}.

\begin{corollary} Let $G$ be a group and $\I=\{B\subset G:\hat\varsigma_R(B)=0\}$. For any finite partition $G=A_1\cup\dots\cup A_n$ of a group there is a cell $A_i$ of the partition and a finite subset $E\subset G$ such that the set $\Delta_\I(A)^{\circlearrowright E}=\bigcup_{x\in E}x^{-1}\cdot \Delta_\I(A_i)\cdot x$ has covering number  $\cov(\Delta_\I(A)^{\circlearrowright E}) \le n$.
\end{corollary}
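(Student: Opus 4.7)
The plan is to reduce the corollary to a single-cell application of Theorem~\ref{semikourov}, the key observation being that although $\varsigma_R$ itself is not subadditive, the smaller quantity $\hat\sigma_R$ is, so we can apply pigeonhole on the partition at the level of $\hat\sigma_R$ and then feed the result into the bound $\cov(\Delta_\I(A)^{\circlearrowright E}) \le 1/\hat\sigma_R(A)$ from Theorem~\ref{semikourov}.

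First I would invoke Proposition~\ref{p4.1n}(1) applied to the density $\sigma_R$: its subadditivization $\hat\sigma_R$ is a submeasure on $G$, in particular subadditive. Next I would note that $\hat\sigma_R(G) = 1$: this follows because $\sigma_R(G) = 1$ and $\sigma_R \le \hat\sigma_R \le \sigma$ by Theorem~\ref{subadit-sol}, with $\sigma(G) = 1$. Hence
\[
1 \;=\; \hat\sigma_R(G) \;=\; \hat\sigma_R\Bigl(\bigcup_{i=1}^n A_i\Bigr) \;\le\; \sum_{i=1}^n \hat\sigma_R(A_i),
\]
so by the pigeonhole principle there is an index $i \le n$ with $\hat\sigma_R(A_i) \ge 1/n > 0$.

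Now I apply Theorem~\ref{semikourov} to this cell $A_i$ (using the same ideal $\I = \{B \subset G : \hat\varsigma_R(B) = 0\}$ that appears in both statements). Theorem~\ref{semikourov} produces a finite subset $E \subset G$ such that
\[
\cov\bigl(\Delta_\I(A_i)^{\circlearrowright E}\bigr) \;\le\; \frac{1}{\varsigma_R(A_i)} \;\le\; \frac{1}{\hat\sigma_R(A_i)} \;\le\; n,
\]
where the middle inequality uses $\hat\sigma_R \le \varsigma_R$ from Theorem~\ref{subadit-sol}.

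Since every step is a direct invocation of a previously established result — Proposition~\ref{p4.1n}, Theorem~\ref{subadit-sol}, and Theorem~\ref{semikourov} — there is no real obstacle here; the only mild point worth verifying carefully is that the submeasure ranking $\hat\sigma_R \le \varsigma_R \le \hat\varsigma_R \le \sigma$ is precisely what lets us both sum over the partition (needing subadditivity, which $\hat\sigma_R$ has) and then invert to a covering bound of the desired form (needing the lower bound on $\varsigma_R$ supplied by $\hat\sigma_R$).
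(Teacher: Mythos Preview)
Your proof is correct and is exactly the intended argument: the paper states this corollary without proof, immediately after Theorem~\ref{semikourov}, and the pigeonhole step via the subadditivity of $\hat\sigma_R$ followed by an application of Theorem~\ref{semikourov} is the natural (and only reasonable) way to derive it. One minor simplification: you do not need to argue separately that $\hat\sigma_R(G)=1$, since Proposition~\ref{p4.1n}(1) already asserts that $\hat\sigma_R$ is a submeasure, and by the paper's definition a submeasure is in particular a density, so $\hat\sigma_R(G)=1$ is built in.
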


In Theorem~12.7 of \cite{PB} it was proved that for every partition $G=A_1\cup\dots\cup A_n$ there is cell $A_i$ of the partition such that $\cov(A_iA_i^{-1})\le 2^{2^{n-1}-1}$. An ``idealized'' version of this result was proved in \cite{Erde}. In \cite{BRS} these results were improved to the following form giving a partial answer to Protasov's Problem~\ref{prob-Kourov}.

\begin{theorem}[Banakh, Ravsky, Slobodianiuk] For any finite partition $G=A_1\cup\dots\cup A_n$ of a group $G$ and any left-invariant ideal $\I$ on $G$ there is an index $i\le n$ such that $$\cov(\Delta_\I(A_i))\le \max_{1\le k\le n}\sum_{i=0}^{n-k}k^i.$$
\end{theorem}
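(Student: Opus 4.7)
I would prove this by induction on $n$, combined with the bound $\cov(\Delta_\I(A))\le\Ipack(A)$ from Proposition~\ref{Ipack-cov}. Write $b_k(n):=\sum_{j=0}^{n-k}k^j$, so the quantity to bound is $f(n):=\max_{1\le k\le n}b_k(n)$. The recurrence $b_k(n)=1+k\cdot b_k(n-1)$ with initial value $b_k(k)=1$ identifies $b_k(n)$ as the number of vertices of a rooted $k$-ary tree of height $n-k$, which strongly suggests a tree-building argument. The base case $n=1$ is immediate: then $A_1=G$ forces $\Delta_\I(A_1)=G$ (provided the ideal is proper), so $\cov(\Delta_\I(A_1))=1=b_1(1)$.

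For the inductive step, assume the result for all partitions with fewer than $n$ cells, and suppose for contradiction that $\cov(\Delta_\I(A_i))>f(n)$ for every $i\le n$. Fix $k^*$ with $f(n)=b_{k^*}(n)$. Then $\Ipack(A_i)\ge \cov(\Delta_\I(A_i))>b_{k^*}(n)$ for each $i$, so each cell admits a large $\I$-packing set. I would grow greedily a rooted $k^*$-ary tree $T$ of depth $n-k^*$ whose nodes are labelled by elements of $G$: the root is any fixed $g_0\in G$, and if a node is labelled $g$ and lies in cell $A_{c(g)}$, its $k^*$ children are $gh_1,\dots,gh_{k^*}$ obtained by left-shifting $g$ through $k^*$ distinct elements $h_1,\dots,h_{k^*}$ of a chosen $\I$-packing set for $A_{c(g)}$. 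Left-invariance of $\I$ is exactly what guarantees that for every node, the intersections $(gh_r)A_{c(g)}\cap (gh_s)A_{c(g)}$ for $r\ne s$ lie in $\I$. The tree has $|T|=b_{k^*}(n)$ nodes distributed among only $n$ colours (cells), and its depth is $n-k^*$, so every root-to-leaf path of length $n-k^*+1$ has two nodes of the same colour; this coincidence together with the built-in $\I$-independence should produce a genuine sub-partition of size $\le n-1$ to which the inductive hypothesis applies, delivering the needed contradiction because $b_{k^*}(n)=1+k^*\cdot b_{k^*}(n-1)$ dominates $f(n-1)$.

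\textbf{Main obstacle.} The crux is making the ``coincidence on a branch'' argument precise: one must verify that when two nodes on a common branch share a cell, the $\I$-independence accumulated along the shifts can be repackaged as a partition of some coset into at most $n-1$ classes whose $\I$-difference sets require more than $f(n-1)$ shifts to cover, thereby contradicting induction. This requires (i) carefully tracking how the pairwise $\I$-smallness transforms under iterated left multiplications, using left-invariance of $\I$ to convert $xA\cap yA\in\I$ into $A\cap x^{-1}yA\in\I$; (ii) verifying that the depth $n-k^*$ in the tree is exactly the budget one can afford before a branch is forced to repeat a cell; and (iii) checking that the optimisation $k^*=\arg\max_k b_k(n)$ balances the trade-off between branching factor (which controls how quickly the tree spreads across cells) and depth (which controls the strength of the inductive bound invoked). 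Steps (ii) and (iii) are where the precise shape of $b_k(n)$ and the maximum over $k$ enter the proof, and I would expect them to consume the bulk of the technical work; once the tree and the bookkeeping are in place, the remaining reduction to the inductive hypothesis via Proposition~\ref{Ipack-cov} should be routine.
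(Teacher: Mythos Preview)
The paper does not actually prove this theorem: it is stated without proof and attributed to the preprint \cite{BRS}. So there is no ``paper's own proof'' to compare against; you would need to consult that reference for the intended argument.

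As for your proposal, there is a concrete gap in the pigeonhole step. You build a $k^*$-ary tree of depth $n-k^*$, so each root-to-leaf branch has $n-k^*+1$ nodes; but the nodes are coloured by which of the $n$ cells they lie in. Pigeonhole on a single branch forces a repeated colour only when the branch has more than $n$ nodes, i.e.\ when $n-k^*+1>n$, which fails for every $k^*\ge 1$. So the claim ``every root-to-leaf path \dots\ has two nodes of the same colour'' is simply false at the stated depth, and the mechanism by which you intend to drop to an $(n-1)$-cell partition does not fire. This is not a detail you can patch by bookkeeping: the depth $n-k^*$ was chosen precisely to make the vertex count equal $b_{k^*}(n)$, and that depth is too shallow for branch-pigeonhole among $n$ colours.

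The recursion $b_k(n)=1+k\cdot b_k(n-1)$ you isolated is correct and does point toward an inductive tree-type argument, but the reduction from $n$ cells to $n-1$ cells has to come from somewhere other than a colour repeat on a branch. A more plausible scheme is to spend the ``$1$'' on a single distinguished cell (the one you branch against), arrange that in each of the $k$ children this cell becomes $\I$-negligible and hence can be absorbed into the remaining $n-1$ cells, and then invoke the inductive bound $b_k(n-1)$ on each child. Making ``becomes $\I$-negligible'' precise---and checking that the resulting $(n-1)$-cell cover is still a genuine partition to which the inductive hypothesis applies---is where the real work lies; your outline does not yet supply this step.
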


Let $\I$ be an ideal on a group $G$. A subset $A\subset G$ is called {\em $\I$-thin} if for any distinct elements $x,y\in G$ the intersection $xA\cap yA$ belongs to the ideal $\I$.

\begin{proposition}\label{thin-null} Let $G$ be a group and $\I$ be a left-invariant ideal on $G$ such that $\I\subset\{A\subset G:\varsigma(A)=0\}$. Each $\I$-thin subset $A$ of an infinite group $G$ has density $\varsigma_R(A)=0$ and hence is right-Solecki null.
\end{proposition}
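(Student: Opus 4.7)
The plan is to argue by contradiction along the lines of the proof of Theorem~\ref{t4.2}. I would suppose $\alpha := \varsigma_R(A) > 0$ and, invoking the definition of $\varsigma_R$, fix $\mu_1 \in P(G)$ and $\mu_2 = \sum_{i\in n}\alpha_i\delta_{a_i} \in P_\w(G)$ with finite support $S = \{a_i\}_{i\in n}$ so that $\inf_{\mu_3 \in P_\w(G)} \mu_2*\mu_3*\mu_1(A) > \alpha/2$; since this infimum is attained on Dirac measures, one obtains $(\mu_2*\delta_{x^{-1}}*\mu_1)(A) > \alpha/2$ for every $x \in G$. Because $G$ is infinite, I would then pick a finite subset $M \subset G$ with $|M| > 4/\alpha$.

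The next step is to produce a small exception set using $\I$-thinness. For each $a \in S$ and distinct $x,y \in M$, left invariance of $\I$ together with $\I$-thinness gives
\[
xa^{-1}A \cap ya^{-1}A \;=\; xa^{-1}\bigl(A \cap ax^{-1}ya^{-1}A\bigr) \in \I
\]
(note $ax^{-1}ya^{-1} \ne 1_G$ since $x \ne y$). Hence the finite union $E := \bigcup_{a \in S}\bigcup_{x \ne y \in M}(xa^{-1}A \cap ya^{-1}A)$ lies in $\I$, so by hypothesis $\varsigma_R(E) = 0$. Evaluating the definition of $\varsigma_R(E)$ at $\mu_2' = \delta_{1_G}$ and $\mu_1' = \mu_1$ yields $\inf_{\mu_3' \in P_\w(G)}\mu_3'*\mu_1(E) = 0$, so I can choose $\mu_3' \in P_\w(G)$ such that $\mu_1''(E) < \alpha/(8|M|)$ for $\mu_1'' := \mu_3'*\mu_1 \in P(G)$. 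Since $\mu_3 \mapsto \mu_3*\mu_3'$ maps $P_\w(G)$ into itself, associativity of convolution gives
\[
\inf_{\mu_3}\mu_2*\mu_3*\mu_1''(A) \;=\; \inf_{\mu_3}\mu_2*(\mu_3*\mu_3')*\mu_1(A) \;\ge\; \inf_{\mu_3''}\mu_2*\mu_3''*\mu_1(A) \;>\; \alpha/2,
\]
so $\mu_1''$ remains a witness for the same strict inequality.

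Finally, for each $a \in S$ the $\I$-almost disjointness of $\{xa^{-1}A\}_{x \in M}$ yields, pointwise in $y \in G$, the bound $|\{x\in M : y\in xa^{-1}A\}| \le 1 + (|M|-1)\chi_E(y)$, since the multiplicity can only exceed $1$ when $y$ lies in some pairwise intersection (hence in $E$). Integrating against $\mu_1''$, summing over $i$ weighted by $\alpha_i$, and using $\sum_i\alpha_i = 1$ produces
\[
\sum_{x \in M}(\mu_2*\delta_{x^{-1}}*\mu_1'')(A) \;=\; \sum_i\alpha_i\sum_{x\in M}\mu_1''(xa_i^{-1}A) \;\le\; 1 + (|M|-1)\mu_1''(E) \;<\; 1 + \alpha/8 \;<\; 2.
\]
On the other hand, the witnessing property forces $\sum_{x \in M}(\mu_2*\delta_{x^{-1}}*\mu_1'')(A) > |M|\cdot\alpha/2 > 2$, the desired contradiction; hence $\varsigma_R(A) = 0$, and right-Solecki nullity follows from $\sigma_R \le \varsigma_R$ by Theorem~\ref{subadit-sol}. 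The main obstacle to overcome is that $\I$-thinness supplies only $\I$-almost disjointness of the translates $\{xa^{-1}A\}_{x\in M}$ rather than true disjointness; the hypothesis $\I \subset \{B : \varsigma(B) = 0\}$ is used precisely to absorb this error via the left-convolution trick $\mu_1 \leadsto \mu_3'*\mu_1$, which turns the exception set $E$ into a set of $\mu_1''$-measure below $\alpha/(8|M|)$ without destroying the witnessing property.
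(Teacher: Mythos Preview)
Your proof is correct and follows essentially the same strategy as the paper: pick a large finite set, use $\I$-thinness to produce an exception set $E\in\I$, then exploit $\varsigma_R(E)=0$ to absorb $E$ via a convolution and derive a contradiction with the pigeonhole bound. The paper's version is direct rather than by contradiction, builds $\mu_3=\mu_u*\mu$ (instead of replacing $\mu_1$ by $\mu_3'*\mu_1$), and excises the exception set from $A$ to get genuine disjointness rather than using your counting bound $|\{x\in M:y\in xa^{-1}A\}|\le 1+(|M|{-}1)\chi_E(y)$; these are cosmetic variations on the same idea.
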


\begin{proof} To prove that $\varsigma_R(A)=0$, it suffices to show that $\varsigma_R(A)<\e$ for every $\e>0$. Given any measures $\mu_1\in P(G)$ and $\mu_2\in P_\w(G)$ we need to find a measure $\mu_3\in P_\w(G)$ such that $\mu_2*\mu_3*\mu_1(A)<\e$. This is trivial if $A=\emptyset$. So we assume that $A\ne\emptyset$, in which case the group $G$ is infinite. So, we can choose a finite subset $F\subset G$ of cardinality $|F|>2/\e$ and consider the uniformly distributed measure $\mu_u=\frac1{|F|}\sum_{x\in F}\delta_x$.
Write the finitely supported measure $\mu_2$ as a convex combination $\sum_{i\in n}\alpha_i\delta_{a_i}$ of Dirac measures. Let $S=\{a_i\}_{i\in n}$. Since the set $A$ is $\I$-thin the set
$$U=\bigcup\{x^{-1}a_i^{-1}A\cap y^{-1}a_i^{-1}A:i\in n,\;\;x,y\in F,\;\;x\ne y\}$$belong to the ideal $\I$ and so does  the set $SFU$. Then $\varsigma_R(SFU)=0$ and by definition of the density $\varsigma_R$, we can find a finitely supported probability measure $\mu=\sum_{j\in m}\beta_j\delta_{b_j}\in P_\w(G)$ such that
$(\mu_2*\mu_u)*\mu*\mu_1(SFU)<\e/2$. We claim that for the measure $\mu_3=\mu_u*\mu$ we get the desired inequality $\mu_2*\mu_3*\mu_1(A)<\e$. For this consider the set $A'=A\setminus SFU$ and observe that for every $i\in n$ the indexed family $(x^{-1}a_i^{-1}A')_{x\in F}$ is disjoint. Indeed, assuming conversely that for some distinct points $x,y\in F$ the intersection $x^{-1}a_i^{-1}A'\cap y^{-1}a_i^{-1}A'$ contains some point $g$, we would conclude that $g\in U$ and $a_ixg\in a_ixU\cap A'\subset SFU\cap A'=\emptyset$. Therefore, the family $(x^{-1}a_i^{-1}A')_{x\in F}$ is disjoint and thus the family
$(b_j^{-1}x^{-1}a_i^{-1}A')_{x\in F}$ is disjoint for every $j\in m$. This implies that
$$\sum_{x\in F}\mu_1(b_j^{-1}x^{-1}a^{-1}_iA')\le 1$$and then
$$
\begin{aligned}
\mu_2*\mu_3*\mu_1(A)&\le \mu_2*\mu_3*\mu_1(A')+\mu_2*\mu_3*\mu_1(SFU)=\mu_2*\mu_u*\mu*\mu_1(A')+\mu_2*\mu_u*\mu*\mu_1(SFU)<\\
&<\sum_{i\in n}\sum_{j\in m}\sum_{x\in F}\alpha_i\frac1{|F|}\beta_j\cdot\delta_{a_i}*\delta_x*\delta_{b_j}*\mu_1(A')+\frac\e2=\\
&=\sum_{i\in n}\sum_{j\in m}\alpha_i\beta_j\frac1{|F|}\sum_{x\in F}\mu_1(b_j^{-1}x^{-1}a_i^{-1}A')+\frac\e2\le\\
&\le\sum_{i\in n}\sum_{j\in m}\alpha_i\beta_j\frac1{|F|}+\frac\e2<\frac1{|F|}+\frac\e2=\frac\e2+\frac\e2=\e.
\end{aligned}
$$
\end{proof}

\section{The difference sets of Solecki positive sets in Polish groups}

Let us recall \cite{Ke} that a subset $A$ of a topological space $X$ is called {\em analytic} if $A$ is a continuous image of a Polish space. Proposition~\ref{p7.1n} and Theorem~\ref{t4.2}  have a nice topological corollary, which can be considered as a variation of the classical theorem of Steinhaus and Weil \cite[20.17]{HR}.

\begin{corollary}\label{c7.5n} If a subset $A$ of a Polish group $G$ is right-Solecki positive (or has density $\varsigma_R(A)>0$), then the set $AA^{-1}$ is not meager in $G$. If the set $A$ is analytic, then $AA^{-1}AA^{-1}$ is a neighborhood of the unit $1_G$ in $G$.
\end{corollary}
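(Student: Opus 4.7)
The plan is to reduce both assertions to well-known facts about Polish (Baire) groups via the covering results already established in the paper (Proposition~\ref{p7.1n} and Theorem~\ref{t4.2}), combined with the classical Pettis theorem.

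First I would prove that $AA^{-1}$ is non-meager. If $\sigma^R(A)>0$, Proposition~\ref{p7.1n} yields a finite set $F\subset G$ of cardinality $|F|\le 1/\sigma^R(A)$ with $G=F\cdot AA^{-1}$; if only $\varsigma_R(A)>0$ is assumed, Theorem~\ref{t4.2} yields a finite set $F$ with $G=FAA^{-1}F$. In either case $G$ is covered by finitely many two-sided shifts of $AA^{-1}$. Since left and right translations are homeomorphisms of the topological group $G$, any shift of a meager set is meager; so if $AA^{-1}$ were meager, then $G$ would be a finite union of meager sets, hence meager. This contradicts the Baire category theorem, since the Polish group $G$ is a Baire space. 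Therefore $AA^{-1}$ is non-meager in $G$.

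Next, assume in addition that $A$ is analytic. Then the continuous image $AA^{-1}$ of the analytic set $A\times A$ under the map $(x,y)\mapsto xy^{-1}$ is itself analytic in $G$. In a Polish space every analytic set has the Baire Property (see \cite[29.14]{Ke}), so $AA^{-1}$ has the Baire Property. Observe also that $AA^{-1}$ is symmetric: $(AA^{-1})^{-1}=AA^{-1}$. Now apply the classical Pettis Theorem, which states that in a Baire topological group any non-meager set $B$ with the Baire Property satisfies $BB^{-1}\supset U$ for some open neighborhood $U$ of the unit. Taking $B=AA^{-1}$, we obtain that $AA^{-1}(AA^{-1})^{-1}=AA^{-1}AA^{-1}$ is a neighborhood of $1_G$, as required.

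The only potential obstacle is making sure that the Baire Property and non-meagerness of $AA^{-1}$ are both genuine; the first uses that analyticity is preserved under continuous images and that analytic sets in Polish spaces have the Baire Property, while the second is exactly the content of the first paragraph. Once these are in place, Pettis' theorem delivers the conclusion with no extra work.
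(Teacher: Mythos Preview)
Your proof is correct and follows essentially the same route as the paper: use Proposition~\ref{p7.1n} or Theorem~\ref{t4.2} to write $G$ as a finite union of two-sided shifts of $AA^{-1}$, invoke the Baire category theorem to conclude $AA^{-1}$ is non-meager, and then, when $A$ is analytic, observe that $AA^{-1}$ is analytic and hence has the Baire Property, so the Pettis (Picard--Pettis) theorem gives that $AA^{-1}(AA^{-1})^{-1}=AA^{-1}AA^{-1}$ is a neighborhood of the identity. Your write-up is slightly more explicit than the paper's (e.g., spelling out why $AA^{-1}$ is analytic and symmetric), but the argument is the same.
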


\begin{proof} By Proposition~\ref{p7.1n} or Theorem~\ref{t4.2}, there is  a finite subset $F\subset G$ such that $G=FAA^{-1}F$. By the Baire Theorem, the set $AA^{-1}$ is not meager in $G$. If the set $A$ is analytic, then so is the set $AA^{-1}$. By \cite[29.5]{Ke}, the set $B=AA^{-1}$ has the Baire Property in $G$ and by the Picard-Pettis Theorem \cite[9.9]{Ke}, $BB^{-1}=AA^{-1}AA^{-1}$ is a neighborhood of the unit in $G$.
\end{proof}

It is natural to ask if right-Solecki positive sets in Corollary~\ref{c7.5n} can be replaced by Solecki positive sets. The following example shows that this cannot be done.

\begin{example} There exists a Polish group which contains a closed nowhere dense Solecki one subgroup.
\end{example}

\begin{proof} Let $X$ be a countable infinite set and $Y\subsetneqq X$ be a proper infinite subset of $X$. Endow the countable group $FS_Y$ with the discrete topology. By Example~\ref{permutation}, the subgroup $FS_Y=\{f\in FS_X:\supp(f)\subset Y\}$ is Solecki one in $FS_X$. This fact can be used to prove that the countable power $FS_Y^\w$ of $FS_Y$ is Solecki one in $FS_X^\w$. Since $FS_Y\ne FS_X$, the subgroup $FS_Y^\w$ is closed and nowhere dense in $FS_X$.
\end{proof}

However we do not know the answer to the following problem.

\begin{problem} Let $A$ be an analytic Solecki positive set in a compact Polish group $G$. Is $AA^{-1}AA^{-1}$ a neighborhood of the unit in $G$?
\end{problem}

The answer to this problem is affirmative under the condition that $A$ is closed in $G$.

\begin{proposition} For any Solecki positive closed subset $A$ in a compact topological group $G$ the set $AA^{-1}$ is a neighborhood of the unit in $G$.
\end{proposition}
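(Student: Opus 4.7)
The plan is to reduce the statement immediately to the classical Steinhaus--Weil theorem by using Lemma~\ref{l6.5}. Since $A$ is closed in the compact group $G$, Lemma~\ref{l6.5} yields $\lambda(A) = \sigma(A) > 0$, where $\lambda$ denotes the Haar measure on $G$. Thus $A$ is a compact (hence Borel) set of positive Haar measure, and the classical Steinhaus--Weil theorem (see e.g.\ \cite[20.17]{HR}) directly gives that $AA^{-1}$ contains a neighborhood of $1_G$, and therefore is itself a neighborhood of $1_G$.

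For a self-contained argument that avoids citing Steinhaus--Weil, I would consider the real-valued function $f:G\to[0,1]$ defined by $f(x)=\lambda(A\cap xA)$. A standard approximation argument (approximate $\chi_A$ in $L^1(\lambda)$ by a continuous function and use continuity of left translation in $L^1$, which holds on any compact topological group) shows that $f$ is continuous. At $x=1_G$ we have $f(1_G)=\lambda(A)=\sigma(A)>0$, so by continuity there is an open neighborhood $U\subset G$ of $1_G$ on which $f>0$. For every $x\in U$ the intersection $A\cap xA$ is non-empty, which means $x\in AA^{-1}$. Hence $U\subset AA^{-1}$, and $AA^{-1}$ is indeed a neighborhood of $1_G$.

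There is essentially no obstacle beyond invoking Lemma~\ref{l6.5}; the whole content of the statement is the passage from the \emph{a priori} combinatorial hypothesis $\sigma(A)>0$ to the measure-theoretic hypothesis $\lambda(A)>0$, after which the result is the classical Steinhaus--Weil phenomenon. The only mildly technical point is the continuity of $x\mapsto\lambda(A\cap xA)$, which is a routine consequence of the regularity of $\lambda$ and joint continuity of the group operation on the compact group $G$.
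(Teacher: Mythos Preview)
Your proof is correct and follows essentially the same approach as the paper: invoke Lemma~\ref{l6.5} to get $\lambda(A)=\sigma(A)>0$, then cite the classical Steinhaus--Weil theorem \cite[20.17]{HR}. Your additional self-contained sketch of Steinhaus--Weil via continuity of $x\mapsto\lambda(A\cap xA)$ is a nice bonus that the paper does not include.
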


\begin{proof} By Lemma~\ref{l6.5}, the set $A$ has Haar measure $\lambda(A)=\sigma(A)>0$. Then $AA^{-1}$ is a neighborhood of the unit in $G$ according to a  classical result of Steinhaus and Weil (see \cite[20.17]{HR} or \cite[\S3]{Jarai}).
\end{proof}

It is clear that a meager subgroup $A$ of a Polish group $G$ has infinite index in $G$, which implies that $\varsigma_R(A)=0$ according to Proposition~\ref{zero-index}.

\begin{problem} Let $H$ be a meager (analytic) subgroup of a compact topological group $G$. Is $H$  Solecki null in $G$?
\end{problem}

\section{The $\e$-difference sets of right-Solecki positive sets in amenable groups}

In this section, given a subset $A$ of an amenable group $G$ and $\e>0$ we study the largeness properties of the $\e$-difference set
$$\Delta_\e(A)=\{x\in G:\sigma^R(A\cap xA)\ge\e\}.$$
Our aim is to generalize to arbitrary amenable groups a theorem of Veech \cite{Veech}, generalized later to countable amenable groups by Beiglb\"ock, Bergelson and Fish \cite{BBF}. They proved that for any subset $A$ of positive Banach density $d^*(A)$ in a countable amenable group $G$ there is $\e>0$ and a subset $N\subset G$ of upper Banach density $d^*(N)=0$ such that the set $N\cup\Delta_\e(A)$ is a neighborhood of the unit in the Bohr topology of $G$.

Let us recall that the {\em Bohr topology} on a group $G$ is the smallest topology on $G$ such that the canonical homomorphism $\eta:G\to bG$ into the Bohr compactification $bG$ of $G$ is continuous. Since continuous homomorphisms into orthogonal groups $O(n)$, $n\in\IN$, separate points of compact Hausdorff topological groups, the Bohr topology on $G$ can be equivalently defined as the smallest topology in which all homomorphisms from $G$ to the compact Hausdorff group $K=\prod_{n=1}^\infty O(n)$ are continuous. Subsets $U\subset G$ belonging to the Bohr topology will be called {\em Bohr open}.

\begin{theorem}\label{t:Bohr} If a subset $A$ of an amenable group $G$ is right-Solecki positive, then for some positive $\e$ the $\e$-difference set $\Delta_\e(A)$ contains the intersection $U\cap T$ for some Bohr open neighborhood $U\subset G$ of the unit $1_G$ and some subset $T\subset G$ with $\sigma^R(G\setminus T)=0$.
\end{theorem}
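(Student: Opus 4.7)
The plan is to adapt the Furstenberg--Veech correspondence to general amenable $G$: starting from a left-invariant mean concentrated on $A$, build a unitary representation, split off the almost-periodic (Kronecker) part which factors through the Bohr compactification, and verify that the complementary weakly-mixing part is $\sigma^R$-negligible.

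\emph{Setup.} By Theorem~\ref{t3.4nn}, choose $\mu\in P_l(G)$ with $\alpha:=\mu(A)\ge\tfrac12\sigma^R(A)>0$, and define $\varphi(x):=\mu(A\cap xA)$ on $G$. Working in the Hilbert space $\HH:=L^2(G,\mu)$ (obtained by completing simple functions under the seminorm $\|f\|_2^2=\int|f|^2\,d\mu$ and quotienting by its kernel), the left regular action $\rho(x)f(y):=f(x^{-1}y)$ is unitary by left-invariance of $\mu$. The vectors $\xi:=\chi_A$ and $\mathbf 1:=\chi_G$ satisfy
$$\|\xi\|^2=\alpha,\quad \|\mathbf 1\|^2=1,\quad \rho(x)\mathbf 1=\mathbf 1,\quad \langle\xi,\mathbf 1\rangle=\alpha,\quad \varphi(x)=\langle\rho(x)\xi,\xi\rangle.$$

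\emph{Kronecker decomposition.} Let $\HH_{ap}\subset\HH$ be the closed span of vectors with precompact $\rho$-orbit, $\HH_{wm}$ its orthogonal complement, and write $\xi=\xi_{ap}+\xi_{wm}$. Because $\rho|\HH_{ap}$ is almost periodic, it factors through a strongly continuous unitary representation $\pi:bG\to\mathrm{Aut}(\HH_{ap})$ via the canonical homomorphism $\eta:G\to bG$; consequently $\varphi_{ap}(x):=\langle\rho(x)\xi_{ap},\xi_{ap}\rangle$ is continuous with respect to the Bohr topology on $G$. Since $\mathbf 1\in\HH_{ap}$, Cauchy--Schwarz yields $\|\xi_{ap}\|^2\ge|\langle\xi_{ap},\mathbf 1\rangle|^2=|\langle\xi,\mathbf 1\rangle|^2=\alpha^2$, so $\varphi_{ap}(1_G)=\|\xi_{ap}\|^2\ge\alpha^2$ and
$$U:=\big\{x\in G:\varphi_{ap}(x)>\tfrac34\alpha^2\big\}$$
is a Bohr open neighbourhood of $1_G$ in $G$.

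\emph{Negligibility of the remainder and conclusion.} By Jacobs--Glicksberg--deLeeuw, the vector $\xi_{wm}\otimes\overline{\xi_{wm}}\in\HH\otimes\bar\HH$ is orthogonal to every $(\rho\otimes\bar\rho)$-invariant vector, so by the mean ergodic theorem for amenable groups the Cesàro averages of $|\varphi_{wm}(x)|^2$, where $\varphi_{wm}(x):=\langle\rho(x)\xi_{wm},\xi_{wm}\rangle$, along any Følner net tend to $0$. Together with Theorem~\ref{Bandens} (which identifies $\sigma^R$ with the upper Banach density $d^*$ on amenable groups), this gives $\sigma^R(N)=0$ for
$$N:=\big\{x\in G:|\varphi_{wm}(x)|\ge\tfrac14\alpha^2\big\}.$$
Setting $T:=G\setminus N$, for every $x\in U\cap T$ one has
$$\sigma^R(A\cap xA)\ge\mu(A\cap xA)=\varphi_{ap}(x)+\varphi_{wm}(x)\ge\tfrac34\alpha^2-\tfrac14\alpha^2=\tfrac12\alpha^2,$$
so $U\cap T\subset\Delta_\e(A)$ for $\e:=\tfrac12\alpha^2$.

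The main obstacle is the third step: the Jacobs--Glicksberg--deLeeuw/van der Corput argument in~\cite{BBF} is classically framed for countable amenable groups along Følner sequences, so for arbitrary amenable $G$ one must deploy the mean ergodic theorem along Følner nets; simultaneously, one must check that the finitely additive nature of $\mu$ does not obstruct the construction of $L^2(G,\mu)$ nor the orthogonality of the Kronecker decomposition. The remaining passages (positive definiteness of $\varphi$, Bohr-continuity of almost-periodic matrix coefficients, and the final chain of inequalities) are routine bookkeeping.
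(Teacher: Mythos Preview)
Your approach is essentially correct and takes a genuinely different route from the paper's proof. The paper does not redo the ergodic argument for general amenable $G$; instead it quotes Corollary~5.3 of \cite{BBF} as a black box for the countable case and then transfers the result to arbitrary amenable $G$ by a purely set-theoretic compactness argument: it works with the partially ordered family $\HH$ of countable subgroups of $G$, shows that the subgroups $H$ with $\sigma^R_H(A)\ge\sigma^R(A)$ form a closed dominating set, applies the countable case inside each such $H$, and then uses stationarity and Jech's generalization of Fodor's pressing-down lemma to stabilise the finite data (the index $n$ of the basic neighbourhood $U_n$ and the witnessing finite sets $F_{H,m}$), finally extracting a global homomorphism $h:G\to K$ as an accumulation point in $K^G$ of the homomorphisms $h_H$ defined on countable subgroups.

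Your route runs the Furstenberg--Veech correspondence directly on $G$ via a left-invariant mean and the Jacobs--de Leeuw--Glicksberg splitting of the associated regular representation. This avoids the set-theoretic machinery entirely, at the cost of verifying that the Hilbert-space ergodic theory survives the passage to (possibly non-separable) $L^2$ of a finitely additive mean and to F{\o}lner nets rather than sequences --- a trade you correctly identify. Two remarks on the execution: (i) the deduction of $\sigma^R(N)=0$ is cleaner through Theorem~\ref{t3.4nn} than through Theorem~\ref{Bandens}: for each $\nu\in P_l(G)$ the averaging operator $M_\nu$ on $\HH\otimes\bar\HH$ defined by $\langle M_\nu v,w\rangle=\nu(g\mapsto\langle(\rho\otimes\bar\rho)(g)v,w\rangle)$ has range in the $(\rho\otimes\bar\rho)$-invariants, so $\nu(|\varphi_{wm}|^2)=\langle M_\nu u,u\rangle=0$ since $u=\xi_{wm}\otimes\bar\xi_{wm}$ is orthogonal to those invariants, whence $\nu(N)=0$ for every invariant mean and thus $\sigma^R(N)=0$; arguing via F{\o}lner nets and $d^*$ requires an extra uniformity step. (ii) Check the left/right F{\o}lner convention when invoking the mean ergodic theorem for $\rho$; with $\rho(g)\rho(h)=\rho(gh)$ the coboundary estimate uses $|F_i\triangle F_ih|/|F_i|$, i.e., right F{\o}lner sets.
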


\begin{proof} For countable amenable groups this theorem follows from Corollary 5.3 \cite{BBF} and the equality $d^*=\sigma^R$ proved in Theorem~\ref{Bandens}. The general case will be derived by a suitable compactness argument. So, we assume that $G$ is an uncountable amenable group and $A$ is a right-Solecki positive subset in $G$.

Let $\HH$ be the family of all countable subgroups of the group $G$ partially ordered by the inclusion relation. A subset $\F\subset \HH$ will be called
\begin{itemize}
\item {\em closed} if for each increasing sequence of countable subgroups $\{H_n\}_{n\in\w}\subset\F$ the union $\bigcup_{n\in\w}H_n$ belongs to $\F$;
\item {\em dominating} if each countable subgroup $H\in\HH$ is contained in some subgroup $H'\in\F$;
\item {\em stationary} if $\F\cap\C\ne\emptyset$ for every closed dominating subset $\C\subset\HH$.
\end{itemize}
It is well-known (see \cite[4.3]{Jech}) that the intersection $\bigcap_{n\in\w}\C_n$ of any countable family of closed dominating sets $\C_n\subset \HH$, $n\in\w$, is closed and dominating in $\HH$.

For a subgroup $H\subset G$ let
$$\sigma^R_H(A)=\inf_{F\in[H]^{<\w}}\max_{y\in H}\frac{|F\cap Ay|}{|F|}$$be the right Solecki density of the set $A\cap H$ in the group $H$.

For every $\e>0$ let $\Delta_\e(A;H)=\{x\in H:\sigma^R_H(A\cap xA)\ge\e\}$ be the counterpart of the $\e$-difference set $\Delta_\e(A)$ in the subgroup $H$.

\begin{claim}\label{cl1:t:Bohr} The subfamily $$\mathcal A=\{H\in\HH:\sigma^R_H(A)\ge \sigma^R(A)\}$$ is closed and dominating in $\HH$.
\end{claim}


\begin{proof}
To show that $\A$ is closed in $\HH$, we need to prove that the union $H=\bigcup_{n\in\w}H_n$
of any increasing sequence of subgroups $\{H_n\}_{n\in\w}\subset \A$ belongs to $\A$, which means that $\sigma^R_H(A)\ge\sigma^R(A)$. Assuming conversely that $\sigma^R_H(A)<\sigma^R(A)$, we can find a finite subset $F\subset H$ such that $\sup_{y\in H}|Fy\cap A|/|F|<\sigma^R(A)$. Find $n\in\w$ with $F\subset H_n\in\A$ and obtain a desired contradiction:
$$\sup_{y\in H}\frac{|Fy\cap A|}{|F|}<\sigma^R(A)\le\sigma^R_{H_n}(A)\le\sup_{y\in H_n}\frac{|Fy\cap A|}{|F|}\le \sup_{y\in H}\frac{|Fy\cap A|}{|F|}.$$

To show that $\A$ is dominating in $\HH$, fix any countable subgroup $H_0\subset G$. Taking into account that
$$\sigma^R(A)=\inf_{F\in[F]^{<\w}}\sup_{y\in G}\frac{|Fy\cap A|}{|F|}=\inf_{F\in[F]^{<\w}}\max_{y\in G}\frac{|Fy\cap A|}{|F|},$$for every finite set $F\subset G$ choose a point $y_F\in G$ such that
$|Fy_F\cap A|/|F|\ge\sigma^R(A)$. For every $n\in\w$ let $H_{n+1}$ be the countable subgroup of $G$ generated by the countable set $H_n\cup\{y_F:F\in[H_n]^{<\w}\}$. To see that the subgroup $H=\bigcup_{n\in\w}H_n$ belongs to the family $\A$, observe that
$$\begin{aligned}
\sigma^R_H(A)=\inf_{F\in[H]^{<\w}}\sup_{y\in H}\frac{|Fy\cap A|}{|F|}\ge\inf_{n\in\w}\inf_{F\in[H_n]^{<\w}}\sup_{y\in H_{n+1}}\frac{|Fy\cap A|}{|F|}\ge\inf_{n\in\w}\inf_{F\in[H_n]^{<\w}}\frac{|Fy_F\cap A|}{|F|}\ge\sigma^R(A).
\end{aligned}
$$\end{proof}

Let $K=\prod_{n=1}^\infty O(n)$ be the Tychonoff product of orthogonal groups and $\{U_n\}_{n\in\w}$ be a countable base of open neighborhoods at the unit $1_K$ of the group $K$ such that $U_{n+1}\subset U_n$ for all $n\in\w$. For a subgroup $H\in\HH$ by $\Hom(H,K)$ we denote the set of all homomorphisms from $H$ to $K$. Since homomorphisms into orthogonal groups separate points of compact Hausdorff topological groups, the Bohr topology on $H$ coincides with the smallest topology in which all homomorphisms $h\in \Hom(H,K)$ are continuous.

\begin{claim}\label{cl2:t:Bohr} For some number $n\in\IN$ the set
$$\A_n=\{H\in\A:\exists h\in\Hom(H,K)\;\;\sigma^R_H(h^{-1}(U_n)\setminus \Delta_{1/n}(A;H))=0\}$$is stationary in $\HH$.
\end{claim}

\begin{proof} Assuming that for every $n\in\IN$ the set $\A_n$ is not stationary in $\HH$, we can find a closed dominating subset $\C_n\subset \HH$ which is disjoint with $\A_n$. It is standard to show that the intersection $\C_\infty=\mathcal A\cap \bigcap_{n=1}^\infty \C_n$ is closed and dominating in $\HH$ and hence contains some element $H\in\C_\infty$. It follows from $H\in\C_\infty\subset \A$ that $\sigma^R_H(A)\ge\sigma^R(A)>0$.
By Theorem~\ref{Bandens}, the set $A_H=A\cap H$ has positive upper Banach density $d^*(A_H)=\sigma^R_H(A_H)$ in $H$. Then by (the proof of) Corollary~5.3 of \cite{BBF}, there exists $\e>0$ and a neighborhood $U\subset H$ of the unit $1_H$ in the Bohr topology of $H$ such that $d^*(U\setminus \Delta_\e(A;H))=0$. By Theorem~\ref{Bandens}, $\sigma^R_H(U\setminus \Delta_\e(A;H))=0$. For the Bohr neighborhood $U$ we can find a number $n>1/\e$ and a homomorphism $h\in\Hom(H,K)$ such that $h^{-1}(U_n)\subset U$. Then $H\in\A_n$ and hence $H\in\A_n\cap\C_\infty\subset \HH_n\cap\C_n=\emptyset$, which is a desired contradiction.
\end{proof}

Claim~\ref{cl2:t:Bohr} allows us to fix a number $n\in\w$ such that the family $\A_n$ is stationary in $\HH$.
By the definition of $\A_n$, for every subgroup $H\in\A_n$ there exists a homomorphism $h_H\in\Hom(H,K)$ such that the set $D_H=h_H^{-1}(U_n)\setminus\Delta_{1/n}(A;H)$ has right Solecki density $\sigma^R_H(D_H)=0$. Then for each $m\in\IN$ we can find a finite subset $F_{H,m}\subset H$ such that $\sup_{y\in H}{|F_{H,m}y\cap D_H|}/{|F_{H,m}|}<1/m$. Let $\mathcal S_0=\A_n$ and for every $m\in\IN$ let $f_m:\mathcal S_0\to [G]^{<\w}$ be the function assigning to each subgroup $H\in\mathcal S_0$ the finite subset $f_m(H)=F_{H,m}\subset H$. By Jech's generalization \cite{Jech72}, \cite[4.4]{Jech}  of Fodor's Lemma, the stationary set $\mathcal S_0$ contains a stationary subset $\mathcal S_1\subset \mathcal S_0$ such that the restriction $f_1|\mathcal S_1$ is a constant function. Proceeding by induction, we can construct a decreasing sequence $(\mathcal S_m)_{m\in\w}$ of stationary sets in $\HH$ such that for every $m\in\IN$ the restriction $f_m|\mathcal S_m$ is constant.

For every subgroup $H\in\mathcal S_0$ extend the homomorphism $h_H:H\to K$ to any function $\bar h_H:G\to K$. The function $\bar h_H$ is an element of the compact Hausdorff space $K^H$. For every $m\in\w$ and a finite subset $F\subset G$ consider the closure $\bar{\mathcal K}_{H,m}$ of the set $\mathcal K_{F,m}=\{\bar h_S:F\subset S\in\mathcal S_m\}$ in the compact Hausdorff space $K^G$. The stationarity of $\mathcal S_m$ guarantees that the set $\mathcal K_{F,m}$ is not empty. Observe that for any pairs $(F,m),(E,k)\in[G]^{<\w}\times\w$ the intersection $\K_{F,m}\cap\K_{E,k}$ contains the set $\K_{F\cup E,\max\{m,k\}}$. This implies that the family $\{\bar{\mathcal \K}_{F,m}:(F,m)\in[G]^{<\w}\times\w\}$ is centered and hence the intersection $\bigcap\{\bar{\mathcal K}_{F,m}:(F,m)\in[G]^{<\w}\times\w\}$ contains some function $h\in K^G$.

It is standard to check that the function $h:G\to K$ is a group homomorphism.
To finish the proof of the theorem, it remains to prove that $\sigma^R(h^{-1}(U_n)\setminus\Delta_{1/n}(A))=0$. Assume conversely that the set $D=h^{-1}(U_n)\setminus\Delta_{1/n}(A)$ has right Solecki density $\sigma^R(D)>0$.
Find $m\ge n$ such that $\frac1m<\sigma^R(D)$. By the choice of the stationary set $\mathcal S_m$, the function $f_m|\mathcal S_m$ is constant and hence $f_m(\mathcal S_m)=\{F\}$ for some finite set $F\subset G$. For the set $F$ choose a point $y\in G$ such that $|Fy\cap D|/|F|\ge\sigma^R(D)$.
For every point $x\in Fy\setminus \Delta_{1/n}(A)$ we get $\sigma^R(A\cap xA)<\frac1n$ and hence there exists a non-empty finite set $F_x\subset G$ such that $\sup_{z\in G}|F_xz\cap(A\cap xA)|/|F_x|<\frac1n$. Consider the finite set $E=Fy\cup\{F_x:x\in Fy\setminus\Delta_{1/n}(A)\}$.
It follows that $$O_h=\{f\in K^G:f(Fy\cap h^{-1}(U_n))\subset U_n\}$$ is an open neighborhood of the function $h$ in $K^X$. Since $h\in\bar\K_{E,m}$, there is a subgroup $H\in \mathcal S_m$ such that $E\subset H$ and $\bar h_H\in O_h$. By the choice of the set $F_{H,m}=f_m(H)=F$, $|Fy\cap D_H|/|F|<\frac1m$.

We claim that $Fy\cap D\subset Fy\cap D_H$, where $D_H=h_H^{-1}(U_n)\setminus \Delta_{1/n}(A;H)$. Take any point $x\in Fy\cap D$ and observe that $x\in Fy\cap D=Fy\cap h^{-1}(U_n)\setminus\Delta_{1/n}(A)\subset Fy\cap h^{-1}(U_n)\subset Fy\cap h_H^{-1}(U_n)$ as $h_H\in O_h$.
Since $x\notin\Delta_{1/n}(A)$, the set $F_x\subset E$ is contained in the subgroup $H$ which implies that $\sigma^R_H(A\cap xA)\le\sup_{z\in H}\frac{|F_xz\cap(A\cap xA)|}{|F_x|}<\frac1n$ and hence $x\in Fy\cap h_H^{-1}(U_n)\setminus\Delta_{1/n}(A;H)=Fy\cap D_H$. Finally, we obtain the desired contradiction as: $$\sigma^R(D)\le\frac{|Fy\cap D|}{|F|}\le \frac{|Fy\cap D_H|}{|F|}<\frac1m<\sigma^R(D).$$
\end{proof}

Theorem~\ref{t:Bohr} is related to the following classical problem from Combinatorial Number Theory and Harmonic Analysis (see \cite[Question 2]{Pestov} and references therein):

\begin{problem} Let $A$ be a large set in the group of integers $\IZ$. Is $AA^{-1}$ a Bohr open neighborhood of zero in $\IZ$?
\end{problem}

\begin{remark} In \cite{Protas} Protasov proved that each countable totally bounded topological group $G$ contains a dense thin subset $N$. By Proposition~\ref{thin-null} this set $N$ is right-Solecki null in $G$.
So, for a Bohr open subset $U$ of a group $G$ and a subset $T\subset G$ with $\sigma^R(G\setminus T)=0$ the intersection $U\cap T$ (from Theorem~\ref{t:Bohr}) can have empty interior in the Bohr topology on $G$.
\end{remark}

The following two corollaries of Theorem~\ref{t:Bohr} generalize the results of Bogoliuboff, F\o lner \cite{Folner}, Cotlar, Ricabarra \cite{CR}, Ellis, Keynes \cite{EK}.

\begin{corollary}\label{c1:Bohr} For any right-Solecki positive sets $A,B$ in an amenable group $G$ the set $B^{-1}AA^{-1}$ has non-empty interior in the Bohr topology on $G$.
\end{corollary}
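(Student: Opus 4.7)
The plan is to invoke Theorem \ref{t:Bohr} for the set $A$ and then use the right-Solecki positivity of $B$ together with Bohr-compactness to turn the ``almost Bohr-open'' set $U\cap T$ into a genuine Bohr open subset of $B^{-1}AA^{-1}$. First I would apply Theorem \ref{t:Bohr} to $A$ to obtain $\e>0$, a Bohr open neighborhood $U$ of $1_G$, and a set $T\subset G$ with $\sigma^R(G\setminus T)=0$ such that $\Delta_\e(A)\supset U\cap T$. Since $x\in\Delta_\e(A)$ forces $A\cap xA\ne\emptyset$ and hence $x\in AA^{-1}$, this gives $AA^{-1}\supset U\cap T$, so it suffices to find a non-empty Bohr open subset of $B^{-1}(U\cap T)$.

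Next I would use the fact that the Bohr topology on $G$ is a group topology whose total boundedness comes from the Bohr compactification $\eta:G\to bG$. By continuity of multiplication in the Bohr topology, choose a Bohr open neighborhood $V$ of $1_G$ with $VV\subset U$. Since $bG$ is compact and $\eta(G)$ is dense in $bG$, a standard finite subcover argument yields a finite set $F\subset G$ with $G=\bigcup_{f\in F}Vf$. The subadditivity of $\sigma^R$ on amenable groups (Corollary~\ref{solamenable}) then gives $\sigma^R(B)\le\sum_{f\in F}\sigma^R(B\cap Vf)$, so some $f_0\in F$ satisfies $\sigma^R(B\cap Vf_0)\ge\sigma^R(B)/|F|>0$.

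Finally I would show that the Bohr open set $W:=f_0^{-1}V$, which contains $f_0^{-1}$, is a subset of $B^{-1}(U\cap T)$. For $g=f_0^{-1}v\in W$ one has $Vf_0\cdot g=Vv\subset VV\subset U$, equivalently $Vf_0\subset Ug^{-1}$, so $B\cap Ug^{-1}\supset B\cap Vf_0$. Right-invariance of $\sigma^R$ then yields $\sigma^R(Bg\cap U)=\sigma^R(B\cap Ug^{-1})>0$, and one more application of subadditivity together with $\sigma^R(G\setminus T)=0$ gives $\sigma^R(Bg\cap U\cap T)\ge\sigma^R(Bg\cap U)>0$. In particular $Bg\cap U\cap T\ne\emptyset$, so $g\in B^{-1}(U\cap T)\subset B^{-1}AA^{-1}$, completing the proof.

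The structural heavy lifting is absorbed into Theorem~\ref{t:Bohr}; given it, the remaining argument is a pigeonhole in a Bohr-open partition combined with the standard topological-group manipulations. I expect no real obstacle beyond carefully bookkeeping left/right translates and verifying the finite right-translate cover $G=\bigcup_{f\in F}Vf$, which follows from compactness of $bG$ and the density of $\eta(G)$ in $bG$ by passing to a finite subcover of $\{\eta^{-1}(W')\eta(f):f\in G\}$ where $V=\eta^{-1}(W')$.
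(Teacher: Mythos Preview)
Your proof is correct and follows essentially the same route as the paper: apply Theorem~\ref{t:Bohr} to get $AA^{-1}\supset U\setminus N$ with $N=G\setminus T$ right-Solecki null, shrink to $V$ with $VV\subset U$, use total boundedness plus subadditivity of $\sigma^R$ to find $f_0$ with $\sigma^R(B\cap Vf_0)>0$, and then show $f_0^{-1}V\subset B^{-1}(U\setminus N)$ by the same ``a positive set cannot be swallowed by a null set'' argument. The only cosmetic difference is that you carry the $\Delta_\e(A)$ notation one step further before passing to $AA^{-1}$, whereas the paper immediately writes $U\setminus N\subset AA^{-1}$.
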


\begin{proof} By Theorem~\ref{t:Bohr}, there are a Bohr open neighborhood $U\subset G$ of the unit and a right-Solecki null set $N\subset G$ such that $U\setminus N\subset AA^{-1}$. Since the multiplication and the inversion are continuous in the Bohr topology on $G$, there is a Bohr open neighborhood $V\subset G^{\#}$ of the unit such that $VV^{-1}\subset U$. By the total boundedness of the Bohr topology, there is a finite subset $F\subset G$ such that $G=VF$. Since $B=\bigcup_{x\in F}Vx\cap B$, the subadditivity of the right Solecki density $\sigma^R$ (which follows from Corollary~\ref{solamenable}) yields a point $x\in F$ such that $B_x=Vx\cap B$ is right-Solecki positive. We claim that $x^{-1}V\subset B_x^{-1}(U\setminus N)$. Given any point $v\in V$, consider the set $B_xx^{-1}v\subset Vxx^{-1}v\subset VV\subset U$. Being right-Solecki positive, the set $B_xx^{-1}v$ is not contained in the right-Solecki null set $N$ and hence meets the complement $U\setminus N$. Then $x^{-1}v\in B_x^{-1}(U\setminus N)\subset B^{-1}AA^{-1}$ and hence the set $B^{-1}AA^{-1}$ contains the non-empty Bohr open set $x^{-1}V$.
\end{proof}

\begin{corollary}\label{c2:Bohr} For any right-Solecki positive sets $A,B$ in an amenable group $G$ the set $AA^{-1}BB^{-1}$ is a neighborhood of the unit $1_G$ in the Bohr topology of $G$.
\end{corollary}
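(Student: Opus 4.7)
The plan is to apply Theorem~\ref{t:Bohr} separately to $A$ and $B$ and combine the resulting ``almost Bohr-open'' inclusions $U_A\setminus N_A\subset AA^{-1}$ and $U_B\setminus N_B\subset BB^{-1}$ by a Bohr--Baire style argument, using that every non-empty Bohr open subset of an amenable group has positive right Solecki density. A preliminary symmetrization step, enabled by the symmetry of $AA^{-1}$ and $BB^{-1}$, will be needed to deal with the fact that $\sigma^R$ is not inversion-invariant on general amenable groups.

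Theorem~\ref{t:Bohr} applied to $A$ (resp.\ $B$) produces positive numbers $\e_A,\e_B$, Bohr open neighborhoods $U_A,U_B$ of $1_G$, and sets $N_A,N_B$ with $\sigma^R(N_A)=\sigma^R(N_B)=0$ such that $U_A\setminus N_A\subset\Delta_{\e_A}(A)$ and $U_B\setminus N_B\subset\Delta_{\e_B}(B)$. Since $\sigma^R(C\cap gC)>0$ forces $C\cap gC\ne\emptyset$ and hence $g\in CC^{-1}$, we have $\Delta_\e(C)\subset CC^{-1}$, so in particular $U_A\setminus N_A\subset AA^{-1}$ and $U_B\setminus N_B\subset BB^{-1}$. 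Set $V_A:=U_A\cap U_A^{-1}$, $N'_A:=N_A\cap N_A^{-1}$, and analogously $V_B,N'_B$: then $V_A,V_B$ are symmetric Bohr open neighborhoods of $1_G$, while $N'_A,N'_B$ are symmetric sets with $\sigma^R(N'_A)=\sigma^R(N'_B)=0$ by monotonicity. Inverting the previous inclusions (legitimate because $AA^{-1}$ and $BB^{-1}$ are symmetric) yields $U_A^{-1}\setminus N_A^{-1}\subset AA^{-1}$ and similarly for $B$; a short case check on $x\in V_A\setminus N'_A$ (where either $x\notin N_A$, putting $x$ in $U_A\setminus N_A$, or $x\notin N_A^{-1}$, putting $x$ in $U_A^{-1}\setminus N_A^{-1}$) then gives $V_A\setminus N'_A\subset AA^{-1}$ and $V_B\setminus N'_B\subset BB^{-1}$.

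I will then show that the Bohr open neighborhood $W:=V_AV_B$ of $1_G$ is contained in $AA^{-1}BB^{-1}$. Fix $x\in W$ and write $x=v_Av_B$; then $v_A\in V_A\cap xV_B$ (using symmetry of $V_B$), so $V_A\cap xV_B$ is a non-empty Bohr open set. Every non-empty Bohr open subset $O\subset G$ has $\sigma^R(O)>0$: by total boundedness of the Bohr topology, $G=FO$ for some finite $F\subset G$, and then the subadditivity of $\sigma^R$ on the amenable group $G$ (Corollary~\ref{solamenable}) together with its left-invariance give $1=\sigma^R(G)\le|F|\sigma^R(O)$. Meanwhile, by subadditivity, left-invariance and the symmetry of $N'_B$,
\[
\sigma^R(N'_A\cup xN'_B)\le\sigma^R(N'_A)+\sigma^R(N'_B)=0.
\]
Hence $(V_A\cap xV_B)\setminus(N'_A\cup xN'_B)$ has positive $\sigma^R$ and therefore contains some point $u$. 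Setting $v:=u^{-1}x$, the containment $u\in V_A\setminus N'_A\subset AA^{-1}$ is immediate, and the conditions $u\in xV_B$ and $u\notin xN'_B$ translate via the symmetry of $V_B$ and $N'_B$ into $v\in V_B$ and $v\notin N'_B$, so $v\in V_B\setminus N'_B\subset BB^{-1}$ and $x=uv\in AA^{-1}BB^{-1}$.

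The main obstacle is that Theorem~\ref{t:Bohr} only guarantees $\sigma^R(N_B)=0$, while the factorization argument needs $\sigma^R(xN_B^{-1})=0$; this cannot be obtained directly, because $\sigma^R(N_B^{-1})=\sigma^L(N_B)$ may well be positive on a non-abelian amenable group. The symmetrization step bypasses this difficulty by replacing $N_A,N_B$ with the symmetric null sets $N'_A,N'_B$, for which $\sigma^R(xN'_B)=\sigma^R(N'_B)=0$ follows from left-invariance alone.
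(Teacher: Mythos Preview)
Your proof is correct and follows the same overall strategy as the paper: apply Theorem~\ref{t:Bohr} to $A$ and $B$, use that non-empty Bohr open sets have positive $\sigma^R$ in an amenable group, and then find a factorization of each $x$ in a suitable Bohr neighborhood as a product of an element of $AA^{-1}$ and an element of $BB^{-1}$.

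The one notable technical difference is your symmetrization step. You introduce $N'_A=N_A\cap N_A^{-1}$ and $N'_B=N_B\cap N_B^{-1}$ precisely to avoid ever needing $\sigma^R(N_B^{-1})=0$. The paper sidesteps this issue more directly: having chosen $V$ with $VV\subset U$ (the paper writes $VV^{-1}$, but $V$ can be taken symmetric), for $v\in V$ it observes that the right-Solecki positive set $v(V\setminus N_B)\subset U$ must meet $U\setminus N_A$, giving $w\in V\setminus N_B$ with $vw\in U\setminus N_A\subset AA^{-1}$; then $w\in U\setminus N_B\subset BB^{-1}$ and the symmetry of $BB^{-1}$ itself yields $w^{-1}\in BB^{-1}$, so $v=(vw)w^{-1}\in AA^{-1}BB^{-1}$. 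In other words, the inversion is pushed onto $BB^{-1}$, which is already symmetric, rather than onto the null set $N_B$, so no symmetrization of $N_B$ is needed. Your route is a perfectly valid alternative, just slightly more elaborate.
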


\begin{proof} By Theorem~\ref{t:Bohr}, there are a right-Solecki null set $N_A,N_B\subset G$ and a Bohr open neighborhood $U\subset G$ of the unit such that $U\setminus N_A\subset AA^{-1}$ and $U\setminus N_B\subset BB^{-1}$. Using the continuity of the multiplication and inversion with respect to the Bohr topology on $G$, find a Bohr open neighborhood $V\subset G$ of the unit $1_G$ such that $VV^{-1}\subset U$. We claim that $V\subset AA^{-1}BB^{-1}$. The subadditivity of the right Solecki density on amenable groups and the total boundedness of the topological group $G$ implies also that the neighborhood $V$ is right-Solecki positive. The subadditivity of the right Solecki density $\sigma^R$ implies that $\sigma^R(V\setminus N_B)=\sigma^R(V)>0$. Then for every $v\in V$ the set $v(V\setminus N_B)\subset U$, being right-Solecki positive, meets the set $U\setminus N_A$, which implies $v\in (U\setminus N_A)(V\setminus N_B)^{-1}\subset AA^{-1}BB^{-1}$.
\end{proof}

\begin{problem}\label{prob12.8} Is Theorem~\ref{t:Bohr} true for non-amenable groups?
\end{problem}

The following weaker version of Problem~\ref{prob12.8} also seems to be open:

\begin{problem}\label{prob:nE} Let $A$ be an inner invariant Solecki positive subset of a group $G$. Is $\sigma(U\setminus AA^{-1})=0$ for some Bohr open neighborhood $U$ of the unit $1_G$? Is $AA^{-1}AA^{-1}$ a neighborhood of the unit in the Bohr topology on $G$?
\end{problem}

The following proposition can be considered as a partial answer to this problem.

\begin{proposition} If a subset $A$ of a group $G$ has right-Solecki density $\sigma^R(A)\ge\frac1n$ for some $n\in\IN$, then the set $U=(AA^{-1})^{4^{n-1}}$ is a subgroup of finite index $\le n$ in $G$ and hence $U$ is a Bohr open neighborhood of the unit $1_G$.
\end{proposition}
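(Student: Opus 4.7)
Let $B = AA^{-1}$, which is symmetric and contains $1_G$; writing $B_k = B^{2^k}$, the $B_k$ form an increasing chain of symmetric sets with $B_{k+1} = B_k \cdot B_k$, and $B_k$ is a subgroup precisely when $B_k = B_{k+1}$. Since $U = B^{4^{n-1}} = B_{2(n-1)}$, my plan is to show that this chain stabilizes by step $2(n-1)$ and that the stabilization point has covering number at most $n$. The starting point will be Proposition~\ref{p7.1n}, which gives $\cov(B) \le 1/\sigma^R(A) \le n$, so there is a finite $F \subseteq G$ with $|F| \le n$ and $G = FB$; after a left translation I may assume $1_G \in F$.

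The main technical step will be the inequality
\[
B_{k+1} \supsetneq B_k \;\Ra\; \cov(B_{k+2}) \le \cov(B_k) - 1.
\]
I plan to prove this as follows. Fix a covering set $F_k \ni 1_G$ with $|F_k| = \cov(B_k)$ and $G = F_k B_k$, and split $F_k = F_k' \sqcup F_k''$ with $F_k'' = \{f \in F_k : fB_k \cap B_{k+1} \ne \emptyset\}$. For each $f \in F_k''$ I have $f \in B_{k+1} B_k^{-1} = B_k^3$, so $fB_k \subseteq B_k^4 = B_{k+2}$; combined with $B_k \subseteq B_{k+2}$ this gives $G \subseteq B_{k+2} \cup F_k' B_{k+2}$ and hence $\cov(B_{k+2}) \le |F_k'| + 1$. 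It remains to show $|F_k''| \ge 2$: the inclusion $B_k \subseteq B_{k+1}$ puts $1_G$ into $F_k''$; and any witness $x \in B_{k+1} \setminus B_k$ of the strict growth, written $x = fb$ with $f \in F_k$ and $b \in B_k$, must have $f \ne 1_G$ (else $x = b \in B_k$) and hence contributes a second element to $F_k''$.

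Iterating the lemma from $\cov(B_0) \le n$: either the chain never stabilizes before step $2(n-1)$, in which case $\cov(B_{2j}) \le n - j$ for $j = 0, 1, \dots, n-1$ and in particular $\cov(B_{2(n-1)}) \le 1$, i.e.\ $B_{2(n-1)} = G$; or the chain stabilizes at some earlier step $k^*$, in which case $B_{k^*}$ is already a subgroup, $B_m = B_{k^*}$ for all $m \ge k^*$, and its index (which equals $\cov(B_{k^*})$) is $\le n$ from the partial iteration. In every case $U = B_{2(n-1)}$ is a subgroup with $[G:U] \le n$. The main obstacle I expect is securing the two-element loss in the key lemma --- a naive one-element loss would only yield the weaker exponent $2^{n-1}$, and the factor of $4$ comes precisely from spending one extra doubling to turn $f \in B_k^3$ into $fB_k \subseteq B_k^4$. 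Finally, any finite-index subgroup $U$ is automatically Bohr open: the normal core $N = \bigcap_{g \in G} gUg^{-1}$ is a finite-index normal subgroup, hence the kernel of a homomorphism from $G$ onto the finite compact Hausdorff group $G/N$, so $N$ is Bohr open and $U$, a union of cosets of $N$, is Bohr open as well.
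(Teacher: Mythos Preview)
Your proof is correct and follows the same overall strategy as the paper: use Proposition~\ref{p7.1n} to get $\cov(AA^{-1})\le n$, deduce that $(AA^{-1})^{4^{n-1}}$ is a subgroup of index $\le n$, and then pass to the normal core to conclude Bohr openness. The only difference is that the paper outsources the middle step to Lemma~12.3 of \cite{PB}, whereas you supply a clean self-contained proof of that lemma via the doubling chain $B_k=B^{2^k}$ and the inequality $\cov(B_{k+2})\le\cov(B_k)-1$ whenever $B_k\subsetneq B_{k+1}$; your observation that two elements of $F_k$ must land in $F_k''$ is exactly what forces the exponent $4^{n-1}$ rather than $2^{n-1}$. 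One cosmetic remark: for the index bound at the stabilization step you do not actually need the ``partial iteration'' --- since $B_0\subseteq B_{k^*}$, any $n$-element cover of $G$ by translates of $B_0$ is already a cover by translates of $B_{k^*}$, so $[G:B_{k^*}]=\cov(B_{k^*})\le n$ immediately.
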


\begin{proof} By Proposition~\ref{p7.2n}, $\cov(AA^{-1})\le1/\sigma^R(A)\le n$. By Lemma 12.3 of \cite{PB}, $H=(AA^{-1})^{4^{n-1}}$ is a subgroup of finite index $\le n$ in $G$.
By \cite[1.6.9]{Rob}, the subgroup $H$ contains a normal subgroup of finite index in $G$ and hence is a Bohr neighborhood of the unit.
\end{proof}

For the (non-amenable) group $G=S_X$ of all permutations of an infinite set, we can apply results of Bergman \cite{Bergman} and obtain another partial answer to Problem~\ref{prob:nE}.

\begin{proposition}\label{p:SX} If $A$ is an inner invariant Solecki positive set in
the group $G=S_X$ of all permutations of an infinite set $X$, then $(AA^{-1})^{18}=G$.
\end{proposition}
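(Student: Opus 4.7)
The plan is to reduce the statement to a quantitative theorem of Bergman about bounded generation of the symmetric group $S_X$ by conjugation-invariant sets. Set $U := AA^{-1}$. Since $A$ is inner invariant, so is $A^{-1}$ (because $gA^{-1}g^{-1} = (gAg^{-1})^{-1} = A^{-1}$), and consequently $U$ is inner invariant: for every $g \in G$,
$$gUg^{-1} = (gAg^{-1})(gA^{-1}g^{-1}) = AA^{-1} = U.$$
Clearly $U$ is symmetric and contains the unit $1_G$.

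Next I show that $U$ has finite covering number. For an inner invariant set, all Solecki densities coincide with the Solecki submeasure, as noted at the beginning of Section~3, so $\sigma^R(A) = \sigma(A) > 0$. Proposition~\ref{p7.1n} then yields
$$\cov(U) = \cov(AA^{-1}) \le 1/\sigma^R(A) < \infty,$$
so the subgroup $H := \langle U \rangle$ of $S_X$ generated by $U$ has finite index. Being generated by a conjugation-invariant set, $H$ is normal in $S_X$. By the Schreier--Ulam--Baer classification of the normal subgroups of $S_X$, every proper normal subgroup of $S_X$ (the finitary alternating group, $FS_X$, and the subgroups of permutations with support of cardinality $<\kappa$ for $\kappa \le |X|$) has infinite index. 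Therefore $H = S_X$, and $U$ is a symmetric, conjugation-invariant, generating subset of $S_X$ containing $1_G$.

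It remains to invoke Bergman's quantitative bound \cite{Bergman}: every symmetric conjugation-invariant generating subset $V$ of the infinite symmetric group $S_X$ containing $1_G$ satisfies $V^{18} = S_X$. Applied to $V = U$ this gives $(AA^{-1})^{18} = G$, as required.

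The main obstacle is pinpointing the exact form of Bergman's theorem that delivers the constant $18$ for conjugation-invariant generating sets; once this is in hand, everything else is routine, reducing to inner invariance, the coincidence of the Solecki densities on inner-invariant sets, the covering bound of Proposition~\ref{p7.1n}, and the Schreier--Ulam--Baer classification.
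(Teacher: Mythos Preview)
Your preliminary reductions are correct: $U=AA^{-1}$ is symmetric, inner invariant, contains $1_G$, and by Proposition~\ref{p7.1n} has $\cov(U)\le 1/\sigma^R(A)<\infty$. The gap is at the last step. Bergman's paper does \emph{not} contain a ready-made theorem of the form ``every symmetric conjugation-invariant generating subset $V\ni 1_G$ of $S_X$ satisfies $V^{18}=S_X$''; you yourself flag locating such a statement as ``the main obstacle,'' and it is a real one. In particular, mere generation (which is what your Schreier--Ulam--Baer detour delivers) is not what Bergman's lemmas take as input, so that detour is not only unnecessary but also does not connect to the bound $18$.

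The paper's argument never uses that $U$ generates $S_X$; it works directly from the finite covering number you already established. From $G=F\cdot AA^{-1}$ with $F$ finite, Lemma~4 of \cite{Bergman} yields some $g\in F$ for which $gAA^{-1}$ has a \emph{full moiety} (an infinite $Y\subset X$ with infinite complement such that every permutation of $Y$ extends to an element of the set). Hence so does $W:=(gAA^{-1})^{-1}(gAA^{-1})=(AA^{-1})^2$. Lemma~3 of \cite{Bergman} then produces an involution $h\in G$ with
\[
G=\big((Wh)^7W^2h\big)\cup\big((hW)^7hW^2\big).
\]
Now the inner invariance of $W$ gives $hW=Wh$, so each product collapses to $W^9h^8=W^9$ (as $h^2=1_G$), i.e., $G=W^9=(AA^{-1})^{18}$. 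The constant $18$ is the outcome of this explicit calculation with Bergman's Lemmas~3 and~4, not a black-box citation; to complete your proof you should replace the appeal to an unspecified ``Bergman bound'' by this two-lemma argument, and drop the Schreier--Ulam--Baer step altogether.
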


\begin{proof}  Following \cite{Bergman}, we say that a subset $U\subset S_X$ has a {\em full moiety}
if there is an infinite set $Y\subset X$ with infinite  complement $X\setminus Y$ (called a {\em full moiety for $U$}) such that for each
permutation $f\in S_Y$ extends to a permutation $\bar f\in U$. In this case the set $U^{-1}U$ also has the full moiety $Y$.

Since $A$ is inner invariant, $\sigma^R(A)=\sigma(A)>0$ and hence $\hat\sigma_R(A)=\sigma(A)$. By Proposition~\ref{p7.1n}, $\cov(AA^{-1})<1/\sigma^R(A)<\infty$ and hence there is a finite subset $F\subset G$ such that $G=FAA^{-1}$. By Lemma 4 of \cite{Bergman}, for some $g\in F$ the set $xAA^{-1}$ has a full moiety and then so does the set $U=(xAA^{-1})^{-1}(xAA^{-1})=(AA^{-1})^2$. By Lemma 3 of \cite{Bergman}, there is an element $g\in G$ of order 2 such that $G=((Ug)^7U^2g)\cup ((gU)^7gU^2)$. Since the set $U=(AA^{-1})^2$ is inner invariant and the element $g$ has order 2, we finally conclude that
$G=(U^9g^8)\cup(g^8U^9)=U^9=(AA^{-1})^{18}.$
\end{proof}

It is interesting to compare Proposition~\ref{p:SX} with:

\begin{proposition}\label{p:ASX} If $A$ is a right-Solecki positive set in
the group $G=A_X$ of all even finitely supported permutations of an infinite set $X$, then $AA^{-1}A=G$.
\end{proposition}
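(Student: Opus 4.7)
The plan is to mimic the strategy of Proposition~\ref{p:SX} but replace the Bergman lemmas for $S_X$ by their alternating-group counterparts; the switch from $S_X$ to $A_X$ is what forces the product length to drop from $(AA^{-1})^{18}$ down to $AA^{-1}A$ even without an inner-invariance hypothesis.

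First I would apply Proposition~\ref{p7.1n} to the right-Solecki positive set $A \subset A_X$ to obtain $\cov(AA^{-1}) \le \pack(A) \le 1/\sigma^R(A) < \infty$, yielding a finite set $F \subset A_X$ with $A_X = F \cdot AA^{-1}$. In particular, writing $U = AA^{-1}$, we have a symmetric set $U \ni 1_G$ of finite covering index.

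Next, following Bergman's argument from \cite{Bergman} (Lemma~4) adapted to $A_X$, I would show that for at least one $f \in F$ the shifted set $f U = f \cdot AA^{-1}$ has a \emph{full moiety}: there is an infinite $Y \subset X$ with $X \setminus Y$ infinite and a copy of $A_Y$ embedded in $fU$. Passing to $V = (fU)^{-1}(fU) = U \cdot U = AA^{-1}AA^{-1}$ preserves the full-moiety property while making the set symmetric; the point of switching to $V$ is that its full moiety is now located intrinsically in $U \cdot U$, with no leftover $f$.

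Third, I would apply an alternating-group analogue of Bergman's Lemma~3 (of the form $A_X = g_1 V g_2$, or more precisely $A_X = V \cdot A$ once the right-hand factor is thickened by the right-Solecki positive set $A$). Here the key point is that in $A_X$ the Bergman width of any full-moiety set is short enough that only one further multiplication by $A$ is required to cover $A_X$: given $g \in A_X$, one uses the full moiety of $U \cdot U$ inside $A_Y$ together with the fact that $gA^{-1}$ is right-Solecki positive (hence meets $V$ via an intersection argument on the infinite factor $X \setminus Y$) to produce $a_1, a_2, a_3 \in A$ with $g = a_1 a_2^{-1} a_3$. The finite factor $f$ is absorbed at this stage by choosing the representatives inside the full-moiety set to begin with.

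The main obstacle is the third step: without the inner-invariance hypothesis that made the symmetric-group proof collapse via $gU = Ug$, one cannot simply convert a two-sided product into a one-sided one. The genuinely new input required is an identity specific to $A_X$ producing a length-three product $AA^{-1}A$ directly from a full-moiety hypothesis on $AA^{-1}$; this relies on the high transitivity and simplicity of $A_X$ and on the explicit even-permutation gadgetry that allows correction factors to be chosen inside $A$ rather than inside $A_X$ at large.
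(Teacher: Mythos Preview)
Your proposal has a genuine gap: step three is not a proof but a wish. You correctly identify the obstacle yourself --- without inner invariance you cannot collapse the Bergman-style product, and you never supply the ``identity specific to $A_X$'' that would do so. Bergman's machinery gives products of length on the order of a dozen or more; getting down to length three from a full-moiety hypothesis alone would require substantial new combinatorics that you have not provided. The hand-wave ``$gA^{-1}$ is right-Solecki positive, hence meets $V$'' does not produce the precise factorisation $g = a_1 a_2^{-1} a_3$.

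The paper's argument is entirely different and almost one line. The group $A_X$ is locally finite, hence amenable, so Corollary~\ref{c1:Bohr} applies with $B=A$: the set $A^{-1}AA^{-1}$ has non-empty Bohr-open interior. But $A_X$ is a simple group (for infinite $X$) and therefore has trivial Bohr compactification; the only non-empty Bohr-open set is $G$ itself. Hence $A^{-1}AA^{-1}=G$, and taking inverses gives $AA^{-1}A=G$. The entire Bergman apparatus is irrelevant here --- the result drops out of the Bohr-topology machinery developed in Section~\ref{Bohr} and the $\e$-difference-set section, together with the single structural fact that $A_X$ admits no non-trivial homomorphism to a compact Hausdorff group.
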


\begin{proof} By Corollary~\ref{c1:Bohr}, the set $A^{-1}AA^{-1}$ has non-empty interior in the Bohr topology on $G$. Since the Bohr compactification of the group $G=A_X$ is trivial, the unique non-empty Bohr open subset of $G$ is $G$. Consequently, $G=A^{-1}AA^{-1}$ and $G=G^{-1}=AA^{-1}A$.
\end{proof}

Comparing Propositions~\ref{p:SX} and \ref{p:ASX}, it is natural to ask:

\begin{problem} Is $G=AA^{-1}A$ for each (inner invariant) right-Solecki positive set $A$ in the group $G=S_X$ of permutations of an infinite set?
\end{problem}

\section{The sumsets of right-Solecki positive sets in amenable groups}

In \cite{Jin} Jin proved that for any subsets $A,B\subset\IZ$ of positive upper Banach density there is a finite set $F\subset \IZ$ such that the sumset $F+A+B=\{f+a+b:f\in F,\; a\in A,\; b\in B\}$ is thick (equivalently, is right-Solecki one). The initial proof of Jin's theorem used arguments of non-standard analysis.
In \cite{Jin2} Jin found a ``standard'' proof of this theorem and in \cite{BBF} Jin's theorem was generalized to all countable amenable groups. In \cite{NL} Di Nasso and Lupini using arguments of non-standard analysis generalized Jin's theorem to all amenable groups.

\begin{theorem}[Jin-Beiglb\"ock-Bergelson-Fish-Di Nasso-Lupini]\label{Jin} For any subsets $A,B$ of positive upper Banach density $d^*(A)=\sigma^R(A)$, $d^*(B)=\sigma^R(B)$ in an amenable group $G$ there is a finite set $F\subset G$ such that the sumset $FAB$ is thick and hence has right-Solecki density $\sigma^R(FAB)=1$.
\end{theorem}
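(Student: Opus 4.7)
The plan is to reduce the general amenable case to the countable amenable case---which is the theorem of Beiglb\"ock, Bergelson and Fish \cite{BBF}---via the countable reflection argument already used in the proof of Theorem~\ref{t:Bohr}. By Theorem~\ref{Bandens} upper Banach density equals $\sigma^R$ on every amenable group, so the countable case guarantees: whenever $H$ is a countable amenable group and $A',B'\subset H$ both have positive right Solecki density inside $H$, there exists a finite $F\subset H$ with $FA'B'$ thick in $H$. For a subgroup $H\le G$ write
$$\sigma^R_H(S)=\inf_{F\in[H]^{<\w}}\sup_{y\in H}\frac{|F\cap Sy|}{|F|}$$
for the right Solecki density of $S\cap H$ computed in $H$, and let $\HH$ denote the poset of all countable subgroups of $G$ with the terminology (closed, dominating, stationary) from the proof of Theorem~\ref{t:Bohr}.

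First I would verify that the family
$$\A=\{H\in\HH:\sigma^R_H(A\cap H)\ge\sigma^R(A)\text{ and }\sigma^R_H(B\cap H)\ge\sigma^R(B)\}$$
is closed and dominating in $\HH$. Closure under increasing countable unions is literally the argument of Claim~\ref{cl1:t:Bohr}, since any finite $F\subset\bigcup_n H_n$ lies in some $H_n$. Domination is obtained by the same tower construction: for each finite $F\subset G$ fix witnesses $y_{F,A},y_{F,B}\in G$ satisfying $|F\cap Ay_{F,A}|/|F|\ge\sigma^R(A)$ and $|F\cap By_{F,B}|/|F|\ge\sigma^R(B)$ (such witnesses exist because for fixed finite $F$ the value $|F\cap Ay|/|F|$ takes only finitely many values as $y$ ranges over $G$, so each supremum is attained), and given $H_0\in\HH$ inductively adjoin to $H_n$ the witnesses indexed by finite subsets of $H_n$.

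Next, for every $H\in\A$ the sets $A\cap H$ and $B\cap H$ are right-Solecki positive in the countable amenable subgroup $H$, so the countable case yields a finite $F_H\subset H$ with $F_H\cdot(A\cap H)\cdot(B\cap H)$ thick in $H$. The assignment $H\mapsto F_H$ is a regressive function from $\A$ into $[G]^{<\w}$, so by Jech's generalization of Fodor's Lemma \cite{Jech72,Jech} it is constant on some stationary $\mathcal S\subset\A$; write $F$ for the common value.

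Finally I would verify that $FAB$ is thick in $G$. Given a finite $K\subset G$, the family $\{H\in\HH:K\subset H\}$ is closed and dominating, hence meets the stationary $\mathcal S$ in some $H\supset K\cup F$. Thickness of $F(A\cap H)(B\cap H)$ in $H$ then produces $y\in H\subset G$ with $Ky\subset F(A\cap H)(B\cap H)\subset FAB$, so $FAB$ is thick in $G$ and Proposition~\ref{p:rizne}(2) gives $\sigma^R(FAB)=1$. The main obstacle I anticipate is the simultaneous verification that $\A$ is closed and dominating with \emph{both} density constraints imposed at once; this is technical, but each ingredient is a direct adaptation of the corresponding step in Claim~\ref{cl1:t:Bohr}. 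A secondary, more conceptual point is transferring thickness from $H$ up to $G$, which is precisely what domination over the family of countable subgroups engulfing a prescribed finite $K$ delivers.
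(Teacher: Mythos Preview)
Your proof is correct, but the paper takes a different and considerably more elementary route. Instead of reducing to the countable Beiglb\"ock--Bergelson--Fish theorem via the reflection machinery of Theorem~\ref{t:Bohr}, the paper proves the result directly for all amenable groups in two short steps: first, Theorem~\ref{ergodic} establishes a $0$--$1$ law $\sup_{F\in[G]^{<\omega}}\sigma_R(FA)\in\{0,1\}$ (valid in any group, via Kelley's intersection number and the measure from Theorem~\ref{t3.3nn}), so there is a finite $F$ with $\sigma^R(FA)>1-\sigma^R(B)$; second, Lemma~\ref{ergo-sum} shows that $\sigma^R(C)+\sigma^R(B)>1$ forces $\sigma^R(CB)=1$ in an amenable group (a F\o lner-set pigeonhole argument). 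Applying the lemma with $C=FA$ finishes.

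Your approach has the virtue of being a straight transfer of a known countable result and mirrors exactly what the paper does for the harder Theorems~\ref{t:Bohr} and~\ref{sumset}; the paper's approach, on the other hand, is self-contained, avoids Fodor's lemma and any appeal to \cite{BBF} as a black box, and in fact isolates Theorem~\ref{ergodic} as a statement about arbitrary (not necessarily amenable) groups. One small presentational point: your argument implicitly assumes $G$ is uncountable (else $\HH$ has $G$ itself as a member and the reflection is vacuous); it would be cleaner to dispose of the countable case explicitly at the outset, as the paper does in the proofs of Theorems~\ref{t:Bohr} and~\ref{sumset}.
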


In this section we shall present an elementary proof of this results.
Our proof of Theorem~\ref{Jin} (like that from \cite{BBF}) is based on the following ergodicity property of the right Solecki density $\sigma_R$ in arbitrary (not necessarily amenable) groups.

\begin{theorem}\label{ergodic} For any subset $A$ of a group $G$ we get
$$\sup_{F\in[G]^{<\w}}\sigma_R(FA)\in\{0,1\}.$$
\end{theorem}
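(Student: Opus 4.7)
The proof is by contradiction. Let $s := \sup_{F \in [G]^{<\w}} \sigma_R(FA)$ and assume $s \in (0,1)$. Choose $\e > 0$ with $s + 3\e < 1$ and pick $F_0 \in [G]^{<\w}$ with $\alpha := \sigma_R(F_0 A) > s - \e > 0$. Set $B := F_0 A$; the minimax formula of Theorem~\ref{t3.3nn} provides a measure $\mu \in P(G)$ with $\mu(xB) \ge \alpha$ for every $x \in G$.

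My plan is to amplify $F_0$ to a larger finite set $F'$ by adjoining well-chosen translates and verify $\sigma_R(F'A) > s$, contradicting the definition of $s$. The decisive observation is that left multiplication preserves disjointness: if left translates $\{y_j B\}_{j=1}^n$ are pairwise disjoint, so is $\{xy_j B\}_{j=1}^n$ for every $x \in G$. Via Zorn's Lemma I would select a maximal pairwise disjoint family $\{y_j B\}_{j=1}^n$ of left translates of $B$; the packing bound from Proposition~\ref{p7.1n} gives $n \le \pack(B) \le 1/\alpha$. Summing,
$$\mu\Bigl(x \bigcup_{j=1}^n y_j B\Bigr) = \sum_{j=1}^n \mu(xy_j B) \ge n\alpha$$
uniformly in $x \in G$. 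With $F' := \{y_1, \dots, y_n\} F_0$, an application of Theorem~\ref{t3.3nn} then yields $\sigma_R(F'A) \ge n\alpha$.

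If one is fortunate and $n\alpha > s$, the contradiction with $s = \sup_F \sigma_R(FA)$ is immediate. Otherwise $n = \pack(B)$ falls short of $1/\alpha$ and the pure disjoint-packing argument must be refined. The remedy I envisage is to loosen ``disjoint'' to ``$\mu$-almost-disjoint uniformly in $x$'', namely to select translates $\{y_j\}_{j=1}^n$ satisfying $\sup_{x \in G}\mu(xy_iB \cap xy_j B) < \e^2$ for all $i \ne j$, and then invoke an inclusion-exclusion bound
$$\mu\Bigl(x\bigcup_{j=1}^n y_j B\Bigr) \ge n\alpha - \binom{n}{2}\e^2$$
valid uniformly in $x$, with $n$ chosen so that the right-hand side exceeds $s$. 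The main obstacle will be arranging the almost-disjointness \emph{uniformly} across all $x \in G$, rather than merely at $x = 1_G$; this should be achievable via a Zorn-style selection driven by the uniform lower bound $\mu(xB) \ge \alpha$ available at every $x$, but producing enough such translates to push $n\alpha - \binom{n}{2}\e^2$ above the threshold $s$ is the delicate combinatorial step at the heart of the proof.
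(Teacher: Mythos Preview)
Your proposal is not a complete proof, and you yourself flag the gap: the disjoint-packing argument gives only an \emph{upper} bound $n\le\pack(B)\le 1/\alpha$, whereas to force $n\alpha>s$ you need a \emph{lower} bound on $n$. A maximal disjoint family of left translates of $B$ may well consist of a single element, in which case you obtain only $\sigma_R(F'A)\ge\alpha>s-\e$, which does not exceed $s$. Your proposed remedy via $\mu$-almost-disjoint translates faces a genuine obstacle: since $\mu$ is not left-invariant, the quantity $\mu(xy_iB\cap xy_jB)=\mu\big(x(y_iB\cap y_jB)\big)$ varies with $x$ in an uncontrolled way, and there is no evident mechanism for selecting the $y_j$ so that this is small uniformly in $x$. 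The ``delicate combinatorial step'' you allude to is not just delicate --- it is unclear that it can be carried out at all along these lines.

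The paper's argument bypasses packing entirely. The key observation is a \emph{pointwise} inequality coming from the intersection-number characterization $\sigma_R(A)=I(\{xA\}_{x\in G})$: choosing $x_1,\dots,x_n$ so that $\sup_y\frac1n\sum_i\chi_{x_iA}(y)<\sigma_R(A)+\delta$, one has
\[
\frac1n\sum_{i=1}^n\chi_{x_iA}\;\le\;(\sigma_R(A)+\delta)\cdot\chi_{FA},\qquad F=\{x_1,\dots,x_n\},
\]
simply because the left side is supported on $FA$ and is everywhere at most $\sigma_R(A)+\delta$. Now integrate this (and its left translates by any $z\in G$) against a measure $\mu$ with $\inf_x\mu(xA)>\sigma_R(A)-\delta$; this yields $(\sigma_R(A)+\delta)\,\mu(zFA)>\sigma_R(A)-\delta$ for every $z$, hence $\sigma_R(FA)\ge\inf_z\mu(zFA)>(\sigma_R(A)-\delta)/(\sigma_R(A)+\delta)$, which is as close to $1$ as you wish. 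The point is that the intersection-number side of the minimax hands you, for free, the set $F$ together with the crucial inequality --- no selection of translates is required.
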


\begin{proof} To see that $\sup_{F\in[G]^{<\w}}\sigma_R(FA)\in\{0,1\}$, it suffices to show that for any right-Solecki positive subset $A\subset G$ and every $\e>0$ there is a finite set $F\subset G$ such that $\sigma_R(FA)>1-\e$. Find a positive number $\delta$ such
$\frac{\sigma_R(A)-\delta}{\sigma_R(A)+\delta}>1-\e$.
By Theorem~\ref{t3.3nn}, $\sigma_R(A)=I(\{xA\}_{x\in G})$. Then the definition of the intersection number yields points $x_1,\dots,x_n\in G$ such that $$\sup_{y\in G}\frac1n\sum_{i=1}^n\chi_{x_iA}(y)<I(\{xA\}_{x\in G}+\delta=\sigma_R(A)+\delta$$and hence
\begin{equation}\label{eqq}
\frac1n\sum_{i=1}^n\chi_{x_iA}\le (\sigma_R(A)+\delta)\cdot\chi_{FA}
\end{equation}
where $F=\{x_1,\dots,x_n\}$. Taking into account the equality
 $\sigma_R(A)=\sup_{\mu\in P(G)}\inf_{x\in G}\mu(xA)$ established in Theorem~\ref{t3.3nn}, find a measure $\mu$ on $G$ such that $\inf_{x\in G}\mu(xA)>\sigma_R(A)-\delta$. Integrating the inequality (\ref{eqq}) by the measure $\mu$ we get
$$(\sigma_R(A)+\delta)\cdot \mu(FA)\ge \frac1n\sum_{i=1}^n\mu(x_iA)>\sigma_R(A)-\delta,$$which implies
the desired lower bound $$\mu(FA)>\frac{\sigma_R(A)-\delta}{\sigma_R(A)+\delta}>1-\e.$$
\end{proof}

We shall also need the following version of Lemma 3.1 \cite{BBF}.

\begin{lemma}\label{ergo-sum} Let $A,B$ be two subsets of an amenable group $G$. If $\sigma^R(A)+\sigma^R(B)>1$, then $\sigma^R(AB)=1$.
\end{lemma}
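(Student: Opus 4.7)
My plan is to use Theorem~\ref{Bandens} to identify $\sigma^R$ with the upper Banach density $d^*$ in the amenable group $G$, and then to produce, for every sufficiently small $\e>0$ and every prescribed finite set $F_0\subset G$, a finite $K\subset G$ that is $\e$-invariant under $F_0$ with $|K\cap AB|/|K|>1-\e$; this yields $d^*(AB)\ge 1-\e$ for every such $\e$, hence $\sigma^R(AB)=1$. Write $\alpha=\sigma^R(A)$, $\beta=\sigma^R(B)$, and $\delta=\alpha+\beta-1>0$, and restrict to $\e\in(0,\delta/2)$.

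The combinatorial core is the following inclusion--exclusion observation. If $K\subset G$ is a finite set with $|K\cap A|/|K|\ge\alpha-\e/4$, and $z\in G$ is such that $|K\cap zB^{-1}|/|K|\ge\beta-\e/4$, then
\[
|A\cap zB^{-1}\cap K|\ge|A\cap K|+|K\cap zB^{-1}|-|K|\ge(\delta-\e/2)|K|>0,
\]
so $A\cap zB^{-1}\ne\emptyset$, and hence $z\in AB$. To apply this, note that $\sigma^R(ABy_0)=\sigma^R(AB)$ for every $y_0\in G$, so it is equivalent to show $\sigma^R(A\cdot(By_0))=1$ for a suitably chosen $y_0$. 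Accordingly, first pick a finite $K$ that is $\e$-invariant under a large finite set $F_1\supset F_0$ with $|K\cap A|/|K|\ge\alpha-\e/4$, using the definition of $d^*(A)=\alpha$; then apply $\sigma^R(B)=\beta$ to $K$ to find $y_0\in G$ with $|K\cap By_0|/|K|\ge\beta-\e/4$. Renaming $By_0$ as $B$, we now have $|K\cap A|/|K|\ge\alpha-\e/4$ and $|K\cap B|/|K|\ge\beta-\e/4$ on the \emph{same} finite set $K$.

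It remains to show that, for all but an $\e$-fraction of $z\in K$, the estimate $|K\cap zB^{-1}|/|K|\ge\beta-\e/4$ holds; the combinatorial core then forces these $z$ into $AB$ and so $|K\cap AB|/|K|>1-\e$. The double-counting identity
\[
\sum_{z\in K}|K\cap zB^{-1}|\;=\;\sum_{l\in K}|l^{-1}K\cap B|,
\]
together with the $F_1$-almost-invariance of $K$ (so that $|l^{-1}K\cap B|\ge|K\cap B|-\e|K|$ for each $l\in K\subset F_1$), bounds the average of $|K\cap zB^{-1}|/|K|$ over $z\in K$ from below by $\beta-O(\e)$. The main technical obstacle is promoting this average bound to a concentration bound, which is done by a second-moment estimate using the same Folner-type invariance at the level of two-fold products of elements of $K$: the variance of $z\mapsto|K\cap zB^{-1}|$ then turns out to be $O(\e)|K|^2$, and Chebyshev's inequality finishes the job after suitably tuning $\e$. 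This argument is essentially that of Beiglb\"ock--Bergelson--Fish \cite[Lemma~3.1]{BBF} for countable amenable groups, and the extension to arbitrary amenable $G$ uses the almost-invariant-finite-set characterization of $d^*$ from \cite{NL}, which is available in any amenable group.
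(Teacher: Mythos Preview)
Your argument has a genuine circularity. You pick $F_1\supset F_0$ first, and then choose a finite $K$ that is $\e$-invariant under $F_1$; but a few lines later you invoke ``$|l^{-1}K\cap B|\ge|K\cap B|-\e|K|$ for each $l\in K\subset F_1$'', which requires $K\subset F_1$. Since $F_1$ was fixed before $K$ was produced (and there is no a priori bound on the size or location of a F\o lner set realizing a given density for $A$), you cannot assume $K\subset F_1$. The same problem recurs in your variance step, where controlling $\sum_{z\in K}|K\cap zB^{-1}|^2=\sum_{l,m\in K}|K\cap lB\cap mB|$ needs invariance of $K$ under products $l^{-1}m$ with $l,m\in K$, i.e.\ invariance under $K^{-1}K$, again a self-referential requirement that is not provided by the ordinary F\o lner condition. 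Citing \cite[Lemma~3.1]{BBF} does not close this gap; that argument, for the conclusion you need, works with the given finite set $F$ rather than with $K$ itself.

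The paper's proof avoids all of this by aiming at a weaker (but equivalent) target: rather than showing $|K\cap AB|/|K|>1-\e$, it shows directly that $AB$ is thick, which by Proposition~\ref{p:rizne}(2) already gives $\sigma^R(AB)=1$. Concretely: given a finite $F\ni 1_G$, choose $K$ F\o lner with respect to $F^{-1}$ and with $|K\cap A|/|K|\ge\sigma^R(A)$; invariance under $F^{-1}$ then yields $|K\cap x^{-1}A|>(\sigma^R(A)-\e)|K|$ for every $x\in F$. Now apply $\sigma^R(B)$ to the set $K^{-1}$ to produce \emph{one} point $z$ with $|K\cap zB^{-1}|\ge\sigma^R(B)|K|$, and the same pigeonhole you used gives $(K\cap x^{-1}A)\cap(K\cap zB^{-1})\ne\emptyset$, hence $xz\in AB$, for every $x\in F$. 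Thus $Fz\subset AB$. The two crucial simplifications over your route are: (i) invariance is needed only under the prescribed finite set $F$, never under $K$ itself; and (ii) a single good $z$ suffices, so no averaging or second-moment concentration is required.
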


\begin{proof} Choose a positive real number $\e>0$ such that $\sigma^R(A)+\sigma^R(B)>1+\e$. The equality $\alpha^R(AB)=1$ will follow as soon as we check that for every finite subset $F\subset G$ there is a point $z\in G$ such that $Fz\subset AB$. We lose no generality assuming that $F$ contains the unit of the group $G$.

The amenability of $G$ yields a finite subset $E\subset G$ such that $|F^{-1}E\setminus E|<\e|E|$.
Since $\sigma^R(A)\le\max_{y\in G}\frac{|Ey\cap A|}{|E|}$, there is a point $y\in G$ such that $\frac{|Ey\cap A|}{|E|}\ge\sigma^R(A)$. Let $K=Ey$ and observe that $|F^{-1}K\setminus K|<\e|K|$ and $|K\cap A|\ge\sigma^R(A)|K|$. Then for every $x\in F$ we obtain that
$$
\begin{aligned}
\sigma^R(A)\cdot|K|&\le |K\cap A|\le|(xK\cup (K\setminus xK))\cap A|\le |xK\cap A|+|K\setminus xK|=\\
&=|K\cap x^{-1}A|+|x^{-1}K\setminus K|\le |K\cap x^{-1}A|+|F^{-1}K\setminus K|<|K\cap x^{-1}A|+\e|K|,
\end{aligned}
$$
and hence  $|K\cap x^{-1}A|>(\sigma^R(A)-\e)\cdot|K|$.

Since $\sigma^R(B)\le\max_{z\in G}\frac{|K^{-1}\cap Bz^{-1}|}{|K^{-1}|}$, there is a point $z\in G$ such that $\frac{|K^{-1}\cap Bz^{-1}|}{|K|}\ge \sigma^R(B)$.
Observe that for every point $x\in F$
$$|K\cap x^{-1}A|+|K\cap zB^{-1}|=|K\cap x^{-1}A|+|K^{-1}\cap Bz^{-1}|>(\sigma^R(A)-\e)\cdot|K|+\sigma^R(B)\cdot|K|>|K|,$$which implies that the set $K\cap x^{-1}A$ and $K\cap zB^{-1}$ have a common point and hence $xz\in AB$ and $Fz\subset AB$.
\end{proof}
\smallskip

Now we are able to present
\smallskip

\noindent{\em Proof of Theorem~\ref{Jin}}: Let $A,B$ be two sets of positive upper Banach density $d^*(A),d^*(B)$ in an amenable group $G$. By Theorem~\ref{Bandens} these sets have positive right Solecki densities $\sigma_R(A)=\sigma^R(A)=d^*(A)>0$ and $\sigma_R(B)=\sigma^R(B)=d^*(B)>0$. By Ergodic Theorem~\ref{ergodic}, there is a finite subset $F\subset G$ such that $\sigma_R(FA)>1-\sigma^R(B)$. By Theorem~\ref{t-FC}, $\sigma^R(FA)=\sigma_R(FA)>1-\sigma^R(B)$ and hence $\sigma^R(FA)+\sigma^R(B)>1$. Then $\sigma^R(FAB)=1$ by Lemma~\ref{ergo-sum} and hence $FAB$ is  thick by Proposition~\ref{p:rizne}(2).\qed
\medskip

In fact, using methods of non-standard analysis, Di Nasso and Lupini \cite{NL} proved the following quantitative version of Theorem~\ref{Jin}.

\begin{theorem}[Di Nasso, Lupini]\label{t:Jin-NL} For any right-Solecki positive sets $A,B$ in an amenable group $G$ there is a finite set $F\subset G$ of cardinality $|F|\le1/(\sigma^R(A)\cdot\sigma^R(B))$ such that the set $FAB$ is thick.
\end{theorem}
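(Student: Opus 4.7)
The plan is to refine the argument of Theorem~\ref{Jin} by replacing the qualitative appeal to Theorem~\ref{ergodic} with a density-increment iteration that controls $|F|$. Set $\alpha=\sigma^R(A)$ and $\beta=\sigma^R(B)$; since $G$ is amenable, Theorems~\ref{Bandens} and~\ref{t-FC}(1) give $\sigma^R=\sigma_R=d^*$ and Theorem~\ref{t3.4nn} gives $\sigma^R(X)=\sup_{\mu\in P_l(G)}\mu(X)$ for every $X\subset G$. By Lemma~\ref{ergo-sum} and Proposition~\ref{p:rizne}(2) it will suffice to exhibit a finite $F\subset G$ with $|F|\le 1/(\alpha\beta)$ and $\sigma^R(FA)>1-\beta$: then $\sigma^R(FA)+\sigma^R(B)>1$ forces $\sigma^R(FAB)=1$, which is the thickness of $FAB$.

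We build $F$ by induction. Fix once and for all, via Theorem~\ref{t3.4nn}, a left-invariant measure $\mu\in P_l(G)$ with $\mu(A)=\alpha$, and a parameter $\e>0$ to be sent to $0$ at the end. Set $F_0=\emptyset$. Given $F_k$ with $\sigma^R(F_kA)\le 1-\beta$, the equality $\sigma^R(F_kA)=\sup_{\nu\in P_l(G)}\nu(F_kA)$ forces $\mu(F_kA)\le 1-\beta$, so $\mu(C_k)\ge\beta$ for $C_k:=G\setminus F_kA$. The crux is to locate $x_{k+1}\in G$ with $\mu(x_{k+1}A\cap C_k)\ge\alpha\beta-\e$; then $F_{k+1}:=F_k\cup\{x_{k+1}\}$ satisfies $\mu(F_{k+1}A)\ge\mu(F_kA)+\alpha\beta-\e$, and the iteration continues.

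To produce $x_{k+1}$ we use the Kelley intersection-number characterization $\sigma^R(A)=I(\{xA\}_{x\in G})=\alpha$ from Theorem~\ref{t3.3nn}: there exist $n\in\IN$ and $y_1,\dots,y_n\in G$ with $\sum_{i=1}^n\chi_{y_iA}(g)\le n(\alpha+\e/2)$ for every $g\in G$. Integrating this pointwise bound against $\mu$ over $F_kA$ and using $\mu(y_iA)=\mu(A)=\alpha$, we obtain
\[
\sum_{i=1}^n\mu(y_iA\cap C_k)=n\alpha-\sum_{i=1}^n\mu(y_iA\cap F_kA)\ge n\alpha-n\bigl(\alpha+\tfrac{\e}{2}\bigr)(1-\beta)=n\bigl(\alpha\beta-\tfrac{\e}{2}(1-\beta)\bigr),
\]
so some index $i$ furnishes $x_{k+1}:=y_i$ with $\mu(x_{k+1}A\cap C_k)\ge\alpha\beta-\e$. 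By induction $\mu(F_kA)\ge k(\alpha\beta-\e)$, whence the iteration halts by step $k\le\lceil(1-\beta)/(\alpha\beta-\e)\rceil$; letting $\e\downarrow 0$ and using $(1-\beta)/(\alpha\beta)=1/(\alpha\beta)-1/\alpha\le 1/(\alpha\beta)-1$ yields $|F|\le 1/(\alpha\beta)$. The main technical observation is that, because $\sigma^R$ is a supremum of left-invariant measures, the hypothesis $\sigma^R(F_kA)\le 1-\beta$ automatically forces $\mu(F_kA)\le 1-\beta$ for our fixed $\mu$; this compatibility is what allows a single measure realising $\mu(A)=\alpha$ to drive the entire iteration, and is the point at which the quantitative factor $1/(\alpha\beta)$ emerges from the qualitative Jin argument.
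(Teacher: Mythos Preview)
The paper does not prove this theorem: it quotes the result from Di~Nasso and Lupini (proved there by nonstandard methods) and explicitly remarks ``We know no standard proof of this result.'' So there is no ``paper's proof'' to compare against; your proposal, if correct, supplies exactly the standard argument the author says is missing.

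Your density-increment argument is correct. The key point---that fixing a single left-invariant measure $\mu$ realising $\mu(A)=\alpha$ lets you use both $\mu(y_iA)=\alpha$ (invariance) and $\mu(F_kA)\le\sigma^R(F_kA)\le 1-\beta$ (since $\sigma^R=\sup_{\nu\in P_l(G)}\nu$) simultaneously---is exactly right, and the Kelley-type averaging over $y_1,\dots,y_n$ to locate an $x_{k+1}$ with $\mu(x_{k+1}A\cap C_k)\ge\alpha\beta-\e$ is clean. Two small points deserve tightening. First, the claim that the supremum in Theorem~\ref{t3.4nn} is attained (so that $\mu(A)=\alpha$ exactly) follows from the weak$^*$-compactness of $P_l(G)\subset(\ell^\infty(G))^*$ and the weak$^*$-continuity of $\mu\mapsto\mu(A)$; alternatively take $\mu(A)>\alpha-\delta$ and carry $\delta$ along with $\e$. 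Second, ``letting $\e\downarrow 0$'' is imprecise because the set $F$ you construct depends on $\e$; what you actually need is that for all sufficiently small $\e>0$ the integer halting time $K=K(\e)$, which satisfies $K-1\le(1-\beta)/\bigl(\alpha\beta-\tfrac{\e}{2}(1-\beta)\bigr)$, obeys $K\le 1/(\alpha\beta)$. This follows because the right-hand side tends to $(1-\beta)/(\alpha\beta)$ from above, $1+(1-\beta)/(\alpha\beta)\le 1/(\alpha\beta)$ (your last displayed inequality), and $K$ is an integer; a short case analysis on whether $(1-\beta)/(\alpha\beta)$ is an integer handles the boundary.
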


We know no standard proof of this result and also do not know if this theorem is valid for non-amenable groups. In \cite{BBF} Beiglb\"ock, Bergelson and Fisher obtained a striking generalization of Jin's theorem proving that for any subsets $A,B$ of positive upper Banach density in a countable amenable group there is a non-empty Bohr open set $U\subset G$ which is finitely embeddable in $AB$.

We shall say that a subset $A$ of a group $G$ is {\em finitely embeddable} in a subset $B\subset G$ if for every finite set $F\subset A$ there is a point $x\in G$ such that $Fx\subset B$. Observe that a subset $A\subset G$ is thick if and only if $G$ is finitely embeddable in $A$. The following simple proposition can be easily derived from the definition.

\begin{proposition}\label{repres} If a subset $A$ of a group $G$ is finitely embeddable in a subset $B\subset G$, then $\sigma^R(A)\le\sigma^B(B)$ and $AA^{-1}\subset BB^{-1}$.
\end{proposition}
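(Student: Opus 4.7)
The plan is to unpack the two assertions separately; neither is deep, both follow directly from the definitions once we exploit finite embeddability on well-chosen finite subsets of $A$.

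For the inclusion $AA^{-1}\subset BB^{-1}$, I would take an arbitrary element $g\in AA^{-1}$, write $g=a_1a_2^{-1}$ with $a_1,a_2\in A$, and apply finite embeddability to the two-point set $F=\{a_1,a_2\}\subset A$ to obtain $x\in G$ with $Fx=\{a_1x,a_2x\}\subset B$. Then $g=a_1a_2^{-1}=(a_1x)(a_2x)^{-1}\in BB^{-1}$.

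For the density inequality, I would first rewrite $\sigma^R$ in the shape that matches finite embeddability: since the map $f\mapsto fy^{-1}$ is a bijection from $F$ onto $Fy^{-1}$, we have $|F\cap Ay|=|Fy^{-1}\cap A|$, so letting $y$ range over all of $G$ yields
\[
\sigma^R(A)=\inf_{F\in[G]^{<\w}}\sup_{y\in G}\frac{|Fy\cap A|}{|F|}.
\]
Now fix $\e>0$ and choose, by the definition of $\sigma^R(B)$, a finite set $F\subset G$ with $\sup_{y\in G}|Fy\cap B|/|F|<\sigma^R(B)+\e$. For an arbitrary $y\in G$ put $T=Fy\cap A$, a finite subset of $A$; by finite embeddability there exists $x\in G$ with $Tx\subset B$. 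Since also $Tx\subset Fyx$, we get $Tx\subset Fyx\cap B$, hence $|Fy\cap A|=|T|=|Tx|\le|Fyx\cap B|\le|F|\bigl(\sigma^R(B)+\e\bigr)$. Dividing by $|F|$ and taking the supremum over $y$ gives $\sigma^R(A)\le\sigma^R(B)+\e$, and letting $\e\to 0$ finishes the argument.

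There is no real obstacle: the only mildly subtle point is the rewriting of $\sigma^R(A)$ via the substitution $y\leftrightarrow y^{-1}$, which is what allows a finite set $T\subset A$ (produced on the $A$-side) to be transported by a single right multiplication into $B$ while staying inside the reference set $F$ translated on the right.
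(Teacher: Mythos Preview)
Your proof is correct. The paper does not actually supply a proof of this proposition, remarking only that it ``can be easily derived from the definition,'' and your argument does exactly that: both parts follow directly from applying finite embeddability to a suitably chosen finite subset of $A$.
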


A subset $A$ of a group $G$ is called {\em piecewise Bohr} if $A$ contains the intersection $U\cap T$ of a non-empty Bohr open subset $U\subset G$ and a  thick set $T\subset G$.

\begin{proposition}\label{piece-Bohr} A subset $A$ of a group $G$ is piecewise Bohr in $G$ if and only if some non-empty Bohr open set $U\subset G$ is finitely embeddable in $A$.
\end{proposition}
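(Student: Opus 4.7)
For $(\Rightarrow)$, assume $A\supset U\cap T$ with $U$ non-empty Bohr open and $T$ thick; I claim that $U$ itself is finitely embeddable in $A$. Given a finite $F\subset U$, form the Bohr open set $V=\bigcap_{f\in F}f^{-1}U$, which contains $1_G$ since $F\subset U$, together with $T'=\bigcap_{f\in F}f^{-1}T$. The set $T'$ is thick: for any finite $E\subset G$, the thickness of $T$ applied to $FE$ yields $y$ with $FEy\subset T$, hence $Ey\subset T'$. Non-empty Bohr open sets are syndetic by total boundedness of the Bohr topology, and every syndetic set meets every thick set (a standard pigeonhole argument: if $G=HS$ with $H$ finite and $T$ is thick, pick $y$ with $H^{-1}y\subset T$ and write $y=hs$ with $h\in H$, $s\in S$, so $s=h^{-1}y\in S\cap T$). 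Therefore $V\cap T'\ne\emptyset$, and any $x$ in this intersection satisfies $Fx\subset U\cap T\subset A$.

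For $(\Leftarrow)$, let $U=\eta^{-1}(W)\subset G$ (with $W\subset bG$ open) be a non-empty Bohr open set finitely embeddable in $A$. For each finite $F\subset U$ choose $x_F\in G$ with $Fx_F\subset A$, and by compactness of $bG$ extract a subnet along which $\eta(x_{F_\alpha})\to z\in bG$. For any fixed $u\in U$, eventually $u\in F_\alpha$ and $ux_{F_\alpha}\in A$, while $\eta(ux_{F_\alpha})\to\eta(u)z$; thus $\eta(U)z\subset\overline{\eta(A)}$ and, passing to closures in $bG$, $\overline Wz\subset\overline{\eta(A)}$. Using the regularity of the compact Hausdorff group $bG$, pick a non-empty open $O\subset bG$ with $\overline O\subset Wz$, and set $V=\eta^{-1}(O)$, a non-empty Bohr open subset of $G$.

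To finish, I claim that for every finite $F_0\subset G$ there is $y_{F_0}\in G$ with $F_0y_{F_0}\cap V\subset A$; then $T=\bigcup_{F_0}F_0y_{F_0}$ is thick by construction (each $F_0y_{F_0}\subset T$) and $V\cap T\subset A$. To produce $y_{F_0}$, take $y_{F_0}=x_{F_\alpha}$ for $\alpha$ deep enough in the net that $F_\alpha\supset F_0\cap U$. For $f\in F_0\cap U$, $fx_{F_\alpha}\in A$ by the choice of $x_{F_\alpha}$; for $f\in F_0$ with $\eta(f)\notin\overline W$, the limit $\eta(f)z\notin\overline Wz\supset\overline O$ forces $fx_{F_\alpha}\notin V$ eventually. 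The main obstacle is the boundary case $\eta(f)\in\overline W\setminus W$, where $\eta(f)z\in\partial(\overline Wz)$ may sit on $\partial\overline O$ and membership of $fx_{F_\alpha}$ in $V$ oscillates. I expect to resolve this by choosing $O\subset Wz$ whose boundary avoids the orbit $\eta(G)z$ in $bG$: in the metrizable case (countable $G$) this is a Baire-category matter inside $Wz$, and in general one can shrink $O$ to $O_{F_0}\subset O$ with $\partial O_{F_0}\cap\eta(F_0)z=\emptyset$ before selecting $y_{F_0}$ from the net, observing that the property $F_0y\cap V\subset A$ is preserved as long as $V$ is replaced by a smaller Bohr open set tracked through the argument.
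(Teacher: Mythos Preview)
Your ``only if'' direction is correct and in fact cleaner than the paper's argument: where the paper runs a pigeonhole on a finite cover $G=ZW$ of the Bohr group to pin down a cofinal family of translates, you directly intersect $\bigcap_{f\in F}f^{-1}U$ with $\bigcap_{f\in F}f^{-1}T$ and use the syndetic-meets-thick principle. This is a genuine simplification.

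For the ``if'' direction, your approach via a convergent subnet in $bG$ is different from the paper's (which again uses a finite cover $G=WZ$ and a pigeonhole on the directed set $[G]^{<\omega}$), but it also works --- and in fact the ``obstacle'' you flag is a phantom. You chose $O$ with $\overline{O}\subset Wz$, not merely $\overline{O}\subset\overline{W}z$. Hence for \emph{any} $f\in F_0$ with $\eta(f)\notin W$ (including your boundary case $\eta(f)\in\overline{W}\setminus W$) you have $\eta(f)z\notin Wz\supset\overline{O}$; since $\overline{O}$ is closed this gives a neighborhood of $\eta(f)z$ disjoint from $\overline{O}$, and the net $\eta(f)\eta(x_{F_\alpha})\to\eta(f)z$ forces $fx_{F_\alpha}\notin V$ eventually. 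There is no oscillation, and you do not need to adjust $O$ depending on $F_0$ or avoid any boundaries. Your dichotomy is simply $\eta(f)\in W$ (so $f\in U$, eventually $f\in F_\alpha$, hence $fx_{F_\alpha}\in A$) versus $\eta(f)\notin W$ (so eventually $fx_{F_\alpha}\notin V$), and the argument closes.

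In summary: both directions are correct as written, once you drop the unnecessary hedging in the last paragraph. Compared with the paper, your proof trades two combinatorial pigeonhole steps for one topological compactness step; either route is fine, and yours is arguably more transparent.
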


\begin{proof} To prove the ``only if'' part, assume that $A$ is piecewise Bohr in $G$. Find a non-empty Bohr open set $V\subset G$ and a  thick set $T\subset G$ such that $V\cap T\subset A$. Fix a point $x\in V$ and choose a Bohr open neighborhood $W$ of the unit $1_G$ such that $WxW\subset V$. Find a finite subset $Z\subset G$ such that $G=ZW$. Since $T$ is thick, there is a function $t:[G]^{<\w}\to G$ such that $F\cdot t(F)\subset T$ for all $F\in[G]^{<\w}$.
In the following claim,  $[G]^{<\w}$ considered as a partially ordered set endowed with the inclusion relation.

\begin{claim}\label{cl:t:sumset2} For some point $z\in Z$ the family $\F_z=\{F\in [G]^{<\w}:t(F)\in zW\}$ is dominating in $[G]^{<\w}$.
\end{claim}

\begin{proof}  Assuming the opposite, for every  $z\in Z$ find a finite subset $F_z\in[G]^{<\w}$ which is contained in no set $F\in\F_z$. Now consider the finite set $F=\bigcup_{z\in Z}F_z$. Since $t(F)\in G=ZW$, there is a point $z\in Z$ such that $t(F)\in zW$ and hence $F_z\subset F\in\F_z$, which contradicts the choice of the set $F_z$.
\end{proof}

Using Claim~\ref{cl:t:sumset2}, we can fix a point $z\in Z$ such that the family $\F_z$ is dominating in $[G]^{<\w}$. We claim that the Bohr open set $U=Wxz^{-1}$ is finitely embeddable in $A$. Given any finite subset $E\subset U$, find a set $F\in\F_z$ containing $E$.
Then $E\cdot t(F)\subset F\cdot t(F)\subset T$. On the other hand, $E\cdot t(F)\subset (Wxz^{-1})zW=WxW\subset V$. So, $E\cdot t(F)\subset T\cap V\subset A$, which means that $U$ is finitely embeddable in $A$. To completes the proof of the ``only if'' part of the proposition.
\smallskip

To prove the ``if'' part, assume that some non-empty Bohr open set $U\subset G$ is finitely embeddable in $A$. Replacing $U$ by a suitable right shift of $U$, we can assume that $U$ is a Bohr neighborhood of the unit $1_H$. Since $U$ is finitely embeddable in $A$, for every finite set $F\subset G$ there is a point $y_F\in G$ such that $(F\cap U)y_F\subset A$. Since the multiplication and the inversion are continuous with respect to the Bohr topology on $G$, there is an open neighborhood $W\subset G$ such that $WW^{-1}\subset U$. By the total boundedness of the Bohr topology, there exists a finite subset $Z\subset G$ such that $G=WZ$. Repeating the argument from the proof of  Claim~\ref{cl:t:sumset2}, we can fix a point $z\in Z$ such that the family $\F_z=\{F\in[G]^{<\w}:y_F\in Wz\}$ is dominating in $[G]^{<\w}$. Then for every $F\in\F_z$ we get $y_F\in Wz$ and hence $$zy_F^{-1}\in W^{-1}.$$

Since $\F_z$ is dominating in $[G]^{<\w}$, the set $T=\bigcup_{F\in\F_z}Fy_F$ is thick in $G$. We claim that for the non-empty Bohr open set $V=Wz\subset G$ the intersection $T\cap V$ lies in the set $A$. Given any point $x\in T\cap V$, find a finite set $F\in\F_z$ such that $x\in Fy_F$. Then $$x\in Fy_F\cap V=Fy_F\cap Wz=(F\cap Wzy_F^{-1})y_F\subset (F\cap WW^{-1})y_F\subset (F\cap U)y_F\subset A.$$So, $T\cap V\subset A$, which means that the set $A$ is piecewise Bohr in $G$.
\end{proof}

The following theorem generalizes to arbitrary amenable group the result of Beiglb\"ock, Bergelson and Fisher \cite{BBF} mentioned above.

\begin{theorem}\label{sumset} For any right-Solecki positive set $A,B$ in an amenable group $G$ the sumset $AB$ is piecewise Bohr. Consequently, some Bohr open neighborhood $U\subset G$ of the unit $1_G$ is finitely embeddable in the sumset $AB$.
\end{theorem}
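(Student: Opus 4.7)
By Proposition~\ref{piece-Bohr}, the statement that $AB$ is piecewise Bohr is equivalent to the existence of a non-empty Bohr open subset of $G$ that is finitely embeddable in $AB$; a right translation of any such set then yields the ``consequently'' clause (if a finite $E\subset V$ admits $y$ with $Ey\subset AB$, then for any $v\in V$ the Bohr open neighborhood $Vv^{-1}\ni 1_G$ has the same property via $y':=vy$). Thus my plan is to produce a Bohr open neighborhood $U\ni 1_G$ such that every finite subset $E\subset U$ admits some $y\in G$ with $Ey\subset AB$.

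Apply Theorem~\ref{t:Bohr} to the right-Solecki positive set $B$: this yields $\e>0$, a Bohr open neighborhood $V\ni 1_G$, and a set $N\subset G$ with $\sigma^R(N)=0$ such that $\sigma^R(B\cap xB)\ge\e$ for every $x\in V\setminus N$. Using the continuity of the group operations in the Bohr topology, shrink $V$ to a Bohr open neighborhood $U\ni 1_G$ with $UU^{-1}\subset V$. Fix $a_0\in A$. The inclusion $Ey\subset AB$ will follow once $y\in\bigcap_{e\in E}e^{-1}a_0B$, since then $ey\in a_0B\subset AB$ for each $e\in E$. Thus the problem reduces to proving that $\bigcap_{e\in E}e^{-1}B$ is non-empty for every finite $E\subset U$.

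To establish this non-emptiness I plan to imitate the strategy of the proof of Theorem~\ref{t:Bohr}: by a stationary-set argument on the partially ordered set $\HH$ of countable subgroups of $G$ (using Jech's generalization of Fodor's Lemma, as in Claims~\ref{cl1:t:Bohr} and~\ref{cl2:t:Bohr}), reduce the problem to the countable amenable case, in which the piecewise Bohr property of $AB$ is already known from the work of Beiglb\"ock, Bergelson and Fish. The Bohr neighborhood and finite-embedding property produced inside a suitable countable subgroup $H\subset G$ will then be lifted back to $G$ by a compactness argument in the space $K^G$ of homomorphisms into a countable product $K$ of orthogonal groups, exactly as in the proof of Theorem~\ref{t:Bohr}. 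The main obstacle is arranging that the stationary family of countable subgroups can be chosen to simultaneously preserve both densities, $\sigma^R_H(A)\ge\sigma^R(A)$ and $\sigma^R_H(B)\ge\sigma^R(B)$, and to be compatible with the null exceptional sets $N$ that arise; overcoming this will combine the ergodicity of $\sigma^R$ (Theorem~\ref{ergodic}), its bi-invariance on amenable groups (which follows from Theorem~\ref{t3.4nn}, since every left-invariant mean is unchanged by right-multiplication inside $\sigma^R$), and Jin's sumset theorem (Theorem~\ref{Jin}) to upgrade pairwise intersection estimates for translates of $B$ to simultaneous non-emptiness of $n$-fold intersections over finite $E\subset U$.
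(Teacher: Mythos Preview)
Your second paragraph contains a genuine error. You reduce the task to showing $\bigcap_{e\in E}e^{-1}B\ne\emptyset$ (equivalently $\bigcap_{e\in E}e^{-1}a_0B\ne\emptyset$) for every finite $E\subset U$; that is, that the Bohr open set $U$ is finitely embeddable in the single translate $a_0B$. By Proposition~\ref{piece-Bohr} this is precisely the statement that $B$ itself is piecewise Bohr, and that is false in general: by Proposition~\ref{repres} any piecewise Bohr set $B$ has $BB^{-1}\supset UU^{-1}$ for some Bohr open $U\ni 1_G$, yet there exist sets $B\subset\IZ$ of positive upper Banach density whose difference set $B-B$ contains no Bohr neighborhood of $0$ (the K\v{r}\'{\i}\v{z} construction). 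Your reduction discards $A$ entirely except for the trivial inclusion $a_0B\subset AB$, so it cannot succeed; the theorem genuinely needs both factors. Theorem~\ref{t:Bohr} gives only the pairwise estimate $\sigma^R(B\cap xB)\ge\e$ for $x\in V\setminus N$, and there is no mechanism---Jin's theorem, ergodicity of $\sigma^R$, or otherwise---that upgrades this to non-emptiness of $|E|$-fold intersections of translates of $B$ without bringing $A$ back into the argument.

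The outline in your third paragraph---reduce to countable subgroups via a stationary-set argument on $\HH$, apply the Beiglb\"ock--Bergelson--Fish result there, and lift the resulting homomorphism and Bohr neighborhood back to $G$ by compactness in $K^G$---is exactly the paper's proof, and it works cleanly once you drop the paragraph-2 detour. Concretely: the families $\A=\{H\in\HH:\sigma^R_H(A)\ge\sigma^R(A)\}$ and $\mathcal B=\{H\in\HH:\sigma^R_H(B)\ge\sigma^R(B)\}$ are closed and dominating (as in Claim~\ref{cl1:t:Bohr}); for each $H\in\A\cap\mathcal B$ the countable case yields a homomorphism $h_H:H\to K$ and an $n(H)$ with $h_H^{-1}(U_{n(H)})$ finitely embeddable in $(A\cap H)(B\cap H)$; a single $n$ works on a stationary subfamily $\C$; an accumulation point $h\in K^G$ of the extensions $(\bar h_H)_{H\in\C}$ is a homomorphism, and $h^{-1}(U_n)$ is finitely embeddable in $AB$ because any finite $F\subset h^{-1}(U_n)$ lies in some $H\in\C$ with $h_H(F)\subset U_n$. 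No exceptional null sets $N$, no ergodicity, and no pairwise-to-$n$-fold upgrade are required.
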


\begin{proof} For countable amenable groups the first part of this theorem was proved in Theorem 3  \cite{BBF} while the second part follows from the first part and Proposition~\ref{piece-Bohr}. So, assume that $G$ is an uncountable group and $A,B\subset G$ be two sets of positive upper Banach density. By Theorem~\ref{Bandens}, $\sigma^R(A)=d^*(A)>0$ and $\sigma^R(B)=d^*(B)>0$.

In the subsequent proof we shall use some notations and results from the proof of Theorem~\ref{t:Bohr}.

In particular, by $K=\prod_{n=1}^\infty O(n)$ we denote the Tychonoff product of orthogonal groups,
 by $(U_n)_{n\in\w}$ a neighborhood base at $1_K$ consisting of open neighborhoods subset in $K$ such that $U_{n+1}\subset U_n$ for all $n\in\w$.
 By $\HH$ we denote the family of all countable subgroups partially ordered by the inclusion relation.

By analogy with Claim~\ref{cl1:t:Bohr} we can prove that the sets
$$\A=\{H\in\HH:\sigma^R_H(A)\ge\sigma^R(A)\}\mbox{ \ and \ }\mathcal B=\{H\in\HH:\sigma^R_H(B)\ge\sigma^R(B)\}$$ are closed and dominating in $\HH$.
For every subgroup $H\in\A\cap\mathcal B$ the sets $A_H=A\cap H$ and $B_H=B\cap H$ have positive right Solecki density in $H$. Consequently, by the ``countable'' version of Theorem~\ref{sumset}, some Bohr open neighborhood $U_H\subset H$ of $1_H$ is finitely embeddable in the sumset $A_H\cdot B_H$. Since the Bohr topology on $H$ is generated by preimages of open sets under homomorphisms from $H$ to the compact Hausdorff group $K$, we can find a number $n(H)\in\w$ for which there is a homomorphism $h_H:H\to K$ such that $U_H\supset h_H^{-1}(U_{n(H)})$. It is standard to check that for some $n\in\w$ the set $$\C=\{H\in\A\cap\mathcal B:n(H)=n\}$$is stationary in $\HH$.

Then for every subgroup $H\in\C$ we can choose a homomorphism $h_H:H\to K$ such that $h_H^{-1}(U_n)\subset U_H$. Let $\bar h_H:G\to K$ be any extension of the function $h_H$. By the compactness of the space $K^G$, the net $(\bar h_H)_{H\in\C}$ has an accumulation point $h\in K^G$. This is a function $h:G\to K$ such that for each neighborhood $O_h\subset K_G$ and each countable subgroup $H_0\in\HH$ there is a subgroup $H\in\C$ such that $H_0\subset H$ and $\bar h_H\in O_h$. It is standard to check that $h:G\to K$ is a group homomorphism.

To finish the proof it remains to check that the Bohr open neighborhood $U=h^{-1}(U_n)\subset G$ of the
unit $1_G$ is finitely embeddable in the sumset $AB$. Fix any finite subset $F\subset h^{-1}(U_n)$ and consider the open neighborhood $O_h=\{f\in K^G: f(F)\subset U_n\}$ of the function $h$ in the  compact Hausdorff space $K^G$. Since $h$ is an accumulation point of the net $(\bar h_H)_{H\in\C}$, there is a countable subgroup $H\in\C$ such that $F\subset H$ and $\bar h_H\in O_h$. Then $F\subset h_H^{-1}(U_n)\subset U_H$ and by the finite embeddability of the Bohr open set $U_H$ in $A_HB_H$ there is a point $y\in H$ such that $Fy\subset A_HB_H\subset AB$, which means that $U$ is finitely embeddable in the sumset $AB$. By Proposition~\ref{piece-Bohr}, the set $AB$ is piecewise Bohr.
\end{proof}

Theorem~\ref{sumset} and Proposition~\ref{repres} imply:

\begin{corollary}\label{c:sumset} For any right-Solecki positive sets $A,B$ in an amenable group $G$ the set $ABB^{-1}A^{-1}$ is a neighborhood of the unit $1_G$ in the Bohr topology of $G$.
\end{corollary}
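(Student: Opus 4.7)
The plan is to deduce the corollary directly from Theorem~\ref{sumset} and Proposition~\ref{repres}, with essentially no additional work. First, I would apply Theorem~\ref{sumset} to the right-Solecki positive sets $A,B$ in the amenable group $G$ to obtain a Bohr open neighborhood $U\subset G$ of the unit $1_G$ which is finitely embeddable in the sumset $AB$. This is the substantive input; everything else is formal.

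Second, Proposition~\ref{repres} tells us that finite embeddability of $U$ in $AB$ yields the inclusion $UU^{-1}\subset (AB)(AB)^{-1}=ABB^{-1}A^{-1}$. So it remains only to verify that $UU^{-1}$ is itself a Bohr open neighborhood of $1_G$. Since $1_G\in U$, we get $1_G=1_G\cdot 1_G^{-1}\in UU^{-1}$; moreover, $UU^{-1}=\bigcup_{u\in U}Uu^{-1}$ is a union of right shifts of the Bohr open set $U$, and right translations are homeomorphisms of the Bohr topology (the Bohr topology is a group topology by construction), so each $Uu^{-1}$ is Bohr open, and therefore so is $UU^{-1}$. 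Combining these, $ABB^{-1}A^{-1}\supset UU^{-1}$ contains a Bohr open neighborhood of $1_G$ and is thus itself a Bohr neighborhood of the unit.

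Since the real content is packed into Theorem~\ref{sumset}, which in turn depends on the ``countable'' version of Beiglb\"ock--Bergelson--Fish together with the reflection-type argument using stationary subsets of $\HH$ and accumulation points in $K^G$, there is no genuine obstacle at this stage of the argument. The only minor point worth highlighting in a written-out proof is the verification that the Bohr topology is a group topology (so that $UU^{-1}$ is open), which is immediate from the fact that $G^{\#}$ is defined as the initial topology induced by the continuous homomorphism $\eta:G\to bG$ into a topological group.
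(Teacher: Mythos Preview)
Your proposal is correct and follows exactly the approach the paper indicates: the paper simply states that the corollary follows from Theorem~\ref{sumset} and Proposition~\ref{repres}, and you have unpacked precisely that implication, including the routine check that $UU^{-1}$ is a Bohr open neighborhood of $1_G$.
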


\begin{problem} Is Theorem~\ref{sumset} true for any (not necessarily countable) amenable group $G$?
\end{problem}

A weaker form of this problem also seems to be open:

\begin{problem} Let $A,B$ be inner invariant Solecki positive sets in a group $G$. Is the set $AB$ piecewise Bohr? Is $ABB^{-1}A^{-1}$ a neighborhood of the unit in the Bohr topology on $G$?
\end{problem}

\section{Characterizing amenable groups with trivial Bohr compactification}

In this section we shall apply Theorems~\ref{t:Bohr} and \ref{sumset} to characterize amenable groups with trivial Bohr compactification. Observe that a group $G$ has trivial Bohr compactification if and only if any homomorphism $h:G\to K$ to a compact Hausdorff (or metrizable) topological group $K$ is constant. A simple example of an amenable group with trivial Bohr compactification is the group $A_X$ of all even finitely supported permutations of any infinite set $X$.

\begin{theorem}\label{trivBohr1} Let $G$ be a group.
\begin{enumerate}
\item If $G$ is amenable and has trivial Bohr compactification, then for any right-Solecki positive sets $A,B\subset G$ we get
    $$ABB^{-1}A^{-1}=B^{-1}AA^{-1}=AA^{-1}A=G,\;\sigma^R(G\setminus AA^{-1})=0,\;\;\sigma^R(AB)=1.$$
\item If the Bohr compactification of $G$ is not trivial, then $G$ contains an inner invariant Bohr open neighborhood $V=V^{-1}$ of the unit such that $$\sigma_R(V)>0,\;\;\sigma(VV^{-1}VV^{-1})\le \frac12,\;\; \sigma_R(G\setminus VV^{-1})\ge \frac12.$$
\end{enumerate}
\end{theorem}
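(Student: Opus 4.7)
Part (1) follows at once from the Bohr-topological results of the previous sections together with the observation that when $bG$ is trivial, the Bohr topology on $G$ is antidiscrete, so the only non-empty Bohr open subset of $G$ is $G$ itself and every Bohr neighborhood of $1_G$ equals $G$. Applying Theorem~\ref{t:Bohr} I obtain $\e>0$, a Bohr open $U\ni 1_G$ and a set $N\subset G$ with $\sigma^R(N)=0$ such that $U\setminus N\subset\Delta_\e(A)\subset AA^{-1}$ (the last inclusion because $\sigma^R(A\cap xA)\ge\e$ forces $x=a(a')^{-1}$ for some $a,a'\in A$); since $U=G$, this yields $\sigma^R(G\setminus AA^{-1})=0$. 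Corollary~\ref{c1:Bohr} applied with the ordered pairs $(A,B)$ and $(A,A)$ gives $B^{-1}AA^{-1}=G$ and $A^{-1}AA^{-1}=G$, the latter inverting to $AA^{-1}A=G$. Corollary~\ref{c:sumset} directly gives $ABB^{-1}A^{-1}=G$. Finally, Theorem~\ref{sumset} implies $AB\supset U'\cap T$ for some non-empty Bohr open $U'=G$ and some thick $T$, whence $\sigma^R(AB)\ge\sigma^R(T)=1$ by Proposition~\ref{p:rizne}(2).

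For part (2), my plan is to build a symmetric and conjugation-invariant open neighborhood $W$ of $1_{bG}$ in the non-trivial compact group $bG$ with $\lambda(\overline{W^4})\le 1/2$, and then to take $V=\eta^{-1}(W)\subset G$. The set $V$ is a Bohr open neighborhood of $1_G$, and symmetry and inner invariance transfer from $W$ to $V$ (the latter because $\eta(x)W\eta(x)^{-1}=W$ for every $x\in G$). All three asserted estimates then follow by routine verification: since the Bohr topology on $G$ is totally bounded, $G=FV$ for some finite $F\subset G$, so Proposition~\ref{semiadditive}(2) gives $1=\sigma_R(G)=\sigma_R(FV)\le|F|\sigma_R(V)$, whence $\sigma_R(V)>0$. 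The inclusions $VV^{-1}\subset\eta^{-1}(W^2)$ and $VV^{-1}VV^{-1}\subset\eta^{-1}(W^4)$ combined with Theorem~\ref{bohr} (and the density of $\eta(G)$ in $bG$) yield $\sigma(VV^{-1})\le\lambda(\overline{W^2})\le\lambda(\overline{W^4})\le 1/2$ and $\sigma(VV^{-1}VV^{-1})\le\lambda(\overline{W^4})\le 1/2$, and Proposition~\ref{semiadditive}(1) then forces $\sigma_R(G\setminus VV^{-1})\ge 1-\sigma(VV^{-1})\ge 1/2$.

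The main obstacle is the construction of $W$. If $bG$ is finite of order $n\ge 2$, I take $W=\{1_{bG}\}$, which is clopen, symmetric, trivially conjugation invariant, and of Haar measure $1/n\le 1/2$. If $bG$ is infinite, then $\lambda(\{1_{bG}\})=0$, so outer regularity supplies an open $U\ni 1_{bG}$ with $\lambda(U)<1/2$; Urysohn's lemma yields a continuous $f:bG\to[0,1]$ with $f(1_{bG})=0$ and $f\equiv 1$ on $bG\setminus U$, and I symmetrize over inversion and conjugation by
\[
\tilde f(x)=\int_{bG}\bigl(f(gxg^{-1})+f(gx^{-1}g^{-1})\bigr)\,d\lambda(g).
\]
The function $\tilde f$ is continuous on $bG$, satisfies $\tilde f(1_{bG})=0$, $\tilde f(x^{-1})=\tilde f(x)$, and $\tilde f(hxh^{-1})=\tilde f(x)$ for all $h\in bG$. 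Since the integrand is non-negative and continuous in $g$ and the Haar measure has full support, $\tilde f(x)=0$ forces the integrand to vanish identically in $g$, in particular at $g=1_{bG}$, giving $f(x)=0$; hence $\{\tilde f=0\}\subset f^{-1}(0)\subset U$ and $\lambda(\{\tilde f=0\})<1/2$. By downward continuity of $\lambda$, $\lambda(\tilde f^{-1}([0,\eta]))\le 1/2$ for all sufficiently small $\eta>0$, and continuity of multiplication at $1_{bG}$ provides a $t>0$ such that $W^4\subset\tilde f^{-1}([0,\eta))$ for $W:=\tilde f^{-1}([0,t))$, so that $\overline{W^4}\subset\tilde f^{-1}([0,\eta])$ has Haar measure $\le 1/2$, completing the construction.
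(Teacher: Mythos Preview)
Your treatment of part~(1) is correct and coincides with the paper's proof: once the Bohr topology is antidiscrete, each of Theorem~\ref{t:Bohr}, Corollary~\ref{c1:Bohr}, Corollary~\ref{c:sumset} and Theorem~\ref{sumset} immediately yields the corresponding conclusion, exactly as the paper records in one line. Your extra explanations (e.g.\ why $\Delta_\e(A)\subset AA^{-1}$, or why piecewise Bohr implies $\sigma^R(AB)=1$) are welcome detail.

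In part~(2) your overall plan matches the paper's, and the derivation of the three estimates from the existence of $W$ is fine (one remark: Proposition~\ref{semiadditive}(2) is stated for right multiplication, $\sigma_R(VF)\le|F|\sigma_R(V)$; your use of $G=FV$ is still correct because $V$ is inner invariant, hence $fV=Vf$ and $FV=VF$, but you should say so). The paper, incidentally, gets $\sigma_R(V)>0$ differently: it uses the subadditivity of $\sigma$ to get $\sigma(V)>0$ and then the inner invariance of $V$ to conclude $\sigma_R(V)=\sigma(V)$.

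There is, however, a genuine gap in your construction of $W$ in the infinite case. The step ``continuity of multiplication at $1_{bG}$ provides a $t>0$ such that $W^4\subset\tilde f^{-1}([0,\eta))$ for $W:=\tilde f^{-1}([0,t))$'' is not justified: continuity of multiplication only gives you \emph{some} open neighborhood $W_0\ni 1_{bG}$ with $W_0^4\subset\tilde f^{-1}([0,\eta))$, not one of the form $\tilde f^{-1}([0,t))$. That stronger conclusion would require the sublevel sets $\tilde f^{-1}([0,t))$ to form a neighborhood base at $1_{bG}$, i.e.\ $\{\tilde f=0\}=\{1_{bG}\}$. But you have only shown $\{\tilde f=0\}\subset f^{-1}(0)$, and Urysohn's lemma gives no control over $f^{-1}(0)$ beyond $f^{-1}(0)\subset U$. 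For a concrete failure take $bG=\IT$, $U=(-0.2,0.2)$ and choose the Urysohn function so that $f^{-1}(0)=[-0.15,0.15]$; then $\tilde f(x)=f(x)+f(-x)$ has zero set $Z=[-0.15,0.15]$ and $Z+Z+Z+Z=\IT$, so $\lambda(\overline{W_t^4})=1$ for every $t>0$. (When $bG$ is not metrizable, $\{1_{bG}\}$ is not a $G_\delta$, so no continuous $f$ with $f^{-1}(0)=\{1_{bG}\}$ exists, and the averaging trick cannot be repaired by a better choice of $f$.)

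The fix is to drop the averaging detour and construct $W$ directly, as the paper does. Choose open $U_1\ni 1_{bG}$ with $U_1^4\subset U$ and then open $U_2\ni 1_{bG}$ with $\overline{U_2}\subset U_1$. A standard compactness argument (cover $bG$ by finitely many sets $O_g$ on which $hU_2'h^{-1}\subset U_2$) shows that the interior of $\bigcap_{g\in bG}gU_2g^{-1}$ is a nonempty inner-invariant open neighborhood $W_1$ of $1_{bG}$; set $W=W_1\cap W_1^{-1}$. Then $W$ is open, symmetric, inner invariant, and $\overline{W^4}\subset(\overline{W})^4\subset U_1^4\subset U$, giving $\lambda(\overline{W^4})\le\tfrac12$. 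This is precisely the paper's route (the paper states the existence of an inner-invariant symmetric closed neighborhood $W$ with $W^4\subset U$ as a standard fact and pulls it back via $\eta$); your open $W$ is in fact slightly cleaner, since it makes $V=\eta^{-1}(W)$ literally Bohr open.
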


\begin{proof} 1. The first statement follows immediately from Theorem~\ref{t:Bohr}, Corollary~\ref{c1:Bohr}, Theorem~\ref{sumset} and Corollary~\ref{c:sumset}.
\smallskip

2. Assume that the group $G$ has non-trivial Bohr compactification $bG$. The compact Hausdorff group $bG$, being non-trivial, contains an open neighborhood $U\subset bG$ of the unit of Haar measure $\lambda(U)\le\frac12$. By the continuity of the group operations on $bG$, we can choose an inner invariant closed neighborhood $W\subset bG$ of the unit such that $W=W^{-1}$ and $WW^{-1}WW^{-1}\subset U$. We claim that the preimage $V=\eta^{-1}(W)$ of $W$ under the canonical homomorphism $\eta:G\to bG$ has the required properties. It is clear that  $V$ is an inner invariant Bohr open neighborhood of the unit. The subadditivity of the Solecki submeasure $\sigma$ implies that $\sigma(V)>0$. Since $V$ is inner invariant, $\sigma_R(V)=\sigma(V)>0$. By Theorem~\ref{bohr}, $$\sigma(VV^{-1})\le\sigma(VV^{-1}VV^{-1})\le\bar\lambda(VV^{-1}VV^{-1})\le\lambda(WW^{-1}WW^{-1})\le\lambda(U)\le \frac12.$$
To see that $\sigma_R(G\setminus VV^{-1})\ge \frac12$, observe that by the inner invariance of the set $G\setminus VV^{-1}$ we get $\sigma_R(G\setminus VV^{-1})=\sigma(G\setminus VV^{-1})$ and by the subadditivity of the Solecki submeasure $\sigma$,
$\sigma(G\setminus VV^{-1})\ge 1-\sigma(VV^{-1})\ge\frac12.$
\end{proof}

Theorem~\ref{trivBohr1} and the subadditivity of the right Solecki density $\sigma^R$ on amenable groups imply the following Ramsey characterization of amenable groups with trivial Borh compactification.

\begin{corollary}\label{trivBohr} An amenable group $G$ has trivial Bohr compactification if and only if for each finite partition $G=A_1\cup\dots\cup A_n$ there is an index $i\le n$ such that $A_iA_i^{-1}A_i=G$.
\end{corollary}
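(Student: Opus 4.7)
The plan is to derive both implications directly from Theorem~\ref{trivBohr1}, using the subadditivity of $\sigma^R$ on amenable groups (Corollary~\ref{solamenable}) in one direction and a coset-style partition subordinate to a small symmetric Bohr neighborhood in the other.

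\textbf{($\Rightarrow$).} Assume $G$ is amenable with trivial Bohr compactification, and let $G=A_1\cup\dots\cup A_n$ be a finite partition. By Corollary~\ref{solamenable} the right Solecki density $\sigma^R$ is subadditive on $G$, so
\[
1=\sigma^R(G)\le\sum_{i=1}^{n}\sigma^R(A_i),
\]
hence some cell $A_i$ is right-Solecki positive. Theorem~\ref{trivBohr1}(1) applied to $A=B=A_i$ then yields $A_iA_i^{-1}A_i=G$.

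\textbf{($\Leftarrow$).} For the contrapositive, suppose the Bohr compactification of $G$ is non-trivial. By Theorem~\ref{trivBohr1}(2) there exists an inner-invariant Bohr open neighborhood $V=V^{-1}$ of $1_G$ with $\sigma(VV^{-1}VV^{-1})\le\tfrac12$. Since $V=V^{-1}$, we have $VV^{-1}=V^2$ and $VV^{-1}VV^{-1}=V^4$, so $\sigma(V^3)\le\sigma(V^4)\le\tfrac12<1=\sigma(G)$. The Bohr topology on $G$ is totally bounded, hence the Bohr open cover $\{xV:x\in G\}$ admits a finite subcover: choose $x_1,\dots,x_n\in G$ with $G=\bigcup_{i=1}^{n}x_iV$, and form the partition
\[
A_i\;=\;x_iV\;\setminus\;\bigcup_{j<i}x_jV,\qquad i=1,\dots,n.
\]
Each $A_i\subset x_iV$, so
\[
A_iA_i^{-1}A_i\;\subset\;(x_iV)(x_iV)^{-1}(x_iV)\;=\;x_iV V^{-1}V\;=\;x_iV^{3}.
\]
Since $\sigma(x_iV^3)=\sigma(V^3)\le\tfrac12<1$ (using invariance of $\sigma$), the set $x_iV^{3}$ is a proper subset of $G$, and therefore $A_iA_i^{-1}A_i\ne G$ for every $i\le n$. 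This partition witnesses the failure of the partition property.

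The only non-routine input is Theorem~\ref{trivBohr1}, and once it is available both directions reduce to short arithmetic: subadditivity of $\sigma^R$ forces a ``large'' cell in the amenable case, and $\sigma(V^4)\le\tfrac12$ forces a ``small'' enough symmetric Bohr neighborhood in the non-trivial Bohr case. The mildly subtle point is to notice that symmetry of $V$ lets one replace $VV^{-1}VV^{-1}$ by $V^{4}$ and hence bound $\sigma(V^{3})$, which is precisely what is needed to obstruct $A_iA_i^{-1}A_i=G$.
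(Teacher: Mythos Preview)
Your proof is correct and follows essentially the same approach the paper indicates: the paper states only that the corollary follows from Theorem~\ref{trivBohr1} together with the subadditivity of $\sigma^R$ on amenable groups, and you have filled in exactly those details. The forward direction is verbatim what the paper intends, and your contrapositive argument for the converse---covering $G$ by finitely many translates $x_iV$ of the symmetric Bohr neighborhood from Theorem~\ref{trivBohr1}(2), disjointifying, and using $\sigma(V^3)\le\sigma(V^4)\le\tfrac12<1$ to conclude $A_iA_i^{-1}A_i\subset x_iV^3\ne G$---is the natural way to unpack the statement.
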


Another characterization of amenable groups with trivial Bohr compactification was obtained by Bergelson and Furstenberg in \cite{BF}. A subset $B$ of a group $G$ is called an {\em IP-set} if there is a sequence $(x_i)_{i\in\w}$ of elements of $G$ such that for any finite number sequence $i_1<i_2<\dots<i_n$ the product $x_{i_1}\cdot x_{i_2}\cdots x_{i_n}$ belongs to $B$.

\begin{theorem}[Bergelson, Furstenberg] An amenable group $G$ has trivial Bohr compactification if and only if each right-Solecki positive subset  $A$ of $G$ contains an IP-set.
\end{theorem}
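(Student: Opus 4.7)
The plan is to handle the two directions separately. If $bG$ is non-trivial I will exhibit a right-Solecki positive subset of $G$ containing no IP-set, using Theorem~\ref{trivBohr1}(2) and a compactness argument inside $bG$. If $bG$ is trivial I will show that every right-Solecki positive set $A$ contains an IP-set by iterating Theorem~\ref{t:Bohr}.

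Suppose first that $bG$ is non-trivial. Theorem~\ref{trivBohr1}(2) produces an inner invariant Bohr open neighborhood $V=\eta^{-1}(W)$ of $1_G$ with $\sigma_R(G\setminus VV^{-1})\ge\tfrac12$; amenability together with Theorem~\ref{t-FC}(1) upgrades this to $\sigma^R(G\setminus VV^{-1})\ge\tfrac12$, so $A:=G\setminus VV^{-1}$ is right-Solecki positive. To show $A$ contains no IP-set, suppose $(x_i)_{i\in\omega}$ generates one inside $A$ and set $y_k:=\eta(x_1\cdots x_k)\in bG$. By compactness of $bG$ some subsequence $(y_{k_j})$ converges to a point $z\in bG$, whence $y_{k_{j+1}}y_{k_j}^{-1}=\eta(x_{k_j+1}\cdots x_{k_{j+1}})$ tends to $zz^{-1}=1_{bG}$. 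Using the elementary identity $\eta^{-1}(WW^{-1})=\eta^{-1}(W)\eta^{-1}(W)^{-1}=VV^{-1}$, for all sufficiently large $j$ the finite increasing product $x_{k_j+1}\cdots x_{k_{j+1}}$ lies in $VV^{-1}$, contradicting its membership in $A$.

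Now suppose $bG$ is trivial and $A\subset G$ is right-Solecki positive. I will construct inductively a sequence $(x_n)_{n\ge 1}\subset G$ and right-Solecki positive sets $A_0:=A\supset A_1\supset\cdots$ satisfying $A_{n+1}=A_n\cap x_{n+1}^{-1}A_n$, together with the invariant that for every non-empty $S=\{i_1<\cdots<i_k\}\subset\{1,\dots,n\}$ the product $w_S:=x_{i_1}\cdots x_{i_k}$ satisfies $w_S\cdot A_n\subset A$ (so in particular $w_S\in A$). Given $A_n$ right-Solecki positive, Theorem~\ref{t:Bohr} supplies some $\e_n>0$ with $\Delta_{\e_n}(A_n)\supset U\cap T$ for a Bohr open neighborhood $U\ni 1_G$ and a set $T\subset G$ with $\sigma^R(G\setminus T)=0$; triviality of $bG$ forces $U=G$, hence $\sigma^R(G\setminus\Delta_{\e_n}(A_n))=0$. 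The invariance of $\sigma^R$ implies the symmetry $\Delta_{\e_n}(A_n)=\Delta_{\e_n}(A_n)^{-1}$, and the subadditivity of $\sigma^R$ on amenable groups (Corollary~\ref{solamenable}) then delivers $\sigma^R(A_n\cap\Delta_{\e_n}(A_n))\ge\sigma^R(A_n)>0$. Choosing $x_{n+1}$ from this non-empty intersection secures both $x_{n+1}\in A_n$ and $\sigma^R(A_{n+1})=\sigma^R(A_n\cap x_{n+1}^{-1}A_n)=\sigma^R(A_n\cap x_{n+1}A_n)\ge\e_n>0$; a routine check verifies that the recursion propagates the invariant $w_S\cdot A_{n+1}\subset A$ to every $S\subset\{1,\dots,n+1\}$. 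The resulting sequence $(x_n)_{n\ge 1}$ thus generates an IP-set contained in $A$.

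The main obstacle is to notice that Theorem~\ref{t:Bohr} can be applied iteratively to the shrinking sets $A_n$ even though the constant $\e_n$ may depend on $A_n$: the recursion only requires each $A_n$ to be right-Solecki positive (so that the next pick is possible), not a uniform lower bound across $n$. The decisive point is that under trivial $bG$ the Bohr open neighborhood $U$ of Theorem~\ref{t:Bohr} automatically inflates to the whole of $G$, reducing its piecewise-Bohr conclusion to a purely density-theoretic statement ideal for induction.
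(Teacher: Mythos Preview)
The paper does not prove this theorem; it is quoted from Bergelson and Furstenberg \cite{BF} without argument, so there is nothing to compare against. Your proof is essentially correct and is a nice application of the paper's machinery, but the first direction contains two technical slips that you should repair.

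First, the ``elementary identity'' $\eta^{-1}(WW^{-1})=\eta^{-1}(W)\eta^{-1}(W)^{-1}=VV^{-1}$ is false in general: only the inclusion $VV^{-1}\subset\eta^{-1}(WW^{-1})$ holds, since $\eta$ need not be surjective. Fortunately you do not need this identity. It is enough to arrange that the image $\eta(x_{k+1}\cdots x_{k'})=y_k^{-1}y_{k'}$ lands in $W$ itself (not merely in $WW^{-1}$); then $x_{k+1}\cdots x_{k'}\in\eta^{-1}(W)=V\subset VV^{-1}$, contradicting membership in $A=G\setminus VV^{-1}$. Note also that your displayed equality has the factors in the wrong order: $\eta(x_{k_j+1}\cdots x_{k_{j+1}})=y_{k_j}^{-1}y_{k_{j+1}}$, not $y_{k_{j+1}}y_{k_j}^{-1}$; this is harmless since $W=W^{-1}$.

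Second, $bG$ need not be metrizable, so the sequence $(y_k)$ need not have a convergent subsequence. Use instead that $(y_k)$ has a cluster point $z\in bG$: choose an open neighborhood $O\ni z$ with $O^{-1}O\subset W^\circ$ (possible since $W$ is a neighborhood of $1_{bG}$), and then pick any two indices $k<k'$ with $y_k,y_{k'}\in O$.

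The second direction is fine. Your parenthetical ``so in particular $w_S\in A$'' does not follow from $w_S\cdot A_n\subset A$ alone (there is no reason for $1_G\in A_n$); rather, $w_S\in A$ follows because $x_m\in A_{m-1}$ for $m=\max S$, together with the invariant $w_{S\setminus\{m\}}\cdot A_{m-1}\subset A$ at the previous level. This is implicit in your inductive verification, so the argument stands.
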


It is interesting to compare Corollary~\ref{trivBohr} with the characterization of odd groups proved in Theorem 3.2 of \cite{BGN}. A group $G$ is called {\em odd} if each element $x\in G$ has odd finite order.

\begin{theorem}[Banakh-Gavrylkiv-Nykyforchyn] A group $G$ is odd if and only if
 for any partition $G=A\cup B$ into two sets either $AA^{-1}=G$ or $BB^{-1}=G$.
\end{theorem}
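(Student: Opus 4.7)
The plan is to prove the two directions separately. For the forward direction, assume $G$ is odd and $G=A\cup B$ is a partition with $A,B\neq\emptyset$, set $T_A=G\setminus AA^{-1}$ and $T_B=G\setminus BB^{-1}$, and introduce the stabilizer subgroup $S=\{g\in G:gA=A\}$, which equals $\{g:gB=B\}$ and is contained in $AA^{-1}$ (so $S\cap T_A=\emptyset$). The first observation is that $g\in T_A$ iff $gA\subseteq B$, and similarly for $T_B$; consequently $SgS\subseteq T_A$ whenever $g\in T_A$. The crucial second observation uses oddness: if $g\in T_A$ and $h\in T_B$, then $hgA\subseteq hB\subseteq A$, and since $hg$ has finite order $n$, the descending chain $A=(hg)^n A\subseteq(hg)^{n-1}A\subseteq\cdots\subseteq(hg)A\subseteq A$ must collapse to equalities, forcing $hgA=A$ and hence $hg\in S$; symmetrically $gh\in S$.

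Suppose toward contradiction that both $T_A$ and $T_B$ are non-empty, and fix $g_0\in T_A$. For each $h\in T_B$ and each $g\in T_A$, $hg\in S$ gives $h\in Sg^{-1}$; since distinct right cosets of $S$ are disjoint and $T_B\neq\emptyset$, all elements of $T_A$ must lie in a single left coset $g_0 S$, so $T_A\subseteq g_0 S$. Combined with $Sg_0\subseteq SgS\subseteq T_A$ this yields $Sg_0\subseteq g_0 S$, i.e., $g_0^{-1}Sg_0\subseteq S$; since $g_0$ has finite order, the injective endomorphism $s\mapsto g_0^{-1}sg_0$ of $S$ is bijective, so $g_0$ normalizes $S$ and $T_A=g_0 S=Sg_0$. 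The inversion symmetry $T_A=T_A^{-1}$ then gives $g_0 S=Sg_0^{-1}=g_0^{-1}S$, hence $g_0^2\in S$; writing the odd order of $g_0$ as $2m+1$, I get $g_0=(g_0^2)^{-m}\in S$, contradicting $g_0\in T_A$.

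For the backward direction, if $G$ is not odd then it contains an element of even order (some power of which has order $2$) or of infinite order. In the first case, fixing $g$ of order $2$, I take $A$ to be any transversal of the right cosets of $\langle g\rangle$ and set $B=gA$; then $gA\cap A=\emptyset$ and $gB=g^2 A=A$, so $g\notin AA^{-1}\cup BB^{-1}$. In the second case, on each orbit $\langle g\rangle x=\{g^n x:n\in\IZ\}$ I place the elements with $n$ even into $A$ and those with $n$ odd into $B$; again $gA\subseteq B$ and $gB\subseteq A$, so neither $AA^{-1}$ nor $BB^{-1}$ equals $G$.

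The main obstacle I anticipate is the upgrade from $hgA\subseteq A$ to $hgA=A$: without the finite-order consequence of oddness, $hg$ acts merely as a compression rather than a stabilizer, and the coset-geometry argument breaks down. Locating the use of oddness at exactly this step, and then invoking it a second time to extract $g_0\in S$ from $g_0^2\in S$, is the heart of the forward direction.
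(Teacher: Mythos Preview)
The paper does not prove this theorem; it merely quotes the result from \cite{BGN} (Theorem~3.2 there) for comparison with Corollary~\ref{trivBohr}. So there is no in-paper proof to compare against.

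Your argument is correct and self-contained. A few minor remarks. In the forward direction, the line ``$Sg_0\subseteq SgS\subseteq T_A$'' should read $Sg_0\subseteq Sg_0S\subseteq T_A$; this is clearly a typo and the reasoning is unaffected. The step from $g_0^{-1}Sg_0\subseteq S$ to $g_0^{-1}Sg_0=S$ is justified exactly as you say: since $g_0$ has finite order~$n$, the map $\phi(s)=g_0^{-1}sg_0$ satisfies $\phi^n=\mathrm{id}_S$, hence is a bijection of $S$. In the backward direction, the infinite-order construction implicitly fixes a transversal for the right cosets of $\langle g\rangle$ before speaking of the parity of~$n$; you might make this explicit, but the argument is fine. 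The two uses of oddness --- upgrading $hgA\subseteq A$ to $hg\in S$, and extracting $g_0\in S$ from $g_0^2\in S$ --- are indeed the crux, and you have located them precisely.
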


\section{Concluding Remarks and an Open Problem}

The Solecki densities $\sigma_L$, $\sigma_R$ and the Solecki submeasures are initial representatives of the hierarchy of extremal densities defined on each group $G$ as follows.

First we remark that the densities $\sigma_R$, $\sigma_R$ and $\sigma$ can be equivalently defined by the formulas:
$$
\begin{aligned}
&\sigma_R(A)=\inf_{\mu_1\in P_\w(G)}\sup_{\mu_2\in P_\w(G)}\mu_1*\mu_2(A),\\
&\sigma_L(A)=\inf_{\mu_1\in P_\w(G)}\sup_{\mu_2\in P_\w(G)}\mu_2*\mu_1(A),\\
&\sigma(A)=\inf_{\mu_1\in P_\w(G)}\sup_{\mu_2\in P_\w(G)}\sup_{\mu_3\in P_\w(G)}\mu_2*\mu_1*\mu_3(A),
\end{aligned}
$$which can be shortly written as $\sigma_R=\inf\sup_{12}$, $\sigma_L=\inf\sup_{21}$, $\sigma=\inf\sup\sup_{213}$ or even shorter as $\sigma_R=\mathsf{is}_{12}$, $\sigma_L=\mathsf{is}_{21}$, $\sigma=\mathsf{iss}_{213}$.
The density $$\varsigma_R(A)=\sup_{\mu_1\in P(G)}\sup_{\mu_2\in P_\w(G)}\inf_{\mu_3\in P_\w(G)}\mu_2*\mu_3*\mu_1(A)=\sup_{\mu_1\in P_\w(G)}\sup_{\mu_2\in P(G)}\inf_{\mu_3\in P_\w(G)}\mu_1*\mu_3*\mu_2(A),$$considered in Section~\ref{s4}  can be shortly written as $\varsigma_R=\mathsf{Ssi}_{231}=\mathsf{sSi}_{132}$.
By analogy we can define densities $\mathsf{Si}_{12}$ and $\mathsf{Si}_{21}$ letting
$$\mathsf{Si}_{12}(A)=\sup_{\mu_1\in P(G)}\inf_{\mu_2\in P_\w(G)}\mu_1*\mu_2(A)\mbox{ \ and \ }
\mathsf{Si}_{21}(A)=\sup_{\mu_1\in P(G)}\inf_{\mu_2\in P_\w(G)}\mu_2*\mu_1(A)\mbox{ \ for $A\subset G$}.
$$
Theorems~\ref{t3.3nn} and \ref{subadit-sol} imply that
$$\mathsf{is}_{12}=\mathsf{Si}_{21}\le\mathsf{Ssi}_{231}\le\mathsf{iss}_{213}\mbox{ \ \ and \ \ }
\mathsf{is}_{21}=\mathsf{Si}_{12}\le\mathsf{Ssi}_{132}\le\mathsf{iss}_{312}
.$$

These observations suggest the following definition. Given a positive integer number $n\in\IN$, a  function $\mathsf e:\{1,\dots,n\}\to\{\inf,\sup,\Inf,\Sup\}$ with $\big|\big\{i\in\{1,\dots,n\}:\mathsf{e}(i)\in\{\Inf,\Sup\}\big\}\big|\le 1$ and a permutation $s:\{1,\dots,n\}\to\{1,\dots,n\}$ define
a density $\mathsf e_s:\mathcal P(G)\to[0,1]$ by the formula
$$\mathsf e_s(A)=\underset{\mu_1\in P_1(G)}{\mathsf e(1)}\cdots
\underset{\mu_n\in P_n(G)}{\mathsf e(n)}\mu_{s(1)}*\cdots*\mu_{s(n)}(A)\mbox{ \ for \ }A\subset G$$where $$P_i(G)=\begin{cases}P_\w(G)&\mbox{if $\mathsf{e}(i)\in\{\inf,\sup\}$}\\
P(G)&\mbox{if $\mathsf{e}(i)\in\{\Inf,\Sup\}$}
\end{cases}\mbox{ \ \ for $1\le i\le n$}.
$$
The density $\mathsf e_s$ will be called the {\em extremal density} generated by the function $\mathsf e$ and the substitution $s$. To shorten the notations, we shall write $\mathsf{i}$, $\mathsf{s}$, $\mathsf{I}$, $\mathsf{S}$ instead of $\inf$, $\sup$, $\Inf$, $\Sup$, respectively, and identify the functions $\mathsf e$ and $s$ with the sequences $(\mathsf e(1),\dots,\mathsf e(n))$ and $(s(1),\dots,s(m))$ or even words $\mathsf e(1)\cdots \mathsf e(n)$ and $s(1)\cdots s(m)$. In these notations, we get  $\sigma_R=\mathsf{is}_{12}$, $\sigma_L=\mathsf{is}_{21}$,  $\sigma=\mathsf{iss}_{213}=\mathsf{iss}_{231}$, and $\varsigma_R=\mathsf{Ssi}_{231}$. Therefore, this paper was devoted to study and applications of the extremal densities $\mathsf{is}_{12}$, $\mathsf{is}_{21}$, and $\mathsf{iss}_{213}$.
The (subadditive) extremal density $\mathsf{sis}_{213}$ was used in the paper \cite{BPS} as an instrument for solving an invariant version of Protasov's Problem~\ref{prob-Kourov}. It can be shown that $\sigma_R=\mathsf{is}_{12}\le\hat\sigma_R\le\mathsf{sis}_{213}\le\mathsf{iss}_{213}=\sigma$.

Observe that the simplest extremal densities
$\mathsf{i}_1$ and $\mathsf{s}_1$ can be calculated by the formulas
$$\mathsf{i}_1(A)=\begin{cases}0&\mbox{if $A\ne G$}\\
1&\mbox{if $A=G$}\end{cases}
\mbox{ \ \ and \ \ } \mathsf{s}_1(A)=\begin{cases}0&\mbox{if $A=\emptyset$}\\
1&\mbox{if $A\ne\emptyset$}\end{cases}$$
implying that $\mathsf{i}_1$ and $\mathsf{s}_1$ are the smallest and largest densities on $G$,  respectively. Therefore, the Solecki densities $\sigma_R=\mathsf{is}_{12}$ and $\sigma_L=\mathsf{is}_{21}$ are the simplest nontrivial extremal densities in this hierarchy.
This suggests the following problem, or rather, a program of research.

\begin{problem} Study the properties of the extremal densities $\mathsf e_s$ on groups. Detect extremal densities which are subadditive. Study the interplay between various extremal densities on a group.
Find further applications of extremal densities in combinatorics of groups and $G$-spaces.
\end{problem}

\section{Acknowledgements}

The author would like to express his sincere thanks to Valerio Capraro, Ostap Chervak, Rostyslav Grigorchuk, Igor Protasov, Sergiy Slobodianiuk, Slawomir Solecki and Piotr Zakrzewski for valuable discussions related to the topic of this paper.

\end{document}